\newtheorem{thm}{Theorem}[section]
\newtheorem{lemma}[thm]{Lemma}
\newtheorem{proposition}[thm]{Proposition}
\newtheorem{corollary}[thm]{Corollary}
\newtheorem{remark}[thm]{Remark}
\newtheorem{defn}[thm]{Definition}
\newtheorem{hyp}[thm]{Hypothesis}
\theoremstyle{definition}
\theoremstyle{remark}
\numberwithin{equation}{section}
\newcommand{\N}{\mathbb N}
\newcommand{\R}{\mathbb R}
\newcommand{\norm}[1]{\left\lVert#1\right\rVert}
\newcommand{\abs}[1]{\left\lvert#1\right\rvert}
\newcommand{\ps}[2]{\langle#1,#2\rangle}
\newcommand{\ix}[2]{\langle#1,#2\rangle_X}
\title{\textbf{Second Quantization and Evolution Operators in infinite dimension}}
\author[$\star$]{Davide Addona}
\author[$\dagger$]{Paolo De Fazio}
\date{}
\affil[$\star$]{\small{corresponding author, Department of Mathematical, Physical and Computer Sciences, Universit\`a degli studi di Parma, email-address: davide.addona@unipr.it}}
\affil[$\dagger$]{\small{Departement of Theoretical and Applied Sciences, Universit\`a degli studi dell'Insubria, email-address: paolo.defazio@uninsubria.it}}
\providecommand{\keywords}[1]{{\textit{Keywords}:} #1}
\providecommand{\subjclass}[1]{{\textit{2020 Mathematics Subject Classification}:} #1}
\begin{document}

\maketitle
\begin{abstract}
\noindent
In an infinite dimensional separable Hilbert space $X$, we study compactness properties and the hypercontractivity of the Ornstein-Uhlenbeck evolution operators $P_{s,t}$ in the spaces $L^p(X,\gamma_t)$, $\{\gamma_t\}_{t\in\R}$ being a suitable evolution system of measures for $P_{s,t}$. Moreover, we study the asymptotic behavior of $P_{s,t}$. Our results are produced thanks to a representation formula for $P_{s,t}$ through the second quantization operator. Among the examples, we consider the transition evolution operator associated to a non-autonomous stochastic parabolic PDE.\\\phantom{a}\\
\keywords{Evolution Operators, Second Quantization, Infinite Dimensional Analysis, Hypercontractivity Property, Asymptotic Behaviour.}\\
\subjclass{28C20, 34G10.}
\end{abstract}

% \PACS{PACS code1 \and PACS code2 \and more}

%Subject class[2020]: primary ; secondary ;

%Keywords: 

%\begin{abstract}
%Insert your abstract here. Include keywords, PACS and mathematical
%subject classification numbers as needed.
%
%\end{abstract}

\section{Introduction}
\label{intro}
Let $(X, \ix{\cdot}{\cdot},\norm{\,\cdot\,}_X)$ be a separable Hilbert space, let $\{U(t,s)\}_{s\leq t}$ be an evolution operator in $X$ and let $\{B(t)\}_{t\in\R}$ be a strongly continuous family of linear bounded operators on $X$. In this paper we consider the evolution operator $\{P_{s,t}\}_{s\leq t}$ defined on the space of bounded and Borel measurable functions $\varphi:X\to \R$ by
\begin{align*}
 P_{t,t}&=I,\qquad  t\in\R, \\
P_{s,t}\varphi(x)&=\int_X\varphi(y)\,\mathcal{N}_{U(t,s)x ,Q(t,s)}(dy),\quad s<t,\  x\in X,
\end{align*}
where $\mathcal{N}_{U(t,s)x ,Q(t,s)}$ is the Gaussian measure in $X$ with mean $U(t,s) x$
and covariance operator
\begin{align*} 
Q(t,s)=\int_s^tU(t,r) Q(r) U(t,r)^\star\,dr,\quad s<t, \qquad Q(r):=B(r)B(r)^\star, \quad r\in\R.
\end{align*}
As shown in \cite{def_2022,ouy_roc_2016}, $P_{s,t}$ extends to a linear bounded operator on suitable $L^p$ spaces with respect to an evolution systems of measure for $P_{s,t}$, namely families of Borel probability measures $\{\gamma_t\}_{t\in\R}$ in $X$ such that 
\begin{equation*}
\int_XP_{s,t}\varphi(x)\gamma_s(dx)=\int_X\varphi(x)\gamma_t(dx), \qquad s<t, \ \varphi\in L^p(X,\gamma_t).
\end{equation*}
Such evolution system of measures $\{\gamma_t\}_{t\in\R}$ in unique and $\gamma_t$ is the Gaussian measure with mean zero 
and covariance operator
\begin{align*} 
Q(t,-\infty)=\int_{-\infty}^tU(t,r) Q(r) U(t,r)^\star\,dr, \qquad t\in\R. 
\end{align*}

Our aim is to study further properties of $\{P_{s,t}\}_{s\leq t}$ through  a representation formula given by a suitable version of the second quantization operator. In finite dimension evolution operators associated ti Kolmogorov equations have already been widely investigated, see \cite{add_2013,add_ang_lor_2017,ang_lor_2014,ang_lor_20142,ang_lor_lun_2013,dap_lun_2007,gei_lun_2008,gei_lun_2009,kun_lor_lun_2010}. Instead, in infinite dimension, only few results are available, see \cite{big_def_2024,cer_lun_2021,def_2022,kna_2011,ouy_roc_2016}. 

In the autonomous case, for every $s\leq t$ we have $P_{s,t}=P_{0,t-s}=:R_{t-s}$, where $\{R(\sigma)\}_{\sigma\geq 0}$ is a semigroup and it is usually settled in Lebesgue spaces with respect to its invariant measure $\mu$, that exists and is unique under suitable assumptions (see e.g. \cite{lun_pal_2020}).
For all $\varphi\in L^p(X,\mu)$ with $p\in[1,\infty)$, $R_t$ is given by
\begin{align*}
R_0&=I,\\
R_t\varphi(x)&=\int_X\varphi(y)\, \mathcal{N}_{e^{tA}x, Q_t}(dy),\ \ t>0,\ x\in X,
\end{align*}
where $A:D(A)\subseteq X\to X$ is the infinitesimal generator of a strongly continuous semigroup $\{e^{tA}\}_{t\geq0}$,
\begin{equation*}
Q_t=\int_0^t e^{sA}Qe^{sA^\star}ds, \qquad t\geq0
\end{equation*}
and $Q\in\mathscr{L}(X)$ is a self-adjoint non-negative operator.
In \cite{cho_gol_1996} the authors prove that, for every $t\geq 0$, 
\begin{equation}\label{rappresentazione_rt}
R_tf=\Gamma[(Q_\infty^{-\frac{1}{2}}e^{tA}Q_\infty^{\frac{1}{2}})^\star]f,\ \ f\in L^p(X,\mu),\ p\in[1,\infty),
\end{equation}
where, for every contraction $T\in\mathscr L(X)$, $\Gamma(T)$ is the so-called \textit{second quantization operator} (associated to $T$) and $\mu$ is the centered non-degenerate Gaussian measure on $X$ with covariance operator $Q_\infty$. Under suitable assumptions on the operator $T$, the authors prove compactness properties, Hilbert-Schmidt properties, smoothing properties in $L^p(X,\mu)$ and $W^{k,p}(X,\mu)$  and through \eqref{rappresentazione_rt} they deduce analogous result for $R_t$. Thanks to \eqref{rappresentazione_rt} in \cite{cho_gol_1996} the authors also prove a hypercontractivity result for $R_t$ that was known for $\Gamma(T)$ (see e.g. \cite{gli_1968,nel_1966,nel_1973,sim_1974,sim_hoe_1972}). In \cite[Section 2]{cho_2001} the results of \cite{cho_gol_1996} were generalized to the degenerate case.

The operator $\Gamma(T)$ was introduced between the 60ies and the 70ies %in quantum field theory 
(see e.g. \cite{coo_1953,gli_1968,gli_jaf_1970,nel_1966, nel_1973,seg_1970,seg_1971,sim_1973,sim_1974,sim_hoe_1972}). We underline that in these works the operator $\Gamma(T)$ appears without a proper name and it was introduced studying topics in quantum field theory (see e.g. \cite[Definition page 31 \& Example 3 page 32]{sim_1974}). However, the second quantization of the physics is not given by $\Gamma(T)$ as observed in \cite[page 26]{sim_1974}. More recently, $\Gamma(T)$ appears with the name of \textit{Fock space operator} in \cite{jan_1997} and with the name of \textit{second quantization operator} in \cite{cho_gol_1996}. This last choice has become particularly common in the mathematical literature and $\Gamma(T)$ is universally known as the second quantization operator. In last twenty years the second quantization operator has been widely used to produce further results on Ornstein-Uhlenbeck semigroups (see e.g. \cite{app_1995,app_van_2014,app_van_2015,bog_1998,cho_gol_2001, fey_del_1994, gol_van_2003, van_1998, van_2005}) and generalized Mehler semigroups (see e.g. \cite{app_2015, pes_2011}). For a modern approach to quantum stochastic calculus we refer to \cite{par_1992}.

In this paper we provide a representation formula for $P_{s,t}$ which generalize \eqref{rappresentazione_rt} to prove compactness properties, Hilbert-Schmidt properties and hypercontractivity of $P_{s,t}$ and to study the asymptotic behaviour of $P_{s,t}f$, $s<t$ and $f\in L^p(X,\gamma_t)$. Let $\mu,\nu$ be two centered Gaussian measures on $X$ with Cameron-Martin spaces $H_\mu$ and $H_\nu$, respectively. Given $T\in\mathscr{L}(H_\mu,H_\nu)$, we construct an operator $\Gamma_{\nu,\mu}(T):L^p(X,\mu)\rightarrow L^p(X,\nu)$  which generalizes the second quantization operator $\Gamma(T)$.  Differently from \cite{sim_1974}, we consider $T\in\mathscr L(H_\mu,H_\nu)$ and not necessarily belonging to $\mathscr L(X)$ in order to cover also the degenerate case. As shown in Remark \ref{rmk:eq_operatore_sec_quant_choj}, these two approaches are equivalent in the non-degenerate case, but considering $T\in\mathscr L(H_\mu,H_\nu)$ seems to fit better to the degenerate setting since it allows us to develop a simpler theory than \cite[Section 2]{cho_2001}. Moreover, such an approach does not need any structure on the underlying probability spaces $(X,\mu)$ and $(X,\nu)$ as in the study of Gaussian Hilbert spaces.

The paper is organized as follows.

In Section \ref{notations} we fix the notation and we recall basic result on the tensor product of Hilbert spaces and on the Wiener Chaos decomposition. In particular, we show that, for every Gaussian measure $\gamma$ on $X$, there is an isomorphism between the $n$-th Wiener chaos $\mathscr H_n^\gamma$ and the symmetric tensor space $H_\gamma^{\odot n}$.

In Section \ref{second_serie} we study a representation formula for the second quantization operator via a series analogous to \cite[Chapter 4]{jan_1997}. To be more precise, to study the Ornstein-Uhlenbeck evolution operator we need a second quantization operator of the type $\widetilde \Gamma_{\nu,\mu}(T):L^2(X,\mu)\to L^2(X,\nu)$. Hence, we expand the theory developed in \cite[Chapter 4]{jan_1997} and we give a complete treatment exploiting the properties of the generalized Hermite polynomials and avoiding Wick products and Feynman diagrams. More precisely, we consider two centered Gaussian measures $\mu,\nu$ on $X$ with Cameron Martin spaces $H_\mu$ and $H_\nu$, respectively.  Given a contraction $T\in\mathscr L(H_\mu,H_\nu)$, we define the second quantization operator as the linear operator $\widetilde \Gamma_{\nu,\mu}(T):L^2(X,\mu)\to L^2(X,\nu)$ such that
\begin{align}
\label{operatore_tilde_gamma_intro}
\widetilde \Gamma_{\nu,\mu}(T):=\sum_{n=0}^\infty \widetilde \Gamma_{\nu,\mu,n}(T),    
\end{align}
where $\widetilde \Gamma_{\nu,\mu,n}(T)$, $n\in\N$, are suitable operators defined on the $n$-th Wiener Chaos (see Definition \ref{def:op_gamma_tilden}). 

In Section \ref{second_integrale} we introduce an integral formula similar to that in \cite{fey_del_1994}. More precisely,  given two centered Gaussian measures $\mu,\nu$ on $X$ with Cameron-Martin spaces $H_\mu$ and $H_\nu$, respectively, and a contraction $T\in\mathscr L(H_\mu,H_\nu)$,  we set
\begin{align}
\label{gammamunuintro}
(\Gamma_{\nu,\mu}(T)f)(x)
= & \int_Xf\left[(T^\star )_\infty^{\mu,\nu}x+\left((I-T^\star T)^{\frac{1}{2}}\right)_\infty^{\mu}y\right]\,\mu(dy),\ \ \ \  \nu\textrm{-a.e.}\  x\in X,
\end{align}
where $(T^\star )_\infty^{\mu,\nu}$ and $\left((I-T^\star T)^{\frac{1}{2}}\right)_\infty^{\mu}$ are suitable extensions of $T^\star$ and $(I-T^\star T)^{\frac12}$, respectively, to the whole $X$ as measurable linear operators (see Proposition \ref{prop:op_L_infty2}, Lemma \ref{lemma:prop_L_infty3} and Appendix \ref{mis_op_app}) and $f$ is a $\mu$-measurable function with at most exponential growth.  
We prove that $\widetilde \Gamma_{\nu,\mu}(T)=\Gamma_{\nu,\mu}(T)$ for every contraction $T\in\mathscr L(H_\mu,H_\nu)$ and exploiting \eqref{gammamunuintro} we extend the second quantization operator as a bounded linear operator from $L^p(X,\mu)$ into $L^p(X,\nu)$ for every $p\in[1,\infty)$.
We also show that, under additional assumptions on $T$, $\Gamma_{\nu,\mu}(T)$ is hypercontractive
%(see Proposition \ref{prop:hyp_contr_gen2}) 
and fulfills some compactness and Hilbert-Schmidt properties. 
%of $\Gamma_{\nu,\mu}$ (see Propositions \ref {compattezza} and \ref{Hilbert-Schmidt}). 
We notice that the representation of $\Gamma(T)$ via an integral-type formula was given for the first time in infinite dimension in \cite{fey_del_1994} in the context of locally convex spaces. In \cite{bog_1998} the same formula was given in separable Banach spaces while in \cite{cho_2001,cho_gol_1996} in a Hilbert setting. First results in finite dimension were proven in \cite{doo_1942,hil_1926,sim_hoe_1972}.

In Section \ref{risultati_OU} we prove %Proposition \ref{tesoretto} 
that, for every $s<t$, $U(t,s)_{|\mathcal H_{s}}$ belongs to $\mathscr{L}(\mathcal H_s,\mathcal H_t)$, where for every $t\in\R$, we denote by $\mathcal H_t$ the Cameron-Martin space of the Gaussian measure $\gamma_t$. Moreover, $\norm{U(t,s)}_{\mathscr{L}(\mathcal H_s,\mathcal H_t)}\leq 1$ and for all $p\in[1,\infty)$
%in Proposition \ref{prop:rapp_OU_second_q}  
we obtain
\begin{equation}\label{formulafinale}
P_{s,t}f=\Gamma_{\gamma_s,\gamma_t}((U(t,s)_{|\mathcal H_s})^\star)f,\ \ f\in L^p(X,\gamma_t), \ s<t.
\end{equation}
Thanks to \eqref{formulafinale} we deduce for $P_{s,t}$ the results of the previous section.
In particular, if we have $\|U(t,s)_{|\mathcal H_s}\|_{\mathscr L(\mathcal H_s,\mathcal H_t)}<1$, then $P_{s,t}$ is hypercontractive, namely for every $s<t$, $p>1$ and $q\leq q_0:= (p-1)\norm{U(t,s)}^{-2}_{\mathscr{L}(\mathcal H_s,\mathcal H_t)}+1$ we have
\begin{equation*}
\norm{P_{s,t} \varphi}_{L^q(X,\, \gamma_s)}\leq \norm{\varphi}_{L^p(X,\, \gamma_t)},\quad \varphi\in L^p(X,\, \gamma_t).
\end{equation*}
A similar result was proven in \cite[Theorem 6]{big_def_2024}. However, we stress that our assumptions are more general, as shown in Theorem \ref{prop_bignamini} and in Example \ref{ex_diag}.
Moreover, differently from \cite[Theorem 6]{big_def_2024}, we are able to prove that our choice of $q_0$ is optimal. For results in the autonomous case see e.g. \cite{ang_big_fer_2023,dap_zab_2002,gro_1975,lun_pal_2020}.

In Corollary \ref{compattezzapst} and in Corollary \ref{Hilbert-Schmidtpst} we provide necessary and sufficient conditions under which $\{P_{s,t}\}_{s\leq t}$ is compactness and of Hilbert-Schmidt type, respectively.
In Proposition \ref{comportamento_asintotico} we show that the asymptotic behavior of $P_{s,t}f$ depends on $\norm{U(t,s)_{|\mathcal H_s}}_{\mathscr L(\mathcal H_s,\mathcal H_t)}$. More precisely, for every $p\in(1,\infty)$ there exists a positive constant $c_p$ such that  
\begin{align*}
\|P_{s,t}f-m_t(f)\|_{L^p(X,\gamma_s)}\leq c_p\|U(t,s)_{|\mathcal H_s}\|_{\mathcal L(\mathcal H_s,\mathcal H_t)}\|f\|_{L^p(X,\gamma_t)}, \qquad  f\in L^p(X,\gamma_t),\ s<t,   
\end{align*}
where, for every $t\in\R$ and $f\in L^p(X,\gamma_t)$, $m_t(f)$ is the mean of $f$ with respect to the measure $\gamma_t$.
In Proposition \ref{prop_bignamini} a sufficient condition for $P_{s,t}f$ to decay exponentially to $m_t(f)$ is given. 

In Section \ref{Examples} we present different examples of $\{P_{s,t}\}_{s\leq t}$ satisfying our assumptions. In the first one, for every $t\in\R$ the operator $A(t)$ is the realization of a second-order elliptic differential operator in $X=L^2(\mathcal{O})$ either with Dirichlet, Neumann or Robin boundary conditions and sufficiently smooth coefficients. Here, $\mathcal{O}$ is a bounded open subset of $\R^d$ with smooth boundary, $\{U(t,s)\}_{s\leq t}$ is the evolution operator associated to $\{A(t)\}_{t\in\R}$ according to  \cite{acq_1988,acq_ter_1987}, and $B(t):=(-A(t))^{-\gamma}$ with a suitable choice of $\gamma\geq 0$. $\{P_{s,t}\}_{s\leq t}$ turns out to be the transition evolution operator associated to the time inhomogeneous Markov process that is the unique mild solution of the non-autonomous stochastic heat equation
\begin{equation}\label{SPDE}
\begin{cases}
dZ(t)=A(t)Z(t)dt+(-A(t))^{-\gamma}dW(t),\\
Z(s)=x\in X,
\end{cases}
\end{equation}
where $\{W(t)\}_{t\in\R}$ is a $X$-cylindrical Wiener process. We refer to \cite{dap_ian_tub_1982,seid_1993,seid_2003,tub_1982,ver_zim_2008} for a study of SPDEs of the type \eqref{SPDE}.
Finally, we show that our abstract results apply to a non-autonomous version of the Ornstein-Uhlenbeck operator in the Malliavin setting and in case of diagonal operators.

\section{Notation and preliminary results}\label{notations}
If $(E, \norm{\, \cdot\, }_E)$ and $(F, \norm{\, \cdot\, }_F)$ are real Banach spaces we denote by $\mathscr{L}(E;F)$ the space of bounded linear operators from $E$ to $F$. If $F=E$ then we write $\mathscr L(E)$, while if $F=\R$ then we write $E^\star$ instead of $\mathscr{L}(E;F)$, respectively. 
By $B_b(E;F)$ and $C_b(E;F)$ we denote the space of bounded Borel functions from $E$ to $F$ and the space of bounded and continuous functions from $E$ to $F$, respectively. We endow them with the sup norm $$\norm{G}_\infty=\sup_{x\in E}\norm{G(x)}_F.$$  
If $F=\R$, then we simply write $B_b(E)$ and $C_b(E)$ instead of $B_b(E;\R)$ and $C_b(E;\R)$, respectively.

Let $(X,\norm{\, \cdot\, }_X,\ps{\cdot}{\cdot}_X)$ $(Y,\norm{\, \cdot\, }_Y,\ps{\cdot}{\cdot}_Y)$ be two real separable Hilbert spaces. We denote by $I_X$ the identity operator on $X$ and given a subspace $V$ of $X$ we denote by $I_V$ the identity operator on $V$. If no confusion occurs, we do not consider the subscript and simply write $I$.

 We say that $Q\in\mathscr{L}(X)$ is non-negative (respectively negative, non-positive, positive) if for every $x\in X\setminus\{0\}$
\[
\langle Qx,x\rangle_X\geq 0\ (<0,\ \geq,\ >0).
\]
We say that $Q\in\mathscr L(X;Y)$ is a trace class operator if
\begin{align}\label{trace_defn}
{\rm Tr}[Q]:=\sum_{n=1}^{\infty}\langle \abs{Q}e_n,e_n\rangle_X<\infty,\quad \abs{Q}:=\sqrt{Q^\star Q},
\end{align}
for some (and hence for all) orthonormal basis $\{e_n:n\in\N\}$ of $X$. We recall that the trace of an operator, defined in \eqref{trace_defn}, is independent of the choice of the orthonormal basis. We denote by $\mathscr{L}_1(X)$ the space of the trace class operators. It is complete with respect to the norm
\[
\norm{Q}_{\mathscr{L}_1(X)}:={\rm Tr}\abs{Q},\ \ \  Q\in\mathscr L_1(X).
\]

We say that $T\in\mathscr L(X;Y)$ is a Hilbert-Schmidt operator if there exists a Hilbert basis $\{e_k:k\in\N\}$ of $ X $ such that
\begin{equation}\label{serieHS}
\sum_{k=1}^{\infty}\norm{Te_k}_{Y}^2<\infty.
\end{equation}
It can be proven easily that the series in \eqref{serieHS} does not depend on the choice of the Hilbert basis of $X$. We denote by $\mathscr{L}_2(X;Y)$ the space of  the Hilbert-Schmidt operators from $ X $ to $Y$, if $Y= X $ we set $\mathscr{L}_2(X;Y)=\mathscr{L}_2( X )$.
$\mathscr{L}_2( X;Y)$ is a Hilbert space if endowed with the norm
\begin{equation}\label{normaHS}
\norm{T}_{\mathscr{L}_2( X;Y)}:=\sqrt{\sum_{k=1}^{\infty}\norm{Te_k}_{Y}^2},
\end{equation} 
for every $T\in\mathscr{L}_2( X;Y)$ and for every Hilbert basis $\{e_k:k\in\N\}$ of $X$.
\eqref{normaHS} comes from the inner product 
\begin{equation}\label{innerHS}
\ps{T}{S}_{\mathscr{L}_2( X;Y)}:=\sum_{k=1}^{\infty}\ps{Te_k}{Se_k}_{Y},
\end{equation} 
where for every $S,T\in \mathscr{L}_2(X;Y)$, the series on the right hand side of \eqref{innerHS} does not depend on the choice of the Hilbert basis $\{e_k:k\in\N\}$ of $X$.

$Q\in\mathscr L(X;Y)$ is a trace class operator if and only if the operator $\sqrt{\abs{Q}}$ is a Hilbert-Schmidt operator and
\[
{\rm Tr}\abs{Q}=\norm{\sqrt{\abs{Q}}}^2_{\mathscr{L}_2( X )}.
\]

Let $R\in\mathscr{L}(X)$ be a self-adjoint operator. We denote by ${\rm Ker}(R)$ the kernel of $R$ and by $({\rm Ker}(R))^{\bot}$ its orthogonal complement in $X$. 
We set by $H_R:=R(X)$ the range of the operator $R$ and we recall that $({\rm ker} R)^{\bot}=\overline{R(X)}$. In order to provide $H_R$ with a Hilbert structure, we recall that the restriction $R_{|_{({\rm ker} R)^{\bot}}}$ is an injective operator, and so
\[
R_{|_{({\rm ker} R)^{\bot}}}:({\rm ker} R)^{\bot}\subseteq X\rightarrow H_R
\] 
is bijective. We call pseudo-inverse of $R$ the linear bounded operator $R^{-1}:H_R\rightarrow X$ where for all $y\in H_R$, $R^{-1}y$ is the unique $x\in ({\rm ker} R)^{\bot}$ such that $Rx=y$, see \cite[Appendix C]{LI-RO1}. We introduce the scalar product 
\begin{equation*}
\ps{x}{y}_{H_R}:=\langle R^{-1}x,R^{-1}y\rangle_X,\quad x,y\in H_R,
\end{equation*}
and its associated norm $\norm{x}_{H_R}:=\|R^{-1}x\|_X$. With this inner product, $H_R$ is a separable Hilbert space and a Borel subset of $X$ (see \cite[Theorem 15.1]{kec_2012}). A possible orthonormal basis of $H_R$ is given by $\{Re_k\}_{k\in\N}$, where $\{e_k: k\in\N\}$ is any orthonormal basis of $({\rm ker} R)^{\bot}$. Denoting by $P$ the orthogonal projection on ${\rm ker} R$, we recall that 
\begin{align*}
RR^{-1} &=I_{H_R},\qquad R^{-1}R=I-P.
%\label{orietta1}
\end{align*}
Notice that, for every $x\in H_R$,
\begin{align}
\label{norma_forte_R}
\norm{x}_X=\|RR^{-1}x\|_X\leq \norm{R}_{\mathscr{L}(X)}\|R^{-1}x\|_{X}\leq \norm{R}_{\mathscr{L}(X)}\norm{x}_{H_R},
\end{align}
which means that $H_R\subseteq X$ with embedding constant $c_R\leq\|R\|_{\mathscr L(X)}$.

The following proposition will be useful in the sequel.

\begin{proposition}\label{pseudo} Let $(X, \norm{\,\cdot\,}_{X},\ix{\cdot}{\cdot})$, $(X_1, \norm{\,\cdot\,}_{X_1},\ps{\cdot}{\cdot}_{X_1})$ and $(X_2, \norm{\,\cdot\,}_{X_2},\ps{\cdot}{\cdot}_{X_2})$ be real separable Hilbert spaces and let $L_1\in\mathscr L(X_1,X)$ and $L_2\in\mathscr L(X_2,X)$. The following statements hold.
\begin{enumerate}[\rm (i)]
\item ${\rm Range}(L_1)\subseteq{\rm Range}(L_2)$ if and only if there exists a constant $C>0$ such that 
\begin{equation}\label{cstar}
\norm{L_1^\star x}_{X_1}\leq C\norm{L_2^\star x}_{X_2},\ \ x\in X.
\end{equation}
In this case $\norm{L^{-1}_2L_1}_{\mathscr{L}(X_1,X_2)}\leq C$, where
\begin{equation*}
\norm{L^{-1}_2L_1}_{\mathscr{L}(X_1,X_2)}=\inf\{C>0\ \textrm{such that \eqref{cstar} holds}\}.
\end{equation*}
\item If $\norm{L_1^\star x}_{X_1}=\norm{L_2^\star x}_{X_2}$ for every $x\in X$, then ${\rm Range}(L_1)={\rm Range}(L_2)$ and
$\norm{L^{-1}_1 x}_{X_1}=\norm{L^{-1}_2 x}_{X_2}$ for every $x\in X$.
\end{enumerate}
\end{proposition}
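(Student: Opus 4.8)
The plan is to recognize this as a pseudo-inverse version of Douglas' range inclusion and factorization lemma, and to exploit the elementary identity $\norm{L_i^\star x}_{X_i}^2=\ix{L_iL_i^\star x}{x}$, so that the stated inequality \eqref{cstar} is nothing but the operator inequality $L_1L_1^\star\leq C^2L_2L_2^\star$ as quadratic forms on $X$. The whole statement will then follow from a single factorization result: \eqref{cstar} holds if and only if $L_1=L_2D$ for some $D\in\mathscr L(X_1,X_2)$, and the optimal such $D$ is precisely the pseudo-inverse composition $L_2^{-1}L_1$. Once this is in place, $\mathrm{Range}(L_1)=L_2D(X_1)\subseteq\mathrm{Range}(L_2)$ is immediate, and the norm bounds drop out of $\norm{D}\le C$.

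For the ``if'' direction of (i) I would construct $D$ by hand. Assuming \eqref{cstar}, the assignment $\Phi(L_2^\star x):=L_1^\star x$ is well defined on $\mathrm{Range}(L_2^\star)\subseteq X_2$, since $L_2^\star x=L_2^\star x'$ forces $\norm{L_1^\star(x-x')}_{X_1}\le C\norm{L_2^\star(x-x')}_{X_2}=0$; it is linear and bounded by $C$. I would extend $\Phi$ by continuity to $\overline{\mathrm{Range}(L_2^\star)}=(\mathrm{Ker}\,L_2)^\bot$ and set it to zero on $\mathrm{Ker}\,L_2$, obtaining $\Phi\in\mathscr L(X_2,X_1)$ with $\norm{\Phi}\le C$. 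Setting $D:=\Phi^\star$, a short adjoint computation gives $\ix{L_2Dw}{x}=\ps{w}{\Phi L_2^\star x}_{X_1}=\ps{w}{L_1^\star x}_{X_1}=\ix{L_1w}{x}$ for all $x\in X$, $w\in X_1$, hence $L_1=L_2D$ and the range inclusion. The one point that needs care is the identification $D=L_2^{-1}L_1$ in the precise pseudo-inverse sense of the paper: because $\Phi$ vanishes on $\mathrm{Ker}\,L_2$, one has $\mathrm{Range}(D)=\mathrm{Range}(\Phi^\star)\subseteq(\mathrm{Ker}\,\Phi)^\bot\subseteq(\mathrm{Ker}\,L_2)^\bot$, so $Dw$ is the unique element of $(\mathrm{Ker}\,L_2)^\bot$ mapped by $L_2$ to $L_1w$, which is exactly $L_2^{-1}L_1w$; this yields $\norm{L_2^{-1}L_1}_{\mathscr L(X_1,X_2)}\le C$.

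For the ``only if'' direction I would define $D:=L_2^{-1}L_1:X_1\to X_2$ directly (legitimate since $\mathrm{Range}(L_1)\subseteq\mathrm{Range}(L_2)=H_{L_2}$, the domain of $L_2^{-1}$) and prove its boundedness by the closed graph theorem: if $w_n\to w$ in $X_1$ and $Dw_n\to z$ in $X_2$, then $L_2Dw_n=L_1w_n\to L_1w$ forces $L_2z=L_1w$, while $z\in(\mathrm{Ker}\,L_2)^\bot$ because this subspace is closed, so $z=L_2^{-1}L_1w=Dw$. Passing to adjoints in $L_1=L_2D$ gives $\norm{L_1^\star x}_{X_1}=\norm{D^\star L_2^\star x}_{X_1}\le\norm{D}\,\norm{L_2^\star x}_{X_2}$, i.e.\ \eqref{cstar} with $C=\norm{D}=\norm{L_2^{-1}L_1}_{\mathscr L(X_1,X_2)}$. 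Combining the two directions yields the infimum characterization, since the ``if'' part shows $\norm{L_2^{-1}L_1}\le\inf\{C\}$ and the ``only if'' part produces an admissible $C$ equal to $\norm{L_2^{-1}L_1}$.

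Part (ii) follows by symmetry. Applying (i) in both directions with $C=1$ gives $\mathrm{Range}(L_1)=\mathrm{Range}(L_2)$ together with $\norm{L_2^{-1}L_1}_{\mathscr L(X_1,X_2)}\le1$ and $\norm{L_1^{-1}L_2}_{\mathscr L(X_2,X_1)}\le1$. For $x$ in the common range set $u:=L_1^{-1}x$ and $v:=L_2^{-1}x$; then $v=L_2^{-1}L_1u$ and $u=L_1^{-1}L_2v$, so that $\norm{v}_{X_2}\le\norm{u}_{X_1}$ and $\norm{u}_{X_1}\le\norm{v}_{X_2}$, whence $\norm{L_1^{-1}x}_{X_1}=\norm{L_2^{-1}x}_{X_2}$. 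I expect the main obstacle to be purely bookkeeping rather than conceptual: namely keeping the pseudo-inverse domains, the subspaces $(\mathrm{Ker}\,L_i)^\bot$, and the three distinct Hilbert spaces straight, so that the factor produced in the ``if'' direction genuinely coincides with the pseudo-inverse operator $L_2^{-1}L_1$ and the closed graph argument is applied in the correct topology.
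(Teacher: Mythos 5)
Your proof is correct, and it is worth noting that the paper itself gives no argument at all for this proposition: it simply cites \cite[Appendix B, Proposition B.1]{bog_1998}, i.e.\ the pseudo-inverse version of Douglas' range-inclusion/factorization lemma. Your write-up therefore supplies precisely the standard proof of the cited result, and all the steps check out. In the ``if'' direction, the map $\Phi(L_2^\star x):=L_1^\star x$ is well defined and bounded by $C$ thanks to \eqref{cstar}, the extension by continuity to $\overline{{\rm Range}(L_2^\star)}=({\rm Ker}\,L_2)^\bot$ and by zero on ${\rm Ker}\,L_2$ is legitimate, and the adjoint computation $\ix{L_2\Phi^\star w}{x}=\ps{w}{\Phi L_2^\star x}_{X_1}=\ix{L_1w}{x}$ gives the factorization $L_1=L_2\Phi^\star$; the observation that ${\rm Range}(\Phi^\star)\subseteq({\rm Ker}\,\Phi)^\bot\subseteq({\rm Ker}\,L_2)^\bot$ is exactly what is needed to identify $\Phi^\star$ with the pseudo-inverse composition $L_2^{-1}L_1$ in the sense the paper defines it (the unique preimage lying in $({\rm Ker}\,L_2)^\bot$), so the bound $\norm{L_2^{-1}L_1}_{\mathscr L(X_1,X_2)}\leq C$ follows. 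In the ``only if'' direction, the closed graph argument is sound: the limit $z$ of $Dw_n=L_2^{-1}L_1w_n$ stays in the closed subspace $({\rm Ker}\,L_2)^\bot$ and satisfies $L_2z=L_1w$, hence equals $Dw$; passing to adjoints in $L_1=L_2D$ produces an admissible constant $C=\norm{D}$, which combined with the first direction yields the infimum characterization. Part (ii) via the two applications of (i) with $C=1$ and the identities $v=L_2^{-1}L_1u$, $u=L_1^{-1}L_2v$ on the common range is also correct (and you rightly read ``for every $x\in X$'' as ``for every $x$ in the common range'', which is the only set where both pseudo-inverses are defined). The only cosmetic caveat is the degenerate case $L_1=0$, where the infimum of admissible constants is $0$ and is not attained; this does not affect the stated equality.
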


\begin{proof}
See \cite[Appendix B, Proposition B.1]{bog_1998}.
\end{proof}

Let $\mu$ be a Borel probability measure on $X$. We denote by $\widehat{\mu}$ its characteristic function defined by
\[
\widehat{\mu}(x):=\int_X e^{i\ix{x}{y}}\mu(dy), \qquad x\in X.
\]
Let $Q$ be a self-adjoint non-negative trace class operator and let $a\in X$. We denote by $\mathcal{N}_{a,Q}$ or $\mathcal{N}(a,Q)$ the Gaussian measure in $X$ with mean $a$ and covariance operator $Q$. If $a=0$, we say that the Gaussian measure is centered. We recall that
\begin{equation*}
\widehat{\mathcal{N}}_{a,Q}(x):=\int_X e^{i\ix{x}{y}}\mathcal{N}_{a,Q}(dy)=e^{i\ix{a}{x}-\frac{1}{2}\ix{Qx}{x}},\quad x\in X.
\end{equation*}

The following proposition summarizes the most relevant properties of the Cameron-Martin space of a Gaussian measure on a separable real Hilbert space.

\begin{proposition}
\label{teoria}
Let $\gamma=\mathcal{N}_{0,Q_\gamma}$ be a centered Gaussian measure with covariance operator $Q_\gamma$ and let $P_\gamma$ be the orthogonal projection on ${\rm ker}(Q_\gamma)$. Let $\{e_k:k\in\N\}$ be a Hilbert basis of $X$ consisting of eigenvectors of $Q_\gamma$, i.e. $Q_\gamma e_k=\lambda_ke_k$, $\lambda_k\geq 0$ for all $k\in\N$. If for every $x\in X$ we set $x_k:=\ix{x}{e_k}$, then for every $z\in X$ the series 
\begin{equation*}
\sum_{1\leq k\leq n:\, \lambda_k\neq 0}x_kz_k\lambda_k^{-\frac{1}{2}}
\end{equation*}
converges in $L^p(X,\gamma)$ for every $p\in[1,\infty)$. The reproducing kernel of $\gamma$ is the closed subspace of $L^2(X,\gamma)$ given by 
\begin{equation*}
X _\gamma ^\star :=\left\lbrace W^\gamma_{z}: X \rightarrow\R\; :\; W^\gamma_{z}(x)=\sum_{k\in\N:\,\lambda_k\neq 0}x_kz_k\lambda_k^{-\frac{1}{2}}\ \textrm{a.e. }x\in X\right\rbrace.
\end{equation*}
The Cameron-Martin space $H_\gamma$ of $\gamma$ coincides with the subspace $Q_\gamma^{\frac{1}{2}}(X)$ of $X$ as Hilbert space, namely
\begin{equation*}
H_\gamma:=\left\lbrace x\in X \; :\; \sum_{k\in\N:\,\lambda_k\neq 0}x_k^2\lambda_k^{-1}<\infty\right\rbrace
\end{equation*}
and $$\ps{h}{\ell}_{H_\gamma}=\ix{Q_\gamma^{-\frac{1}{2}} h}{Q_\gamma^{-\frac{1}{2}} \ell}=\sum_{k\in\N:\,\lambda_k\neq 0}h_k \ell_k\lambda_k^{-1},\ \ \ h,\ell\in Q_\gamma^{\frac{1}{2}}(X),$$
where $Q_\gamma^{-\frac{1}{2}}$ is the pseudo-inverse of $Q_\gamma^{\frac{1}{2}}$. The spaces $X^\star_\gamma$ and $H_{\gamma}$ are isomorphic by means of the isomorphism $W^\gamma_{z}\leftrightarrow Q_\gamma^{\frac{1}{2}}z$ and $$\|W^\gamma_{z}\|_{L^2(X,\gamma)}=\norm{Q_\gamma^{\frac{1}{2}}z}_{H_\gamma}, \qquad z\in X.
$$ 
The following statements hold.
\begin{enumerate}[\rm (i)]
%\item If $\gamma$ is non-degenerate then for every $\displaystyle{\alpha<\inf\{\frac{1}{2\lambda_k}\; :\, k\in\N}\}$, \eqref{ferni1} is verified.
\item $H_\gamma$ is the intersection of all measurable subspaces $L$ of $X$ such that $\gamma(L)=1$.
\item $\left\{\lambda^{\frac{1}{2}}_ke_k:\ k\in\N\ \mbox{and}\ \lambda_k\neq 0\right\}$ is an Hilbert basis of the Cameron-Martin space $H_\gamma=Q_\gamma^{\frac{1}{2}}(X)$.
\item The Cameron-Martin space $H_\gamma$ has finite dimension if and only if the set $\sigma_p(Q_\gamma)$ of the eigenvalues of $Q_\gamma$ is finite.
\item The Cameron-Martin space $H_\gamma$ is dense in $X$ if and only if $\lambda_k > 0$ for every $k\in\N$, i.e., $Q_\gamma$ is positive or, equivalently, $\gamma$ is a non-degenerate Gaussian measure on $X$.
\end{enumerate}
\end{proposition}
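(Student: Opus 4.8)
The plan is to reduce every assertion to the coordinate picture supplied by the eigenbasis $\{e_k\}$ of $Q_\gamma$, and then to apply the pseudo-inverse machinery recalled before the statement to $R=Q_\gamma^{\frac12}$.

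First I would record the decisive elementary fact that, under $\gamma$, the coordinates $x_k=\ix{x}{e_k}$ with $\lambda_k\neq0$ are independent centered real Gaussians with variance $\lambda_k$; this is read off from $\widehat\gamma(y)=\exp(-\frac12\ix{Q_\gamma y}{y})$ by testing against $y=\sum_k t_ke_k$. Hence the partial sums $W_{z,n}(x):=\sum_{k\le n,\,\lambda_k\neq0}x_kz_k\lambda_k^{-\frac12}$ are centered Gaussian with $\norm{W_{z,n}}_{L^2(X,\gamma)}^2=\sum_{k\le n,\,\lambda_k\neq0}z_k^2\le\norm{z}_X^2$, so they form a Cauchy sequence in $L^2(X,\gamma)$ and converge to a limit $W^\gamma_z$ with $\norm{W^\gamma_z}_{L^2(X,\gamma)}^2=\sum_{\lambda_k\neq0}z_k^2$. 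Since an $L^2$-limit of Gaussians is Gaussian and all $L^p$ norms of a centered Gaussian are mutually equivalent, the increment $W_{z,n}-W^\gamma_z$ is Gaussian and its $L^p$ norm is a fixed multiple of its $L^2$ norm; this promotes the convergence to $L^p(X,\gamma)$ for every $p\in[1,\infty)$. The map $z\mapsto W^\gamma_z$ is thus linear with kernel $\ker Q_\gamma$ and isometric from $(\ker Q_\gamma)^\perp$ onto its image, so it has closed range and $X_\gamma^\star$ is a closed subspace of $L^2(X,\gamma)$.

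Next I would apply the recalled pseudo-inverse theory with $R=Q_\gamma^{\frac12}$, which yields at once $H_\gamma=Q_\gamma^{\frac12}(X)$, the inner product $\ps{h}{\ell}_{H_\gamma}=\ix{Q_\gamma^{-\frac12}h}{Q_\gamma^{-\frac12}\ell}$, and the orthonormal basis $\{Q_\gamma^{\frac12}e_k:\lambda_k\neq0\}=\{\lambda_k^{\frac12}e_k:\lambda_k\neq0\}$, which is precisely (ii). Writing $h=Q_\gamma^{\frac12}w$ with $w\in(\ker Q_\gamma)^\perp$ gives $w_k=h_k\lambda_k^{-\frac12}$ and hence $\norm{h}_{H_\gamma}^2=\sum_{\lambda_k\neq0}h_k^2\lambda_k^{-1}$, the coordinate description of $H_\gamma$; comparing this with the value of $\norm{W^\gamma_z}_{L^2(X,\gamma)}^2$ found above shows that $W^\gamma_z\leftrightarrow Q_\gamma^{\frac12}z$ is an isometric isomorphism. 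Item (iii) is then immediate, since each nonzero eigenvalue of the trace class (hence compact) operator $Q_\gamma$ has finite multiplicity, so $\dim H_\gamma<\infty$ if and only if $Q_\gamma$ has finitely many nonzero eigenvalues, that is, if and only if $\sigma_p(Q_\gamma)$ is finite; and (iv) follows from $\overline{H_\gamma}^X=\overline{Q_\gamma^{\frac12}(X)}=(\ker Q_\gamma)^\perp$, which equals $X$ exactly when $\ker Q_\gamma=\{0\}$, i.e. when all $\lambda_k>0$.

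I expect the crux to be (i), the identification of $H_\gamma$ with the intersection of all measurable subspaces $L$ of full measure. For $H_\gamma\subseteq\bigcap L$ I would argue by contradiction with the Cameron-Martin theorem as the external input: if $\gamma(L)=1$ and $h\in H_\gamma\setminus L$, then the translate $\gamma(\cdot-h)$ is equivalent to $\gamma$, so $\gamma(L-h)=1$, while $L$ being a subspace with $h\notin L$ forces $L\cap(L-h)=\emptyset$, contradicting that two disjoint sets cannot both have measure one. For the reverse inclusion I would note first that $(\ker Q_\gamma)^\perp$ is itself a full-measure subspace (as $x_k=0$ $\gamma$-a.e. when $\lambda_k=0$), so the intersection is contained in it; then, for every nonnegative summable weight sequence $(c_k)$, the set $\{x:\sum_{\lambda_k\neq0}c_kx_k^2\lambda_k^{-1}<\infty\}$ is a measurable subspace of full measure, and intersecting over all such $(c_k)$ (using that a nonnegative series $\sum a_k$ diverges if and only if $\sum c_ka_k=\infty$ for some summable $(c_k)$) leaves exactly $\{x\in(\ker Q_\gamma)^\perp:\sum_{\lambda_k\neq0}x_k^2\lambda_k^{-1}<\infty\}=H_\gamma$. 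The delicate point throughout is the passage between the merely $\gamma$-a.e. defined objects and genuinely pointwise membership in $H_\gamma$, which is exactly where the Cameron-Martin equivalence does the real work.
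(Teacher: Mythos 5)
The paper itself offers no proof of Proposition \ref{teoria}: it is stated as recalled background on Gaussian measures (the type of material found in Bogachev's \emph{Gaussian Measures} or Da Prato--Zabczyk), so your proposal can only be measured against the standard arguments. Most of it is correct and is indeed the standard route: the coordinate description of $\gamma$, the $L^2$-Cauchy argument for $W_{z,n}$ upgraded to $L^p$ by equivalence of Gaussian moments, the identification of $H_\gamma$, its inner product and the basis in (ii) via the pseudo-inverse facts applied to $R=Q_\gamma^{1/2}$, the isometry $W^\gamma_z\leftrightarrow Q_\gamma^{1/2}z$, items (iii) and (iv), and the inclusion $H_\gamma\subseteq\bigcap L$ in (i) via the Cameron--Martin equivalence and the disjointness of $L$ and $L-h$.

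The reverse inclusion in (i), however, contains a genuine gap. The lemma you invoke --- ``a nonnegative series $\sum_k a_k$ diverges if and only if $\sum_k c_ka_k=\infty$ for some summable $(c_k)$'' --- is false: for $a_k=1$ the series diverges, yet $\sum_k c_ka_k=\sum_k c_k<\infty$ for \emph{every} summable $(c_k)$. This is not a removable blemish but kills the argument: take $x=\sum_k\lambda_k^{1/2}e_k$, which lies in $X$ because $Q_\gamma$ is trace class; if infinitely many $\lambda_k$ are nonzero, then $x\notin H_\gamma$ (its series is $\#\{k:\lambda_k\neq0\}=\infty$), yet $x$ belongs to every one of your sets $L_c=\{y:\sum_{\lambda_k\neq0}c_ky_k^2\lambda_k^{-1}<\infty\}$, so $\bigcap_c L_c\supsetneq H_\gamma$. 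Nor can you enlarge the family of weights: for independent nonnegative summands, $\sum_k c_ky_k^2\lambda_k^{-1}<\infty$ $\gamma$-a.s.\ forces $\sum_k c_k<\infty$, so quadratic criteria with full measure are intrinsically too coarse to cut out $H_\gamma$. The standard repair replaces quadratic criteria by \emph{convergence sets of linear series}. Given $x\in(\ker Q_\gamma)^\perp\setminus H_\gamma$, set $a_k=x_k^2/\lambda_k$ and $s_k=\sum_{j\leq k}a_j\to\infty$; by the Abel--Dini theorem, $t_k:=1/s_k$ satisfies $\sum_k t_ka_k=\infty$ while $\sum_k t_k^2a_k<\infty$. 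Then $L:=\{y\in X:\ \sum_{\lambda_k\neq0}t_kx_k\lambda_k^{-1}y_k\ \mbox{converges}\}$ is a Borel linear subspace; it has full measure by Kolmogorov's two-series theorem, since the independent terms $t_kx_k\lambda_k^{-1}y_k$ are centered with summable variances $t_k^2a_k$, and $x\notin L$ because at $y=x$ the terms equal $t_ka_k\geq0$ with divergent sum. This is exactly the ``delicate point'' your last sentence gestures at, and it is where the Cameron--Martin theorem alone does not suffice: one needs this probabilistic construction (cf.\ Bogachev, \emph{Gaussian Measures}, Theorem 2.4.7) to separate points of $X\setminus H_\gamma$ from $H_\gamma$ by full-measure subspaces.
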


\begin{proposition}
\label{prop:cameron-martin}
%    If $h=Q_\gamma^{\frac{1}{2}}z$ for some $z\in X$, then we set $\widehat h_{\gamma}:=f_z\in X^*_\gamma$, i.e.,
%\begin{equation}\label{cappucci}
%\widehat{h}_\gamma(x):=
%\sum_{\substack{k\in\N,\\ \lambda_k\neq0}}x_kz_k\lambda_k^{-\frac{1}{2}}
%= \sum_{\substack{k\in\N,\\ \lambda_k\neq0}}x_k\Bigl(Q_\gamma^{-\frac{1}{2}}h\Bigr)_k\lambda_k^{-\frac{1}{2}}=\sum_{\substack{k\in\N,\\ \lambda_k\neq0}}x_kh_k\,\lambda_k^{-1}, \qquad \textrm{a.e. }x\in X,
%\end{equation}
Let $\gamma=\mathcal{N}_{0,Q_\gamma}$ be a centered Gaussian measure with covariance operator $Q_\gamma$ and let $P_\gamma$ be the orthogonal projection on ${\rm ker}(Q_\gamma)$.
For every $h=Q^{\frac{1}{2}}_\gamma z\in Q_\gamma^{\frac{1}{2}}(X)$, the Cameron-Martin formula 
$$\mathcal{N}_{h,Q_\gamma}(dy)=\exp\biggl(-\frac{1}{2}\norm{h}^2_{H_{\gamma}}+W^\gamma_{z}(y)\biggr)\mathcal{N}_{0,Q_\gamma}(dy)$$ 
holds true. Moreover $W^\gamma_{z}$ is a Gaussian random variable with law $$\mathcal{N}\left(0,\norm{W^\gamma_{z}}^2_{L^2(X,\gamma)}\right)=\mathcal{N}(0,\norm{h}^2_{H_\gamma}).$$
Moreover, if $z\in H_\gamma$ then there exists $y\in X$ such that $W^\gamma_{z}(x)=\langle y,x\rangle_X $ for every $x\in X$.
\end{proposition}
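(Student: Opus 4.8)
The plan is to pass to the eigencoordinates of $Q_\gamma$ provided by Proposition \ref{teoria} and reduce every assertion to computations with independent one-dimensional Gaussians. Writing $x_k=\ix{x}{e_k}$, under $\gamma$ the coordinate functions $x\mapsto x_k$ are independent with $x_k\sim\mathcal N(0,\lambda_k)$, and $x_k=0$ $\gamma$-a.e.\ whenever $\lambda_k=0$. I would begin with the law of $W^\gamma_{z}$. The partial sums $S_n:=\sum_{1\le k\le n:\,\lambda_k\ne0}x_kz_k\lambda_k^{-1/2}$ are finite linear combinations of independent centered Gaussians, hence centered Gaussian with variance $\sum_{1\le k\le n:\,\lambda_k\ne0}z_k^2$; by Proposition \ref{teoria} they converge to $W^\gamma_{z}$ in $L^2(X,\gamma)$, and since the class of centered Gaussian variables is closed in $L^2$ with the variance passing to the limit, $W^\gamma_{z}$ is centered Gaussian with variance $\sum_{k:\,\lambda_k\ne0}z_k^2$. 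As $h_k=\lambda_k^{1/2}z_k$, this variance equals $\sum_{k:\,\lambda_k\ne0}h_k^2\lambda_k^{-1}=\norm{h}_{H_\gamma}^2=\norm{W^\gamma_{z}}_{L^2(X,\gamma)}^2$, giving the asserted law. In particular $e^{W^\gamma_{z}}\in L^1(X,\gamma)$ with $\int_Xe^{W^\gamma_{z}}\,d\gamma=e^{\frac12\norm{h}_{H_\gamma}^2}$.

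For the Cameron--Martin formula I would introduce the finite measure $\widetilde\gamma(dy):=e^{-\frac12\norm{h}^2_{H_\gamma}+W^\gamma_{z}(y)}\,\gamma(dy)$ and prove $\widetilde\gamma=\mathcal N_{h,Q_\gamma}$ by comparing characteristic functions, recalling that on a separable Hilbert space a Borel probability measure is determined by $\widehat{\,\cdot\,}$. For fixed $x$ the exponent $i\ix{x}{y}+W^\gamma_{z}(y)=\sum_{k:\,\lambda_k\ne0}(ix_k+z_k\lambda_k^{-1/2})y_k$ is a convergent sum of independent complex multiples of the $y_k$, so the integral factorizes over the eigencoordinates and
$$\widehat{\widetilde\gamma}(x)=e^{-\frac12\norm{h}^2_{H_\gamma}}\prod_{k:\,\lambda_k\ne0}e^{\frac12\lambda_k(ix_k+z_k\lambda_k^{-1/2})^2}.$$
Expanding the square, the constants $\tfrac12z_k^2$ cancel the prefactor, the real part yields $-\tfrac12\sum_k\lambda_kx_k^2=-\tfrac12\ix{Q_\gamma x}{x}$ and the imaginary part yields $i\sum_kx_k\lambda_k^{1/2}z_k=i\ix{x}{h}$; hence $\widehat{\widetilde\gamma}(x)=e^{i\ix{h}{x}-\frac12\ix{Q_\gamma x}{x}}=\widehat{\mathcal N}_{h,Q_\gamma}(x)$. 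Taking $x=0$ shows $\widetilde\gamma$ is a probability measure, so $\widetilde\gamma=\mathcal N_{h,Q_\gamma}$, which is the stated identity.

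For the last assertion, observe that $z\in H_\gamma=Q_\gamma^{1/2}(X)$ is, by the description of $H_\gamma$ in Proposition \ref{teoria}, exactly the condition $\sum_{k:\,\lambda_k\ne0}z_k^2\lambda_k^{-1}<\infty$, i.e.\ the coefficients $c_k:=z_k\lambda_k^{-1/2}$ are square-summable. Then $y:=\sum_{k:\,\lambda_k\ne0}c_ke_k$ is a well-defined element of $X$ (namely $y=Q_\gamma^{-1/2}z$ with the pseudo-inverse), and for every $x\in X$ the series $\sum_{k:\,\lambda_k\ne0}x_kc_k$ converges by Cauchy--Schwarz to $\ix{y}{x}$; thus $W^\gamma_{z}$ admits the everywhere-defined representative $\ix{y}{\cdot}$.

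I expect the characteristic-function step to be the main obstacle, since it requires justifying both the factorization of the integral over the eigencoordinates and the convergence of the resulting infinite product. I would make this rigorous by exploiting the $L^2(X,\gamma)$-convergence of $S_n$ together with the independence of the coordinates, so that $i\ix{x}{\cdot}+W^\gamma_{z}$ is a genuine complex Gaussian and the exponential-moment identity $\int_Xe^{\zeta}\,d\gamma=e^{\frac12\int_X\zeta^2\,d\gamma}$ applies; an equivalent and perhaps safer route is to first establish the formula for the finite-dimensional truncations of the coordinates, where the classical Cameron--Martin theorem is available, and then let the dimension tend to infinity.
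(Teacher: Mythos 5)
Your proposal is correct, but there is nothing in the paper to compare it against: Proposition \ref{prop:cameron-martin} is stated in the preliminaries as a recalled classical fact (the Cameron--Martin theorem together with the law of $W^\gamma_z$), and the paper gives no proof of it. What you have written is essentially the standard textbook argument that fills this in: diagonalize $Q_\gamma$, use independence of the eigencoordinates to identify the law of $W^\gamma_z$ as the $L^2$-limit of finite Gaussian sums, and then verify the density formula by matching characteristic functions, using that a Borel probability measure on a separable Hilbert space is determined by its Fourier transform. Your computations are consistent with the paper's conventions (in particular $h_k=\lambda_k^{1/2}z_k$, so the variance $\sum_{k:\lambda_k\neq0}z_k^2$ equals $\norm{h}^2_{H_\gamma}=\norm{W^\gamma_z}^2_{L^2(X,\gamma)}$, and the coordinates with $\lambda_k=0$ vanish $\gamma$-a.e., so they cause no trouble in the factorization). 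The one step you flag as delicate — justifying the factorized infinite product — is indeed fine by either of the routes you indicate: the identity $\int_X e^{\zeta}\,d\gamma=e^{\frac12\int_X\zeta^2\,d\gamma}$ is valid for $\zeta=i\ix{x}{\cdot}+W^\gamma_z$ (a complex linear combination of jointly Gaussian centered variables, by analytic continuation in the coefficient), or one can integrate the truncations $i\ix{x}{\cdot}+S_n$ exactly and pass to the limit, using that $e^{S_n}$ is bounded in $L^2(X,\gamma)$ (uniformly, since $\int e^{2S_n}d\gamma=e^{2\operatorname{Var}(S_n)}\leq e^{2\norm{h}^2_{H_\gamma}}$) and hence uniformly integrable. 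The final claim is also handled correctly: for $z\in H_\gamma$ the vector $y=Q_\gamma^{-1/2}z$ (pseudo-inverse) gives the everywhere-defined representative $\ix{y}{\cdot}$ of $W^\gamma_z$.
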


Here, we state some known results on Gaussian measures useful to perform change of variables. 

\begin{proposition} \label{corDecompositionHilbert}
Let $X$ be a separable Hilbert space endowed with a Gaussian measure $\gamma$, with mean $a$ and covariance operator $Q$. 
\begin{enumerate}[\rm (i)]
\item If $Y$ is a  separable Hilbert space endowed with a Gaussian measure $\mu$ with  mean $b$ and covariance operator $R$, then 
$\gamma\otimes\mu$ is a Gaussian measure on $X\times Y$ with mean $(a, b)$ and covariance operator $Q_{\gamma\otimes \mu}$ defined by 
\[
Q_{\gamma\otimes \mu}(f,g) =(Q(f),R(g)), \quad f\in X, \;g\in Y.
\]
\item If $\mu$ is another Gaussian measure on $X$ with mean $b$ and covariance operator $R$, then the convolution measure 
\index{convolution of measures} $\gamma\ast\mu$, defined as the image measure in $X$ of $\gamma\otimes\mu$ on $X\times X$ under the map $(x,y) \mapsto x+y$, is a Gaussian measure with mean  $a+ b$ and  covariance operator $Q + R$. 
\item If $Y$ is a  separable Hilbert space and $A\in {\mathscr L}(X;Y)$, $b\in Y$, setting $T(x) = Ax + b$ for $x\in X$, the measure $\mu = \gamma \circ T^{-1}$ is a Gaussian measure in $Y$ with mean $Aa + b$ and covariance operator $AQA^\star$. 
\end{enumerate}
\end{proposition}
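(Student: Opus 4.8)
The plan is to prove all three statements by the same device: compute the characteristic function of the candidate measure and match it, term by term, with the Gaussian form $\widehat{\mathcal N}_{a,Q}(x)=e^{i\ix{a}{x}-\frac12\ix{Qx}{x}}$ recalled above, then invoke the fact that a Borel probability measure on a separable Hilbert space is uniquely determined by its characteristic function. In each part I would first exhibit $\widehat{\,\cdot\,}$ of the constructed measure, then read off the mean as the coefficient of the linear term and the covariance as the operator in the quadratic term.

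For (i), I would use that the characteristic function of a product measure factorizes. On $X\times Y$ with inner product $\langle(x_1,y_1),(x_2,y_2)\rangle=\ix{x_1}{x_2}+\ps{y_1}{y_2}_Y$, Fubini's theorem gives $\widehat{\gamma\otimes\mu}(f,g)=\widehat\gamma(f)\,\widehat\mu(g)$. Substituting the two Gaussian characteristic functions and collecting the linear and quadratic terms, one recognizes exactly $\exp\!\big(i\langle(a,b),(f,g)\rangle-\tfrac12\langle Q_{\gamma\otimes\mu}(f,g),(f,g)\rangle\big)$ with $Q_{\gamma\otimes\mu}(f,g)=(Qf,Rg)$, since $\langle Q_{\gamma\otimes\mu}(f,g),(f,g)\rangle=\ix{Qf}{f}+\ps{Rg}{g}_Y$. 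A preliminary check is that $Q_{\gamma\otimes\mu}$ is self-adjoint, non-negative and trace class on $X\times Y$, which follows at once from the corresponding properties of $Q$ and $R$.

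For (ii), since $\gamma\ast\mu$ is by definition the image of $\gamma\otimes\mu$ under the addition map $(x,y)\mapsto x+y$, the change-of-variables formula for pushforwards yields $\widehat{\gamma\ast\mu}(x)=\int_{X\times X}e^{i\ix{x}{u+v}}(\gamma\otimes\mu)(du,dv)=\widehat\gamma(x)\,\widehat\mu(x)$. Multiplying the two Gaussian exponentials and adding the exponents gives $e^{i\ix{a+b}{x}-\frac12\ix{(Q+R)x}{x}}$, the characteristic function of $\mathcal N_{a+b,Q+R}$. For (iii), applying the change-of-variables formula to the affine map $T(x)=Ax+b$ gives $\widehat{\gamma\circ T^{-1}}(y)=\int_X e^{i\ps{y}{Ax+b}_Y}\gamma(dx)=e^{i\ps{y}{b}_Y}\widehat\gamma(A^\star y)$; inserting the Gaussian form of $\widehat\gamma$ and using the adjoint identities $\ix{a}{A^\star y}=\ps{Aa}{y}_Y$ and $\ix{QA^\star y}{A^\star y}=\ps{AQA^\star y}{y}_Y$ turns this into $e^{i\ps{Aa+b}{y}_Y-\frac12\ps{AQA^\star y}{y}_Y}$, i.e. the characteristic function of $\mathcal N_{Aa+b,AQA^\star}$. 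Here $AQA^\star$ is self-adjoint, non-negative, and trace class because the trace class operators form a two-sided ideal in $\mathscr L(X)$.

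None of the three computations is deep; the substantive ingredient, used in every part, is the injectivity of the characteristic-function map on Borel probability measures over a separable Hilbert space, the analogue of the classical uniqueness theorem for Fourier transforms. The only points requiring a little care are the applications of Fubini's theorem and of the change-of-variables formula, all justified since the integrands are bounded and the relevant maps continuous, hence Borel, and the verification that the candidate covariance operators are genuinely admissible, so that the matched characteristic functions really correspond to Gaussian measures on the respective spaces. I expect these to be the only, and entirely routine, obstacles.
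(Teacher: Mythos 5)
Your proof is correct. The paper states this proposition without proof, as a recollection of standard facts on Gaussian measures, and your characteristic-function argument is precisely the classical one it implicitly relies on: in all three parts you reduce to the formula $\widehat{\mathcal N}_{a,Q}(x)=e^{i\ix{a}{x}-\frac{1}{2}\ix{Qx}{x}}$, which the paper itself recalls, and then use the injectivity of the characteristic-function map on Borel probability measures over a separable Hilbert space. Your side checks (trace-class property of $Q_{\gamma\otimes\mu}$ and of $AQA^\star$ via the ideal property of $\mathscr L_1(X)$, and the adjoint identities in part (iii)) are exactly the points that need verification, so nothing is missing.
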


\begin{defn}\label{def_e_gamma}Let $\gamma=\mathcal{N}_{0,Q_\gamma}$ be a centered Gaussian measure on $X$ and $H_\gamma$ be the Cameron-Martin space of $\gamma$. 
We denote by $\mathcal E_\gamma$ the set of functions 
\begin{equation*}
x\in X\mapsto E^\gamma_h(x):=e^{W^\gamma_h(x)-\frac12\|h\|_{X}^2},\ h\in H_\gamma.
\end{equation*}
\end{defn}

In the following, we will need of the next lemma. We recall that a similar result (with $h$ varying in $X$ instead of in $Q_{\gamma}^{\frac{1}{2}}(X)$) have been obtained in \cite[Proposition  1.2.5] {dap_zab_2002}. 

% {\color{blue}\begin{lemma}\label{lem:density_exp_funct}
% Let $\gamma=\mathcal{N}_{0,Q_\gamma}$ be a centered Gaussian measure on $X$ and $H_\gamma$ be the Cameron-Martin space of $\gamma$.  Then $\mathcal E_\gamma$ is dense in $L^2(X,\gamma)$.
% \end{lemma}}

\begin{lemma}
\label{lem:density_exp_funct}
Let $\gamma=\mathcal{N}_{0,Q_\gamma}$ be a centered Gaussian measure on $X$ and let $H_\gamma$ be its Cameron-Martin space. Then the set of the functions $$\mathcal{D_\gamma}=\left\{e^{W_h^\gamma}:h\in H_\gamma\right\}$$ is linearly dense in $L^2(X,\gamma)$. 
%In particular even $\mathcal E_\gamma$ is linearly dense in $L^2(X,\gamma)$.
\end{lemma}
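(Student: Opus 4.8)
The plan is to exploit that $L^2(X,\gamma)$ is a Hilbert space, so the linear density of $\mathcal{D}_\gamma$ is equivalent to the triviality of its orthogonal complement. First note $\mathcal{D}_\gamma\subseteq L^2(X,\gamma)$: by Proposition \ref{teoria} each $W^\gamma_h$, $h\in H_\gamma$, is a centered Gaussian random variable with $\norm{W^\gamma_h}_{L^2(X,\gamma)}=\norm{h}_X$, whence $\int_X e^{2W^\gamma_h}\,d\gamma=e^{2\norm{h}_X^2}<\infty$. I would therefore fix $f\in L^2(X,\gamma)$ with
\[
\int_X f(x)\,e^{W^\gamma_h(x)}\,\gamma(dx)=0\qquad\text{for every }h\in H_\gamma,
\]
and show that $f=0$ $\gamma$-a.e.

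The core of the argument is an analytic continuation upgrading orthogonality to the real exponentials into orthogonality to the imaginary ones. For a fixed $h\in H_\gamma$ I would set $\Phi(t):=\int_X f\,e^{tW^\gamma_h}\,d\gamma$ for $t\in\mathbb{C}$. Cauchy--Schwarz and the Gaussianity of $W^\gamma_h$ give the bound $\abs{\Phi(t)}\leq\norm{f}_{L^2(X,\gamma)}\,e^{(\mathrm{Re}\,t)^2\norm{h}_X^2}$, so $\Phi$ is finite on all of $\mathbb{C}$, and a routine application of Morera's theorem together with Fubini (the integrand being dominated, uniformly for $t$ in a compact set, by an $L^1(\gamma)$ function) shows that $\Phi$ is entire. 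Since $tW^\gamma_h=W^\gamma_{th}$ with $th\in H_\gamma$ for every real $t$, the standing hypothesis forces $\Phi(t)=0$ for all $t\in\R$, hence $\Phi\equiv 0$ by the identity theorem. In particular $\int_X f\,e^{iW^\gamma_h}\,d\gamma=0$ for every $h\in H_\gamma$.

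It then remains to recognise these relations as the vanishing of a Fourier transform. By Proposition \ref{prop:cameron-martin}, for each $h\in H_\gamma$ there is $y_h\in X$, namely $y_h=Q_\gamma^{-\frac12}h$, with $W^\gamma_h(x)=\ix{y_h}{x}$ for $\gamma$-a.e. $x$; moreover, as $h$ ranges over $H_\gamma=Q_\gamma^{\frac12}(X)$, the vector $y_h$ ranges over the whole of $(\ker Q_\gamma)^\perp$. Since $\gamma$ is concentrated on $(\ker Q_\gamma)^\perp$, one has $\ix{y}{x}=0$ for $\gamma$-a.e. $x$ whenever $y\in\ker Q_\gamma$; decomposing an arbitrary $y\in X$ along $X=(\ker Q_\gamma)^\perp\oplus\ker Q_\gamma$ I would deduce
\[
\int_X f(x)\,e^{i\ix{y}{x}}\,\gamma(dx)=0\qquad\text{for every }y\in X.
\]
This says that the characteristic function of the finite real measure $f\,\gamma$ vanishes identically, so by the uniqueness theorem for Fourier transforms of measures on a separable Hilbert space $f\,\gamma=0$, i.e. $f=0$ $\gamma$-a.e. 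This gives the asserted density.

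The genuinely delicate point is the analytic-continuation step, which converts the a priori weak information ``$f\perp e^{W^\gamma_h}$'' into full Fourier information; the growth estimate on $\Phi$ and the justification of its holomorphy are the places to be careful. Handling the possibly degenerate case ($\ker Q_\gamma\neq\{0\}$) also requires the remark that $\gamma$ lives on $(\ker Q_\gamma)^\perp$, so that $\{W^\gamma_h:h\in H_\gamma\}$ still exhausts all continuous linear functionals up to $\gamma$-null sets. The concluding Fourier-uniqueness step is classical and may simply be quoted.
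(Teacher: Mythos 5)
Your proof is correct, and it takes a genuinely different route from the paper's. The paper does not argue by duality: it quotes \cite[Lemma 1.1.2]{nual_2006} to get linear density of $\mathcal D_\gamma$ in $L^2(\overline{H_\gamma},\gamma_{P_\gamma}^\perp)$ (the isonormal-process formulation on the support of $\gamma$), and then spends the rest of the proof transporting this to $L^2(X,\gamma)$ in the possibly degenerate case: it shows that the image measure $\gamma\circ P_\gamma^{-1}$ is $\delta_0$, that one may replace $f$ by $f\circ(I-P_\gamma)$, and it concludes via the density of the cylindrical functions $\mathscr FC_b(X)$. You instead run the classical self-contained argument: take $f\perp\mathcal D_\gamma$, use the entire function $\Phi(t)=\int_X f\,e^{tW^\gamma_h}\,d\gamma$ (growth bound, Morera/Fubini, identity theorem) to upgrade orthogonality to real exponentials into $\int_X f\,e^{iW^\gamma_h}\,d\gamma=0$, identify $W^\gamma_h=\ix{Q_\gamma^{-\frac12}h}{\cdot}$ via Proposition \ref{prop:cameron-martin} so that these functionals exhaust $(\ker Q_\gamma)^\perp$, and finish with Fourier uniqueness for finite measures on a separable Hilbert space, the degenerate directions being absorbed by the fact that $\gamma$ is concentrated on $(\ker Q_\gamma)^\perp$. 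In effect you re-prove, directly on $X$, the lemma the paper cites; what this buys is a proof that needs no external density result and treats the degenerate and non-degenerate cases uniformly, at the price of justifying the holomorphy of $\Phi$ and invoking infinite-dimensional Fourier uniqueness (both standard, and your justifications — the bound $\abs{\Phi(t)}\leq\norm{f}_{L^2(X,\gamma)}e^{(\mathrm{Re}\,t)^2\norm{h}_X^2}$, the identification of the range of $h\mapsto Q_\gamma^{-\frac12}h$, and the a.e.\ vanishing of $\ix{y}{\cdot}$ for $y\in\ker Q_\gamma$ — are all accurate). The paper's route, conversely, outsources the analytic core to the cited lemma but must then do the factorization bookkeeping that your argument avoids.
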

\begin{proof}
We recall that $\mathcal{D_\gamma}$ is linearly dense in $L^2(\overline {H_\gamma}, \gamma_{P_\gamma}^{\perp})$, where $P_\gamma$ is the orthogonal projection on ${\rm Ker}(Q_\gamma)$ and $\gamma_{P_\gamma}^\perp:=\gamma \circ (I-P_\gamma)^{-1}$ on $\overline {H_\gamma}$. This follows from \cite[Lemma 1.1.2]{nual_2006} with $(\Omega,\mathcal F,\mathbb P)=(\overline {H_\gamma},\mathcal B(\overline {H_\gamma}),\gamma)$, $H=\overline {H_\gamma}$ and $W(z)=W^\gamma_z$ for every $h\in \overline {H_\gamma}$.
%where for $h\in \overline {H_\gamma}\smallsetminus H_\gamma$ $Q_\gamma^{-\frac12}h(\cdot)$ is defined as in \cite[Section 1.2.4]{dap_zab_2002}. 
Since $H_\gamma$ is dense in $\overline{H_\gamma}$, it follows that for every $z\in \overline{H_\gamma}$ there exists a sequence $(z_n)_{n\in\N}\subseteq H_\gamma$ which converges to $z$ in $X$ as $n\to\infty$. By the dominated convergence theorem, it follows that the sequence $(e^{W_{z_n}})_{n\in\N}$ converges to $e^{W_{z}}$ in $L^2(X,\gamma)$ as $n\to\infty$. Hence, the statement is true if $Q_\gamma$ is injective.

In the general case, we consider an orthonormal basis $\{e_n:n\in\N\}$ of $X$ consisting of eigenvectors of $Q_\gamma$. %and we recall that the space $\mathscr FC_b(X)$ is dense in $L^2(X,\gamma)$. 
Let $f\in\mathscr FC_{b}(X)$. Hence, for every $\varepsilon>0$ there exists $g\in {\rm span}\{\mathcal{D_\gamma}\}$ such that $\|f\circ (I-{P_\gamma})-g\|_{L^2(\overline {H_\gamma},\gamma_{P_\gamma}^{\perp})}\leq \varepsilon$. If we denote by $\gamma_{P_\gamma}$ the measure $\gamma\circ {P_\gamma}^{-1}$ on ${P_\gamma}(X)$, then  we get
\begin{align*}
\widehat{\gamma_{P_\gamma}}(v)
= \int_{{P_\gamma}(X)}e^{i\langle x,v\rangle_X}\gamma_{P_\gamma}(dx)
= \int_{X}e^{i\langle {P_\gamma} x,v\rangle_X}\gamma(dx)
= \exp\Big(-\frac12\langle Q_\gamma {P_\gamma}v,{P_\gamma}v\rangle_X\Big)=\widehat\delta_0(v)
\end{align*}
for every $v\in P_\gamma(X)$, which implies that $\gamma_{P_\gamma}=\delta_0$.
Therefore, we get
\begin{align*}
\int_X|f-g|^2d\gamma
= & \int_{\overline{H_\gamma}}\gamma_{P_\gamma}^\perp(dy)\int_{{P_\gamma}(X)}|f({P_\gamma}x+(I-{P_\gamma})y)-g({P_\gamma}x+(I-{P_\gamma})y)|^2\gamma_{P_\gamma}(dx) \\
= & \int_{\overline{H_\gamma}}|f((I-{P_\gamma})y)-g((I-{P_\gamma})y)|^2\gamma_{P_\gamma}^\perp(dy).
\end{align*}
We stress that, for every $z\in \overline{H_\gamma}$,
\begin{align*}
e^{W^\gamma_z((I-{P_\gamma})y)}=e^{W^\gamma_z(y)}, \qquad y\in X,   \end{align*}
%since $Q_\gamma^{-\frac12}h\in \overline{H_\gamma}=(I-{P_\gamma})(X)$ for every $h\in H_\gamma$. 
Hence, $g((I-{P_\gamma})y)=g(y)$ for every $y\in X$ and
\begin{align*}
\|f-g\|_{L^2(X,\gamma)}
=\|f\circ(I-{P_\gamma})-g\|_{L^2(\overline{H_\gamma},\gamma_{P_\gamma}^\perp)}\leq \varepsilon.
\end{align*}
The thesis follows from the density of $\mathscr FC_b(X)$ into $L^2(X,\gamma)$.
\end{proof}

\subsection{Wiener Chaos Decomposition and Tensor Product Spaces}

\subsubsection{Wiener Chaos Decomposition}
For every $n\in\N\cup\{0\}$, we denote by $\phi_n$ the $n$-th Hermite polynomial, given by
\begin{align}
\label{def_pol_herm_1}
\phi_n(\xi)=\frac{(-1)^n}{n!}e^{\frac{\xi^2}{2}}\frac{d^n}{d\xi^n}e^{-\frac{\xi^2}{2}}, \qquad \xi\in\R, \ n\in\N\cup\{0\}. 
\end{align}
Let $\gamma$ be a centered Gaussian measure on $X$ with covariance operator $Q_\gamma$ and let $\{e^\gamma_n:n\in\N\}\subseteq H_\gamma$ be an orthonormal basis of $H_\gamma$. From Proposition \ref{teoria}(ii), without loss of generality, hereafter we assume that $e_j^\gamma$ is an eigenvector of $Q_\gamma$ for every $j\in\N$. We set $\Lambda\subseteq(\N\cup\{0\})^{\N}$ the set of multi-indices with only a finite number of components different from $0$ and, for every $\alpha\in\Lambda$ and $j\in\N$, $\alpha_j$ denotes the $j$-th component of $\alpha$. For every multiindex $\alpha\in\Lambda$, 
we set
\begin{align}
\label{pol_herm_gen}
\Phi_\alpha^\gamma(x):=\sqrt{\alpha!}\prod_{j=1}^\infty \phi_{\alpha_j}\Bigl(W^\gamma_{f_j}(x)\Bigr), \quad x\in X, \ \alpha=(\alpha_1,\alpha_2,\ldots,)\in\Lambda,  
\end{align}
where $f_j:=Q_\gamma^{-\frac 12}e^\gamma_j$ for every $j\in\N$ and ${ \alpha!}:=\prod_{j=1}^\infty\alpha_j!$. We also set $|\alpha|:=\sum_{j=1}^\infty\alpha_j$. It is well-known that, if we set
\begin{align*}
\mathscr H_n^\gamma:=\overline{{\rm span}\{\Phi_\alpha:\alpha\in\Lambda, \ |\alpha|=n\}}, \qquad n\in\N\cup\{0\}, 
\end{align*}
then $\mathscr H_n^\gamma$ is the $n$-th Wiener Chaos, $\mathscr H_n^\gamma\perp\mathscr H_m^\gamma$ if $n\neq m$ and
\begin{align}
\label{wiener_chaos_dec}
L^2(X,\gamma)=\bigoplus_{n\in\N\cup\{0\}}\mathscr H_n^\gamma.    
\end{align}
Decomposition \eqref{wiener_chaos_dec} is known as {\it Wiener Chaos Decomposition} of $L^2(X,\gamma)$. For every $n\in\N$ we denote by $I_n^\gamma$ the projection from $L^2(X,\gamma)$ onto $\mathscr H_n^\gamma$, that is,
\begin{align*}
f=\sum_{n=0}^\infty I_n^\gamma(f), \qquad I_n^\gamma(f)=\sum_{\alpha\in\Lambda, |\alpha|=n}\Big(\int_Xf\Phi_\alpha^\gamma d\gamma\Big) \Phi_\alpha^\gamma, \qquad f\in L^2(X,\gamma).   
\end{align*}
For every $n\in\N$, we denote by $\Lambda_n$ the subset of $\Lambda$ which consists of those elements $\alpha\in\Lambda$ with $|\alpha|=n$.

\subsubsection{Product tensor spaces}
Let $(V_1,\norm{\,\cdot\,}_{V_1},\ps{\cdot}{\cdot}_{V_1})$ and $(V_2,\norm{\,\cdot\,}_{V_2},\ps{\cdot}{\cdot}_{V_2})$ be two real separable Hilbert spaces. We follow the approach of \cite[Appendix E]{jan_1997} to introduce the tensor product space $V_1\otimes V_2$. 
Given $v_1\in V_1$ and $v_2\in V_2$, let $v_1\otimes v_2:V_1\times V_2\to\R$ be the so called conjugate bilinear form given by $$(v_1\otimes v_2)(z_1,z_2)=\ps{v_1}{z_1}_{V_1}\ps{v_2}{z_2}_{V_2},\ \ (z_1,z_2)\in V_1\times V_2.$$
We denote by $\mathcal{V}$ the set of all real linear combinations of such conjugate bilinear forms and we define an inner product $\left[\cdot,\cdot\right]_{V_1\otimes V_2}$ by $$\left[v_1\otimes v_2,w_1\otimes w_2\right]_{V_1\otimes V_2}=\ps{v_1}{w_1}_{V_1}\ps{v_2}{w_2}_{V_2},\ \ v_1,w_1\in V_1,\ v_2,w_2\in V_2.$$
We extend $[\cdot,\cdot]_{V_1\otimes V_2}$ to  $\mathcal{V}$ by linearity. In \cite[Appendix E, Definition E.7]{jan_1997}, it is shown that $\left[\cdot,\cdot\right]_{V_1\otimes V_2}$ is well-defined and positive definite.
 
\begin{defn} The tensor product of the Hilbert spaces $V_1$ and $V_2$ is the completion of $\mathcal{V}$ under the inner product $[\cdot,\cdot]_{V_1\otimes V_2}$ defined above. The tensor product of $V_1$ and $V_2$ is denoted by $V_1\otimes V_2$.
\end{defn}

\begin{proposition}
\label{prop:hilbert_basis_V1V2}
If $\{v_n:n\in\N\}$ and $\{w_n: n\in\N\}$ are Hilbert bases of $V_1$ and $V_2$, respectively, then $\{v_n\otimes w_m:n,m\in\N\}$ is a Hilbert basis of $V_1\otimes V_2$.
\end{proposition}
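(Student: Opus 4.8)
The plan is to verify the two defining properties of a Hilbert basis for the candidate orthonormal system $\{v_n\otimes w_m : n,m\in\N\}$: orthonormality and completeness (totality of the span). The orthonormality is immediate from the very definition of the inner product on $V_1\otimes V_2$: for all $n,k,m,\ell\in\N$ one has
\[
[v_n\otimes w_m, v_k\otimes w_\ell]_{V_1\otimes V_2}=\ps{v_n}{v_k}_{V_1}\ps{w_m}{w_\ell}_{V_2}=\delta_{nk}\,\delta_{m\ell},
\]
since $\{v_n:n\in\N\}$ and $\{w_n:n\in\N\}$ are themselves orthonormal. Thus the only real work lies in proving completeness.

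Since $V_1\otimes V_2$ is by definition the completion of the space $\mathcal V$ of finite linear combinations of elementary tensors, it suffices to show that each elementary tensor $v_1\otimes v_2$ (with $v_1\in V_1$, $v_2\in V_2$) belongs to the closed linear span $\mathcal S:=\overline{\mathrm{span}}\{v_n\otimes w_m:n,m\in\N\}$. Indeed, once this is established, $\mathcal S\supseteq\mathcal V$, and since $\mathcal S$ is closed we get $\mathcal S\supseteq\overline{\mathcal V}=V_1\otimes V_2$, forcing $\mathcal S=V_1\otimes V_2$; together with orthonormality this is precisely the assertion that $\{v_n\otimes w_m\}$ is a Hilbert basis.

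To show $v_1\otimes v_2\in\mathcal S$, I would expand $v_1=\sum_n\ps{v_1}{v_n}_{V_1}v_n$ and $v_2=\sum_m\ps{v_2}{w_m}_{V_2}w_m$ in the respective bases, introduce the partial sums $v_1^N:=\sum_{n\leq N}\ps{v_1}{v_n}_{V_1}v_n$ and $v_2^N:=\sum_{m\leq N}\ps{v_2}{w_m}_{V_2}w_m$, and use bilinearity of the tensor map to write $v_1^N\otimes v_2^N=\sum_{n,m\leq N}\ps{v_1}{v_n}_{V_1}\ps{v_2}{w_m}_{V_2}\,v_n\otimes w_m\in\mathrm{span}\{v_n\otimes w_m\}$. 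It then remains to verify that $v_1^N\otimes v_2^N\to v_1\otimes v_2$ in $V_1\otimes V_2$. The key technical input, read off directly from the inner product, is the multiplicativity of the norm on elementary tensors, $\norm{a\otimes b}_{V_1\otimes V_2}=\norm{a}_{V_1}\norm{b}_{V_2}$. Combining this with the telescoping identity
\[
v_1\otimes v_2-v_1^N\otimes v_2^N=(v_1-v_1^N)\otimes v_2+v_1^N\otimes(v_2-v_2^N)
\]
and the triangle inequality yields
\[
\norm{v_1\otimes v_2-v_1^N\otimes v_2^N}_{V_1\otimes V_2}\leq\norm{v_1-v_1^N}_{V_1}\norm{v_2}_{V_2}+\norm{v_1^N}_{V_1}\norm{v_2-v_2^N}_{V_2}.
\]
Since $\norm{v_1-v_1^N}_{V_1}\to0$ and $\norm{v_2-v_2^N}_{V_2}\to0$ by convergence of the basis expansions, while $\norm{v_1^N}_{V_1}\leq\norm{v_1}_{V_1}$ stays bounded by Bessel's inequality, the right-hand side tends to $0$, which proves the claim.

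The only point requiring genuine care — the main obstacle, such as it is — is that $v_1\otimes v_2$ is a \emph{double}-indexed series in the $v_n\otimes w_m$, so convergence of the partial sums cannot be taken for granted termwise; the cross-term decomposition above is precisely the device that reduces the two-dimensional limit to the two one-dimensional basis expansions and makes the bound transparent. Everything else is a direct and routine consequence of the definition of the tensor inner product.
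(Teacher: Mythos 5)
Your proof is correct. Note, however, that the paper does not actually prove this proposition: its ``proof'' is a citation to \cite[Appendix E, Example E.9]{jan_1997}, so there is no argument in the paper to compare against step by step. Your write-up supplies, in a self-contained way, exactly the standard argument that the cited reference contains: orthonormality of $\{v_n\otimes w_m\}$ is immediate from the defining formula for the inner product on elementary tensors, and completeness reduces to showing that each elementary tensor $v_1\otimes v_2$ lies in the closed span, since $\mathcal V$ is by construction dense in the completion $V_1\otimes V_2$. The two ingredients you isolate — multiplicativity of the norm on elementary tensors, $\norm{a\otimes b}_{V_1\otimes V_2}=\norm{a}_{V_1}\norm{b}_{V_2}$, and the cross-term decomposition $v_1\otimes v_2-v_1^N\otimes v_2^N=(v_1-v_1^N)\otimes v_2+v_1^N\otimes(v_2-v_2^N)$ together with Bessel's bound $\norm{v_1^N}_{V_1}\leq\norm{v_1}_{V_1}$ — are precisely what is needed to control the double-indexed limit, and your handling of that point is careful and complete. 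In short: correct, and a genuine improvement in self-containedness over the paper's bare citation.
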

\begin{proof}
See \cite[Appendix E, Example E.9]{jan_1997}.
\end{proof}

Let $\{v_n:n\in\N\}$ and $\{w_n:n\in\N\}$ be Hilbert bases of $V_1$ and $V_2$, respectively. From Proposition \ref{prop:hilbert_basis_V1V2}, any element $y$ of $V_1\otimes V_2$ can be written in a unique way as
\begin{align*}
y=\sum_{i,j=1}^\infty c_{ij}v_i\otimes w_j, \qquad c_{ij}\in\R, \  i,j\in\N, \quad \|v\|_{\mathscr L(V_1\otimes V_2)}^2:=\sum_{i,j=1}^{\infty} c_{ij}^2<\infty.   \end{align*}

Let $\widetilde V_i$, $i=1,2$, be other two separable Hilbert spaces and let $T_i\in\mathscr L(V_i,\widetilde V_i)$, $i=1,2$. We define the operator $T_1\otimes T_2:V_1\otimes V_2\to \widetilde V_1\otimes \widetilde V_2$ as
\begin{align*}
(T_1\otimes T_2)y=\sum_{i,j=1}^\infty c_{ij}(T_1v_i)\otimes (T_2w_j), \qquad y\in V_1\otimes V_2.    
\end{align*}
\begin{proposition}
\label{prop:tens_op_1_1}
$T_1\otimes T_2$ is well-defined, it belongs to $\mathscr L(V_1\otimes V_2,\widetilde V_1\otimes \widetilde V_2)$ and satisfies 
\begin{align*}
\|T_1\otimes T_2\|_{\mathscr L(V_1\otimes V_2,\widetilde V_1\otimes \widetilde V_2)}=\|T_1\|_{\mathscr L(V_1,\widetilde V_1)}\|T_2\|_{\mathscr L(V_2,\widetilde V_2)}.
\end{align*}
\end{proposition}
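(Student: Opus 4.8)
The plan is to prove the identity in two inequalities, after first settling boundedness and well-definedness; the cleanest route exploits the standard isometric identification of the Hilbert tensor product with a space of Hilbert--Schmidt operators. In the real case one checks that the map sending the elementary tensor $v_1\otimes v_2$ to the rank-one operator $u\mapsto\ps{v_2}{u}_{V_2}\,v_1$ extends to an isometry of $V_1\otimes V_2$ onto $\mathscr L_2(V_2,V_1)$, the latter carrying the Hilbert--Schmidt norm \eqref{normaHS}. Under this identification the elementary-tensor prescription $(T_1\otimes T_2)(v_1\otimes v_2)=T_1v_1\otimes T_2v_2$ transforms into $S\mapsto T_1\,S\,T_2^\star$ for $S\in\mathscr L_2(V_2,V_1)$, as a one-line computation on rank-one operators shows. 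I would open the proof by recording this identification, since it both motivates the definition and reduces every claim of the proposition to a standard property of Hilbert--Schmidt operators.

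For boundedness and the upper bound I would argue in coordinates. Fix Hilbert bases $\{v_i\}$, $\{w_j\}$ of $V_1,V_2$ and $\{\tilde v_k\}$, $\{\tilde w_l\}$ of $\widetilde V_1,\widetilde V_2$, and write $y=\sum_{i,j}c_{ij}\,v_i\otimes w_j$ with $[y,y]_{V_1\otimes V_2}=\sum_{i,j}c_{ij}^2$. Expanding $T_1v_i=\sum_k a_{ki}\tilde v_k$ and $T_2w_j=\sum_l b_{lj}\tilde w_l$, where $a_{ki}=\ps{T_1v_i}{\tilde v_k}_{\widetilde V_1}$ and $b_{lj}=\ps{T_2w_j}{\tilde w_l}_{\widetilde V_2}$, the coefficient of $\tilde v_k\otimes\tilde w_l$ in $(T_1\otimes T_2)y$ is $d_{kl}=\sum_{i,j}a_{ki}c_{ij}b_{lj}$, i.e.\ the matrix $D=ACB^\top$. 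The key estimate is the submultiplicativity of the operator norm against the Hilbert--Schmidt norm, $\|ACB^\top\|_{\mathscr L_2}\le\|A\|_{\mathrm{op}}\,\|C\|_{\mathscr L_2}\,\|B^\top\|_{\mathrm{op}}$, together with the elementary fact that the matrix of a bounded operator in orthonormal bases has operator norm equal to that of the operator, so that $\|A\|_{\mathrm{op}}=\|T_1\|_{\mathscr L(V_1,\widetilde V_1)}$ and $\|B^\top\|_{\mathrm{op}}=\|B\|_{\mathrm{op}}=\|T_2\|_{\mathscr L(V_2,\widetilde V_2)}$. This yields at once $[(T_1\otimes T_2)y,(T_1\otimes T_2)y]\le\|T_1\|^2\|T_2\|^2\,[y,y]$, which is exactly the Hilbert--Schmidt statement $\|T_1ST_2^\star\|_{\mathscr L_2}\le\|T_1\|\,\|S\|_{\mathscr L_2}\,\|T_2\|$ read through the identification above.

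Once the bound holds on the dense subspace $\mathcal V$ of finite combinations of elementary tensors, well-definedness follows in the usual way: applying the estimate to the tails shows the defining series converges in $\widetilde V_1\otimes\widetilde V_2$, and since $T_1\otimes T_2$ agrees on $\mathcal V$ with the bounded linear extension of the bilinear map $(u_1,u_2)\mapsto T_1u_1\otimes T_2u_2$ --- which is manifestly independent of the chosen bases --- the operator is well defined and extends uniquely to an element of $\mathscr L(V_1\otimes V_2,\widetilde V_1\otimes\widetilde V_2)$ with $\|T_1\otimes T_2\|\le\|T_1\|\,\|T_2\|$. For the reverse inequality I would test on elementary tensors: for unit vectors $v_1\in V_1$, $v_2\in V_2$ one has $\|(T_1\otimes T_2)(v_1\otimes v_2)\|_{\widetilde V_1\otimes\widetilde V_2}=\|T_1v_1\otimes T_2v_2\|=\|T_1v_1\|_{\widetilde V_1}\|T_2v_2\|_{\widetilde V_2}$, and taking the supremum over such $v_1,v_2$ gives $\|T_1\otimes T_2\|\ge\|T_1\|\,\|T_2\|$.

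The main obstacle is the upper bound rather than the algebraically trivial lower bound: one must justify the submultiplicative inequality $\|ACB^\top\|_{\mathscr L_2}\le\|A\|_{\mathrm{op}}\|C\|_{\mathscr L_2}\|B\|_{\mathrm{op}}$ for the (generally infinite) matrices involved and handle the convergence of the resulting double series with care. Equivalently, the real work is in setting up the Hilbert--Schmidt identification rigorously --- checking that it is a well-defined isometry onto $\mathscr L_2(V_2,V_1)$ and that $T_1\otimes T_2$ corresponds to $S\mapsto T_1ST_2^\star$ --- after which every assertion of the proposition is a short consequence of standard Hilbert--Schmidt estimates.
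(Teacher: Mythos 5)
Your proof is correct, but note that the paper itself does not prove this proposition at all: its ``proof'' is a citation to \cite[Proposition E.20, Appendix E]{jan_1997}, so there is no in-paper argument to compare against line by line. Your route --- identifying $V_1\otimes V_2$ isometrically with $\mathscr L_2(V_2,V_1)$, under which $T_1\otimes T_2$ becomes $S\mapsto T_1ST_2^\star$, and then invoking the submultiplicative bound $\|T_1ST_2^\star\|_{\mathscr L_2}\le\|T_1\|_{\mathrm{op}}\|S\|_{\mathscr L_2}\|T_2\|_{\mathrm{op}}$ --- is sound: the coordinate identity $D=ACB^\top$ is right, the operator norms of the matrices $A$, $B$ in orthonormal bases do equal $\|T_1\|$, $\|T_2\|$, the density/tail argument gives well-definedness and basis-independence, and the lower bound via elementary tensors is immediate since $\|u_1\otimes u_2\|=\|u_1\|_{V_1}\|u_2\|_{V_2}$. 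The more common textbook route (essentially Janson's) instead factors $T_1\otimes T_2=(T_1\otimes I)(I\otimes T_2)$ and reduces everything to the single estimate $\|(T\otimes I)y\|\le\|T\|\,\|y\|$, proved by writing $y=\sum_j u_j\otimes w_j$ with $\{w_j\}$ orthonormal so that $\|(T\otimes I)y\|^2=\sum_j\|Tu_j\|^2$; that argument is more elementary (no Hilbert--Schmidt machinery) and iterates painlessly to the $n$-fold case, which the paper needs in Corollary \ref{coro:op_tensor_spaces}, whereas your identification buys a one-line upper bound once the isometry with $\mathscr L_2(V_2,V_1)$ is set up and also makes the well-definedness of the map on $\mathcal V$ automatic. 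Either way the statement is established; just be aware that the genuine work in your version is concentrated in verifying the isometry and the submultiplicativity you appeal to, both of which are standard but should be proved or precisely cited if this were to stand as a self-contained proof.
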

\begin{proof}
See \cite[Prop.  E.20, Appendix E]{jan_1997}.
\end{proof}
In a similar way it is possible to define the tensor product $\displaystyle{\bigotimes_{i=1}^nV_i:=V_1\otimes....\otimes V_n}$ of $n$ real separable Hilbert spaces $V_1,...,V_n$. Given a separable Hilbert space $V$, we write $V^{\otimes n}$ instead of $\displaystyle{\bigotimes_{i=1}^nV}$. From Proposition \ref{prop:tens_op_1_1} we deduce the following result.
\begin{corollary}
\label{coro:op_tensor_spaces}
If $V, \widetilde V$ are real separable Hilbert spaces, $\{v_n:n\in\N\}$ is a Hilbert basis of $V$ and $T\in\mathscr L(V,\widetilde V)$, then for every $n\in\N$ the operator $T^{\otimes n}$, defined as
\begin{align*}
T^{\otimes n}y=\sum_{i_1,\ldots,i_n=1}^\infty c_{i_1\cdots i_n}(Tv_{i_1})\otimes\cdots \otimes (Tv_{i_n}),  \qquad y= \sum_{i_1,\ldots,i_n=1}^\infty c_{i_1\cdots i_n}v_{i_1}\otimes\cdots \otimes  v_{i_n}\in V^{\otimes n},
\end{align*}
belongs to $\mathscr L(V^{\otimes n},\widetilde V^{\otimes n})$ and satisfies $\|T^{\otimes n}\|_{\mathscr L(V^{\otimes n},\widetilde V^{\otimes n})}=\|T\|_{\mathscr L(V,\widetilde V)}^n$.
\end{corollary}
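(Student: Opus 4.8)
The plan is to argue by induction on $n$, using Proposition \ref{prop:tens_op_1_1} as the engine and the associativity of the Hilbert tensor product to reduce the $n$-fold tensor power to a single application of the bilinear case. First I would settle the base case $n=1$, where $T^{\otimes 1}=T$ and the statement is nothing but the hypothesis $T\in\mathscr L(V,\widetilde V)$, with $\|T^{\otimes 1}\|=\|T\|$ trivially.

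For the inductive step, assume the claim holds for some $n\in\N$, that is, $T^{\otimes n}\in\mathscr L(V^{\otimes n},\widetilde V^{\otimes n})$ with $\|T^{\otimes n}\|_{\mathscr L(V^{\otimes n},\widetilde V^{\otimes n})}=\|T\|_{\mathscr L(V,\widetilde V)}^n$. The key structural fact I would record is that the iterated tensor product is associative: the map sending the elementary tensor $v_{i_1}\otimes\cdots\otimes v_{i_{n+1}}$ to $(v_{i_1}\otimes\cdots\otimes v_{i_n})\otimes v_{i_{n+1}}$ extends to an isometric isomorphism $V^{\otimes(n+1)}\cong(V^{\otimes n})\otimes V$. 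This is immediate from Proposition \ref{prop:hilbert_basis_V1V2} applied to finitely many factors, since both sides admit $\{v_{i_1}\otimes\cdots\otimes v_{i_{n+1}}\}_{i_1,\ldots,i_{n+1}}$ as a Hilbert basis with identical inner products, and the same identification holds for the target spaces with $\widetilde V$ in place of $V$.

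Under these identifications I would observe that $T^{\otimes(n+1)}$ coincides with $T^{\otimes n}\otimes T$: both operators send $v_{i_1}\otimes\cdots\otimes v_{i_{n+1}}$ to $(Tv_{i_1})\otimes\cdots\otimes(Tv_{i_{n+1}})$, hence they agree on the dense algebraic tensor product and, being bounded, on all of $V^{\otimes(n+1)}$. Applying Proposition \ref{prop:tens_op_1_1} with $V_1=V^{\otimes n}$, $V_2=V$, $T_1=T^{\otimes n}$ and $T_2=T$ then yields at once that $T^{\otimes(n+1)}$ is bounded and that
\[
\|T^{\otimes(n+1)}\|_{\mathscr L(V^{\otimes(n+1)},\widetilde V^{\otimes(n+1)})}=\|T^{\otimes n}\|_{\mathscr L(V^{\otimes n},\widetilde V^{\otimes n})}\,\|T\|_{\mathscr L(V,\widetilde V)}=\|T\|_{\mathscr L(V,\widetilde V)}^{n+1},
\]
where the last equality uses the inductive hypothesis. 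This closes the induction.

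The main obstacle is not the norm computation, which is automatic once the reduction is in place, but the careful justification of the associativity isomorphism together with the operator identity $T^{\otimes(n+1)}=T^{\otimes n}\otimes T$: one has to make sure that the concrete, basis-dependent definition of $T^{\otimes(n+1)}$ is independent of the chosen Hilbert basis and matches the bilinear construction of Proposition \ref{prop:tens_op_1_1}. I would handle this by checking agreement on the dense subspace of finite linear combinations of elementary tensors and then invoking boundedness to extend the equality to the whole completion; basis-independence follows since two bounded operators agreeing on a dense subspace coincide.
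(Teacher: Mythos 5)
Your proof is correct and follows essentially the same route as the paper: induction on $n$ with Proposition \ref{prop:tens_op_1_1} applied to the factorization of the $n$-fold tensor power as a binary tensor product (the paper splits off the first factor, $V\otimes V^{\otimes(n-1)}$, while you split off the last, $(V^{\otimes n})\otimes V$ — an immaterial difference). Your explicit justification of the associativity identification and of the identity $T^{\otimes(n+1)}=T^{\otimes n}\otimes T$ via agreement on a dense subspace simply spells out a step the paper leaves implicit.
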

\begin{proof}
We prove the statement by induction.

If $n=2$ then it follows from Proposition \eqref{prop:tens_op_1_1} with $V_1=V_2=V$, $\widetilde V_1=\widetilde V_2=V$ and $T_1=T_2=T$. If it is true for $n-1\in \N$, $n\geq 3$, then it is true also for $n$ by applying Proposition \ref{prop:tens_op_1_1} with $V_1=V$, $V_2=V^{\otimes n-1}$, $\widetilde V_1=\widetilde V$, $\widetilde V_2=\widetilde V^{\otimes n-1}$, $T_1=T$ and $T_2=T^{\otimes n-1}$, and taking the inductive hypothesis into account.
\end{proof}

For every $n\in\N$, we denote by $\Sigma_n$ the symmetric group on $\{1,...,n\}$, namely the set of all bijective functions $\sigma:\{1,...,n\}\to\{1,...,n\}$ whose group operation is the standard composition of functions. We recall that the cardinality of $\Sigma_n$ is $n!$.
\begin{defn}
Let $V$ be a separable real Hilbert space and let $\{v_n:n\in\N\}$ be a Hilbert basis of $V$. For every $n\in\N$, we denote by $V^{\odot n}$ the symmetric tensor product space, namely the closed subspace of $V^{\otimes n}$ consisting of those elements $y\in V^{\otimes n}$ which satisfy
\begin{align}
\label{v_in_vodot_n_1}
y=\sum_{i_1,\ldots,i_n=1}^\infty c_{i_1\cdots i_n}v_{i_1}\otimes \cdots\otimes v_{i_n}, \quad c_{i_{\sigma(1)}i_{\sigma(2)}\cdots i_{\sigma(n)}}=c_{i_1i_2\cdots i_n},\ \ \sigma\in\Sigma_n. 
\end{align}
%for every permutation $\sigma$ of $\{1,\ldots,n\}$. 
\end{defn}

\begin{lemma}
Let $V,\widetilde V$ be two real separable Hilbert spaces. If $T\in\mathscr L(V,\widetilde V)$, then $T^{\otimes n}$ maps $V^{\odot n}$ into $\widetilde V^{\odot n}$.
\end{lemma}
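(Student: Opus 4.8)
The plan is to work directly with the coefficient description of $V^{\odot n}$ and $\widetilde V^{\odot n}$ from the definition above: I would compute the coefficients of $T^{\otimes n}y$ in a product Hilbert basis of $\widetilde V^{\otimes n}$ and check that they are symmetric whenever those of $y$ are. First I would keep the Hilbert basis $\{v_i:i\in\N\}$ of $V$ with respect to which $V^{\odot n}$ is defined and choose any Hilbert basis $\{\widetilde v_m:m\in\N\}$ of $\widetilde V$; iterating Proposition \ref{prop:hilbert_basis_V1V2}, the family $\{\widetilde v_{m_1}\otimes\cdots\otimes\widetilde v_{m_n}:m_1,\ldots,m_n\in\N\}$ is a Hilbert basis of $\widetilde V^{\otimes n}$. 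Setting $t_{mi}:=\langle Tv_i,\widetilde v_m\rangle_{\widetilde V}$ and taking a generic
\begin{align*}
y=\sum_{i_1,\ldots,i_n=1}^\infty c_{i_1\cdots i_n}v_{i_1}\otimes\cdots\otimes v_{i_n}\in V^{\odot n},\qquad c_{i_{\sigma(1)}\cdots i_{\sigma(n)}}=c_{i_1\cdots i_n}\ \ (\sigma\in\Sigma_n),
\end{align*}
I would use that $T^{\otimes n}\in\mathscr L(V^{\otimes n},\widetilde V^{\otimes n})$ by Corollary \ref{coro:op_tensor_spaces} together with the continuity of $[\,\cdot\,,\widetilde v_{m_1}\otimes\cdots\otimes\widetilde v_{m_n}]_{\widetilde V^{\otimes n}}$ to read off the $(m_1,\ldots,m_n)$-coefficient of $T^{\otimes n}y$ as
\begin{align*}
d_{m_1\cdots m_n}:=[T^{\otimes n}y,\widetilde v_{m_1}\otimes\cdots\otimes\widetilde v_{m_n}]_{\widetilde V^{\otimes n}}=\sum_{i_1,\ldots,i_n=1}^\infty c_{i_1\cdots i_n}\prod_{k=1}^n t_{m_k i_k}.
\end{align*}

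The heart of the argument is then to show that $(d_{m_1\cdots m_n})$ inherits the symmetry of $(c_{i_1\cdots i_n})$. Fixing $\sigma\in\Sigma_n$, I would start from
\begin{align*}
d_{m_{\sigma(1)}\cdots m_{\sigma(n)}}=\sum_{i_1,\ldots,i_n=1}^\infty c_{i_1\cdots i_n}\prod_{k=1}^n t_{m_{\sigma(k)}i_k}=\sum_{i_1,\ldots,i_n=1}^\infty c_{i_1\cdots i_n}\prod_{l=1}^n t_{m_l i_{\sigma^{-1}(l)}}
\end{align*}
and relabel the summation indices by $i'_l:=i_{\sigma^{-1}(l)}$, equivalently $i_k=i'_{\sigma(k)}$. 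This turns the product into $\prod_l t_{m_l i'_l}$ and gives $c_{i_1\cdots i_n}=c_{i'_{\sigma(1)}\cdots i'_{\sigma(n)}}=c_{i'_1\cdots i'_n}$, the last equality being exactly the symmetry of the coefficients of $y$, so that $d_{m_{\sigma(1)}\cdots m_{\sigma(n)}}=d_{m_1\cdots m_n}$ for every $\sigma$; by definition this means $T^{\otimes n}y\in\widetilde V^{\odot n}$. Conceptually this is nothing but the intertwining relation $T^{\otimes n}R_\sigma=R_\sigma T^{\otimes n}$ between $T^{\otimes n}$ and the coordinate-permutation operators $R_\sigma$ on the tensor powers, which is evident on simple tensors since $T$ acts factorwise and which carries the $R_\sigma$-invariant subspace $V^{\odot n}$ into the $R_\sigma$-invariant subspace $\widetilde V^{\odot n}$.

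I expect the only delicate point to be the legitimacy of the reindexing, since the series defining $d_{m_1\cdots m_n}$ need not converge absolutely. This is settled by performing the relabeling on the finite truncated sums over the symmetric index sets $\{(i_1,\ldots,i_n):\max_k i_k\leq N\}$, which are invariant under permutations of the coordinates, and only afterwards letting $N\to\infty$. Alongside this, one must track the direction of the permutation correctly in the passage from $\prod_k t_{m_{\sigma(k)}i_k}$ to $\prod_l t_{m_l i_{\sigma^{-1}(l)}}$; everything else is bookkeeping, the boundedness of $T^{\otimes n}$ and the continuity of the inner product doing all the analytic work.
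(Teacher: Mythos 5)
Your proof is correct and follows essentially the same route as the paper: expand $T^{\otimes n}y$ in a product Hilbert basis of $\widetilde V^{\otimes n}$, express the coefficients $d_{m_1\cdots m_n}$ as series $\sum_{i_1,\ldots,i_n}c_{i_1\cdots i_n}\prod_k\langle Tv_{i_k},\widetilde v_{m_k}\rangle_{\widetilde V}$, and transfer the symmetry of $(c_{i_1\cdots i_n})$ to $(d_{m_1\cdots m_n})$ by reindexing the sum with $\sigma$. Your extra care about the legitimacy of the reindexing (via symmetric truncations, or equivalently the unconditional convergence of the basis expansion) is a point the paper passes over silently, and your permutation bookkeeping with $\sigma^{-1}$ is in fact slightly cleaner than the paper's displayed computation.
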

\begin{proof}
Let $\{v_n:n\in\N\}$ be a Hilbert basis of $V$. For every $y\in V^{\odot n}$ of the form of \eqref{v_in_vodot_n_1}, it follows that
\begin{align*}
T^{\otimes n}y
= & \sum_{i_1,\ldots,i_n=1}^\infty c_{i_1\cdots i_n}(Tv_{i_1})\otimes \cdots\otimes (Tv_{i_n})
= \sum_{k_1,\ldots,k_n=1}^\infty
\widetilde c_{k_1\cdots k_n}\widetilde v_{k_1}\otimes \cdots\otimes \widetilde v_{k_n},
\end{align*}
where $\{\widetilde v_n:n\in\N\}$ is a Hilbert basis of $\widetilde V$. For every $k_1,\ldots,k_n\in\N$, we get
\begin{align*}
\widetilde c_{k_1\cdots k_n}
=\sum_{i_1,\ldots,i_n=1}^\infty c_{i_1\cdots i_n}\langle Tv_{i_1},\widetilde v_{k_1}\rangle_{\widetilde V}\cdots\langle Tv_{i_n},\widetilde v_{k_n}\rangle_{\widetilde V}.
\end{align*}
If $\sigma\in \Sigma_n$, then
\begin{align*}
\widetilde c_{k_{\sigma(1)}\cdots k_{\sigma(n)}}
= & \sum_{i_1,\ldots,i_n=1}^\infty c_{i_1\cdots i_n}\langle Tv_{i_1},\widetilde v_{k_{\sigma(1)}}\rangle_V\cdots\langle Tv_{i_n},\widetilde v_{k_{\sigma(n)}}\rangle_V \\
= & \sum_{i_1,\ldots,i_n=1}^\infty c_{i_{\sigma(1)}\cdots i_{\sigma(n)}}\langle Tv_{i_{\sigma(1)}},\widetilde v_{k_{\sigma(1)}}\rangle_V\cdots\langle Tv_{i_{\sigma(n)}},\widetilde v_{k_{\sigma(n)}}\rangle_V \\
= & \sum_{i_1,\ldots,i_n=1}^\infty c_{i_1\cdots i_n}\langle Tv_{i_1},\widetilde v_{k_1}\rangle_V\cdots\langle Tv_{i_n},\widetilde v_{k_n}\rangle_V =\widetilde c_{k_1\cdots k_n},
\end{align*}
where we have exploited the fact that $y\in V^{\odot n}$.
\end{proof}

Let $V$ be a real separable Hilbert space and let $\{v_n:n\in\N\}$ be a Hilbert basis of $V$. We now introduce a notation for the elements of $V^{\odot n}$ which will be quite useful to express elements of $V^{\odot n}$ in terms of elements of $\Lambda_n$. In particular, we exploit elements of $\Lambda_n$ to count how many times each element of the Hilbert basis $\{v_n:n\in\N\}$ appears in \eqref{v_in_vodot_n_1}. \\
To be more precise, we associate to the term $v_{i_1}\otimes \cdots\otimes v_{i_n}$ a multi-index $\alpha=(\alpha_j)_{j\in\N}\in\Lambda_n$ such that for all $j\in\N$ the number $\alpha_j$ is the number of the repetitions of the vector $v_j$ in $v_{i_1}\otimes \cdots\otimes v_{i_n}$.  Without loss of generality (due to the fact that we deal with symmetric tensors), we may assume that the indices $i_1,\ldots,i_n$ in $v_{i_1}\otimes \cdots\otimes v_{i_n}$ appear in an increasing way. For every $\alpha\in\Lambda_n$, we define the indices  $i^\alpha_1,i^\alpha_2,\ldots,i^\alpha_n\in\N$ such that, if $j_1,\ldots,j_m\in\N$ and $a_{j_1},\ldots,\alpha_{j_m}$ are the components of $\alpha$ different from $0$, then for every $\ell=1,\ldots,m$,
$$
i^\alpha_{1+\sum_{k=1}^{\ell-1}\alpha_{j_k}}=i^\alpha_{2+\sum_{k=1}^{\ell-1}\alpha_{j_k}}=\ldots=i^\alpha_{\alpha_{j_\ell}+\sum_{k=1}^{\ell-1}\alpha_{j_k}}=j_\ell,
$$ 
where we used the standard notation $\sum_{i=1}^{0}\alpha_i=0$. Clearly $i^\alpha_k\leq i^\alpha_{k+1}$ for every $k=1,\ldots,n-1$ and
\begin{align*}
i^\alpha_1=\ldots=i^\alpha_{\alpha_{j_1}}=j_1, \quad
i^\alpha_{\alpha_{j_1}+1}=\ldots=i^\alpha_{\alpha_{j_1}+\alpha_{j_2}}=j_2, \ \ldots, \ i^\alpha_{\alpha_{j_1}+\ldots+\alpha_{j_{m-1}}+1}=\ldots=i^\alpha_{\alpha_{j_1}+\ldots+\alpha_{j_m}}=j_m.
\end{align*}

We notice that every $y\in V^{\odot n}$ can be written as 
\begin{align*}
y=\sum_{\alpha\in\Lambda_n}\frac{c_{\alpha}}{\alpha!}\sum_{\sigma\in\Sigma_n}v_{i^\alpha_{\sigma(1)}}\otimes \cdots \otimes v_{i^\alpha_{\sigma(n)}},    
\end{align*}
where $c_\alpha=c_{i^\alpha_1\cdots i^\alpha_n}$ and the factor $\frac{c_\alpha}{\alpha!}$ appears since, when we consider any permutation of $i^\alpha_1,\ldots,i^\alpha_n$, due to repetitions we are taking $\alpha!$ times each element of the form $v_{i^\alpha_{\sigma(1)}}\otimes \cdots \otimes v_{i^\alpha_{\sigma(n)}}$. To avoid such repetitions, we fix $\alpha\in\Lambda_n$ and we define  on $\Sigma_n$ the relation $\sim_\alpha$ as follows: given $\sigma_1,\sigma_2\in\Sigma_n$, we say that $\sigma_1\sim_\alpha\sigma_2$ if $i^\alpha_{\sigma_1(j)}=i^\alpha_{\sigma_2(j)}$ for every $j=1,\ldots,n$. It follows that $\sim_\alpha$ is an equivalence relation on $\Sigma_n$ and we set  
\begin{align}\label{definizionesigmaalphan}
\Sigma_n^\alpha:=\Sigma_n \ \!/ \sim_\alpha, \qquad \#\Sigma_n^\alpha=\frac{n!}{\alpha!}.
\end{align}
For every $\alpha\in \Lambda_n$ we have
\begin{align*}
\sum_{\sigma\in\Sigma_n}v_{i^\alpha_{\sigma(1)}}\otimes \cdots \otimes v_{i^\alpha_{\sigma(n)}}=\alpha!\sum_{\sigma\in\Sigma_n^\alpha}v_{i^\alpha_{\sigma(1)}}\otimes \cdots \otimes v_{i^\alpha_{\sigma(n)}}.    
\end{align*}
Hence, every $y\in V^{\odot n}$ can be written as
\begin{align}
\label{espressione_elemento_V_symm_n}
y=\sum_{\alpha\in\Lambda_n}
c_\alpha\sum_{\sigma\in\Sigma_n^\alpha}v_{i^\alpha_{\sigma(1)}}\otimes \cdots \otimes v_{i^\alpha_{\sigma(n)}}.    
\end{align}
Since for every $\sigma_1,\sigma_2\in\Sigma_n^\alpha$, with $\sigma_1\neq \sigma_2$, the corresponding elements $v_{i^\alpha_{\sigma_1(1)}}\otimes \cdots \otimes v_{i^\alpha_{\sigma_1(n)}}$ and $v_{i^\alpha_{\sigma_2(1)}}\otimes \cdots \otimes v_{i^\alpha_{\sigma_2(n)}}$ are orthogonal, from \eqref{definizionesigmaalphan} we deduce that
\begin{align}
\label{norma_elemento_v_symm_n}
\|v\|_{V^{\otimes n}}^2
= \sum_{\alpha\in\Lambda_n}c_\alpha^2\cdot (\#\Sigma_n^\alpha)
=\sum_{\alpha\in\Lambda_n}c_\alpha^2\frac{n!}{\alpha!}, \qquad v\in V^{\odot n}.
\end{align}

\subsubsection{Wiener Chaos Decomposition and Tensor Product Spaces}

We conclude this section by showing an explicit isometry between $H_{\gamma}^{\odot n}$ and $\mathscr H_n^\gamma$. In particular, we generalize the result stated at the end of \cite[Subsection 1.1.1]{nual_2006} for the non-degenerate case.

\begin{lemma}
\label{lemma:isomorfismo}
Let $\gamma$ be a centered Gaussian measure on $X$, let $H_\gamma$ be its Cameron-Martin space and let $\{v^\gamma_n:n\in\N\}$ be an orthonormal basis of $H_\gamma$. There is a linear isomorphism $\Psi_n^\gamma$ between $(H_\gamma^{\odot n}, \sqrt{n!}\|\cdot\|_{H_\gamma^{\otimes n}}, n!\ps{\cdot}{\cdot}_{H_\gamma^{\otimes n}})$ and $(\mathscr H_n^\gamma,\|\cdot \|_{L^2(X,\gamma)},\ps{\cdot}{\cdot}_{L^2(X,\gamma)})$, given by 
\begin{align}
\label{iso_wiener_chaos_tensor_prod}
\Psi_n^\gamma\left(\frac{\alpha!}{n!}\sum_{\sigma \in \Sigma_n^\alpha}v_{i^\alpha_{\sigma(1)}}^\gamma\otimes \cdots \otimes v_{i^\alpha_{\sigma(n)}}^\gamma\right)=\Psi_n^\gamma\left(\frac{1}{n!}\sum_{\sigma \in \Sigma_n}v_{i^\alpha_{\sigma(1)}}^\gamma\otimes \cdots \otimes v_{i^\alpha_{\sigma(n)}}^\gamma\right)=\sqrt{\alpha!}\Phi_\alpha^\gamma
\end{align}
for every $\alpha\in\Lambda_n$, where $\Phi_\alpha^\gamma$ is the generalized Hermite polynomial defined in \eqref{pol_herm_gen} with $e^\mu_j$ replaced by $v_j^\mu$, $j\in\N$. 
\end{lemma}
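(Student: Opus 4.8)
The plan is to build $\Psi_n^\gamma$ on the natural orthogonal basis of $H_\gamma^{\odot n}$ indexed by $\Lambda_n$ and to check that it carries this basis onto the orthonormal basis $\{\Phi_\alpha^\gamma\}_{\alpha\in\Lambda_n}$ of $\mathscr H_n^\gamma$ with matching norms. For $\alpha\in\Lambda_n$ set $s_\alpha:=\sum_{\sigma\in\Sigma_n^\alpha}v^\gamma_{i^\alpha_{\sigma(1)}}\otimes\cdots\otimes v^\gamma_{i^\alpha_{\sigma(n)}}$, so that the argument of $\Psi_n^\gamma$ in \eqref{iso_wiener_chaos_tensor_prod} is $e_\alpha:=\frac{\alpha!}{n!}s_\alpha$. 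By the representation \eqref{espressione_elemento_V_symm_n} the family $\{s_\alpha\}_{\alpha\in\Lambda_n}$, hence also $\{e_\alpha\}_{\alpha\in\Lambda_n}$, spans $H_\gamma^{\odot n}$, and by \eqref{norma_elemento_v_symm_n} its members are mutually orthogonal with $\norm{s_\alpha}_{H_\gamma^{\otimes n}}^2=n!/\alpha!$. Thus $\{e_\alpha\}_{\alpha\in\Lambda_n}$ is an orthogonal basis of $H_\gamma^{\odot n}$ and $\Psi_n^\gamma$ is well defined by linear extension of \eqref{iso_wiener_chaos_tensor_prod}.

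Next I would prove that $\{\Phi_\alpha^\gamma\}_{\alpha\in\Lambda_n}$ is an orthonormal basis of $\mathscr H_n^\gamma$, which is the step that actually handles the possibly degenerate covariance $Q_\gamma$. Writing $f_j:=Q_\gamma^{-\frac12}v^\gamma_j$, Proposition \ref{teoria} gives $Q_\gamma^{\frac12}f_j=v_j^\gamma$ and $\ps{W^\gamma_{f_j}}{W^\gamma_{f_k}}_{L^2(X,\gamma)}=\ps{v_j^\gamma}{v_k^\gamma}_{H_\gamma}=\delta_{jk}$; since the $W^\gamma_{f_j}$ lie in the Gaussian reproducing kernel $X^\star_\gamma$ and are uncorrelated, they form a family of independent standard Gaussian random variables. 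Combining this independence with the one-dimensional identity $\int_\R\phi_m\phi_k\,d\mathcal N_{0,1}=\delta_{mk}/m!$ coming from \eqref{def_pol_herm_1} together with \eqref{pol_herm_gen}, a direct computation yields $\int_X\Phi_\alpha^\gamma\Phi_\beta^\gamma\,d\gamma=\sqrt{\alpha!\,\beta!}\prod_{j}\frac{\delta_{\alpha_j\beta_j}}{\alpha_j!}=\delta_{\alpha\beta}$, so the $\Phi_\alpha^\gamma$ with $|\alpha|=n$ are orthonormal; that their closed span is $\mathscr H_n^\gamma$ follows from the basis-independence of the Wiener chaos decomposition \eqref{wiener_chaos_dec}.

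It then remains to compare norms on basis vectors. Equipping $H_\gamma^{\odot n}$ with $\sqrt{n!}\,\norm{\cdot}_{H_\gamma^{\otimes n}}$ gives $n!\,\norm{e_\alpha}_{H_\gamma^{\otimes n}}^2=n!\left(\frac{\alpha!}{n!}\right)^2\frac{n!}{\alpha!}=\alpha!$, while on the target side $\norm{\sqrt{\alpha!}\,\Phi_\alpha^\gamma}_{L^2(X,\gamma)}^2=\alpha!$; since distinct $e_\alpha$ (respectively $\Phi_\alpha^\gamma$) are orthogonal, $\Psi_n^\gamma$ sends an orthogonal basis to an orthogonal basis preserving squared norms, hence it extends to an isometric isomorphism of $(H_\gamma^{\odot n},\sqrt{n!}\norm{\cdot}_{H_\gamma^{\otimes n}})$ onto $(\mathscr H_n^\gamma,\norm{\cdot}_{L^2(X,\gamma)})$. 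I expect the main obstacle to be precisely the orthonormality of $\Phi_\alpha^\gamma$ in the degenerate setting: one must ensure that $f_j=Q_\gamma^{-\frac12}v_j^\gamma$ and the random variables $W^\gamma_{f_j}$ are meaningful and independent even when $Q_\gamma$ fails to be injective. This is where choosing $v_j^\gamma\in H_\gamma$ and invoking the Cameron--Martin formalism of Proposition \ref{teoria} replaces the spectral argument available in the non-degenerate case.
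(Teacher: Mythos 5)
Your proposal is correct and takes essentially the same route as the paper: both arguments reduce to the norm computation on the orthogonal basis $\bigl\{\frac{\alpha!}{n!}\sum_{\sigma\in\Sigma_n^\alpha}v^\gamma_{i^\alpha_{\sigma(1)}}\otimes\cdots\otimes v^\gamma_{i^\alpha_{\sigma(n)}}\bigr\}_{\alpha\in\Lambda_n}$, namely $n!\,\bigl(\frac{\alpha!}{n!}\bigr)^2\frac{n!}{\alpha!}=\alpha!=\|\sqrt{\alpha!}\,\Phi_\alpha^\gamma\|_{L^2(X,\gamma)}^2$, using \eqref{norma_elemento_v_symm_n}. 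The only difference is that you spell out the orthonormality of $\{\Phi_\alpha^\gamma\}_{\alpha\in\Lambda_n}$ (via independence of the $W^\gamma_{f_j}$ and the one-dimensional Hermite identity), a fact the paper treats as known from the Wiener chaos setup.
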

\begin{proof}
The mapping $\Psi_n^\gamma$ is clearly well-defined, linear and surjective. We prove now that $\Psi_n^\gamma$ is an isometry. For every $\alpha\in \Lambda_n$, from \eqref{norma_elemento_v_symm_n} we get
\begin{align*}
\left\|\frac1{n!}\sum_{\sigma \in \Sigma_n}ve^\gamma_{i^\alpha_{\sigma(1)}}\otimes \cdots \otimes v^\gamma_{i^\alpha_{\sigma(n)}}\right\|^2_{H_\gamma^{\otimes n}}
= & \left\|\frac{\alpha!}{n!}\sum_{\sigma \in \Sigma_n^\alpha}v_{i^\alpha_{\sigma(1)}}^\gamma\otimes \cdots \otimes v_{i^\alpha_{\sigma(n)}}^\gamma\right\|_{H_\gamma^{\otimes n}}^2
= \frac{(\alpha!)^2}{(n!)^2} \cdot (\#\Sigma_{n}^\alpha)
= \frac{\alpha!}{n!} \\ 
= & \frac{\|\sqrt{\alpha!}\Phi_\alpha^\gamma\|_{L^2(X,\gamma)}^2}{n!},
\end{align*}
which gives the thesis.
\end{proof}
 
 %\begin{align} 
 %\label{espansione_nu_mu}
% \Psi_n^\nu(T^{\otimes n}(\Psi_n^\mu)^{-1}(\Phi_\alpha^\mu))
 %= & \Psi_n^\nu\left(\frac{\sqrt{\alpha!}}{n!}\sum_{\sigma\in \Sigma_n^\alpha}Te_{i^\alpha_{\sigma(1)}}^\mu\otimes \cdots \otimes Te_{i^\alpha_{\sigma(n)}}^\mu\right) \notag \\
 %= & \Psi_n^\nu\left(\frac{\sqrt{\alpha!}}{n!}\sum_{\sigma\in \Sigma_n^\alpha}\sum_{j_1,\ldots,j_n=1}^\infty \langle Te_{i^\alpha_{\sigma(1)}}^\mu,e^\nu_{j_1}\rangle_{H_\nu}\cdots \langle Te_{i^\alpha_{\sigma(n)}},e_{j_n}^\nu\rangle_{H_\nu}e_{j_1}^\nu\otimes \cdots \otimes e_{j_n}^\nu \right)
 %\notag \\
 %= & \sum_{j_1,\ldots,j_n=1}^\infty\Psi_n^\nu\left(\frac{\sqrt{\alpha!}}{n!}\sum_{\sigma\in\Sigma_n^\alpha}\langle Te_{i^\alpha_{\sigma(1)}}^\mu,e^\nu_{j_1}\rangle_{H_\nu}\cdots \langle Te_{i^\alpha_{\sigma(n)}},e_{j_n}^\nu\rangle_{H_\nu}e_{j_1}^\nu\otimes \cdots \otimes e_{j_n}^\nu\right), 
% = & \frac{1}{n!}\sum_{\sigma\in \Sigma_n}\sum_{j_1,\ldots,j_n=1}^\infty\langle Tf_{i^\alpha_{\sigma(1)}}^\mu,f_{j_1}^\nu\rangle_{H_\nu}\cdots \langle Tf_{i^\alpha_{\sigma(n)}}^\mu, f_{j_n}^\nu\rangle_{H_\nu} f_{j_1}^\nu\otimes \cdots \otimes f_{j_n}^\nu 
% \end{align}
% where $\{e_n^\nu:n\in\N\}$ is an orthonormal basis of $H_\nu$.

\section{The second quantization operator as a series}\label{second_serie}
Let $\mu=\mathcal{N}(0,Q_\mu)$ and $\nu=\mathcal{N}(0,Q_\nu)$ be two centered Gaussian measures on $X$ with Cameron-Martin spaces $H_\mu$ and $H_\nu$, respectively. 
Our aim is to study in a deeper way a generalized version of the second quantization of \cite{sim_1974} as operator from $L^2(X,\mu)$ into $L^2(X,\nu)$ by means of an operator $T\in\mathscr L(H_\mu,H_\nu)$ (see also \cite[Chap. 4]{jan_1997}). Given $n\in\N$, the first step consists in defining an operator $\widetilde \Gamma_{\nu,\mu,n}(T)$ from $(\mathscr H_n^\mu,\|\cdot\|_{L^2(X,\mu)})$ into $(\mathscr H_n^\nu,\|\cdot\|_{L^2(X,\nu)})$. This is possible, exploiting the isomorphisms $\Psi_n^\mu$ and $\Psi_n^\nu$ between $(\mathscr H_n^\mu,\|\cdot\|_{L^2(X,\mu)})$ and $(H_\mu^{\odot n},\sqrt{n!}\|\cdot\|_{H_\mu^{\otimes n}})$, and $(\mathscr H_n^\nu,\|\cdot\|_{L^2(X,\nu)})$ and $(H_\nu^{\odot n},\sqrt{n!}\|\cdot\|_{H_\nu^{\otimes n}})$, respectively, introduced in \eqref{iso_wiener_chaos_tensor_prod}. For every $T\in\mathscr L(H_\mu,H_\nu)$, we consider the operator 
\begin{align*}
T^{\otimes n}(\Psi_n^\mu)^{-1}:(\mathscr H_n^\mu,\|\cdot\|_{L^2(X,\mu)})\to   (H_\nu^{\odot n},\sqrt{n!}\|\cdot\|_{H_\nu^{\otimes n}}),
\end{align*}
which, given $\alpha\in \Lambda_n$, from \eqref{iso_wiener_chaos_tensor_prod} acts on $\Phi_\alpha^\mu$ as
\begin{align}
\label{espansione_psi_mu}
T^{\otimes n}(\Psi_n^\mu)^{-1}\Phi_\alpha^\mu
= & \frac{\sqrt{\alpha!}}{n!}\sum_{\sigma\in\Sigma_n^\alpha}Te_{i^\alpha_{\sigma(1)}}^\mu\otimes\cdots \otimes Te_{i^\alpha_{\sigma(n)}}^\mu,
\end{align}
where $\{e_n^\mu:n\in\N\}$ is an orthonormal basis of $H_\mu$. In the second step, we apply $\Psi_n^\nu$ to $T^{\otimes n}(\Psi_n^\mu)^{-1}$ to map elements of $H_\nu^{\odot n}$ into elements of $\mathscr H_n^\nu$, obtaining the element $\Psi_n^\nu(T^{\otimes n}(\Psi_n^\mu)^{-1}(\Phi_\alpha^\mu))\in\mathscr H_n^\nu$.
%given by {\color{red} è giusta?} \textcolor{blue}{mi puzza, hai fatto dei conti espliciti?}
%\begin{align}
%\Psi_n^\nu(T^{\otimes n}(\Psi_n^\mu)^{-1}\Phi_\alpha^\mu)=\sqrt{\alpha!}\prod_{j=1}^\infty\phi_{\alpha_j}\Bigl(W^\nu_{Q_\nu^{-\frac 12}Te_j^\mu}(x)\Bigr).
%\end{align}
We are ready to introduce the first operator we are interested in.
\begin{defn}\label{def:op_gamma_tilden}
Given two centered Gaussian measures $\mu$ and $\nu$ on $X$ and $T\in\mathscr L(H_\mu,H_\nu)$, for every $n\in\N$ we define
$\widetilde \Gamma_{\nu,\mu,n}(T):\mathscr H_n^\mu\to \mathscr H_n^\nu$ as
\begin{align*}
\widetilde \Gamma_{\nu,\mu,n}(T):=\Psi_n^\nu(T^{\otimes n}(\Psi_n^\mu)^{-1}).   
\end{align*}
\end{defn}
The following result is crucial to define the operator $\widetilde \Gamma_{\nu,\mu}$ when $\|T\|_{\mathscr L(H_\mu,H_\nu)}\leq 1$.
\begin{proposition}
For every $T\in\mathscr L(H_\mu,H_\nu)$ and $n\in\N$, we get $\widetilde \Gamma_{\nu,\mu,n}\in \mathscr L(\mathscr H_{n}^{\mu},\mathscr H_n^\nu)$ and
\begin{align}
\label{norma_tilde_gamma_n}
\|\widetilde \Gamma_{\nu,\mu,n}(T)\|_{\mathscr L(\mathscr H_{n}^{\mu},\mathscr H_n^\nu)}=\|T^{\otimes n}\|_{\mathscr L(H_\mu^{\odot n},H_\nu^{\odot n})}=\|T\|_{\mathscr L(H_\mu,H_\nu)}^n, \qquad n\in\N.
\end{align}    
\end{proposition}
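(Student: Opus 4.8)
The plan is to recognize $\widetilde\Gamma_{\nu,\mu,n}(T)$ as a composition of three maps, two of which are isometries, and to transport the norm computation down to the tensor-product level where Corollary \ref{coro:op_tensor_spaces} applies. By Definition \ref{def:op_gamma_tilden} we have $\widetilde\Gamma_{\nu,\mu,n}(T)=\Psi_n^\nu\circ T^{\otimes n}\circ(\Psi_n^\mu)^{-1}$, where $T^{\otimes n}$ is understood as its restriction to $H_\mu^{\odot n}$, which indeed takes values in $H_\nu^{\odot n}$ by the lemma preceding Lemma \ref{lemma:isomorfismo}. Since $\Psi_n^\mu,\Psi_n^\nu$ and $T^{\otimes n}$ are all linear, so is $\widetilde\Gamma_{\nu,\mu,n}(T)$, and it maps $\mathscr H_n^\mu$ into $\mathscr H_n^\nu$; boundedness will come for free once the norm is shown to equal $\|T\|_{\mathscr L(H_\mu,H_\nu)}^n<\infty$.

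First I would exploit the fact, established in Lemma \ref{lemma:isomorfismo}, that $\Psi_n^\gamma$ is an isometry from $(H_\gamma^{\odot n},\sqrt{n!}\,\|\cdot\|_{H_\gamma^{\otimes n}})$ onto $(\mathscr H_n^\gamma,\|\cdot\|_{L^2(X,\gamma)})$, i.e. $\|\Psi_n^\gamma(y)\|_{L^2(X,\gamma)}=\sqrt{n!}\,\|y\|_{H_\gamma^{\otimes n}}$ for every $y\in H_\gamma^{\odot n}$. Writing $y:=(\Psi_n^\mu)^{-1}f$ for $f\in\mathscr H_n^\mu$, so that $y$ ranges over all of $H_\mu^{\odot n}$ as $f$ ranges over $\mathscr H_n^\mu$, I obtain
\begin{align*}
\frac{\|\widetilde\Gamma_{\nu,\mu,n}(T)f\|_{L^2(X,\nu)}}{\|f\|_{L^2(X,\mu)}}
=\frac{\sqrt{n!}\,\|T^{\otimes n}y\|_{H_\nu^{\otimes n}}}{\sqrt{n!}\,\|y\|_{H_\mu^{\otimes n}}}
=\frac{\|T^{\otimes n}y\|_{H_\nu^{\otimes n}}}{\|y\|_{H_\mu^{\otimes n}}}.
\end{align*}
The factor $\sqrt{n!}$ cancels because it rescales source and target equally. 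Taking the supremum over $f\neq 0$ yields $\|\widetilde\Gamma_{\nu,\mu,n}(T)\|_{\mathscr L(\mathscr H_n^\mu,\mathscr H_n^\nu)}=\|T^{\otimes n}\|_{\mathscr L(H_\mu^{\odot n},H_\nu^{\odot n})}$, the first equality in \eqref{norma_tilde_gamma_n}.

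It then remains to identify $\|T^{\otimes n}\|_{\mathscr L(H_\mu^{\odot n},H_\nu^{\odot n})}$ with $\|T\|_{\mathscr L(H_\mu,H_\nu)}^n$. Since $H_\mu^{\odot n}$ is a closed subspace of $H_\mu^{\otimes n}$, restricting $T^{\otimes n}$ to it cannot increase the operator norm, so Corollary \ref{coro:op_tensor_spaces} (with $V=H_\mu$, $\widetilde V=H_\nu$) gives the upper bound $\|T^{\otimes n}\|_{\mathscr L(H_\mu^{\odot n},H_\nu^{\odot n})}\leq\|T\|_{\mathscr L(H_\mu,H_\nu)}^n$. For the reverse inequality I would test $T^{\otimes n}$ on the rank-one symmetric tensors $v^{\otimes n}:=v\otimes\cdots\otimes v$ with $\|v\|_{H_\mu}=1$: these lie in $H_\mu^{\odot n}$, satisfy $\|v^{\otimes n}\|_{H_\mu^{\otimes n}}=\|v\|_{H_\mu}^n=1$, and obey $T^{\otimes n}(v^{\otimes n})=(Tv)^{\otimes n}$ with $\|(Tv)^{\otimes n}\|_{H_\nu^{\otimes n}}=\|Tv\|_{H_\nu}^n$, so taking the supremum over unit $v$ produces $\|T\|_{\mathscr L(H_\mu,H_\nu)}^n$ and closes the estimate. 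No step here is a genuine obstacle; the only points requiring care are the bookkeeping of the $\sqrt{n!}$ normalization, which must be seen to cancel, and the observation that the extremal vectors realizing $\|T\|$ already yield symmetric tensors, so that the symmetric-subspace norm and the full tensor norm coincide.
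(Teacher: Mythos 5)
Your proof is correct, and its skeleton is the same as the paper's: transport the norm computation through the isometries $\Psi_n^\mu$, $\Psi_n^\nu$ (the paper re-derives the isometry by expanding $f$ in the $\Phi_\alpha^\mu$ basis and invoking \eqref{norma_elemento_v_symm_n}, whereas you simply quote the isometry statement of Lemma \ref{lemma:isomorfismo} — a harmless streamlining), and then identify $\|T^{\otimes n}\|_{\mathscr L(H_\mu^{\odot n},H_\nu^{\odot n})}$ with $\|T\|_{\mathscr L(H_\mu,H_\nu)}^n$. Where you genuinely diverge, and improve on the paper, is in that last identification: the paper disposes of it with the single sentence "the second one follows from Corollary \ref{coro:op_tensor_spaces}", but that corollary computes the norm of $T^{\otimes n}$ on the \emph{full} tensor product $\mathscr L(H_\mu^{\otimes n},H_\nu^{\otimes n})$, not on the symmetric subspace, so strictly speaking only the inequality $\|T^{\otimes n}\|_{\mathscr L(H_\mu^{\odot n},H_\nu^{\odot n})}\leq\|T\|^n$ follows immediately (restriction to a closed subspace cannot increase the norm). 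Your reverse inequality — testing on the rank-one symmetric tensors $v^{\otimes n}$ with $\|v\|_{H_\mu}=1$, which lie in $H_\mu^{\odot n}$, have unit norm, and satisfy $\|T^{\otimes n}(v^{\otimes n})\|_{H_\nu^{\otimes n}}=\|Tv\|_{H_\nu}^n$ — is exactly the missing step, and it closes the gap cleanly. So your write-up is not only valid but slightly more complete than the published argument.
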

\begin{proof}
Let $n\in\N$ and let $T\in\mathscr l(H_\mu,H_\nu)$. The linearity of $\widetilde \Gamma_{\nu,\mu,n}(T)$ is trivial. To prove \eqref{norma_tilde_gamma_n}, we notice that, for every $\alpha,\beta\in\Lambda_n$, from the definition of $\widetilde \Gamma_{\nu,\mu,n}(T)$ we get
\begin{align}
\label{prod_scal_tilde_gamma}
\langle \widetilde \Gamma_{\nu,\mu,n}(T)(\Phi_\alpha^\mu),\widetilde \Gamma_{\nu,\mu,n}(T)(\Phi_\beta^\mu)\rangle_{\mathscr H_n^\nu} 
= & n!\langle T^{\otimes n}(\Psi_n^\mu)^{-1}(\Phi_\alpha^\mu),T^{\otimes n}(\Psi_n^\mu)^{-1}(\Phi_\beta^\mu)\rangle_{H_\nu^{\otimes n}}.
\end{align}
Now we consider $f\in \mathscr H_n^\mu$. It admits the representation
\begin{align*}
f=\sum_{\alpha\in\Lambda_n}c(f,\mu)_\alpha \Phi_\alpha^\mu, \qquad c(f,\mu)_\alpha=\langle f,\Phi_\alpha^\mu\rangle_{\mathscr H_n^\mu}, \qquad \alpha\in\Lambda_n.    
\end{align*}
From \eqref{espansione_psi_mu} and \eqref{prod_scal_tilde_gamma}, it follows that
\begin{align}
\label{norma_operatore_gamma_tilde_n}
\|\widetilde \Gamma_{\nu,\mu,n}(T)(f)\|_{\mathscr H_n^\nu}^2 
% \notag  \\
% = & \frac{1}{n!}\langle T\left(\sum_{\alpha\in\Lambda_n}\sqrt{\alpha!}c(f,\mu)_\alpha\sum_{\sigma\in\Sigma_n^\alpha}e_{i^\alpha_{\sigma(1)}}^\mu\otimes \cdots \otimes e_{i^\alpha_{\sigma(n)}}^\mu\right),T\left(\sum_{\alpha\in\Lambda_n}\sqrt{\alpha!}c(f,\mu)_\alpha\sum_{\sigma\in\Sigma_n^\alpha}e_{i^\alpha_{\sigma(1)}}^\mu\otimes \cdots \otimes e_{i^\alpha_{\sigma(n)}}^\mu\right)\rangle_{H_\nu^{\otimes n}} \notag \\
=& n!\langle T^{\otimes n}y_f,T^{\otimes n}y_f\rangle_{H_\nu^{\otimes n}}
=n!\|T^{\otimes n}y_f\|_{H_\nu^{\otimes n}}^2,
% \notag \\
% \leq &  \frac{1}{n!}\|T^{\otimes n}\|_{\mathscr L(H_\mu^{\otimes n},H_\nu^{\otimes n})}^2\|y_f\|_{H_\mu^{\otimes n}}^2=\frac{1}{n!}\|T\|_{\mathscr L(H_\mu,H_\nu)}^{2n}\|y_f\|_{H_\mu^{\otimes n}}^2,
\end{align}
where $y_f=(\Psi_n^\mu)^{-1}(f)\in H_\mu^{\odot n}$ is defined as (see also \eqref{espressione_elemento_V_symm_n})
\begin{align*}
%\label{scambio_prod_tens_wiener_chaos}
y_f=\sum_{\alpha\in\Lambda_n}c_\alpha\sum_{\sigma\in\Sigma_n^\alpha}e_{i^\alpha_{\sigma(1)}}^\mu\otimes \cdots \otimes e_{i^\alpha_{\sigma(n)}}^\mu, \qquad c_\alpha=\frac{\sqrt{\alpha!}}{n!}c(f,\mu)_\alpha, \qquad  \alpha\in\Lambda_n.    
\end{align*}
From \eqref{norma_elemento_v_symm_n}, we infer that
\begin{align}\label{norma_y_f}
\|y_f\|_{H_\mu^{\otimes n}}^2 =\sum_{\alpha\in\Lambda_n}c_\alpha^2\frac{n!}{\alpha!}=\frac{1}{n!}\sum_{\alpha\in\Lambda_n}(c(f,\mu)_\alpha)^2
=\frac{1}{n!}\|f\|_{\mathscr H_n^\mu}^2,  \end{align}
Since $\Psi_n^\mu$ is an isomorphism, for every $y\in H_\mu^{\odot n}$ there exists exactly one $f\in \mathscr H_n^\mu$ such that $y=y_f$ and $\|f\|_{\mathscr H_n^\mu}^2=n!\|y\|_{H_\mu^{\otimes n}}^2$ (it is enough to take $f=\Psi_n^\mu(y)$ and recall that $\Psi_n^\mu$ is an isometry). Thank to \eqref{norma_operatore_gamma_tilde_n} and \eqref{norma_y_f}, it follows that
\begin{align*}
\|\widetilde \Gamma_{\nu,\mu,n}(T)\|_{\mathscr L(\mathscr H_n^\mu,\mathscr H_n^\nu)}&=\sup_{f\neq 0}\frac{\|\widetilde \Gamma_{\nu,\mu,n}(T)(f)\|_{\mathscr H_n^\nu}}{\|f\|_{\mathscr H_n^\mu}}=\sup_{y_f\neq 0}\frac{\sqrt{n!}\|T^{\otimes n}y_f\|_{H_{\mu}^{\otimes n}}}{\sqrt{n!}\|y_f\|_{H_\mu^{\otimes n}}}=\|T^{\otimes n}\|_{\mathscr L(H_\mu^{\odot n},H_\nu^{\odot n})}.
\end{align*}
The first equality in \eqref{norma_tilde_gamma_n} is proved. The second one follows from Corollary \ref{coro:op_tensor_spaces}.
\end{proof}

Equality \eqref{norma_tilde_gamma_n} allows us to define an operator $\widetilde \Gamma_{\nu,\mu}$ whenever $\|T\|_{\mathscr L(H_\mu,H_\nu)}\leq 1$.

\begin{defn}
\label{def:op_gamma_tilde}
Let $\mu$, $\nu$ be two centered Gaussian measures on $X$ with Cameron Martin spaces $H_\mu$ and $H_\nu$, respectively, and let $T\in\mathscr L(H_\mu,H_\nu)$ be such that $\|T\|_{\mathscr L(H_\mu,H_\nu)}\leq 1$. We define the second quantization operator as the linear operator $\widetilde \Gamma_{\nu,\mu}(T):L^2(X,\mu)\to L^2(X,\nu)$ given by
\begin{align}
\label{operatore_tilde_gamma}
(\widetilde \Gamma_{\nu,\mu}(T))f:=\sum_{n=0}^\infty \widetilde \Gamma_{\nu,\mu,n}(T)(I_n^\mu f)=\sum_{n=0}^\infty \Psi_n^\nu(T^{\otimes n}(\Psi_n^\mu)^{-1}I_n^\mu f), \qquad f\in L^2(X,\mu).    
\end{align}
\end{defn}
From \eqref{norma_tilde_gamma_n} and the assumption on $T$, it follows that
\begin{align*}
\|(\widetilde \Gamma_{\nu,\mu}(T))f\|_{L^2(X,\nu)}^2
=\sum_{n=0}^\infty \|\widetilde \Gamma_{\nu,\mu,n}(T)(I_n^\mu f)\|_{L^2(X,\nu)}^2
\leq \sum_{n=0}^\infty \|T\|_{\mathscr L(H_\mu,H_\nu)}^{2n}\|I_n^\mu f\|_{L^2(X,\mu)}^2
\leq \|f\|_{L^2(X,\mu)}^2
\end{align*}
for every $f\in L^2(X,\mu)$, which means that $\widetilde \Gamma_{\nu,\mu}(T)$ is a contraction from $L^2(X,\mu)$ into $L^2(X,\nu)$.

The following result holds true.
\begin{proposition}
\label{prop:prop_second_quant_serie}
Let $\mu$, $\nu$ and $\sigma$ be three centered Gaussian measures on $X$ with Cameron Martin spaces $H_\mu$, $H_\nu$ and $H_\sigma$, respectively. 
\begin{enumerate}[{\rm (i)}] 
\item  If $\nu=\mu$ and $T=I_{H_\mu}$, then $\widetilde \Gamma_{\mu,\mu}(T)=I_{L^2(X,\mu)}$.
\item If $T\in\mathscr L(H_\mu,H_\nu)$ satisfies $\|T\|_{\mathscr L(H_\mu,H_\nu)}\leq 1$, then $(\widetilde \Gamma_{\nu,\mu}(T))^\star =\widetilde \Gamma_{\mu,\nu}(T^\star )$. 
\item If $T\in\mathscr L(H_\mu,H_\sigma)$ and $S\in\mathscr L(H_\sigma,H_\nu)$ are such that $\|T\|_{\mathscr L(H_\mu,H_\sigma)},\|S\|_{\mathscr L(H_\sigma,H_\nu)}\leq 1$ then
\begin{align}
\label{composizione_completa}
\widetilde \Gamma_{\nu,\mu}(ST)=\widetilde \Gamma_{\nu,\sigma}(S)\widetilde \Gamma_{\sigma,\mu}(T).    
\end{align}
\end{enumerate}
\end{proposition}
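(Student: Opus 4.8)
The plan is to reduce all three assertions to the corresponding identities on a single Wiener chaos $\mathscr H_n$, and then to recombine using the orthogonality of the decomposition \eqref{wiener_chaos_dec} together with the contractivity of the operators involved, which legitimizes interchanging each operator with the defining series \eqref{operatore_tilde_gamma}. The algebraic backbone consists of two elementary facts about tensor powers that I would record first: for bounded operators one has $(T^{\otimes n})^\star=(T^\star)^{\otimes n}$ and $(ST)^{\otimes n}=S^{\otimes n}T^{\otimes n}$. Both are verified on decomposable tensors $v_1\otimes\cdots\otimes v_n$, where they reduce to $\prod_i\langle Tv_i,w_i\rangle=\prod_i\langle v_i,T^\star w_i\rangle$ and to $STv_i$ computed factor by factor, respectively, and are then extended by linearity and continuity; the lemma asserting that $T^{\otimes n}$ carries $H_\mu^{\odot n}$ into $H_\nu^{\odot n}$ ensures that all compositions with the isometries $\Psi_n^\gamma$ of \eqref{iso_wiener_chaos_tensor_prod} make sense. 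Part (i) is then immediate: $I_{H_\mu}^{\otimes n}$ fixes every basis tensor, hence equals $I_{H_\mu^{\otimes n}}$, so $\widetilde\Gamma_{\mu,\mu,n}(I_{H_\mu})=\Psi_n^\mu(\Psi_n^\mu)^{-1}=I_{\mathscr H_n^\mu}$, and summing over $n$ against $f=\sum_n I_n^\mu f$ gives $\widetilde\Gamma_{\mu,\mu}(I_{H_\mu})=I_{L^2(X,\mu)}$.

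For (ii), since $\widetilde\Gamma_{\nu,\mu}(T)$ sends $\mathscr H_n^\mu$ into $\mathscr H_n^\nu$ and the chaoses are mutually orthogonal, it suffices to check the adjoint relation level by level. Writing $f=\Psi_n^\mu(y)$ and $g=\Psi_n^\nu(w)$ with $y\in H_\mu^{\odot n}$, $w\in H_\nu^{\odot n}$, and recalling from Lemma \ref{lemma:isomorfismo} that $\Psi_n^\gamma$ scales the inner product by the factor $n!$, I would compute
\begin{align*}
\langle\widetilde\Gamma_{\nu,\mu,n}(T)f,g\rangle_{L^2(X,\nu)}=n!\langle T^{\otimes n}y,w\rangle_{H_\nu^{\otimes n}}=n!\langle y,(T^\star)^{\otimes n}w\rangle_{H_\mu^{\otimes n}}=\langle f,\widetilde\Gamma_{\mu,\nu,n}(T^\star)g\rangle_{L^2(X,\mu)},
\end{align*}
where the middle equality is exactly $(T^{\otimes n})^\star=(T^\star)^{\otimes n}$. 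Summing the chaos-wise identities yields $(\widetilde\Gamma_{\nu,\mu}(T))^\star=\widetilde\Gamma_{\mu,\nu}(T^\star)$.

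For (iii) I argue again on each level, the crucial point being the cancellation $(\Psi_n^\sigma)^{-1}\Psi_n^\sigma=I$, which collapses the composition:
\begin{align*}
\widetilde\Gamma_{\nu,\sigma,n}(S)\,\widetilde\Gamma_{\sigma,\mu,n}(T)=\Psi_n^\nu S^{\otimes n}(\Psi_n^\sigma)^{-1}\Psi_n^\sigma T^{\otimes n}(\Psi_n^\mu)^{-1}=\Psi_n^\nu(ST)^{\otimes n}(\Psi_n^\mu)^{-1}=\widetilde\Gamma_{\nu,\mu,n}(ST),
\end{align*}
using $S^{\otimes n}T^{\otimes n}=(ST)^{\otimes n}$. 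Since $\|ST\|_{\mathscr L(H_\mu,H_\nu)}\le\|S\|\|T\|\le1$, all three series operators are well defined contractions; and because $\widetilde\Gamma_{\sigma,\mu,n}(T)(I_n^\mu f)$ already lies in $\mathscr H_n^\sigma$ (hence coincides with its own $n$-th chaos component), applying the bounded operator $\widetilde\Gamma_{\nu,\sigma}(S)$ termwise to the series defining $\widetilde\Gamma_{\sigma,\mu}(T)f$ reproduces $\sum_n\widetilde\Gamma_{\nu,\mu,n}(ST)(I_n^\mu f)=\widetilde\Gamma_{\nu,\mu}(ST)f$.

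I expect the only genuine difficulty to be bookkeeping rather than conceptual: keeping track of the normalization constant $n!$ carried by $\Psi_n^\gamma$ and confirming that the two tensor-power identities interact correctly with the symmetric subspaces $H_\gamma^{\odot n}$ (so that, in particular, $(T^\star)^{\otimes n}$ really maps $H_\nu^{\odot n}$ into $H_\mu^{\odot n}$). Once these are secured, the orthogonality of the Wiener chaos decomposition reduces each of (i)--(iii) to the one-line computation on a fixed level displayed above.
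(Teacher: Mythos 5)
Your proposal is correct and follows essentially the same route as the paper: both reduce each identity to a single Wiener chaos via the isometries $\Psi_n^\gamma$, use the tensor-power identities $(ST)^{\otimes n}=S^{\otimes n}T^{\otimes n}$ and $(T^{\otimes n})^\star=(T^\star)^{\otimes n}$, and recombine through the orthogonal decomposition, with the projection identity $I_n^\sigma(\widetilde\Gamma_{\sigma,\mu}(T)f)=\widetilde\Gamma_{\sigma,\mu,n}(T)I_n^\mu f$ justifying the termwise composition in (iii). The only cosmetic difference is in (ii), where the paper carries out the adjoint computation in coordinates on the generalized Hermite polynomials $\Phi_\alpha^\mu$, $\Phi_\beta^\nu$ and concludes by density, while you invoke the abstract identity $(T^{\otimes n})^\star=(T^\star)^{\otimes n}$ together with the $n!$ scaling of the inner product under $\Psi_n^\gamma$ --- the same argument in basis-free form.
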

\begin{proof}
\leavevmode
\begin{enumerate}[{\rm (i)}]

\item It follows from the definition of $\widetilde\Gamma_{\mu,\mu}$.

\item We firstly observe that $\|T^\star \|_{\mathscr L(H_\nu,H_\mu)}=\|T\|_{\mathscr L(H_\mu,H_\nu)}\leq 1$, which means that $\widetilde \Gamma_{\mu,\nu}(T^\star )$ is well-defined. By \eqref{iso_wiener_chaos_tensor_prod}, for every $n\in\N$ and every $\alpha,\beta\in\Lambda_n$, we have
\begin{align*}
\langle \widetilde \Gamma_{\nu,\mu,n}(T)\Phi_\alpha^\mu,\Phi_\beta^\nu\rangle_{L^2(X,\nu)}
= & \langle \Psi_n^\nu(T^{\otimes n}(\Psi_n^\mu)^{-1}\Phi_\alpha^\mu), \Phi_\beta^\nu\rangle_{L^2(X,\nu)}\\
=& n!\langle T^{\otimes n}(\Psi_n^\mu)^{-1}\Phi_\alpha^\mu, (\Psi_n^\nu)^{-1}\Phi_\beta^\nu\rangle_{H_\nu^{\otimes n}} \\
= & \frac{\sqrt{\alpha!\beta!}}{n!}\sum_{\sigma_1,\sigma_2\in\Sigma_n}\langle  Te_{i^\alpha_{\sigma_1(1)}}^\mu,e_{i^\beta_{\sigma_2(1)}}^\nu\rangle_{H_\nu}\cdots\langle Te_{i^\alpha_{\sigma_1(n)}}^\mu,e_{i^\beta_{\sigma_2(n)}}^\nu\rangle_{H_\nu} \\
= & \frac{\sqrt{\alpha!\beta!}}{n!}\sum_{\sigma_1,\sigma_2\in\Sigma_n}\langle  e_{i^\alpha_{\sigma_1(1)}}^\mu,T^\star e_{i^\beta_{\sigma_2(1)}}^\nu\rangle_{H_\mu}\cdots\langle e_{i^\alpha_{\sigma_1(n)}}^\mu,T^\star e_{i^\beta_{\sigma_2(n)}}^\nu\rangle_{H_\mu} \\
= & n!\langle (T^\star )^{\otimes n}(\Psi_n^\nu)^{-1}\Phi_\beta^\nu, (\Psi_n^\mu)^{-1}\Phi_\alpha^\mu\rangle_{H_\mu^{\otimes n}}\\
=& \langle \widetilde \Gamma_{\mu,\nu,n}(T^\star )\Phi_\beta^\nu,\Phi_\alpha^\mu\rangle_{L^2(X,\mu)}.
\end{align*}

Since $\alpha,\beta\in \Lambda_n$, it follows that $\widetilde \Gamma_{\nu,\mu}(T)\Phi_\alpha^\mu=\widetilde \Gamma_{\nu,\mu,n}(T)\Phi_\alpha^\mu$ and $\widetilde \Gamma_{\mu,\nu}(T^\star )\Phi_\beta^\nu=\widetilde \Gamma_{\mu,\nu,n}(T^\star )\Phi_\beta^\nu$. Therefore, $\langle \widetilde\Gamma_{\nu,\mu}(T)\Phi_\alpha^\mu,\Phi_\beta^\nu\rangle_{L^2(X,\mu)}=\langle \widetilde\Gamma_{\mu,\nu}(T^\star )\Phi_\beta^\nu,\Phi_\alpha^\mu\rangle_{L^2(X,\mu)}$. By density we get the assertion. 

\item For every $f\in L^2(X,\mu)$, we get
\begin{align}
\label{forma_comp_1}
(\widetilde \Gamma_{\nu,\mu}(ST))f
= \sum_{n=0}^\infty \Psi_n^\nu((ST)^{\otimes n}(\Psi_n^\mu)^{-1}I_n^\mu f).
\end{align}
For every $n\in\N$ and $v_1,\ldots,v_n\in H_\mu$, we have
\begin{align*}
(ST)^{\otimes n}v_1\otimes \cdots \otimes v_n
&= (STv_1)\otimes \cdots \otimes (STv_n)
= S^{\otimes n}(T^{\otimes n}v_1\otimes \cdots \otimes v_n)\\
&=(S^{\otimes n}T^{\otimes n})v_1\otimes \cdots\otimes v_n,
\end{align*}
which means that $(ST)^{\otimes n}=S^{\otimes n}T^{\otimes n}$. Replacing in \eqref{forma_comp_1}, we deduce that
\begin{align*}
(\widetilde \Gamma_{\nu,\mu}(ST))f
= \sum_{n=0}^\infty \Psi_n^\nu((S^{\otimes n}T^{\otimes n})(\Psi_n^\mu)^{-1}I_n^\mu f), \qquad f\in L^2(X,\mu).    
\end{align*}
Since $\Psi_n^\sigma(T^{\otimes n}(\Psi_n^{\mu})^{-1}I_n^\mu f)\in \mathscr H_n^\sigma$ and $S^{\otimes n}(\Psi_n^{\sigma})^{-1}(\Psi_n^\sigma (T^{\otimes n}(\Psi_n^{\mu})^{-1}I_n^\mu f))\in H_\nu^{\odot n} $ for every $n\in\N$, we get
\begin{align*}
(\widetilde \Gamma_{\nu,\mu}(ST))f
= & \sum_{n=0}^\infty \Psi_n^\nu((S^{\otimes n}(\Psi_n^\sigma)^{-1}\Psi_n^\sigma T^{\otimes n})(\Psi_n^\mu)^{-1}I_n^\mu f) \\
= & \sum_{n=0}^\infty \Psi_n^\nu\big(S^{\otimes n}(\Psi_n^\sigma)^{-1}(\Psi_n^\sigma (T^{\otimes n}(\Psi_n^\mu)^{-1}I_n^\mu f))\big) \\
= & \sum_{n=0}^\infty \widetilde \Gamma_{\nu,\sigma,n}(S)(\widetilde \Gamma_{\sigma,\mu,n}(T)I_n^\mu f) 
= \sum_{n=0}^\infty \widetilde \Gamma_{\nu,\sigma,n}(S)\left(I_n^\sigma\Big(\sum_{m=0}^\infty\widetilde \Gamma_{\sigma,\mu,m}(T)I_m^\mu f\Big)\right) \\
= & \widetilde \Gamma_{\nu,\sigma}(S)(\widetilde \Gamma_{\sigma,\mu}(T)f), \qquad f\in L^2(X,\mu),
\end{align*}
where we have used the fact that 
\begin{align*}
I_n^\sigma(\widetilde \Gamma_{\sigma,\mu}(T))
= I_n^\sigma\Big(\sum_{m=0}^\infty\widetilde\Gamma_{\sigma,\mu,m}(T)I_m^\mu f\Big)=\widetilde \Gamma_{\sigma,\mu,n}(T)I_n^\mu f, \qquad n\in\N, \ f\in L^2(X,\mu).
\end{align*}
\end{enumerate}
\end{proof}

We write the following corollary of Proposition \ref{prop:prop_second_quant_serie}(iii), that will be useful in the sequel.
\begin{corollary}
\label{composizione0}
Let $\mu,\nu$ be two Gaussian measures on $X$ with Cameron-Martin spaces $H_\mu$ and $H_\nu$, respectively. 
If $T\in\mathscr L(H_\mu,H_\nu)$ with $\norm{T}_{\mathscr L(H_\mu,H_\nu)}\leq 1$, then, for every $c\in[0,1]$ and $f\in L^2(X,\mu)$,
\begin{align}
&\widetilde{\Gamma}_{\nu,\mu}(cL)f
= \widetilde{\Gamma}_{\nu,\mu}(L)\widetilde{\Gamma}_{\mu,\mu}(cI)f. \label{composizione1}\\ \label{composizione2}
&\widetilde{\Gamma}_{\nu,\mu}(cL)f = \widetilde{\Gamma}_{\nu,\nu}(cI)\widetilde{\Gamma}_{\mu,\mu}(L)f.
\end{align}
\end{corollary}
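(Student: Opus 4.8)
The plan is to derive both identities as direct applications of the composition rule in Proposition~\ref{prop:prop_second_quant_serie}(iii). The key observation is that the scalar multiple $cL$ can be factored in two ways as a composition of contractions: as $L\circ(cI_{H_\mu})$ and as $(cI_{H_\nu})\circ L$. Before invoking the composition rule, I would first verify that the relevant operators are indeed contractions, so that all the second quantization operators appearing in \eqref{composizione1} and \eqref{composizione2} are well-defined. Since $c\in[0,1]$, the operator $cI_{H_\mu}\in\mathscr L(H_\mu)$ satisfies $\|cI_{H_\mu}\|_{\mathscr L(H_\mu)}=c\leq 1$, and likewise $\|cI_{H_\nu}\|_{\mathscr L(H_\nu)}=c\leq 1$; by hypothesis $\|L\|_{\mathscr L(H_\mu,H_\nu)}\leq 1$; and $\|cL\|_{\mathscr L(H_\mu,H_\nu)}=c\|L\|_{\mathscr L(H_\mu,H_\nu)}\leq 1$. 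Thus each individual factor is a contraction.

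For \eqref{composizione1}, I would write $cL=L(cI_{H_\mu})$, viewing $cI_{H_\mu}\in\mathscr L(H_\mu,H_\mu)$ and $L\in\mathscr L(H_\mu,H_\nu)$. Applying Proposition~\ref{prop:prop_second_quant_serie}(iii) with the three measures chosen as $\mu$, the intermediate measure also $\mu$ (so $\sigma=\mu$), and $\nu$, with $T=cI_{H_\mu}$ and $S=L$, yields
\begin{align*}
\widetilde\Gamma_{\nu,\mu}(cL)=\widetilde\Gamma_{\nu,\mu}(L)\,\widetilde\Gamma_{\mu,\mu}(cI),
\end{align*}
which is exactly \eqref{composizione1} after applying both sides to $f\in L^2(X,\mu)$. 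For \eqref{composizione2}, I would instead write $cL=(cI_{H_\nu})L$, with $L\in\mathscr L(H_\mu,H_\nu)$ and $cI_{H_\nu}\in\mathscr L(H_\nu,H_\nu)$. Here the intermediate measure is $\nu$ (so $\sigma=\nu$), and applying Proposition~\ref{prop:prop_second_quant_serie}(iii) with $T=L$ and $S=cI_{H_\nu}$ gives
\begin{align*}
\widetilde\Gamma_{\nu,\mu}(cL)=\widetilde\Gamma_{\nu,\nu}(cI)\,\widetilde\Gamma_{\mu,\nu}(L),
\end{align*}
and I would confirm that the subscripts match those in \eqref{composizione2}.

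I do not expect any serious obstacle here, since the corollary is essentially a bookkeeping consequence of associativity of the second quantization under composition. The only point requiring genuine care is the correct tracking of the Cameron-Martin spaces attached to each measure and the order of composition, so that the measure subscripts and the domain-codomain pairs of the operators $L$, $cI_{H_\mu}$, $cI_{H_\nu}$ line up consistently with the hypotheses of Proposition~\ref{prop:prop_second_quant_serie}(iii). In particular, one must be attentive that in \eqref{composizione1} the scalar contraction acts on the source side (measure $\mu$) whereas in \eqref{composizione2} it acts on the target side (measure $\nu$), which is precisely the asymmetry recorded by the two formulas. Once the factorizations are set up correctly and the contractivity of each factor is checked, both statements follow immediately.
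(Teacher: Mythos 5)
Your proposal is correct and is exactly the paper's (implicit) argument: the corollary is stated as an immediate consequence of Proposition~\ref{prop:prop_second_quant_serie}(iii), applied once with the factorization $cL=L\,(cI_{H_\mu})$ and intermediate measure $\sigma=\mu$, and once with $cL=(cI_{H_\nu})\,L$ and $\sigma=\nu$, after checking each factor is a contraction. One caveat: in your second display the composition rule with $\sigma=\nu$, $T=L$, $S=cI_{H_\nu}$ produces $\widetilde\Gamma_{\nu,\nu}(cI)\,\widetilde\Gamma_{\nu,\mu}(L)$, not $\widetilde\Gamma_{\nu,\nu}(cI)\,\widetilde\Gamma_{\mu,\nu}(L)$ as you wrote (the latter is not even defined for $L\in\mathscr L(H_\mu,H_\nu)$); note that the paper's own equation \eqref{composizione2} carries the analogous typo $\widetilde\Gamma_{\mu,\mu}(L)$, and the correct form $\widetilde\Gamma_{\nu,\mu}(cL)=\widetilde\Gamma_{\nu,\nu}(cI)\,\widetilde\Gamma_{\nu,\mu}(L)$ is the one actually used later in the proof of Proposition~\ref{comportamento_asintotico}.
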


We now compute the action of $\widetilde \Gamma_{\nu,\mu}(T)$ on functions belonging to $\mathcal{E}_{\mu}$ (see Definition \ref{def_e_gamma}). To this aim, we introduce an operator which will be useful also in the sequel.
\begin{defn}
\label{def:hatT}
Given $T\in\mathscr L(H_\mu,H_\nu)$, we set $\widehat T=Q_\nu^{-\frac12}TQ_\mu^\frac12\in\mathscr L(X)$.   
\end{defn} 
%We stress that an analogous result holds true for $h\in \overline{H_\mu}$ considering functions of the type
%\begin{equation}
%x\in X\mapsto \tilde{E}^\mu_h(x):=e^{Q_\mu^{-\frac12}h(x)_X-\frac12\|Q^{\frac12}h\|_{H_\mu}^2}.
%\end{equation}

\begin{proposition}
\label{prop:conto_gamma_tilde_exp_fct}
Let $\mu,\nu$ be centered Gaussian measures on $X$ with Cameron-Martin spaces $H_\mu$ and $H_\nu$, respectively. Let $T\in\mathscr L(H_\mu,H_\nu)$ be such that $\|T\|_{\mathscr L(H_\mu,H_\nu)}\leq 1$. Therefore,
\begin{align}
\label{azione_tilde_gamma_exp}
\widetilde \Gamma_{\nu,\mu}(T)(E^\mu_h)=\sum_{n=0}^\infty I_n^\nu(E^\nu_{\widehat Th})=E^\nu_{\widehat Th}, \qquad E^\mu_h\in\mathcal{E}_\mu,\ h\in X. 
\end{align}  
\end{proposition}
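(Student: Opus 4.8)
The plan is to reduce everything to the Wiener chaos decomposition of $E^\mu_h$ and then to apply $\widetilde\Gamma_{\nu,\mu}(T)$ chaos by chaos. The second equality in \eqref{azione_tilde_gamma_exp} is just the decomposition \eqref{wiener_chaos_dec} of the $L^2$-function $E^\nu_{\widehat T h}$, so the whole content sits in the first equality. Since $\|h\|_X^2$ is the variance of the Gaussian random variable $W^\mu_h$, one checks that $E^\mu_h\in L^2(X,\mu)$ with $\|E^\mu_h\|^2_{L^2(X,\mu)}=e^{\|h\|_X^2}$; hence, by the very Definition \ref{def:op_gamma_tilde},
\begin{align*}
\widetilde\Gamma_{\nu,\mu}(T)(E^\mu_h)=\sum_{n=0}^\infty\widetilde\Gamma_{\nu,\mu,n}(T)\big(I_n^\mu(E^\mu_h)\big),
\end{align*}
with no interchange of limits to justify. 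It therefore suffices to identify each summand.

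The key step is the chaos-coefficient identity
\begin{align*}
I_n^\mu(E^\mu_h)=\frac{1}{n!}\,\Psi_n^\mu\big((Q_\mu^{\frac12}h)^{\otimes n}\big),\qquad n\in\N\cup\{0\}.
\end{align*}
To prove it I would first record that, for a unit vector $w\in H_\mu$, choosing a Hilbert basis of $H_\mu$ with $e_1^\mu=w$ and taking the multi-index $\alpha$ with $\alpha_1=n$, the isometry \eqref{iso_wiener_chaos_tensor_prod} together with \eqref{pol_herm_gen} gives $\Psi_n^\mu(w^{\otimes n})=n!\,\phi_n(\Psi_1^\mu(w))$, where $\Psi_1^\mu(w)=W^\mu_{Q_\mu^{-1/2}w}$ is standard Gaussian under $\mu$. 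Writing $w=Q_\mu^{\frac12}h$ (so that $\|w\|_{H_\mu}=\|h\|_X$ and $\Psi_1^\mu(w)=W^\mu_h$) and using homogeneity of $\Psi_n^\mu$ on tensor powers together with the Hermite generating function $\sum_{n\ge0}t^n\phi_n(\xi)=e^{t\xi-t^2/2}$ (with $t=\|h\|_X$ and $\xi=W^\mu_h/\|h\|_X$), one gets
\begin{align*}
\sum_{n=0}^\infty\frac1{n!}\Psi_n^\mu\big((Q_\mu^{\frac12}h)^{\otimes n}\big)=\sum_{n=0}^\infty\|h\|_X^n\,\phi_n\Big(\tfrac{W^\mu_h}{\|h\|_X}\Big)=e^{W^\mu_h-\frac12\|h\|_X^2}=E^\mu_h.
\end{align*}
Since the $n$-th summand lies in $\mathscr H_n^\mu$ (being $\tfrac1{n!}\Psi_n^\mu$ of an element of $H_\mu^{\odot n}$), the displayed series is the Wiener chaos decomposition of $E^\mu_h$, which yields the claimed identity.

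With this in hand the computation closes quickly. Using $\widetilde\Gamma_{\nu,\mu,n}(T)=\Psi_n^\nu T^{\otimes n}(\Psi_n^\mu)^{-1}$ (Definition \ref{def:op_gamma_tilden}), the relation $T^{\otimes n}(w^{\otimes n})=(Tw)^{\otimes n}$ from Corollary \ref{coro:op_tensor_spaces}, and the bookkeeping identity $TQ_\mu^{\frac12}h=Q_\nu^{\frac12}\widehat T h$ built into Definition \ref{def:hatT}, I obtain
\begin{align*}
\widetilde\Gamma_{\nu,\mu,n}(T)\big(I_n^\mu(E^\mu_h)\big)=\frac1{n!}\Psi_n^\nu\big((Q_\nu^{\frac12}\widehat T h)^{\otimes n}\big)=I_n^\nu(E^\nu_{\widehat T h}),
\end{align*}
the last equality being the chaos identity above, now read off for the measure $\nu$ and the vector $\widehat T h$. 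This application is legitimate because $\widehat T h=Q_\nu^{-\frac12}TQ_\mu^{\frac12}h$ lies in the range of $Q_\nu^{-\frac12}$, that is in $({\rm ker}\,Q_\nu)^\perp$, so that the variance of $W^\nu_{\widehat T h}$ equals $\|\widehat T h\|_X^2$ and the normalization of $E^\nu_{\widehat T h}$ is the consistent one. Summing over $n$ gives exactly \eqref{azione_tilde_gamma_exp}.

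I expect the main obstacle to be the chaos-coefficient identity for $E^\mu_h$ in the second paragraph: it is where the Hermite generating function must be married to the explicit isometries $\Psi_n^\mu,\Psi_n^\nu$ of Lemma \ref{lemma:isomorfismo}, and where one has to keep straight the two normalizations at play, the ambient norm $\|\cdot\|_X$ appearing in $E^\mu_h$ and the Cameron--Martin norm $\|\cdot\|_{H_\mu}$ governing $\Psi_1^\mu$ and the tensor powers. Once that identity is secured, the remaining ingredients, namely multiplicativity of $T^{\otimes n}$ on tensor powers and the identity $TQ_\mu^{\frac12}=Q_\nu^{\frac12}\widehat T$, are routine.
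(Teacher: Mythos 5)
Your proof is correct and structurally parallel to the paper's: both arguments hinge on the same chaos identity $I_n^\mu(E^\mu_h)=\frac1{n!}\Psi_n^\mu\big((Q_\mu^{\frac12}h)^{\otimes n}\big)$ --- this is exactly the paper's formula \eqref{formula_psi_-1_exp}, written there in coordinates --- followed by $T^{\otimes n}(Q_\mu^{\frac12}h)^{\otimes n}=(TQ_\mu^{\frac12}h)^{\otimes n}=(Q_\nu^{\frac12}\widehat Th)^{\otimes n}$ and a re-reading of the same identity for the pair $(\nu,\widehat Th)$. Where you differ is in how that identity is established. The paper computes every coefficient $\int_X E^\mu_h\Phi^\mu_\alpha\,d\mu$ in its fixed eigenvector basis, factorizing into one-dimensional Gaussian integrals and integrating by parts $\alpha_j$ times against the Rodrigues formula (see \eqref{calcolo_proiez_E_mu_alpha}--\eqref{dec_E_h}); you obtain the whole expansion at once from the Hermite generating function along the single direction $Q_\mu^{\frac12}h$. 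Your route is shorter and eliminates the multi-index bookkeeping; the paper's is more laborious but never leaves the basis used to define $\Psi_n^\mu$.

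That last point hides the one step you should make explicit. Your identity $\Psi_n^\mu(w^{\otimes n})=n!\,\phi_n\big(W^\mu_{Q_\mu^{-\frac12}w}\big)$ is derived by choosing an orthonormal basis of $H_\mu$ whose first element is $w$; but $\Psi_n^\mu$, as it enters Definition \ref{def:op_gamma_tilden}, is the isomorphism attached to one fixed basis, so you are implicitly assuming that $\Psi_n^\mu$ (hence $\widetilde\Gamma_{\nu,\mu,n}(T)$) does not depend on the choice of basis --- equivalently, that the displayed identity holds for \emph{every} unit $w\in H_\mu$, not only for vectors of the defining basis. This is true: expanding $w=\sum_j c_jv_j$ in the fixed basis, it reduces to the Hermite addition formula $\phi_n\big(\sum_j c_j\xi_j\big)=\sum_{|\alpha|=n}\prod_j c_j^{\alpha_j}\phi_{\alpha_j}(\xi_j)$ for $\sum_j c_j^2=1$, which follows from the same generating function you invoke --- but it is nowhere proved in the paper, whose own computation sidesteps the issue precisely because it never changes basis; you should state and prove it. A second, shared caveat: both proofs use that $W^\mu_h$ has variance $\|h\|_X^2$, which holds only for $h\in\overline{H_\mu}$; you do record the analogous point on the $\nu$ side (namely $\widehat Th\in({\rm ker}\,Q_\nu)^\perp$), where the paper is silent.
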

\begin{proof}
Let $h\in X$ and consider $E^\mu_h=e^{W^\mu_h(x)-\frac12\| h\|_{X}^2}$ for all $x\in X$. Let $\Phi_\alpha^\mu$ be as in \eqref{pol_herm_gen} for some $\alpha=(\alpha_1,\alpha_2,\ldots)\in \Lambda$. Since $\{e_n^\mu:n\in\N\}$ is an orthonormal basis of $H_\mu$ consisting of eigenvectors of $Q_\mu$, it follows that
\begin{align}
\label{calcolo_proiez_E_mu_alpha}
\int_X E^\mu_h\Phi_\alpha^\mu d\mu
=  & e^{-\frac12\|h\|^2_{X}}\sqrt{\alpha!}\int_Xe^{W^\gamma_h(x)}\prod_{j=1}^\infty\phi_{\alpha_j}\Bigl( W^\gamma_{Q_\mu^{-\frac 12}e_j^\mu}(x)\Bigr)\mu(dx) \notag \\
= & e^{-\frac12\|h\|_{X}^2}\sqrt{\alpha!}\int_X\exp\Bigl({\displaystyle\sum_{n=1}^\infty\langle h,Q_\mu^{-\frac12}e_n^\mu\rangle_X W^\gamma_{Q_\mu^{-\frac 12}e_n^\mu}(x)}\Bigr)\prod_{j=1}^\infty\phi_{\alpha_j}\Bigl( W^\gamma_{Q_\mu^{-\frac 12}e_j^\mu}(x)\Bigr)\mu(dx)\notag  \\
= & e^{-\frac12\|h\|_{X}^2}\sqrt{\alpha!}\prod_{j=1}^\infty\int_X\exp\Bigl({\displaystyle\langle h,Q_\mu^{-\frac12}e_j^\mu\rangle_X W^\gamma_{Q_\mu^{-\frac 12}e_j^\mu}(x)}\Bigr)\phi_{\alpha_j}\Bigl( W^\gamma_{Q_\mu^{-\frac 12}e_j^\mu}(x)\Bigr)\mu(dx),
\end{align}
where we have exploited the independence of $ W^\gamma_{Q_\mu^{-\frac 12}e_j^\mu}$ with $j$ varying in $\N$ and the fact that
\begin{align*}
W^\gamma_h(x)
= \sum_{n=1}^\infty \langle h,Q_\mu^{-\frac12}e_n^\mu\rangle_X W^\gamma_{Q_\mu^{-\frac 12}e_n^\mu}(x), \qquad h,x\in X.
\end{align*}
By applying the change of variables $\xi=W^\gamma_{Q_\mu^{-\frac 12}e_j^\mu}(x)$ for every $j\in\N$, we get
\begin{align}
\label{change_of_variables}
\int_X\exp\Bigl({\displaystyle\langle h,Q_\mu^{-\frac12}e_j^\mu\rangle_X W^\gamma_{Q_\mu^{-\frac 12}e_j^\mu}(x)}\Bigr)\phi_{\alpha_j}\Bigl( W^\gamma_{Q_\mu^{-\frac 12}e_j^\mu}(x)\Bigr)\mu(dx)
= & \frac1{\sqrt{2\pi}}\int_{\R}e^{\langle h,Q_\mu^{-\frac12}e_j^\mu\rangle_X\xi}\phi_{\alpha_j}(\xi)e^{-\frac12\xi^2}d\xi
\end{align}
and, recalling that
\begin{align*}
\phi_n(\xi)=\frac{(-1)^n}{n!}e^{\frac12 \xi^2}\frac{d^n}{d\xi^n}e^{-\frac12\xi^2},  \quad \xi\in\R, \ n\in\N\cup\{0\}, 
\end{align*}
integrating by parts $\alpha_j$ times in \eqref{change_of_variables} it follows that
\begin{align}
\label{int_parti_E_mu_alpha}
& \int_X\exp\Bigl({\displaystyle\langle h,Q_\mu^{-\frac12}e_j^\mu\rangle_X W^\gamma_{Q_\mu^{-\frac 12}e_j^\mu}(x)}\Bigr)\phi_{\alpha_j}\Bigl( W^\gamma_{Q_\mu^{-\frac 12}e_j^\mu}(x)\Bigr)\mu(dx) \notag \\
& = \frac{\langle h,Q_\mu^{-\frac12}e_j^\mu\rangle_X^{\alpha_j}}{\sqrt{2\pi}\alpha_j!}\int_{\R}e^{\langle h,Q_\mu^{-\frac12}e_j^\mu\rangle_X\xi-\frac12\xi^2}d\xi 
=  \frac{\langle h,Q_\mu^{-\frac12}e_j^\mu\rangle_X^{\alpha_j}}{\alpha_j!}e^{\frac12\langle h,Q_\mu^{-\frac12}e_j^\mu\rangle_X^2}.    
\end{align}
From \eqref{calcolo_proiez_E_mu_alpha} and \eqref{int_parti_E_mu_alpha}, and recalling that $h=\sum_{j=1}^\infty \langle h,Q_\mu^{-\frac12}e_j^\mu\rangle_XQ_{\mu}^{-\frac12}e_j^\mu$, we infer that
\begin{align}
\label{dec_E_h}
\int_X E^\mu_h\Phi_\alpha^\mu d\mu
= & \frac{1}{\sqrt{\alpha!}}\prod_{j=1}^\infty 
\langle h,Q_\mu^{-\frac12}e_j^\mu\rangle_X^{\alpha_j}
, \qquad 
\left(\int_X E^\mu_h\Phi_\alpha^\mu d\mu\right) \Phi_\alpha^\mu
= \frac{1}{\sqrt{\alpha!}}\prod_{j=1}^\infty\langle h,Q_\mu^{-\frac12}e_j^\mu\rangle_X^{\alpha_j}\Phi_\alpha^\mu.
\end{align}

Fix $n\in\N$ and let us compute $\widetilde \Gamma_{\nu,\mu,n}(T)E^\mu_h$. From \eqref{iso_wiener_chaos_tensor_prod} and \eqref{dec_E_h}, it follows that
\begin{align}
\label{formula_psi_-1_exp}
(\Psi_n^\mu)^{-1}I_n^\mu E^\mu_h
= &(\Psi_n^\mu)^{-1}\left(\sum_{\alpha\in\Lambda_n}\frac{1}{\sqrt{\alpha!}}\prod_{j=1}^\infty \langle h,Q_\mu^{-\frac12}e_j^\mu\rangle_X^{\alpha_j}\Phi_{\alpha}^\mu\right) \notag \\
= & \frac1{n!}\sum_{\alpha\in\Lambda_n}\langle h,Q_\mu^{-\frac12}e_{i^\alpha_1}^\mu\rangle_X \cdots\langle h,Q_\mu^{-\frac12}e_{i^\alpha_n}^\mu\rangle_X\sum_{\sigma\in\Sigma_n^\alpha}e_{i^\alpha_{\sigma(1)}}^\mu\otimes \cdots \otimes e_{i^\alpha_{\sigma(n)}}^\mu \notag \\
= & \frac{1}{n!}\sum_{i_1,\ldots,i_n=1}^\infty\langle h,Q_\mu^{-\frac12}e_{i_1}^\mu\rangle_X e_{i_1}^\mu\otimes\cdots \otimes \langle h,Q_\mu^{-\frac12}e_{i_n}^\mu\rangle_X e_{i_n}^\mu \notag \\
= & \frac{1}{n!}\sum_{i_1,\ldots,i_n=1}^\infty\langle Q_\mu^\frac12 h,e_{i_1}^\mu\rangle_{H_\mu} e_{i_1}^\mu\otimes\cdots \otimes \langle Q_\mu^\frac12 h,e_{i_n}^\mu\rangle_{H_\mu} e_{i_n}^\mu,
\end{align}
where we have exploited the fact that 
$\langle h,Q_\mu^{-\frac12}e_{i_j}^\mu\rangle_X
= \langle Q_\mu^{\frac12}h,e_{i_j}^\mu\rangle_{H_\mu}$ for every $j\in\{1,\ldots,n\}$. Hence,
\begin{align}
\label{passaggo_mu_nu_esponenziali}
& T^{\otimes n}(\Psi_n^{\mu})^{-1}I_n^\mu E^\mu_h \notag \\
= & \frac{1}{n!}\sum_{i_1,\ldots,i_n=1}^\infty\langle Q_\mu^\frac12 h,e_{i_1}^\mu\rangle_{H_\mu} (Te_{i_1}^\mu)\otimes\cdots \otimes \langle Q_\mu^\frac12 h,e_{i_n}^\mu\rangle_{H_\mu} (Te_{i_n}^\mu) \notag \\
= & \frac{1}{n!}\sum_{i_1,\ldots,i_n,k_1,\ldots,k_n=1}^\infty\langle Q_\mu^{\frac12}h,e_{i_1}^\mu\rangle_{H_\mu} \langle Te_{i_1}^\mu,e_{k_1}^\nu\rangle_{H_\nu}e_{k_1}^\nu\otimes\cdots \otimes \langle Q_\mu^{\frac12}h,e_{i_n}^\mu\rangle_{H_\mu} \langle Te_{i_n}^\mu,e_{k_n}^\nu\rangle_{H_\nu}e_{k_n}^\nu \notag \\
= & \frac{1}{n!}\sum_{k_1,\ldots,k_n=1}^\infty\langle TQ_\mu^{\frac12}h,e_{k_1}^\nu\rangle_{H_\nu}e_{k_1}^\nu\otimes \cdots \otimes \langle TQ_\mu^{\frac12}h,e_{k_n}^\nu\rangle_{H_\nu}e_{k_n}^\nu.
\end{align}
From \eqref{formula_psi_-1_exp} and \eqref{passaggo_mu_nu_esponenziali}, with $\mu$ and $h$ replaced by $\nu$ and $TQ_\mu^{\frac12}h$, respectively, it follows that
\begin{align*}
& \frac{1}{n!}\sum_{k_1,\ldots,k_n=1}^\infty\langle TQ_\mu^{\frac12}h,e_{k_1}^\nu\rangle_{H_\nu}e_{k_1}^\nu\otimes \cdots \otimes \langle TQ_\mu^{\frac12}h,e_{k_n}^\nu\rangle_{H_\nu}e_{k_n}^\nu 
% \\
% = & \frac{1}{n!}\sum_{k_1,\ldots,k_n=1}^\infty\langle Q_\mu^{-\frac12}Th,Q_\mu^{-\frac12}f_{k_1}^\mu\rangle_Xe_{k_1}^\mu\otimes \cdots \otimes \langle Q_\mu^{-\frac12}TQ_\gamma^{\frac12}h,Q_\mu^{-\frac12}f_{k_n}^\mu\rangle_Xf_{k_n}^\mu 
=  (\Psi_n^\nu)^{-1}(I_n^\nu E^\nu_{Q_\nu^{-\frac12}TQ_\mu^{\frac12}h}),
\end{align*}
which gives $\widetilde \Gamma_{\nu,\mu,n}(T)(I_n^{\mu}E^\mu_h)=\Psi_n^{\nu}(T^{\otimes n}(\Psi_n^{\mu})^{-1}I_n^\mu E_{h}^\mu)=I_n^\nu(E^\nu_{\widehat Th})$. We conclude that 
\begin{align*}
\widetilde \Gamma_{\nu,\mu}(T)(E_h^\mu)
= & \sum_{n=0}^\infty \widetilde \Gamma_{\nu,\mu,n}(T)(I_n^\mu E_h^\mu)
= \sum_{n=0}^\infty I_n^\nu(E_{\widehat T h}^\nu)=E_{\widehat Th}^\nu.
\end{align*}
\end{proof}
\begin{remark}
\label{rmk:eq_operatore_sec_quant_choj}
We claim that Proposition \ref{prop:conto_gamma_tilde_exp_fct} gives the identity $\Gamma(\widehat T)=\widetilde{\Gamma}_{\mu,\mu}(T)$ when $\mu$ is a non-degenerate Gaussian measure on $X$, $T\in\mathscr L(H_\mu)$, $\widehat T\in\mathscr L(X)$ has been defined in \ref{def:hatT},
$\Gamma(\widehat T)$ is the operator defined in \cite{cho_gol_1996} for $T\in\mathscr L(X)$ with $\|T\|_{\mathscr L(X)}\leq 1$ and $\widetilde \Gamma_{\mu,\mu}(\widetilde T)$ is the operator introduced in \eqref{operatore_tilde_gamma}. Indeed, $\|T\|_{\mathscr L(H_\mu)}=\|\widehat T\|_{\mathscr L(X)}$. Further, in \cite[Proposition  1]{cho_gol_1996} it has been proved that $\Gamma(\widehat T)E_h^\mu=E_{\widehat Th}^\mu$ for every $h\in H_\mu$. From \eqref{azione_tilde_gamma_exp}, with $\nu=\mu$, we infer that
\begin{align*}
\widetilde \Gamma_{\mu,\mu}(T)E_h^\mu =E_{\widehat Th}^\mu, \qquad h\in H_\mu.  
\end{align*}
The claim follows from the linear density of $\mathcal D_\mu$ in $L^2(X,\mu)$.
\end{remark}

\section{The second quantization operator via an integral representation formula}\label{second_integrale}

Let $\mu=\mathcal{N}(0,Q_\mu)$ and $\nu=\mathcal{N}(0,Q_\nu)$ be two centered Gaussian measures on $X$ with Cameron-Martin spaces $H_\mu$ and $H_\nu$, respectively. We recall that $H_\mu=Q_\mu^{\frac{1}{2}}(X)$ and $H_\nu=Q_{\nu}^{\frac{1}{2}}(X)$ and we denote by $P_\mu$ e $P_\nu$ the orthogonal projections on ${\rm ker} (Q_\mu)$ and on ${\rm ker} (Q_\nu)$, respectively. 

We fix $T\in \mathscr L(H_\mu,H_\nu)$. 
If $\{e_n^\mu:n\in\N\}$ and $\{e_n^\nu:n\in\N\}$ are orthonormal bases of $H_\mu,H_\nu$ consisting of eigenvectors of $Q_\mu$ and $Q_\nu$, with corresponding eigenvalues $\{\lambda_n^\mu\}_{n\in\N}$ and $\{\lambda_n^{\nu}\}_{n\in\N}$, respectively, then for every $h\in H_\mu$ we get
\begin{align*}
Th
=\sum_{n=1}^\infty \sum_{k=1}^\infty \langle e_k^\mu,h\rangle_{H_\mu}\langle e_n^\nu,T e_k^\mu\rangle_{H_\nu} e_n^\nu.
\end{align*}
In particular, for every $n\in\N$ there exist $f_n^\mu,f_n^\nu\in X$, eigenvectors of $Q_\mu$ and $Q_\nu$ with corresponding eigenvalues $\lambda_n^\mu$ and $\lambda_n^\nu$, respectively, such that $e_{n}^\mu=(\lambda_n^\mu)^{\frac12}f_n^\mu$ and $e_{n}^\nu=(\lambda_n^\nu)^{\frac12}f_n^\nu$ and $\{f_n^\mu:n\in\N\}$ and $\{f_n^\nu:n\in\N\}$ are orthonormal systems in $X$. This implies that $\langle e_n^\mu,e_\ell^\mu\rangle_X=\delta_{n\ell}\lambda_n^\mu$, $\langle e_n^\nu,e_\ell^\nu\rangle_X=\delta_{n\ell}\lambda_n^\nu$ and $\lambda_n^\mu\neq 0\neq \lambda_n^\nu$ for every $n,\ell\in\N$. \\  
We shall study an extension of $T$ belonging to $L^2(X,\mu;X)$. For this purpose, we introduce the linear approximations $T_{\infty,m}^{\nu,\mu}:X\to X$, $m\in\N $, defined for every $x\in X$ as
\begin{align}
\label{Linftym}
T_{\infty,m}^{\nu,\mu}x
= & T\pi_m x
= \sum_{n=1}^\infty\sum_{k=1}^m \langle e_k^\mu,\pi_mx\rangle_{H_\mu}\langle e_n^\nu,Te_k^\mu\rangle_{H_\nu} e_n^\nu
= \sum_{n=1}^\infty\sum_{k=1}^m \frac{\langle Q_\mu^{-\frac12}e_k^\mu,x\rangle_X}{(\lambda_k^\mu)^{\frac{1}{2}}}\langle e_n^\nu,T e_k^\mu\rangle_{H_\nu} e_n^\nu, \end{align}
where
\begin{align*}
\pi_m x=\sum_{k=1}^m\frac{\langle Q_\mu^{-\frac12}e_k^\mu,x\rangle_X}{(\lambda_k^\mu)^{\frac12}}e_k^\mu, \qquad x\in X, \ m\in\N.     
\end{align*}
We notice that the operators $T_{\infty,m}^{\nu,\mu}$ are well-defined and belong to $\mathscr L(X)$ for every $m\in\N$, since
\begin{align*}
\|T_{\infty,m}^{\nu,\mu}x\|_X
\leq c_\nu\sum_{k=1}^m\lambda_k^{-\frac12}\|Te_k^\mu\|_{H_\nu} \|x\|_X, 
\qquad m\in\N,  
\end{align*}
where $c_\nu$ denotes the embedding constant of $H_\nu$ in $X$ (see \eqref{norma_forte_R}). 

The sequence $\{T_{\infty,m}^{\nu,\mu}\}_{m\in\N }$ converges in $L^2(X,\mu;X)$, as stated in the following result. 

\begin{proposition}
\label{prop:op_L_infty2}
Let $\mu=\mathcal{N}(0,Q_\mu)$ and $\nu=\mathcal{N}(0,Q_\nu)$ be two centered Gaussian measures on $X$ with Cameron-Martin spaces $H_\mu$ and $H_\nu$, respectively. If $T\in\mathscr L(H_\mu,H_\nu)$, then the sequence $\{T_{\infty,m}^{\nu,\mu}\}_{m\in\N }$ converges in $L^2(X,\mu;X)$ as $m\to \infty$. Its limit $T_\infty^{\nu,\mu}$ is a centered $X$-valued Gaussian random variable with law
\begin{align}\label{leggeLinfinito}
\mu\circ (T_\infty^{\nu,\mu})^{-1}=
\mathcal{N}(0,Q_\nu^{\frac12}(\widehat {T^\star})^\star\widehat {T^\star}Q_\nu^{\frac12}),
\end{align}
% \begin{align}\label{leggeLinfinito}
% \mu\circ (T_\infty^{\nu,\mu})^{-1}=
% \mathcal{N}(0,Q_\nu^{\frac12}(Q_\nu^{-\frac12}TT^\star Q_\nu^{\frac12})Q_\nu^{\frac12}),
% \end{align}
%where the operator $Q_\nu^{-\frac12}TT^\star Q_\nu^{\frac12}$ is meant as an element of $\mathscr L(X)$. 
where $\widehat {T^\star}=Q_\mu^{-\frac12}T^\star Q_\nu^{\frac12}$ has been introduced in Definition \ref{def:hatT}. Moreover,
% \begin{align*}
% \int_X|T_\infty^{\nu,\mu}x|_X^2\mu(dx)={\rm Tr}_X[Q_\nu^{\frac12}(Q_\nu^{-\frac12} TT^\star Q_\nu^{\frac12})Q_\nu^{\frac12}],  
% \end{align*}
\begin{align*}
\int_X|T_\infty^{\nu,\mu}x|_X^2\mu(dx)={\rm Tr}_X[Q_\nu^{\frac12}(\widehat {T^\star})^\star\widehat {T^\star} Q_\nu^{\frac12}],  
\end{align*}
the Cameron-Martin space of $\mathcal{N}(0,Q_\nu^{\frac12}(\widehat {T^\star})^\star\widehat {T^\star} Q_\nu^{\frac12})$ is continuously embedded in $H_\nu$ and
\begin{equation}
\label{gauss_rand_var}
\langle T_\infty^{\nu,\mu}x,h\rangle_X=W^\mu_{\widehat{T^\star}Q_\nu^\frac12 h}(x)\ , \qquad h,x\in X,
\end{equation}
where both the sides of \eqref{gauss_rand_var} are meant as elements of $L^2(X,\mu)$.
\end{proposition}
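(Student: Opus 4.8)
The plan is to first put the approximants \eqref{Linftym} into a transparent form from which every assertion follows almost mechanically. Writing $f_k^\mu:=Q_\mu^{-\frac12}e_k^\mu$ and $g_k:=W^\mu_{f_k^\mu}$, one reads off from Proposition \ref{teoria} that $g_k(x)=\langle f_k^\mu,x\rangle_X(\lambda_k^\mu)^{-\frac12}$ is exactly the scalar coefficient appearing in \eqref{Linftym}, and that $\{g_k\}_{k\in\N}$ is a sequence of independent standard Gaussian random variables under $\mu$ (indeed $\|g_k\|_{L^2(X,\mu)}=\|e_k^\mu\|_{H_\mu}=1$). Since the inner sum in \eqref{Linftym} is finite, I may interchange the summations and recognise $\sum_{n=1}^\infty\langle e_n^\nu,Te_k^\mu\rangle_{H_\nu}e_n^\nu=Te_k^\mu$ (a convergent expansion in $H_\nu$, viewed inside $X$ through the embedding $H_\nu\hookrightarrow X$), obtaining the clean identity
\begin{align*}
T_{\infty,m}^{\nu,\mu}x=\sum_{k=1}^m g_k(x)\,Te_k^\mu, \qquad x\in X,\ m\in\N.
\end{align*}

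Next I would establish convergence in $L^2(X,\mu;X)$ via the Cauchy criterion. For $m'>m$, independence of the $g_k$'s gives
\begin{align*}
\int_X\norm{T_{\infty,m'}^{\nu,\mu}x-T_{\infty,m}^{\nu,\mu}x}_X^2\,\mu(dx)=\sum_{k=m+1}^{m'}\norm{Te_k^\mu}_X^2,
\end{align*}
so it suffices to prove $\sum_k\norm{Te_k^\mu}_X^2<\infty$. Expanding $Te_k^\mu$ in $\{e_n^\nu\}$, using $\norm{e_n^\nu}_X^2=\lambda_n^\nu$, and exchanging the order of summation yields $\sum_k\norm{Te_k^\mu}_X^2=\sum_n\lambda_n^\nu\norm{T^\star e_n^\nu}_{H_\mu}^2\leq\norm{T}_{\mathscr L(H_\mu,H_\nu)}^2\,{\rm Tr}_X[Q_\nu]<\infty$. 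This is the one genuinely non-trivial step and the main obstacle: since $T$ is merely bounded (not Hilbert--Schmidt), the naive estimate $\norm{Te_k^\mu}_X\leq c_\nu\norm{Te_k^\mu}_{H_\nu}$ is useless, and one must instead exploit that the inclusion $H_\nu\hookrightarrow X$ is Hilbert--Schmidt, i.e. $Q_\nu\in\mathscr L_1(X)$. This gives the limit $T_\infty^{\nu,\mu}\in L^2(X,\mu;X)$.

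To identify the limit, note that each $T_{\infty,m}^{\nu,\mu}$ is the image of a standard Gaussian vector of $\R^m$ under a bounded linear map into $X$, hence a centered $X$-valued Gaussian with quadratic form $\langle C_m h,h\rangle_X=\sum_{k=1}^m\langle Te_k^\mu,h\rangle_X^2$. Since $L^2$-convergence forces convergence of the characteristic functionals and $\langle C_m h,h\rangle_X\to\sum_k\langle Te_k^\mu,h\rangle_X^2=:\langle Ch,h\rangle_X$, the limit is the centered Gaussian $\mathcal N(0,C)$, with $C$ trace class by the previous estimate. To match $C$ with the stated operator I rewrite, for $h\in X$, $\langle Te_k^\mu,h\rangle_X=\langle Te_k^\mu,Q_\nu h\rangle_{H_\nu}=\langle e_k^\mu,T^\star Q_\nu h\rangle_{H_\mu}$ (using $\langle u,Q_\nu h\rangle_{H_\nu}=\langle u,h\rangle_X$ for $u\in H_\nu$), whence by Parseval $\langle Ch,h\rangle_X=\norm{T^\star Q_\nu h}_{H_\mu}^2=\norm{Q_\mu^{-\frac12}T^\star Q_\nu h}_X^2=\norm{\widehat{T^\star}Q_\nu^{\frac12}h}_X^2$. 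This gives $C=Q_\nu^{\frac12}(\widehat{T^\star})^\star\widehat{T^\star}Q_\nu^{\frac12}$, proving \eqref{leggeLinfinito}; the $L^2$-norm identity then follows from ${\rm Tr}_X[C]=\lim_m\sum_{k=1}^m\norm{Te_k^\mu}_X^2=\int_X\abs{T_\infty^{\nu,\mu}x}_X^2\mu(dx)$. For the Cameron--Martin embedding I apply Proposition \ref{pseudo}(i) with $L_1=C^{\frac12}$ and $L_2=Q_\nu^{\frac12}$: the estimate $\norm{C^{\frac12}h}_X^2=\langle Ch,h\rangle_X=\norm{T^\star Q_\nu h}_{H_\mu}^2\leq\norm{T}^2_{\mathscr L(H_\mu,H_\nu)}\norm{Q_\nu^{\frac12}h}_X^2$ gives ${\rm Range}(C^{\frac12})\subseteq{\rm Range}(Q_\nu^{\frac12})=H_\nu$ with embedding constant at most $\norm{T}_{\mathscr L(H_\mu,H_\nu)}$.

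Finally, for \eqref{gauss_rand_var} I would take the $X$-inner product of the series against $h$; since convergence in $L^2(X,\mu;X)$ passes to $\langle\,\cdot\,,h\rangle_X$ in $L^2(X,\mu)$, I get $\langle T_\infty^{\nu,\mu}x,h\rangle_X=\sum_k g_k(x)\langle Te_k^\mu,h\rangle_X=\sum_k g_k(x)\langle e_k^\mu,T^\star Q_\nu h\rangle_{H_\mu}$ in $L^2(X,\mu)$. Setting $z:=\widehat{T^\star}Q_\nu^{\frac12}h=Q_\mu^{-\frac12}T^\star Q_\nu h\in X$, one has $\langle e_k^\mu,T^\star Q_\nu h\rangle_{H_\mu}=\langle f_k^\mu,z\rangle_X$, so the series is precisely the expansion of $W^\mu_z$ furnished by Proposition \ref{teoria}; hence $\langle T_\infty^{\nu,\mu}x,h\rangle_X=W^\mu_{\widehat{T^\star}Q_\nu^{\frac12}h}(x)$ as elements of $L^2(X,\mu)$, which is the claim.
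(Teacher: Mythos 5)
Your proposal is correct and follows essentially the same route as the paper's proof: the same Cauchy-sequence computation (your i.i.d.\ variables $g_k$ are exactly the coefficients the paper orthogonalises via \eqref{fannozero}), the same reliance on the trace-class property of $Q_\nu$ to obtain $\sum_k\|Te_k^\mu\|_X^2<\infty$, the same Parseval-type identity matching the covariance with $Q_\nu^{\frac12}(\widehat{T^\star})^\star\widehat{T^\star}Q_\nu^{\frac12}$, and the same applications of Proposition \ref{pseudo} for the Cameron--Martin embedding and Proposition \ref{teoria} for \eqref{gauss_rand_var}. The only organisational difference is that the paper establishes \eqref{gauss_rand_var} first and then computes the characteristic functional of the limit directly, whereas you identify the Gaussian laws of the finite-rank approximants and pass to the limit in their characteristic functionals; both steps are routine and equivalent.
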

\begin{proof}
%To lighten the notation, we set $\widetilde T:=Q_\nu^{-\frac12} TT^\star Q_\nu^{\frac12}\in\mathscr L(X)$. 
We show that $\{T_{\infty,m}^{\nu,\mu}\}_{m\in\N }$ is a Cauchy sequence in $L^2(X,\mu;X)$. Since $\langle e_n^\nu,e_\ell^\nu\rangle_X=\delta_{n\ell}\lambda_n^\nu$ for every $\ell,n\in\N$, it follows that, for every $m,h\in\N$, with $h>m$,
\begin{align}
\notag
\|T_{\infty,m}^{\nu,\mu}-T_{\infty,h}^{\nu,\mu}\|^2_{L^2(X,\mu;X)}
= & \int_X\norm{\sum_{n=1}^\infty \sum_{k=m+1}^h \frac{\langle Q_\mu^{-\frac12}e_k^\mu,x\rangle_X}{(\lambda_k^\mu)^{\frac{1}{2}}}\langle e_n^\nu,Te_k^\mu\rangle_{H_\nu} e_n^\nu}^2_X\mu(dx) \\
= & \sum_{n=1}^\infty \int_X \lambda_n^\nu\left(\sum_{k=m+1}^h \langle e_n^\nu,Te_k^\mu\rangle_{H_\nu}\frac{\langle Q_\mu^{-\frac12}e_k^\mu,x\rangle_X}{(\lambda_k^\mu)^{\frac{1}{2}}}\right)^2\mu(dx).
\label{cauchy_sequ_L_infty_12}
\end{align}
We recall that
\begin{align} 
\label{fannozero}&\int_X \langle Q_\mu^{-\frac12}e_k^\mu,x\rangle_X\langle Q_\mu^{-\frac12}e_j^\mu,x\rangle_X\mu(dx)=\delta_{jk}\lambda_k^\mu, \qquad  k,j\in\N.
\end{align}
By replacing \eqref{fannozero} in \eqref{cauchy_sequ_L_infty_12}, we infer 
\begin{align*}
\|T_{\infty,m}^{\nu,\mu}-T_{\infty,h}^{\nu,\mu}\|^2_{L^2(X,\mu;X)}
= & \sum_{n=1}^\infty \lambda_n^\nu\sum_{k=m+1}^h\langle e_n^\nu,Te_k^\mu\rangle_{H_\nu}^2
=\sum_{n=1}^\infty \lambda_n^\nu\sum_{k=m+1}^h\langle T^\star e_n^\nu,e_k^\mu\rangle_{H_\mu}^2. 
\end{align*}
Since
\begin{align*}
\sum_{n=1}^\infty\lambda_n^\nu \sum_{k=1}^\infty\langle T^\star e_n^\nu,e_k^\mu\rangle_{H_\mu}^2
= & \sum_{n=1}^\infty\sum_{k=1}^\infty\langle T^\star Q_\nu^{\frac{1}{2}}e_n^\nu,e_k^\mu\rangle_{H_\mu}^2
=\sum_{n=1}^\infty\norm{T^\star Q_\nu^{\frac12}e_n^\nu}_{H_\mu}^2 \\
= & \sum_{n=1}^\infty \|(Q_\mu^{-\frac12}T^\star Q_\nu^{\frac12})Q_\nu^\frac12 f_n\|_X^2
=\sum_{n=1}^\infty \|\widehat {T^\star}Q_\nu^{\frac12}f_n^\nu\|_X^2
={\rm Tr}[Q_\nu^{\frac12}(\widehat {T^\star})^\star\widehat {T^\star} Q_\nu^{\frac12}],
\end{align*}
where we recall that 
$e_n=Q_\nu^{\frac12}f_n$ for every $n\in\N$. It follows that $\{T_{\infty,m}^{\nu,\mu}\}_{m\in\N}$ is a Cauchy sequence in $L^2(X,\mu;X)$ and therefore it converges in $L^2(X,\mu;X)$. 
Let us denote by $T_\infty^{\nu,\mu}$ its limit. Arguing as above, we get
\begin{align*}
\|T_\infty^{\nu,\mu}\|_{L^2(X,\mu;X)}^2
= & \int_X\norm{T_\infty^{\nu,\mu}x}_X^2\mu(dx)
= \sum_{n=1}^\infty \sum_{k=1}^\infty\langle T^\star Q_\nu^{\frac12}e_n^\nu,e_k^\mu\rangle_{H_\mu}^2
={\rm Tr}[Q_\nu^{\frac12}(\widehat {T^\star})^\star\widehat {T^\star}Q_\nu^{\frac12}].
%={\rm Tr}_X[Q_\nu^{\frac12}\widetilde T Q_\nu^{\frac12}].
\end{align*}
Since for every $x\in X$ we get
\begin{align*}
\|\widehat {T^\star} Q_\nu^\frac12x\|_X^2
= & \langle \widehat {T^\star} Q_\nu^{\frac12} x,\widehat {T^\star} Q_\nu^{\frac12}x \rangle_X 
= \langle T^\star Q_\nu x,T^\star Q_\nu x\rangle_{H_\mu} 
= \|T^\star Q_\nu x\|_{H_\mu}^2 
\leq  \|T^\star\|_{\mathscr L(H_\nu,H_\mu)}^2\|Q_\nu x\|_{H_\nu}^2 \\
= & \|T^\star\|_{\mathscr L(H_\nu,H_\mu)}^2\|Q_\nu^\frac12 x\|_X^2,
\end{align*}
from Proposition \ref{pseudo} the Cameron-Martin space of $\mathcal{N}(0,Q_\nu^{\frac12}(\widehat {T^\star})^\star\widehat {T^\star}Q_\nu^{\frac12})$ is continuously embedded in $H_\nu$.

Now we fix $h\in X$. Since $\langle e_n^\nu,h\rangle_X=\langle e_n^\nu,Q_\nu h\rangle_{H_\nu}$ for every $n\in\N$, it follows that 
\begin{align*}
\langle T_{\infty,m}^{\nu,\mu}x,h\rangle_X
= & \sum_{n=1}^\infty \sum_{k=1}^m \frac{\langle Q_\mu^{-\frac12}e_k^\mu,x\rangle_X}{(\lambda_k^\mu)^{\frac{1}{2}}}\langle T^\star e_n^\nu,e_k^\mu\rangle_{H_\mu}\langle e_n^\nu,h\rangle_X \\
= & \sum_{k=1}^m \frac{\langle Q_\mu^{-\frac12}e_k^\mu,x\rangle_X}{(\lambda_k^\mu)^{\frac{1}{2}}}\langle Q_\mu^{-\frac12}T^\star Q_\nu h,Q_\mu^{-\frac12}e_k^\mu\rangle_X,
\end{align*}
which converges to $W^\mu_{\widehat {T^\star} Q_\nu^\frac12 h}$ in $L^2(X,\mu)$ by Proposition \ref{teoria} and, from Proposition \ref{prop:cameron-martin},
$$\langle T_\infty^{\nu,\mu}\cdot,h\rangle_X=W^\mu_{\widehat {T^\star} Q_\nu^\frac12 h}(\cdot)$$ is a real centered Gaussian random variable with variance $\norm{T^\star Q_{\nu}h}_{H_\mu}^2$. \\
To conclude, we prove that the law of $T_\infty^{\nu,\mu}$ is given by \eqref{leggeLinfinito}. For every $h\in X$ we get
\begin{align*}
\widehat{(\mu\circ (T_\infty^{\nu,\mu})^{-1})}(h)
= & \int_Xe^{i\langle x,h\rangle_X}(\mu\circ (T_\infty^{\nu,\mu})^{-1})(dx)= \int_Xe^{i\langle T_\infty^{\nu,\mu}x,h\rangle_X}\mu(dx) \\
=&
%\exp\left(-\frac{1}{2}\norm{W^\mu_{Q_\mu^{-\frac12}T^\star Q_\nu h} }^2 _{L^2(X,\mu)}\right)=
\exp\left(-\frac{1}{2}\norm{T^\star Q_\nu h}^2_{H_\mu}\right)
= 
\exp\left(-\frac12\langle Q_\nu^{\frac12}(\widehat {T^\star})^\star\widehat {T^\star}  Q_\nu^{\frac12} h,h\rangle_X\right).
% = & 
% \exp\left(-\frac12\langle Q_\nu^{\frac12}\widetilde T Q_\nu^{\frac12} h,h\rangle_X\right).
\end{align*}
%since $\norm{T^\star Q_\nu h}^2_{H_\mu}
%= \langle TT^\star Q_\nu h,Q_\nu h\rangle_{H_\nu}=\langle (Q_\nu^{-\frac12}TT^\star Q_\nu^\frac12)Q_\nu^\frac12 h,Q_\nu^\frac12 h\rangle_{H_\nu}$. 
\end{proof}

Before to state some properties of the operator $T_\infty^{\nu,\mu}$, we recall basic notions on measurable operators. 

\begin{defn}  Let $X$ and $Y$ be two separable Hilbert spaces and let $\gamma$ be a Gaussian measure on X. Let $\mathcal{B}(X)$ be the $\sigma$-algebra of the Borel sets on $X$ and let  $\mathcal{B}(X)_\gamma$ the completion of $\mathcal{B}(X)$ with respect to $\gamma$. A mapping $F:X\to Y$ is called a $\gamma$-measurable linear operator if there exists a linear mapping $F_0:X\to Y$, measurable with respect to the pair of $\sigma$-fields $(\mathcal{B}(X)_\gamma, \mathcal{B}(Y))$ and such that $F=F_0$ $\gamma$-a.e. in $X$. We say that $F_0$ is a linear version of $F$.
\end{defn}

\begin{remark} \label{opmisoss}  Let $X$ and $Y$ be two separable Hilbert spaces, let $\gamma$ be a Gaussian measure on X and let $H_\gamma$ be the Cameron-Martin space of $\gamma$.
\begin{enumerate}[\rm (i)]
\item If $F,G: X\rightarrow Y$ are $\gamma$-measurable linear operators such that $F_0= G_0$ on $H_\gamma$, then $F=G$ $\gamma$-a.e. in $X$ (see e.g. \cite[Proposition  4(b)]{fey_del_1994}).
\item If  $F:X\rightarrow Y$ is  a $\gamma$-measurable linear operator, than there exists e Borel linear space $V\subseteq X$ such that $\gamma(V)=1$ and $(F_0)_{|V}$ is Borel measurable. We say that $(F_0)_{|V}$ is a Borel version of $F$ and we denote it by $\widetilde{F}$ (see e.g \cite[Corollary 3 (a)]{fey_del_1994}). Another proof  is \cite[Proposition 1]{urb_1975} for measurable functionals that can be generalized to the case of measurable linear operators considering a suitable version of the Lusin Theorem (see e.g. \cite[Theorem 7.1.13 Volume II]{bog_2007}). We remark that if $F_0$ is not continuous then $V$ is strictly contained in $X$ (see for instance \cite[Theorem 9.10]{kec_1995}).
\end{enumerate}
For more details on measurable linear operator we refer to Appendix \ref{mis_op_app}.
\end{remark}

The operator $T_\infty^{\mu,\nu}$ enjoys nice properties, which we investigate deeper. 
% To this aim, given $T\in\mathscr L(H_\mu,H_\nu)$ we introduce the operator $L_{\nu,\mu}\in\mathscr L(X)$ defined as $L_{\nu,\mu}=Q_\nu^{-\frac12}LQ_\mu^{-\frac12}$. We notice that $L=Q_\nu^{-\frac12}L$, seen as an $X$-valued operator, belongs to $\mathscr L(H_\mu,X)$, since $\|Q_\nu^{-\frac12}Lh\|_{X}^2=\|Lh\|_{H_\nu}^2\leq \|T\|_{\mathscr L(H_\mu,H_\nu)}^2\|h\|_{H_\mu}^2$ for every $h\in H_\mu$.
\begin{lemma}
\label{lemma:prop_L_infty3}
Let $\mu,\nu$ be two centered Gaussian measures on $X$ with Cameron-Martin spaces $H_\mu $ and $H_\nu$, respectively, and let $T\in\mathscr L(H_\mu,H_\nu)$.
\begin{enumerate}[\rm (i)]
\item $T_\infty^{\nu,\mu}$ is a $\mu$-measurable linear operator on $X$ and there exists a linear subspace $E$ of $X$ such that $\mu(E)=1$ and
\begin{align}
\label{pointiwise_limit_L_infty2}
T_\infty^{\nu,\mu}(x)=\lim_{m\to\infty}T_{\infty,m}^{\nu,\mu}(x), \qquad x\in E.    
\end{align}
In particular, $T_\infty^{\nu,\mu}(x)\in \overline {H_\nu}$ for $\mu$-a.e. $x\in X$.
%\item {\color{blue}For every version of $T_\infty^{\nu,\mu}$ we have $P_{\nu}T_\infty^{\nu,\mu}x=T_\infty^{\nu,\mu}P_\mu x=0$ for $\mu$-almost every $x\in X$.} 
\item Any linear version of $T_\infty^{\nu,\mu}$  satisfies $T_\infty^{\nu,\mu}h=Th$ for every $h\in H_\mu$. In particular, $T_\infty^{\nu,\mu}h\in H_\nu$ for every $h\in H_\mu$ and, if there exists a bounded linear extension of $T_{\infty}^{\nu,\mu}$ to the whole $X$, then such an extension is unique on $\overline {H_\mu}$. 
\item There exists a linear version of $T_\infty^{\nu,\mu}$ which belongs to $\mathscr{L}(X)$ if and only if $T$, a priori defined on $H_\mu$, extends to a linear bounded operator on $\overline {H_\mu}$ into $\overline {H_\nu}$. In this case, if we still denote by $T$ its bounded extension on $\overline{H_\mu}$, then the operator $\widetilde T\in\mathscr L(X)$, defined as
\begin{align}\label{estensione_continua}
\widetilde Tx=\begin{cases}
Tx, & x\in \overline {H_\mu}, \\
0, & x\in {\rm  Ker}(Q_\mu^\frac12)=\overline{H_\mu}^\perp,
\end{cases}    
\end{align}
is a version of $T_\infty^{\nu,\mu}$. In particular, if $L\in\mathscr L(X)$ and $L_{|H_\mu}\in\mathscr L(H_\mu,H_\nu)$, then $L$ is a version of $(L_{|H_\mu})_\infty^{\nu,\mu}$.
\end{enumerate}
\end{lemma}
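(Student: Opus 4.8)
The plan is to handle the three parts in order, extracting two analytic facts along the way: almost-sure convergence of the approximating series, and the action of the projections $\pi_m$ on the Cameron-Martin space. Throughout I will use that, by \eqref{fannozero}, the variables $\xi_k(x):=(\lambda_k^\mu)^{-\frac12}\langle Q_\mu^{-\frac12}e_k^\mu,x\rangle_X$ are independent standard Gaussians under $\mu$, so that $T_{\infty,m}^{\nu,\mu}x=\sum_{k=1}^m\xi_k(x)\,Te_k^\mu$ is a sum of independent symmetric $X$-valued summands.

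For (i), the series already converges in $L^2(X,\mu;X)$ by Proposition \ref{prop:op_L_infty2}, hence in probability; by the It\^o--Nisio theorem (equivalently, the partial sums form an $L^2$-bounded martingale for the filtration generated by $\xi_1,\dots,\xi_m$, and Hilbert-space martingales converge) the partial sums converge $\mu$-a.e. Therefore $E:=\{x\in X:\lim_m T_{\infty,m}^{\nu,\mu}x\ \text{exists in}\ X\}$ is a linear subspace with $\mu(E)=1$, and $x\mapsto\lim_m T_{\infty,m}^{\nu,\mu}x$ is linear and Borel on $E$, agreeing $\mu$-a.e. with the $L^2$-limit; extending it linearly off $E$ gives a linear version, so $T_\infty^{\nu,\mu}$ is $\mu$-measurable and \eqref{pointiwise_limit_L_infty2} holds. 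Since every partial sum lies in the closed subspace $\overline{H_\nu}$, so does the limit, which yields the last assertion.

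For (ii), the key computation is that, writing $e_k^\mu=(\lambda_k^\mu)^{\frac12}f_k^\mu$ with $f_k^\mu$ orthonormal in $X$, one checks that for $h\in H_\mu$ the operator $\pi_m$ acts as the $H_\mu$-orthogonal projection of $h$ onto $\mathrm{span}\{e_1^\mu,\dots,e_m^\mu\}$, namely $\pi_m h=\sum_{k=1}^m\langle h,e_k^\mu\rangle_{H_\mu}e_k^\mu$, so that $\pi_m h\to h$ in $H_\mu$. Continuity of $T\in\mathscr L(H_\mu,H_\nu)$ and the embedding $H_\nu\hookrightarrow X$ then give $T_{\infty,m}^{\nu,\mu}h=T\pi_m h\to Th$ in $X$ for every $h\in H_\mu$; in particular $H_\mu\subseteq E$, and the version constructed in (i) satisfies $F_0 h=Th\in H_\nu$ on $H_\mu$. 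To pass to an arbitrary linear version I would show that two linear versions agree on $H_\mu$: if $D$ is linear with $D=0$ $\mu$-a.e., then for fixed $y\in X$ and $h\in H_\mu$ the quasi-invariance $\mu(\cdot-h)\sim\mu$ (Proposition \ref{prop:cameron-martin}) gives $\langle D(x+h),y\rangle_X=0$ for $\mu$-a.e. $x$, and linearity forces $\langle Dh,y\rangle_X=0$; as $y$ is arbitrary, $Dh=0$. Hence every linear version coincides with $F_0$ on $H_\mu$, i.e. $T_\infty^{\nu,\mu}h=Th$ there, and the claimed uniqueness on $\overline{H_\mu}$ of a bounded extension follows from the density of $H_\mu$ in $\overline{H_\mu}$.

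For (iii), the implication ``$\Leftarrow$'' is obtained by defining $\widetilde T$ as in \eqref{estensione_continua}: it is bounded on $X$, hence $\mu$-measurable, and its restriction to $H_\mu$ equals $T$, which by (ii) coincides with the restriction of $T_\infty^{\nu,\mu}$; by Remark \ref{opmisoss}(i) the two $\mu$-measurable linear operators agree $\mu$-a.e., so $\widetilde T$ is a version. For ``$\Rightarrow$'', if $S\in\mathscr L(X)$ is a version then (ii) gives $Sh=Th$ for $h\in H_\mu$, whence $\|Th\|_X\le\|S\|_{\mathscr L(X)}\|h\|_X$; since $H_\mu$ is dense in $\overline{H_\mu}$ and $Th\in H_\nu\subseteq\overline{H_\nu}$ with $\overline{H_\nu}$ closed, $T$ extends by continuity to a bounded operator $\overline{H_\mu}\to\overline{H_\nu}$. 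The concluding ``in particular'' is the special case $T=L_{|H_\mu}$: $L\in\mathscr L(X)$ is $\mu$-measurable and restricts to $T$ on $H_\mu$, so Remark \ref{opmisoss}(i) together with (ii) yields $L=(L_{|H_\mu})_\infty^{\nu,\mu}$ $\mu$-a.e. I expect the main obstacle to be the second step of (ii): upgrading a $\mu$-a.e. identity to a genuine pointwise identity on the $\mu$-null set $H_\mu$, which is exactly what the quasi-invariance argument (and the measurable-linear-operator theory of Appendix \ref{mis_op_app}) is needed to supply.
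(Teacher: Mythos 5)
Your proof is correct, and it shares the paper's overall skeleton --- a.e.\ convergence plus linear extension for (i), identification on $H_\mu$ via $\pi_m h\to h$ for (ii), and both implications of (iii) deduced from (ii) --- but three key sub-steps are argued by genuinely different means. In (i), you get a.e.\ convergence of the \emph{whole} sequence $T^{\nu,\mu}_{\infty,m}$ from It\^o--Nisio/Hilbert-space martingale convergence, which makes the convergence set $E$ automatically a Borel linear subspace; the paper instead merely extracts an a.e.-convergent subsequence from the $L^2(X,\mu;X)$ convergence of Proposition \ref{prop:op_L_infty2} and then passes to ${\rm span}(A)$ plus a Hamel-basis extension --- your route is cleaner but imports an external theorem, the paper's is more elementary and suffices. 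In (ii), the delicate point is that \emph{every} linear version equals $T$ on $H_\mu$: the paper argues that any linear version is (along a subsequence) a pointwise a.e.\ limit of $T^{\nu,\mu}_{\infty,m}$, that the span of the convergence set is a full-measure linear subspace and hence contains $H_\mu$ by Proposition \ref{teoria}(i) (i.e.\ \cite[Proposition 2.4.7]{bog_1998}), and that on $H_\mu$ the limit is $T\pi_m h\to Th$ by \eqref{Linftym}; you instead verify the identity for the version built in (i) and then show that two linear versions agree on $H_\mu$ through Cameron--Martin quasi-invariance. Your translation argument is in substance a re-proof of the nontrivial direction of Proposition \ref{propf=gsuH} in Appendix \ref{mis_op_app}, which the paper obtains from the zero-one law; the two devices encode the same fact about Gaussian null sets, so this is legitimate, though citing Remark \ref{opmisoss}(i) would have shortened the step. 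In (iii), your converse implication (bounded extension of $T$ implies $\widetilde T$ of \eqref{estensione_continua} is a version) goes through Remark \ref{opmisoss}(i) --- agreement on $H_\mu$ forces a.e.\ agreement --- which is tidier than the paper's argument that $T\pi_m x\to\widetilde Tx$ pointwise for every $x\in X$ combined with dominated convergence; the forward implication and the closing ``in particular'' statement coincide with the paper's.
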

\begin{proof}
\leavevmode
\begin{enumerate}[\rm (i)]
\item From Proposition \ref{prop:op_L_infty2}, there exists a set $A\subseteq X$ of $\mu$-full measure such that (up to a subsequence)
\begin{align}
\label{pointwise_conv_L_infty}
T_\infty^{\nu,\mu}(x)=\lim_{m\to\infty}T_{\infty,m}^{\nu,\mu}(x), \qquad x\in A.    
\end{align}
Since $T_{\infty,m}^{\mu,\nu}$ belongs to $\mathscr L(X)$ for every $m\in\N$, it follows that, for every $k\in\N$, every $x_1,\ldots,x_k\in A$ and every $\alpha_1,\ldots\alpha_k\in\R$,  \eqref{pointwise_conv_L_infty} implies
\begin{align*}
\lim_{m\to\infty} T_{\infty,m}^{\mu,\nu}\left(\sum_{j=1}^k\alpha_j x_j\right)
= & \sum_{j=1}^k\lim_{m\to\infty}\alpha_jT_{\infty,m}^{\mu,\nu}(x_j)= \sum_{j=1}^k\alpha_j T_\infty^{\mu,\nu}(x_j).
\end{align*}
Hence, we can set
\begin{align*}
%\label{L_infty_H_mu2}
T_\infty^{\nu,\mu}\left(\sum_{j=1}^k\alpha_jx_j\right):=\sum_{j=1}^k\alpha_j T_\infty^{\nu,\mu}(x_j)=\lim_{m\to\infty} T_{\infty,m}^{\mu,\nu}\left(\sum_{j=1}^k\alpha_j x_j\right), %\qquad k\in\N, \ x_j\in A, \ \alpha_j\in \R, \ j=1,\ldots,k,   
\end{align*}
and \eqref{pointiwise_limit_L_infty2} holds for all $x\in E={\rm span}(A)$.
Taking a Hamel basis of $X$, we extend such a version of $T_\infty^{\nu,\mu}$ to the whole $X$ and so there exists a version of $T_\infty^{\nu,\mu}$ which is a linear operator on $X$.

%\item {\color{blue}From \eqref{pointwise_conv_L_infty} and the fact that $P_{\nu}T_{\infty,m}^{\nu,\mu}=P_\nu Q_\nu^{\frac12} LQ_\mu^{-\frac12}=0$ and $T_{\infty,m}^{\nu,\mu}P_\mu=0$ for every $m\in\N$, it follows that there is a version of $T_\infty^{\nu,\mu}$ such that $P_\nu T_\infty^{\nu,\mu}= T_\infty^{\nu,\mu}P_\mu=0$ on the set of $\mu$-full measure $A$. Since any other version of $T_\infty^{\mu,\nu}$ coincides almost everywhere with such a version, the statement follows.}

\item Let us consider a version $\widetilde{T_\infty^{\nu,\mu}}$ of $T_\infty^{\nu,\mu}$ which is linear on $X$ and let $h\in H_\mu$. Hence, there exists a subset $B\subseteq X$ of $\mu$ full measure such that
\begin{align*}
\widetilde{T_\infty^{\nu,\mu}}(x)=\lim_{m\to\infty}T_{\infty,m}^{\nu,\mu}(x), \qquad x\in B.    
\end{align*}

Arguing as in (i) and recalling that $\widetilde{T_\infty^{\nu,\mu}}$ is linear on $X$, it follows that $\{T_{\infty,m}^{\mu,\nu}(x)\}_{m\in\N}$ converges to $\widetilde{T_\infty^{\nu,\mu}}(x)$ in $X$ as $m\to\infty$ for every $x\in E={\rm span}(B)$. Since $\mu(E)=1$, we infer that $E$ contains $H_\mu$ (see \cite[Proposition 2.4.7]{bog_1998}) and by \eqref{Linftym} we get
\begin{align*}
\widetilde{T_\infty^{\nu,\mu}}(h)=\lim_{m\to\infty}T_{\infty,m}^{\mu,\nu}(h)
= \lim_{m\to\infty}\sum_{n=1}^\infty \sum_{k=1}^m \langle e_k^\mu,h\rangle_{H_\mu}\langle e_n^\nu,Te_k^\mu\rangle_{H_\nu} e_n^\nu
= \lim_{m\to\infty}\sum_{k=1}^m \langle e_k^\mu,h\rangle_{H_\mu}Te_k^\mu=Th.
\end{align*}

\item Let us consider a linear version of $T_\infty^{\nu,\mu}$ which is bounded on $X$. Since by (ii) this version coincides with $T$ on $H_\mu$, then $T_\infty^{\nu,\mu}$ maps $H_\mu$ into $H_\nu$. By continuity, $T_\infty^{\nu,\mu}$ maps $\overline{H_\mu}$ into $\overline{H_\nu}$. 

Conversely, if $T$ extends to a linear bounded operator from $H_\mu$ on $\overline{H_\mu}$ into $\overline{H_\nu}$, then we consider the operator $\widetilde T$ given by \eqref{estensione_continua}.
Since $\widetilde T$ is a linear bounded operator on $X$, it belongs to $L^2(X,\gamma;X)$. Let us show that $\widetilde T$ is a version of $T_\infty^{\nu,\mu}$. Given $m\in\N$ and $x\in\overline {H_\mu}$, by the definition of $\pi_m$ we infer that $(\pi_mx)_{m\in\N}$ converges to $x$ in $X$ as $m\to\infty$. If $x\in{\rm Ker}(Q_\mu^\frac12)$ then $\pi_mx=0$ for every $m\in\N$ . From \eqref{Linftym} and the fact that $T$ extends to a bounded linear operator on $\overline{H_\mu}$, we infer that $(T\pi_mx)_{m\in\N}$ converges to $\widetilde Tx$ in $X$ as $m\to\infty$ for every $x\in X$. From the dominated convergence theorem and \eqref{Linftym}, we get the converse implication. 

To conclude, if $L\in\mathscr L(X)$ satisfies $L_{|H_\mu}\in\mathscr L(H_\mu,H_\nu)$ and we set $T=L_{|H_\mu}$, then $T$ admits $L$ as bounded linear extension on $X$. Since $(L\pi_mx=T\pi_mx=T_{\infty,m}^{\nu,\mu})_{m\in\N}$ converges to $Lx$ for every $x\in \overline {H_\mu}$ and $\overline{H_\mu}^\perp$ is a set of null $\mu$-measure, from (i) it follows that $L$ is a version of $T_\infty^{\nu,\mu}$.
\end{enumerate}
\end{proof}

Hereafter, for every $T\in\mathscr L(H_\mu,H_\nu)$ we always consider a linear version of $T_\infty^{\nu,\mu}$. Moreover, if $L$
admits a linear bounded extension to $\overline {H_\mu}$, we consider the version of $T_\infty^{\nu,\mu}$ given by $\widetilde T$ (see Lemma \ref{lemma:prop_L_infty3}(iii)).

\begin{remark}
From now on, if $\mu=\nu$ we simply write $T_\infty^{\mu}$ instead of $T_\infty^{\mu,\mu}$.  
\end{remark}

Let $T\in\mathscr L(H_\mu,H_\nu)$ with $\|T\|_{\mathscr L(H_\mu,H_\nu)}\leq 1$. The operator $(I_\mu-T^\star T)^{\frac{1}{2}}$, where $I_\mu$ denotes the identity operator on $H_\mu$, belongs to $\mathscr L(H_\mu)$, is self-adjoint, non-negative and has operator norm less than or equal to $1$. When no confusion may arise, we simply write $I$ instead of $I_\mu$.
\begin{defn}
%Let $(X,\norm{\,\cdot\,}_X,\ps{\cdot}{\cdot}_X)$ be a real separable Hilbert space. 
We denote by $\mathscr E(X)$ the set of measurable functions $f:X\to \R$ with exponential growth, i.e., $f\in \mathscr E(x)$ if and only if there exist $M,c>0$ such that $|f(x)|\leq Me^{c\|x\|_X}$ for every $x\in X$.     
\end{defn}

\begin{defn} 
\label{def:op_gamma}
Let $\mu,\nu$ be two centered Gaussian measures on $X$ with Cameron-Martin spaces $H_\mu$ and $H_\nu$, respectively. If $T\in\mathscr L(H_\mu,H_\nu)$ and $\norm{T}_{\mathscr L(H_\mu,H_\nu)}\leq 1$, then for every $f\in \mathscr E(X)$ we set
\begin{align}
\label{gammamunu}
(\Gamma_{\nu,\mu}(T)f)(x)
= & \int_Xf\left[(T^\star )_\infty^{\mu,\nu}x+\left((I-T^\star T)^{\frac{1}{2}}\right)_\infty^{\mu}y\right]\,\mu(dy), \qquad  x\in X.
\end{align}
Formula \eqref{gammamunu} is a generalization of the integral formula given for instance \cite[Proposition 1]{cho_gol_1996} and \cite[Theorem 10]{fey_del_1994} and, if $\mu=\nu$, then $T\mapsto \Gamma_\mu(T):=\Gamma_{\mu,\mu}(T)$ coincides with the second quantization operator on $L^2(X,\mu;X)$ $($see \cite{cho_gol_1996}, with $L$ replaced by $\widetilde T:=Q_{\nu}^{-\frac12}LQ_{\mu}^{\frac12}\in\mathscr L(X))$.
\end{defn}

Notice that $\Gamma_{\nu,\mu}(T)f$ is well-defined for every $f\in\mathscr E(X)$. Indeed, setting $z=((I-T^\star T)^{\frac{1}{2}})_\infty^{\mu}y$, by Proposition \ref{prop:op_L_infty2} we obtain
\begin{align}\label{gammabuona}
(\Gamma_{\nu,\mu}(T)f)(x)
= & \int_Xf\left[(T^\star )_\infty^{\mu,\nu}x+z\right]\,\gamma_{\mu, S}(dz), \qquad  x\in X,
\end{align}
where 
\begin{equation}
\label{gammanumuL}
\gamma_{\mu, S}=\mathcal{N}\left(0,Q_\mu^{\frac{1}{2}}(\widehat S)^\star \widehat S Q^{\frac{1}{2}}_\mu\right), \qquad \widehat S=Q_\mu^{-\frac12}(I-T^\star T)^{\frac12}Q_\mu^{\frac12}\in\mathscr L(X).
\end{equation}
Therefore, if $|f(x)|\leq Me^{c\|x\|_X}$ for every $x\in X$ and some $M,c>0$, then (recalling that we consider a linear version of $(T^\star)_\infty^{\mu,\nu}$)
\begin{align*}
|(\Gamma_{\nu,\mu}(T)f)(x)|
\leq Me^{c\|(T^\star)_\infty^{\mu,\nu}x\|_X}\int_Xe^{c\|z\|_X}\gamma_{\mu,S}(dz)<\infty, \qquad x\in X.
\end{align*}
Moreover, the change of variables $\eta=\left(T^\star\right)^{\mu,\nu}_\infty x$ yields to
\begin{align*}
\int_X\int_Xf((T^\star )_\infty^{\mu,\nu} x+z)\gamma_{\mu,S}(dz)\nu(dx)
= \int_X\int_Xf(\eta+z)\gamma_{\mu,S}(dz) \gamma_{\mu,T^\star}(dx),
\end{align*}
where  
$$ \gamma_{\mu,T^\star}=\mathcal{N}\left(0,Q_\mu^{\frac{1}{2}}(\widehat{T})^\star\widehat{ T}Q_\mu^{\frac{1}{2}}\right), \qquad \widehat {T}=Q_\nu^{-\frac12}TQ_\mu^{\frac12}.$$
Setting $\xi=\eta+z$, by Proposition \ref{corDecompositionHilbert}(ii) it follows that $\gamma_{\mu,T}*\widetilde \gamma_{\mu,T}$ is the Gaussian measure with covariance operator
\begin{align*}
Q=Q_\mu^{\frac{1}{2}}(\widehat S)^\star \widehat S Q^{\frac{1}{2}}_\mu+Q_\mu^{\frac{1}{2}}(\widehat T)^\star \widehat T Q^{\frac{1}{2}}_\mu.  
\end{align*}
Notice that, for every $x,y\in X$, we get
\begin{align}
\label{conto_somma_cov}
\langle Qx,y\rangle_X
= & \langle \widehat SQ_\mu^\frac12x,\widehat SQ_\mu^\frac12y\rangle_X
+ \langle \widehat {T}Q_\mu^\frac12x, \widehat{T}Q_\mu^\frac12y\rangle_X \notag \\
= & \langle (I-T^\star T)^{\frac12}Q_\mu x,(I-T^\star T)^\frac12Q_\mu y\rangle_{H_\mu}
+ \langle TQ_\mu x,TQ_\mu y\rangle_{H_\nu} \notag \\
= & \langle Q_\mu x,Q_\mu y\rangle_{H_\mu}-\langle T^\star TQ_\mu x,Q_\mu y\rangle_{H_\mu}+\langle T^\star TQ_\mu x,Q_\mu y\rangle_{H_\mu} \notag \\
= & \langle Q_\mu x,y\rangle_X,
\end{align}
which means that $Q=Q_\mu$. It follows that, for every $f\in\mathscr E(X)$ and $x\in X$,
\begin{align}
\label{super_change}
\int_X(\Gamma_{\nu,\mu}(T)f)(x)\nu(dx)
= \int_X\int_Xf[(T^\star )_\infty^{\mu,\nu} x+z]\,\gamma_{\mu,S}(dz)\nu(dx)=\int_X f(x)\mu(dx).
\end{align}

\begin{remark}
\label{rmk:esempi_second_quant_12} We compute here the second quantization operator for two special operators $T$. 
\leavevmode
\begin{itemize}
\item [\rm(i)] If $T=0$, then $(T^\star)_\infty^{\mu,\nu}=0$ and $((I-T^\star T)^{\frac{1}{2}})_\infty^{\mu}=I-P_\mu$. Hence,
\begin{align*}
(\Gamma_{\nu, \mu}(0)f)(x)= \int_Xf\left((I-P_\mu)y\right)\,\mu(dy)
= \int_Xf(y)\mu(dy) \qquad  x\in X, \ f\in \mathscr E(X),
\end{align*}
i.e., $\Gamma_{\nu, \mu}(0)f$ is constant for every $\ x\in X$ and such a constant coincides with the mean of $f$ on $X$ with respect to $\mu$.
\item [\rm(ii)] If $\nu=\mu$ and $T=cI$ with $c\in(0,1]$, then from Lemma \ref{lemma:prop_L_infty3}(iii) we get that $cI$ is a version of $(T^\star)_\infty^\mu=T_\infty^{\mu}$ and $\sqrt{1-c^2}I$ is a version of $((I-T^\star T)^\frac{1}{2})_\infty^{\mu}$. Therefore,
\begin{align*}
(\Gamma_{\mu}(cI)f)(x)
= \int_Xf\left(cx+\sqrt{1-c^2}\,y\right)\mu(dy) \qquad x\in X, \ f\in \mathscr E(X),
\end{align*}
so that, if $c=1$, then we get $(\Gamma_{\mu}(I)f)(x)=f(x)$ for  every $x\in X$ while, if $c\in(0,1)$, then
\begin{align*}
(\Gamma_{\mu}(cI)f)(x)    
=\int_X f\left(e^{-t}x+\sqrt{1-e^{-2t}}\,y\right)\mu(dy) \qquad  x\in X, \  f\in \mathscr E(X),
\end{align*}
with $c=e^{-t}$. It follows that $\Gamma_{\mu}(e^{-t})f=(T^\mu_t f)$ where $T^\mu_t$ is the classical Ornstein-Uhlenbeck semigroup in $X$ with respect to the Gaussian measure $\mu$, i.e.,
\begin{align}
\label{classical_OU}
T_t^\mu f(x)
= & \int_Xf\left(e^{-t}x+\sqrt{1-e^{-2t}}y\right)\mu(dy) \qquad x\in X, \  f\in \mathscr E(X).
\end{align}
\end{itemize}    
\end{remark}

\begin{remark}
We consider the space $\mathscr E(X)$ to define $\Gamma_{\nu,\mu}(T)$ in order to have an explicit expression of $\Gamma_{\nu,\mu}(T)f$ when $f$ is a generalized Hermite polynomial.
\end{remark}

\begin{proposition}
\label{prop_L_infty}
Let $\mu,\nu$ be two centered Gaussian measures on $X$ with Cameron-Martin spaces $H_\mu$ and $H_\nu$, respectively. If $T\in\mathscr L(H_\mu,H_\nu)$ and $\norm{T}_{\mathscr L(H_\mu,H_\nu)}\leq 1$, then for every $p\in[1,\infty)$ the operator $\Gamma_{\nu,\mu}(T)$ extends to a linear continuous operator from $L^p(X,\mu)$ into $L^p(X,\nu)$ with unitary norm.
\end{proposition}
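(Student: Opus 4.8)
The plan is to combine the representation \eqref{gammabuona} with the mass-preserving identity \eqref{super_change} via a Jensen-type pointwise estimate, and then to conclude by density. Throughout I would work first on $\mathscr E(X)$, where $\Gamma_{\nu,\mu}(T)$ is already defined by \eqref{gammamunu}, and only afterwards pass to the closure.

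First I would fix $f\in\mathscr E(X)$ and $p\in[1,\infty)$ and note that $\abs{f}^p\in\mathscr E(X)$ as well, so $\Gamma_{\nu,\mu}(T)(\abs{f}^p)$ is defined by \eqref{gammamunu}. Since $\gamma_{\mu,S}$ is a centered Gaussian measure, hence a probability measure, I would apply Jensen's inequality to the convex function $t\mapsto\abs{t}^p$ in the representation \eqref{gammabuona}, obtaining for every $x\in X$
\begin{align*}
\abs{(\Gamma_{\nu,\mu}(T)f)(x)}^p
&= \left\lvert\int_Xf\bigl[(T^\star)_\infty^{\mu,\nu}x+z\bigr]\,\gamma_{\mu,S}(dz)\right\rvert^p \\
&\leq \int_X\abs{f\bigl[(T^\star)_\infty^{\mu,\nu}x+z\bigr]}^p\,\gamma_{\mu,S}(dz)
= \bigl(\Gamma_{\nu,\mu}(T)(\abs{f}^p)\bigr)(x).
\end{align*}
Integrating this bound against $\nu$ and then applying \eqref{super_change} to the function $\abs{f}^p\in\mathscr E(X)$ gives
\begin{align*}
\norm{\Gamma_{\nu,\mu}(T)f}_{L^p(X,\nu)}^p
\leq \int_X\bigl(\Gamma_{\nu,\mu}(T)(\abs{f}^p)\bigr)(x)\,\nu(dx)
= \int_X\abs{f(x)}^p\,\mu(dx)
= \norm{f}_{L^p(X,\mu)}^p,
\end{align*}
so $\Gamma_{\nu,\mu}(T)$ is a contraction from $(\mathscr E(X),\norm{\cdot}_{L^p(X,\mu)})$ into $L^p(X,\nu)$.

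Next I would extend the operator by density. Since $\mu$ is Gaussian it has exponential moments of every order, so $\mathscr E(X)\subseteq L^p(X,\mu)$; moreover $C_b(X)\subseteq\mathscr E(X)$ and $C_b(X)$ is dense in $L^p(X,\mu)$, whence $\mathscr E(X)$ is dense in $L^p(X,\mu)$. The contraction estimate then produces a unique bounded linear extension $\Gamma_{\nu,\mu}(T)\colon L^p(X,\mu)\to L^p(X,\nu)$ with norm at most $1$, and this extension agrees on the common dense set $\mathscr E(X)$ for all $p$. To see that the norm is exactly $1$ I would test on the constant function $f\equiv 1\in\mathscr E(X)$: from \eqref{gammamunu} one reads off $\Gamma_{\nu,\mu}(T)1\equiv 1$, and since $\mu$ and $\nu$ are probability measures $\norm{1}_{L^p(X,\mu)}=\norm{1}_{L^p(X,\nu)}=1$, giving the matching lower bound.

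The only point requiring care is the measurability and integrability needed to invoke Jensen's inequality and Fubini for $\nu$-a.e.\ $x$; this is exactly what the estimate preceding \eqref{super_change} guarantees, since it shows the integrand $z\mapsto f[(T^\star)_\infty^{\mu,\nu}x+z]$ is $\gamma_{\mu,S}$-integrable for every $x$ whenever $f\in\mathscr E(X)$, with the same bound applying to $\abs{f}^p\in\mathscr E(X)$. I therefore expect no genuine obstacle beyond these routine verifications.
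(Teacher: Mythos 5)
Your proof is correct and takes essentially the same route as the paper's: a pointwise Jensen inequality combined with the mass-preservation identity \eqref{super_change}, a density argument, and testing on $f\equiv 1$ to get the unitary norm. The only cosmetic differences are that you work with the dense class $\mathscr E(X)$ where the paper uses $C_b(X)$ (with Jensen left implicit), and that the paper treats the case $T=0$ separately, which your argument handles uniformly.
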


\begin{proof}
Let $p\in[1,\infty)$. If $T=0$, then 
\begin{align*}
(\Gamma_{\nu, \mu}(0)f)(x)= \int_Xf\left((I-P_\mu)y\right)\mu(dy)
= \int_Xf(y)\mu(dy) \qquad \mu\textrm{-a.e.}\  x\in X, \ f\in \mathscr E(X),
   \end{align*}
and the statement follows from the density of $\mathscr E(X)$ in $L^p(X)$.

Let $T\in\mathscr L(H_\mu,H_\nu)\setminus\{0\}$ be such that $\norm{T}_{\mathscr L(H_\mu,H_\nu)}\leq 1$. For all $f\in C_b(X)$, by \eqref{super_change} we get
\begin{align*}
\int_X|(\Gamma_{\nu,\mu}(T)f)(x)|^p\nu(dx)
\leq & \int_X\int_X|f(y)|^p\mu(dy).
\end{align*}
%\begin{align*}
%\int_X|(\Gamma_{\nu,\mu}(T)f)(x)|^p\nu(dx)
%= & \int_X\left|\int_Xf((T^\star )_\infty^{\mu,\nu}x+z)\gamma_{\mu, T}(dz) \right|^p\nu(dx) \\
%\leq & \int_X\int_X|f((T^\star )_\infty^{\mu,\nu}x+z)|^p\gamma_{\mu, T}(dz)\,\nu(dx)
%\end{align*}
%where $\gamma_{\mu, T}$ is given by \eqref{gammanumuL}. 
%The change of variables $\eta=\left(T^\star\right)^{\mu,\nu}_\infty x$ yields to
%\begin{align*}
%\int_X\int_X|f((T^\star )_\infty^{\mu,\nu} x+z)|^p\gamma_{\mu,T}(dz)\nu(dx)
%= \int_X\int_X|f(\eta+\zeta)|^p\gamma_{\mu,T}(dz)\widetilde \gamma_{\mu,T}(dx),
%\end{align*}
%where  $$\widetilde \gamma_{\mu,T}=\mathcal{N}\left(0,Q_\mu^{\frac{1}{2}}\widetilde TQ_\mu^{\frac{1}{2}}\right)$$
%and $\widetilde T$ has been defined in \eqref{gammanumuL}. Setting $\xi=\eta+\zeta$, by Proposition \ref{corDecompositionHilbert}(ii) we have $\gamma_{\mu,T}*\widetilde \gamma_{\mu,T}=\mathcal{N}\left(0,Q_\mu\right)$,
%which implies that
%\begin{align*}
% \int_X\int_X|f(\eta+\zeta)|^p\gamma_{\mu,T}(dz)\widetilde \gamma_{\mu,T}(dx)=\int_X|f(x)|^p\mu(dx).
%\end{align*}
It follows that $\|\Gamma_{\nu,\mu}(T)f\|_{L^p(X,\nu)}\leq \|f\|_{L^p(X,\mu)}$ for every $f\in C_b(X)$.
The statement follows from the density of $C_b(X)$ in $L^p(X,\mu)$. Taking $f \equiv 1$, we infer that $\Gamma_{\nu,\mu}(T)f\equiv 1 $, so that the norm of the extension is $1$.
\end{proof}

For every $p\in[1,\infty)$, we denote by $\Gamma^{(p)}_{\nu,\mu}(T)$ the extension of $\Gamma_{\nu,\mu}(T)$  to $L^p(X,\mu)$. Since such operators coincide on $C_b(X)$, they are consistent, i.e., for every  $p<q$ and $f\in L^q(X, \mu)$, it holds that $\Gamma^{(p)}_{\nu,\mu}(T)=\Gamma^{(q)}_{\nu,\mu}(T)$. For this reason, we omit the index $p$ if no confusion may arise, and we simply denote such extensions by $\Gamma_{\nu,\mu}(T)$.

Now we prove that, given $T\in\mathscr L(H_\mu,H_\nu)$ with $\|T\|_{\mathscr L(H_\mu,H_\nu)}\leq 1$, the operators $\widetilde \Gamma_{\nu,\mu}(T)$ and $\Gamma_{\nu,\mu}(T)$ actually coincide.

\begin{proposition}
\label{prop:uguaglianza_gamma_gamma_tilde}
For every contraction $T\in\mathscr L(H_\mu,H_\nu)$, we have $\widetilde \Gamma_{\nu,\mu}(T)=\Gamma_{\nu,\mu}(T)$ on $L^2(X,\mu)$.   
\end{proposition}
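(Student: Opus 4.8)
The plan is to prove equality of the two operators by showing that both are bounded from $L^2(X,\mu)$ into $L^2(X,\nu)$ and that they agree on a linearly dense set, namely the exponential functions $\mathcal E_\mu$. Boundedness of $\widetilde\Gamma_{\nu,\mu}(T)$ (in fact it is a contraction) is the estimate established right after Definition \ref{def:op_gamma_tilde}, while boundedness of $\Gamma_{\nu,\mu}(T)$ is Proposition \ref{prop_L_infty}. Since $E^\mu_h=e^{-\frac12\|h\|_X^2}e^{W^\mu_h}$ differs from $e^{W^\mu_h}$ only by a positive constant, Lemma \ref{lem:density_exp_funct} shows that $\operatorname{span}(\mathcal E_\mu)=\operatorname{span}(\mathcal D_\mu)$ is dense in $L^2(X,\mu)$. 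Hence it suffices to prove $\widetilde\Gamma_{\nu,\mu}(T)E^\mu_h=\Gamma_{\nu,\mu}(T)E^\mu_h$ for every $h\in H_\mu$ and then close the argument by density and continuity.

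On one side, Proposition \ref{prop:conto_gamma_tilde_exp_fct} (see \eqref{azione_tilde_gamma_exp}) already gives $\widetilde\Gamma_{\nu,\mu}(T)E^\mu_h=E^\nu_{\widehat T h}$, with $\widehat T=Q_\nu^{-\frac12}TQ_\mu^{\frac12}$ as in Definition \ref{def:hatT}. The task is therefore to verify that the integral formula \eqref{gammamunu} produces the same function, i.e. $\Gamma_{\nu,\mu}(T)E^\mu_h=E^\nu_{\widehat T h}$. For $h\in H_\mu$ the functional $W^\mu_h$ admits the continuous version $\langle\,\cdot\,,Q_\mu^{-\frac12}h\rangle_X$ on $X$, so $E^\mu_h\in\mathscr E(X)$ and \eqref{gammamunu} applies directly. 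Writing the formula in the form \eqref{gammabuona} with $\gamma_{\mu,S}$ as in \eqref{gammanumuL}, and recalling that $(T^\star)^{\mu,\nu}_\infty x$ and $z$ take values in $\overline{H_\mu}$ (for $\nu$- and $\gamma_{\mu,S}$-a.e.\ argument, by Lemma \ref{lemma:prop_L_infty3}(i)), the linearity of $\langle\,\cdot\,,Q_\mu^{-\frac12}h\rangle_X$ yields the splitting
\begin{align*}
(\Gamma_{\nu,\mu}(T)E^\mu_h)(x)=e^{-\frac12\|h\|_X^2}\,e^{W^\mu_h((T^\star)^{\mu,\nu}_\infty x)}\int_Xe^{W^\mu_h(z)}\,\gamma_{\mu,S}(dz).
\end{align*}

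It then remains to identify the two factors. For the $x$-dependent exponent I would apply \eqref{gauss_rand_var} with the roles of $\mu$ and $\nu$ interchanged and $T$ replaced by $T^\star$: testing against $k=Q_\mu^{-\frac12}h$ gives $W^\mu_h((T^\star)^{\mu,\nu}_\infty x)=\langle (T^\star)^{\mu,\nu}_\infty x,Q_\mu^{-\frac12}h\rangle_X=W^\nu_{\widehat T h}(x)$, using $Q_\mu^{\frac12}Q_\mu^{-\frac12}h=h$. For the Gaussian factor, $W^\mu_h$ is centered Gaussian under $\gamma_{\mu,S}$, so the integral equals $e^{\frac12\sigma^2}$, where, since $\gamma_{\mu,S}$ has covariance $Q_\mu^{\frac12}(\widehat S)^\star\widehat S Q_\mu^{\frac12}$ and $W^\mu_h$ has representer $Q_\mu^{-\frac12}h$,
\begin{align*}
\sigma^2=\|\widehat S h\|_X^2=\big\|(I-T^\star T)^{\frac12}Q_\mu^{\frac12}h\big\|_{H_\mu}^2=\|Q_\mu^{\frac12}h\|_{H_\mu}^2-\|TQ_\mu^{\frac12}h\|_{H_\nu}^2=\|h\|_X^2-\|\widehat T h\|_X^2.
\end{align*}
Substituting both factors, the exponents combine to $W^\nu_{\widehat T h}(x)-\frac12\|\widehat T h\|_X^2$, that is $\Gamma_{\nu,\mu}(T)E^\mu_h=E^\nu_{\widehat T h}$, as desired.

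I expect the main obstacle to be the measurable-operator bookkeeping in the third paragraph: justifying that $W^\mu_h$ may be split across the sum $(T^\star)^{\mu,\nu}_\infty x+z$ (which rests on both summands lying in $\overline{H_\mu}$ almost everywhere), that its continuous version $\langle\,\cdot\,,Q_\mu^{-\frac12}h\rangle_X$ is compatible with \eqref{gauss_rand_var}, and the correct handling of the pseudo-inverses $Q_\mu^{-\frac12}$, $Q_\nu^{-\frac12}$ in the variance computation. Once these identifications are in place, the Gaussian Laplace transform and the final density/continuity closure are routine.
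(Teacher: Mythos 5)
Your proposal is correct and follows essentially the same route as the paper's proof: reduce to the exponential functions $\mathcal E_\mu$ via Lemma \ref{lem:density_exp_funct}, use Proposition \ref{prop:conto_gamma_tilde_exp_fct} to get $\widetilde\Gamma_{\nu,\mu}(T)E^\mu_h=E^\nu_{\widehat T h}$, and evaluate the integral formula on $E^\mu_h$ by splitting the exponent with \eqref{gauss_rand_var} and computing a Gaussian Laplace transform. The only cosmetic difference is that you compute the variance through the covariance of $\gamma_{\mu,S}$ after the change of variables \eqref{gammabuona}, whereas the paper integrates directly against $\mu$ and identifies $W^\mu_\zeta$ via Proposition \ref{prop:cameron-martin}; the two computations are equivalent.
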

\begin{proof}
In view of Lemma \ref{lem:density_exp_funct}, it is enough to show that $\Gamma_{\nu,\mu}(T)(f)=\widetilde{\Gamma}_{\nu,\mu}(T)(f)$ for every $f\in\mathcal E_\mu$. Let $h\in H_\mu$ and let $z=Q_\mu^{-\frac12}h$. We set $\eta=Q_\nu^{-\frac12}TQ_\mu^{\frac12}h$ and $\zeta=Q_\mu^{-\frac12}(I-T^\star T)^{\frac12}Q_\mu^{\frac 12} h$. By Proposition \ref{prop:conto_gamma_tilde_exp_fct} we infer that
\begin{align*}
(\widetilde \Gamma_{\nu,\mu}(T)E_h^\mu)(x)
= & 
E_{\eta}^\nu(x)
= \exp\Big(W^\nu_\eta(x)-\frac12\|\eta\|_{X}^2\Big), \qquad \nu\textrm{-a.e. }x\in X.
\end{align*}
On the other hand, since $ E_h^\mu\in \mathscr E(X)$, it follows that, for $\nu$-a.e. $x\in X$,
\begin{align*}
(\Gamma_{\nu,\mu}(T)E_h^\mu)(x)
= & \exp\Big(\langle z,(T^\star)_\infty^{\mu,\nu}x\rangle_X-\frac12\|h\|_{X}^2\Big)\int_X\exp\big(\langle z,((I-T^\star T)^{\frac12})_\infty^\mu y\rangle_X\big)\mu(dy) \\
= & \exp\Big(W^\nu_\eta(x)-\frac12\|h\|_{X}^2\Big)\int_X\exp\big(W^\mu_{\zeta}(y)\big)\mu(dy),
\end{align*}
where we have exploited \eqref{gauss_rand_var} twice. Recalling that $W^\mu_\zeta$ is a centered Gaussian random variable with variance $\sigma^2=\langle (I-T^\star T)Q_\mu^{\frac12} h,Q_\mu^{\frac12} h\rangle_{H_\mu}$ (see Proposition \ref{prop:cameron-martin}), it follows that
\begin{align*}
\int_X\exp\big(W^\mu_\zeta(y)\big)\mu(dy)
= & \frac{1}{\sqrt{2\pi\sigma^2}}\int_{\R}e^\xi e^{-\frac{\xi^2}{2\sigma^2}}d\xi
= \exp\Big(\frac12\sigma^2\Big) \\
= & \exp\left(\frac12\|h\|_{X}^2-\frac12\langle TQ_\mu^{\frac12} h,TQ_\mu^\frac12 h\rangle_{H_\nu} \right) \\
= & \exp\left(\frac12\|h\|_{X}^2-\frac12\|Q_\nu^{-\frac12}TQ_\mu^\frac12 h\|_{X}^2 \right),
\end{align*}
which gives the thesis since, for $\nu$-a.e. $x\in X$, 
\begin{align*}
\Gamma_{\nu,\mu}(T)(E_h^\mu)(x)
= & \exp\left(W^\nu_\eta(x)-\frac12\|h\|_{X}^2\right)  \exp\left(\frac12\|h\|_X^2-\frac12\|\eta\|_{X}^2 \right)
= \widetilde \Gamma_{\nu,\mu}(T)(E_h^\mu)(x).
\end{align*}
\end{proof}

Hereafter, we always write $\Gamma(T)$ instead of $\widetilde \Gamma(T)$.
\begin{proposition}
\label{prop:hyp_contr_gen2}
If $T\in\mathscr L(H_\mu,H_\nu)$ with $\norm{T}_{\mathscr L(H_\mu,H_\nu)}< 1$, then $\Gamma_{\mu,\nu}(T)$ is a contraction from $L^p(X,\mu)$ into $L^q(X,\nu)$, for every $q\leq q_0:= 1+(p-1)\|T\|_{\mathscr L(H_\mu,H_\nu)}^{-2}$. 
Moreover, if $q>q_0$ then there exists $f\in L^p(X,\mu)$ such that $\Gamma_{\nu,\mu}(T)f\notin L^q(X,\nu)$.
\end{proposition}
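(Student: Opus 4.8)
The plan is to split the statement into the contractivity part and the optimality ("moreover") part, and to reduce both to the one–dimensional Ornstein--Uhlenbeck semigroup, whose hypercontractivity and its sharpness are classical.

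For contractivity I would first dispose of the trivial case $T=0$, where $\Gamma_{\nu,\mu}(0)f$ is the constant $\int_X f\,d\mu$ by Remark \ref{rmk:esempi_second_quant_12}(i) and $q_0=\infty$, so $\|\Gamma_{\nu,\mu}(0)f\|_{L^q(X,\nu)}=\big|\int_X f\,d\mu\big|\le\|f\|_{L^1(X,\mu)}\le\|f\|_{L^p(X,\mu)}$ is immediate. For $T\neq0$ set $c:=\|T\|_{\mathscr L(H_\mu,H_\nu)}\in(0,1)$ and $L:=c^{-1}T$, so $\|L\|_{\mathscr L(H_\mu,H_\nu)}=1$ and $T=cL$. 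By \eqref{composizione1} in Corollary \ref{composizione0},
\[
\Gamma_{\nu,\mu}(T)=\Gamma_{\nu,\mu}(L)\,\Gamma_{\mu,\mu}(cI),
\]
and by Remark \ref{rmk:esempi_second_quant_12}(ii) the factor $\Gamma_{\mu,\mu}(cI)$ is the classical Ornstein--Uhlenbeck semigroup $T^\mu_\tau$ on $L^p(X,\mu)$ with $c=e^{-\tau}$. Nelson's hypercontractivity, valid in this Gaussian setting, gives that $T^\mu_\tau:L^p(X,\mu)\to L^q(X,\mu)$ is a contraction precisely when $q-1\le e^{2\tau}(p-1)=c^{-2}(p-1)$, that is $q\le q_0$; meanwhile Proposition \ref{prop_L_infty} yields that $\Gamma_{\nu,\mu}(L):L^q(X,\mu)\to L^q(X,\nu)$ is a contraction since $\|L\|_{\mathscr L(H_\mu,H_\nu)}=1$. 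Composing the two contractions proves the claim for every $q\le q_0$.

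The heart of the matter, and the step I expect to be the main obstacle, is optimality, which I would obtain by reducing $\Gamma_{\nu,\mu}(T)$ to a genuine one–dimensional Mehler transform along a single well-chosen direction. Since $q>q_0$ forces $\|T\|_{\mathscr L(H_\mu,H_\nu)}^2>\tfrac{p-1}{q-1}$, I can pick a unit vector $k\in H_\mu$ with $\theta:=\|Tk\|_{H_\nu}$ satisfying $\theta^2>\tfrac{p-1}{q-1}$, equivalently $q>1+(p-1)\theta^{-2}$. Writing $\xi_k:=W^\mu_{Q_\mu^{-1/2}k}$ and $\eta:=\theta^{-1}W^\nu_{Q_\nu^{-1/2}Tk}$, Proposition \ref{prop:cameron-martin} shows $\xi_k\sim\mathcal N(0,1)$ under $\mu$ and $\eta\sim\mathcal N(0,1)$ under $\nu$. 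Testing $\Gamma_{\nu,\mu}(T)$ on $E^\mu_{sQ_\mu^{-1/2}k}$ through Proposition \ref{prop:conto_gamma_tilde_exp_fct} (together with $\widetilde\Gamma=\Gamma$ from Proposition \ref{prop:uguaglianza_gamma_gamma_tilde}) gives
\[
\Gamma_{\nu,\mu}(T)\big(e^{s\xi_k}\big)=e^{\frac12 s^2(1-\theta^2)}\,e^{s\theta\eta},\qquad s\in\R,
\]
which is exactly the action on $e^{s\,\cdot}$ of the one–dimensional Ornstein--Uhlenbeck operator $R_\theta\phi(\eta)=\int_{\R}\phi\big(\theta\eta+\sqrt{1-\theta^2}\,w\big)\,\gamma_1(dw)$, $\gamma_1$ being the standard Gaussian. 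Because real exponentials are linearly dense in $L^2(\R,\gamma_1)$ by Lemma \ref{lem:density_exp_funct}, and both $\Gamma_{\nu,\mu}(T)$ and $R_\theta$ are $L^2$–contractions, they intertwine the isometries $\phi\mapsto\phi(\xi_k)$ and $\psi\mapsto\psi(\eta)$; hence $\|\phi(\xi_k)\|_{L^p(X,\mu)}=\|\phi\|_{L^p(\R,\gamma_1)}$ and $\Gamma_{\nu,\mu}(T)(\phi(\xi_k))=(R_\theta\phi)(\eta)$ for every bounded $\phi$.

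It then remains to exhibit the one–dimensional witness. I would take $\phi(\xi)=e^{\lambda\xi^2/2}$ with $\lambda\in\big[(q\theta^2+1-\theta^2)^{-1},\,p^{-1}\big)$, an interval which is nonempty exactly because $\theta^2(q-1)>p-1$. A direct Gaussian computation gives $\phi\in L^p(\R,\gamma_1)$ (since $\lambda p<1$), while $R_\theta\phi(\eta)=(1-\lambda(1-\theta^2))^{-1/2}\exp\!\big(\tfrac{\lambda\theta^2}{2(1-\lambda(1-\theta^2))}\,\eta^2\big)$ fails to lie in $L^q(\R,\gamma_1)$ (since $q\lambda\theta^2\ge 1-\lambda(1-\theta^2)$). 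Finally I would remove the boundedness restriction by truncation: for $\phi_N:=\min(\phi,N)$ the identity $\Gamma_{\nu,\mu}(T)(\phi_N(\xi_k))=(R_\theta\phi_N)(\eta)$ holds, and since $\Gamma_{\nu,\mu}(T)$ is positive and monotone, being integration against the probability measure $\gamma_{\mu,S}$ in \eqref{gammabuona}, we get $\Gamma_{\nu,\mu}(T)(\phi(\xi_k))\ge(R_\theta\phi_N)(\eta)$ with $\|(R_\theta\phi_N)(\eta)\|_{L^q(X,\nu)}=\|R_\theta\phi_N\|_{L^q(\R,\gamma_1)}\to\|R_\theta\phi\|_{L^q(\R,\gamma_1)}=\infty$. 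Thus $f:=\phi(\xi_k)\in L^p(X,\mu)$ while $\Gamma_{\nu,\mu}(T)f\notin L^q(X,\nu)$, which is the desired optimality.
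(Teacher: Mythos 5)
Your proposal is correct. The contractivity half is the paper's own argument: the same factorization $\Gamma_{\nu,\mu}(T)=\Gamma_{\nu,\mu}\bigl(T\|T\|_{\mathscr L(H_\mu,H_\nu)}^{-1}\bigr)\,\Gamma_{\mu}\bigl(\|T\|_{\mathscr L(H_\mu,H_\nu)}I\bigr)$ from Corollary \ref{composizione0}, the identification of the second factor with $T^\mu_\tau$, $\tau=-\log\|T\|_{\mathscr L(H_\mu,H_\nu)}$, via Remark \ref{rmk:esempi_second_quant_12}(ii), Nelson's theorem, and Proposition \ref{prop_L_infty} for the norm-one factor. For optimality the two arguments use essentially the same witness, a squared exponential of a Gaussian functional along a direction $k$ where $\|Tk\|_{H_\nu}$ nearly attains $\|T\|_{\mathscr L(H_\mu,H_\nu)}$, but organize the computation differently. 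The paper works directly in $X$: it plugs $f=\exp\bigl(\alpha(\langle Q_\mu^{-\frac12}h,\cdot\rangle_X)^2\bigr)$ into the integral formula \eqref{gammamunu}, completes the square in a Gaussian integral, and checks divergence of the $L^q(X,\nu)$ norm for $h=h_\varepsilon$ with $\|Th_\varepsilon\|_{H_\nu}\geq\sqrt{(p+\varepsilon-1)/(q-1)}\,\|h_\varepsilon\|_{H_\mu}$. You instead transport everything to dimension one through the isometries $\phi\mapsto\phi(\xi_k)$, $\psi\mapsto\psi(\eta)$ and the intertwining with the Mehler operator $R_\theta$, then quote the classical one-dimensional sharpness computation. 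Each route has a payoff: the paper's computation yields a closed formula for $\Gamma_{\nu,\mu}(T)f$; your reduction is shorter and, thanks to the truncation $\phi_N=\min(\phi,N)$, it only ever applies the second-quantization formulas to bounded functions, thereby avoiding a technical point the paper glosses over, namely that its witness has square-exponential growth and hence lies outside the class $\mathscr E(X)$ on which \eqref{gammamunu} was defined. The one step you should spell out is the positivity of the $L^p$-extension of $\Gamma_{\nu,\mu}(T)$, needed to write $\Gamma_{\nu,\mu}(T)(\phi(\xi_k))\geq(R_\theta\phi_N)(\eta)$ $\nu$-a.e.: since \eqref{gammabuona} is available only on $\mathscr E(X)$, positivity of the extension follows by approximating nonnegative $L^p$ functions by nonnegative bounded ones and passing to a.e.\ limits.
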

\begin{remark}
The above statement implies that, if $\|T\|_{\mathscr L(H_\mu,H_\nu)}=1$, then $q=p$.     
\end{remark}
\begin{proof}
Let $f\in L^p(X,\mu)$. We consider the operator $M=T\|T\|_{\mathscr L(H_\mu,H_\nu)}^{-1}$ which belongs to $\mathscr L(H_\mu,H_\nu)$ and $\norm{M}_{\mathscr L(H_\mu,H_\nu)}=1$. From Propositions \ref{prop:prop_second_quant_serie}(iii), with $\sigma=\mu$, and \ref{prop:uguaglianza_gamma_gamma_tilde}, we infer that
\begin{align*}
\Gamma_{\nu,\mu}(T)f
= & \Gamma_{\nu,\mu}(M)(\Gamma_{\mu}(\|T\|_{\mathscr L(H_\mu,H_\nu)}I)f.
\end{align*}
By Remark \ref{rmk:esempi_second_quant_12}(ii), we have $\Gamma_{\mu}(\|T\|_{\mathscr L(H_\mu,H_\nu)}I)f=T_t^\mu f$ with $t=-\log(\|T\|_{\mathscr L(H_\mu,H_\nu)})$. It is well-known (see e.g \cite[Theorem 5.5.3]{bog_1998}, \cite{nel_1973}) that $T_t^\mu$ is a contraction operator from $L^p(X,\mu)$ into $L^q(X,\mu)$ with $q=1+(p-1)e^{2t}$. Recalling that $\Gamma_{\nu,\mu}(M)$ is a contraction in $\mathscr L(L^q(X,\mu),L^q(X,\nu))$ for every $q\in[1,\infty)$ and choosing $q\leq1+(p-1)\|T\|_{\mathscr L(H_\mu,H_\nu)}^{-2}$, we get
\begin{align*}
\|\Gamma_{\nu,\mu}(T)f\|_{L^q(X,\nu)}=&\|\Gamma_{\nu,\mu}(M)[\Gamma_{\mu}(\|T\|_{\mathscr L(H_\mu,H_\nu)}I)f]\|_{L^q(X,\nu)}
\leq \|T_t^\mu f\|_{L^q(X,\mu)}
\leq  \|f\|_{L^p(X,\mu)},
\end{align*}
where $t=-\log(\|T\|_{\mathscr L(H_\mu,H_\nu)})$. The arbitrariness of $f\in L^p(X,\mu)$ gives the first part of the statement.

To prove the optimality of $q_0$, let us consider $q>1+(p-1)\|T\|_{\mathscr L(H_\mu,H_\nu)}^{-2}$ and the function $f(x)=\exp(\alpha (\langle Q_\mu^{-\frac12} h,x\rangle_X)^2)$, $x\in X$, for some $h\in H_\mu$ and $\alpha>0$ to be properly chosen. We assume that $\alpha p<\frac{1}{2\|h\|_{H_\mu}^2}$, which implies that $f\in L^p(X,\mu)$.
It follows that, from \eqref{gauss_rand_var} and setting $\sigma_h:=\|(I-T^\star T)^\frac12 h\|_{H_\mu}$, the change of variables $\xi=\langle Q_\mu^{-\frac12}h,((I-T^*T)^{\frac12})_\infty^\mu y\rangle_X$ gives
\begin{align*}
& (\Gamma_{\nu,\mu}(T)f)(x) \\
= & \int_X\exp\left[\alpha\Bigl(\langle Q_\mu^{-\frac12}h,(T^\star )_\infty^{\mu,\nu}x\rangle_X+\langle Q_\mu^{-\frac12}h,((I-T^\star T)^{\frac12})_\infty^\mu y\rangle_X\Bigr)^2 \right]\mu(dy) \\
= & \frac{1}{\sqrt{2\sigma_h^2 \pi}}\int_{\R}\exp\left[\alpha\Bigl(\langle Q_\mu^{-\frac12}h,(T^\star )_\infty^{\mu,\nu}x\rangle_X+\xi\Bigr)^2 \right]\exp\left(-\frac{\xi^2}{2\sigma_h^2} \right)d\xi \\
= & \frac{1}{\sqrt{2\sigma_h^2\pi}}\int_{\R}\exp\left\{\alpha\left[\left(1-\frac{1}{2\alpha\sigma_h^2}\right)\xi^2+2\xi\langle Q_\mu^{-\frac12}h,(T^\star )_\infty^{\mu,\nu}x\rangle_X+(\langle Q_\mu^{-\frac12}h,(T^\star )_\infty^{\mu,\nu}x\rangle_X)^2\right]\right\}d\xi \\
= & \exp\left\{\alpha\left[\langle Q_\mu^{-\frac12}h,(T^\star )_\infty^{\mu,\nu}x\rangle_X)^2\left(1+\frac{1}{\frac{1}{2\alpha\sigma_h^2}-1}\right)\right]\right\} \\
& \times\frac{1}{\sqrt{2\sigma_h^2 \pi}}\int_{\R} \exp\left\{-\left[\left(\frac{1}{2\alpha\sigma_h^2}-1\right)^\frac12\xi-\left(\frac{1}{\frac{1}{2\alpha\sigma_h^2}-1}\right)^{-\frac12}\langle Q_\mu^{-\frac12}h,(T^\star )_\infty^{\mu,\nu}x\rangle_X\right]^2\right\}d\xi \\
= & \exp\left[\alpha\left(\langle Q_\mu^{-\frac12}h,(T^\star )_\infty^{\mu,\nu}x\rangle_X\right)^2\frac{1}{1-2\alpha\sigma_h^2}\right] \\ & \times\frac{1}{\sqrt{2\sigma_h^2 \pi}}\int_{\R} \exp\left\{-\left[\left(\frac{2\alpha\sigma_h^2}{1-2\alpha\sigma_h^2}\right)^{-\frac12}\xi-\left(\frac{1}{\frac{1}{2\alpha\sigma_h^2}-1}\right)^{-\frac12}\langle Q_\mu^{-\frac12}h,(T^\star )_\infty^{\mu,\nu}x\rangle_X\right]^2\right\}d\xi \\
= &\sqrt{\frac{\alpha}{1-2\alpha\sigma_h^2}}\exp\left[\frac{\alpha}{1-2\alpha\sigma_h^2}\left(\langle Q_\mu^{-\frac12}h,(T^\star )_\infty^{\mu,\nu}x\rangle_X\right)^2\right],
\end{align*}
provided that $$2\alpha<\sigma_h^{-2}=\|(I-T^\star T)^\frac12 h\|_{H_\mu}^{-2}=\frac{1}{\|h\|_{H_\mu}^2-\|Th\|_{H_\nu}^2}.$$ 
This condition is fulfilled, since we have $$2\alpha\leq 2\alpha p<\frac{1}{\|h\|_{H_\mu}^2}\leq \frac{1}{\|h\|_{H_\mu}^2-\|Th\|_{H_\nu}^2}.$$
Hence, by applying the change of variables $\eta=\langle Q_\mu^{-\frac12}h,(T^*)_\infty^{\mu,\nu}x\rangle_X$ and taking \eqref{gauss_rand_var} into account, we deduce that
\begin{align}
\label{ottim_hyper}
\int_X|(\Gamma_{\nu,\mu}(T)f)(x)|^q\nu(dx)  = & \sqrt{\frac{\alpha}{2\|Th\|_{H_\nu}\pi\,(1-2\alpha\sigma_h^2)}} \int_\R\exp\left(\frac{\alpha q}{1-2\alpha\sigma_h^2}\eta^2-\frac{\eta^2}{2\|Th\|_{H_\nu}^2}\right)d\eta
\end{align}
and the integral in the right-hand side of \eqref{ottim_hyper} s not finite if and only of $\frac{\alpha q}{1-2\alpha\sigma_h^2}-\frac{1}{2\|Th\|_{H_\nu}^2}\geq0$, which is equivalent to
\begin{align}
\label{dis_ott_1}
& 2\alpha q\|Th\|_{H_\nu}^2\geq 1-2\alpha\|(I-T^\star T)^\frac12 h\|_{H_\mu}^2=1-2\alpha\|h\|_{H_\mu}^2+2\alpha\|Th\|_{H_\nu}^2 \notag \\
& \Longleftrightarrow 2\alpha\|Th\|_{H_\nu}^2(q-1)\geq 1-2\alpha\|h\|_{H_\mu}^2. 
\end{align}
Recall that $q>1+(p-1)\|T\|_{\mathscr L(H_\mu,H_\nu)}^{-2}$. For every $\varepsilon\in\left(0,(q-1)\|T\|_{\mathscr L(H_\mu,H_\nu)}^2-(p-1)\right)$, let $h_\varepsilon\in H_\mu$ be such that $\|Th_\varepsilon\|_{H_\nu}\geq \sqrt{\frac{(p+\varepsilon-1)}{(q-1)}}\|h_\varepsilon\|_{H_\mu}$. Hence, a sufficient condition for \eqref{dis_ott_1} to hold is
\begin{align*}
2\alpha(p+\varepsilon-1)\|h_\varepsilon\|_{H_\mu}^2\geq 1-2\alpha\|h_\varepsilon\|_{H_\mu}^2 \Longleftrightarrow 2\alpha (p+\varepsilon)\|h_\varepsilon\|_{H_\mu}^2\geq 1.    
\end{align*}
Given $\varepsilon$ as above, conditions
\begin{align*}
\alpha p<\frac{1}{2\|h_\varepsilon\|_{H_\mu}^2}, \qquad \alpha(p+\varepsilon)\geq \frac{1}{2\|h_\varepsilon\|_{H_\mu}^2}    
\end{align*}
are satisfied for any $\alpha\in\left[\frac{1}{2(p+\varepsilon)\|h_\varepsilon\|_{H_\mu}^2},\frac{1}{2p\|h_\varepsilon\|_{H_\mu}^2}\right)$ and so with this choice of $\varepsilon,h_\varepsilon$ and $\alpha$ we infer that the function $f$ belongs to $ L^p(X,\mu)$ but $\Gamma_{\nu,\mu}(T)f\notin L^q(X,\nu)$.
\end{proof}

We conclude this section by providing sufficient (and necessary) conditions that ensure compactness and Hilbert-Schmidt properties of the operator $\Gamma_{\nu,\mu}(T)$. We start with some technical lemmas.

\begin{lemma}
\label{lemma:eigenvector_eigenvalue}
Let $\mu=\mathcal{N}(0,Q_\mu)$ be a centered Gaussian measure on $X$ with Cameron-Martin space $H_\mu$ and let $T\in\mathscr L(H_\mu)$ be a self-adjoint contraction with a complete set of orthonormal eigenvectors $\{v_k:k\in\N\}$ and corresponding sequence of eigenvectors $\{t_k\}_{k\in\N}$. Then $\Gamma_\mu(T)$ is self-adjoint with orthonormal set of eigenvectors
\begin{equation*}
\Bigl\{\psi^\mu_{\alpha,T}:=\sqrt{\alpha!}\prod_{j=1}^\infty \phi_{\alpha_j}\Bigl(W^\mu_{Q_\mu^{-\frac 12}v_j}\Bigr):\alpha\in\Lambda\Bigr\}
\end{equation*}
and corresponding sequence of eigenvalues 
\begin{equation*}
\Bigl\{t_{\alpha}:=\prod_{j=1}^{\infty}t_j^{\alpha_j}:\alpha\in\Lambda\Bigr\},
\end{equation*}
with the convention that $0^0=1$.
\end{lemma}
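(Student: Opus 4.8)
The plan is to identify the functions $\psi^\mu_{\alpha,T}$ with the generalized Hermite polynomials $\Phi^\mu_\alpha$ of \eqref{pol_herm_gen} built from the $T$-eigenbasis $\{v_k\}$, and then to exploit the fact that $T^{\otimes n}$ acts diagonally on the associated symmetric tensors. Self-adjointness comes for free: since $T$ is a self-adjoint contraction on $H_\mu$ we have $\|T\|_{\mathscr L(H_\mu)}\le 1$ and $T^\star=T$, so identifying $\Gamma_\mu(T)=\widetilde\Gamma_{\mu,\mu}(T)$ via Proposition \ref{prop:uguaglianza_gamma_gamma_tilde} and applying Proposition \ref{prop:prop_second_quant_serie}(ii) with $\nu=\mu$ yields $(\Gamma_\mu(T))^\star=\widetilde\Gamma_{\mu,\mu}(T^\star)=\widetilde\Gamma_{\mu,\mu}(T)=\Gamma_\mu(T)$.

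Next I would record the orthonormality and completeness claim. Because $\{v_k\}$ is a complete orthonormal system in $H_\mu$, the $\psi^\mu_{\alpha,T}$ are precisely the generalized Hermite polynomials $\Phi^\mu_\alpha$ of \eqref{pol_herm_gen} associated with this basis (that is, with $f_j=Q_\mu^{-\frac12}v_j$). By the Wiener Chaos Decomposition, $\{\psi^\mu_{\alpha,T}:\alpha\in\Lambda\}$ is then an orthonormal basis of $L^2(X,\mu)$, and for each $n$ the subfamily $\{\psi^\mu_{\alpha,T}:|\alpha|=n\}$ is an orthonormal basis of $\mathscr H_n^\mu$.

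The key computation is the eigenvalue relation. Fix $n$ and $\alpha\in\Lambda_n$. Applying Lemma \ref{lemma:isomorfismo} with the basis $\{v_k\}$ and reading off \eqref{iso_wiener_chaos_tensor_prod},
$$
(\Psi_n^\mu)^{-1}\psi^\mu_{\alpha,T}=\frac{\sqrt{\alpha!}}{n!}\sum_{\sigma\in\Sigma_n^\alpha}v_{i^\alpha_{\sigma(1)}}\otimes\cdots\otimes v_{i^\alpha_{\sigma(n)}}.
$$
Since $Tv_k=t_kv_k$, the operator $T^{\otimes n}$ maps each summand $v_{i^\alpha_{\sigma(1)}}\otimes\cdots\otimes v_{i^\alpha_{\sigma(n)}}$ to $\big(\prod_{k=1}^n t_{i^\alpha_k}\big)$ times itself; because each index $j$ occurs exactly $\alpha_j$ times among $i^\alpha_1,\ldots,i^\alpha_n$, this scalar equals $\prod_{j=1}^\infty t_j^{\alpha_j}=t_\alpha$ independently of $\sigma$ (the product being genuinely finite, with the convention $0^0=1$ handling indices where $t_j=0=\alpha_j$). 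Hence $T^{\otimes n}(\Psi_n^\mu)^{-1}\psi^\mu_{\alpha,T}=t_\alpha(\Psi_n^\mu)^{-1}\psi^\mu_{\alpha,T}$, and applying $\Psi_n^\mu$ together with Definition \ref{def:op_gamma_tilden} gives $\widetilde\Gamma_{\mu,\mu,n}(T)\psi^\mu_{\alpha,T}=t_\alpha\psi^\mu_{\alpha,T}$. Finally, since $\psi^\mu_{\alpha,T}\in\mathscr H_n^\mu$ one has $I_m^\mu\psi^\mu_{\alpha,T}=\delta_{mn}\psi^\mu_{\alpha,T}$, so only the $n$-th term of the series \eqref{operatore_tilde_gamma} survives and $\Gamma_\mu(T)\psi^\mu_{\alpha,T}=t_\alpha\psi^\mu_{\alpha,T}$.

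The only genuinely delicate points are bookkeeping ones: checking that the product of eigenvalues is permutation-independent and equals $t_\alpha$, and justifying the use of Lemma \ref{lemma:isomorfismo} with the $T$-eigenbasis $\{v_k\}$ rather than a $Q_\mu$-eigenbasis (which is legitimate precisely because that lemma is stated for an arbitrary orthonormal basis of $H_\mu$). I expect no serious obstacle beyond this careful identification.
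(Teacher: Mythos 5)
Your proposal is correct and takes essentially the same route as the paper: both identify $\psi^\mu_{\alpha,T}$ with its symmetric tensor via Lemma \ref{lemma:isomorfismo} applied to the $T$-eigenbasis $\{v_k:k\in\N\}$, use the diagonal action of $T^{\otimes n}$ on those tensors to read off the eigenvalue $t_\alpha$, and then observe that only the $n$-th term of the series defining $\Gamma_\mu(T)$ survives. The only difference is that you spell out the self-adjointness (via Propositions \ref{prop:uguaglianza_gamma_gamma_tilde} and \ref{prop:prop_second_quant_serie}(ii)) and the orthonormality of the family $\{\psi^\mu_{\alpha,T}:\alpha\in\Lambda\}$, points the paper's proof leaves implicit.
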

\begin{proof}
The statement follows from Lemma \ref{lemma:isomorfismo}, with the choice of $\{v_k:k\in\N\}$ as orthonormal basis of $H_\mu$, and Definitions \ref{def:op_gamma_tilden} and \ref{def:op_gamma_tilde}. Indeed, for every $n\in\N$ and $\alpha\in\Lambda_n$, we have
\begin{align*}
\Gamma_{\mu}(T)\psi_{\alpha,T}^\mu
= & \Gamma_{\mu,\mu,n}(T)\psi_{\alpha,T}^\mu
= \Psi_n^\mu\left(\frac{\sqrt{\alpha!}}{n!}\sum_{\sigma\in\Sigma_n^\alpha}Tv_{i^\alpha_{\sigma(1)}}^\mu\otimes \cdots \otimes Tv_{i^\alpha_{\sigma(n)}}^\mu\right) \\
= & \Psi_n^\mu\left(\prod_{j=1}^n t_j^{\alpha_j}\frac{\sqrt{\alpha!}}{n!}\sum_{\sigma\in\Sigma_n^\alpha}v_{i^\alpha_{\sigma(1)}}^\mu\otimes \cdots \otimes v_{i^\alpha_{\sigma(n)}}^\mu\right)
= \prod_{j=1}^n t_j^{\alpha_j}\psi_{\alpha,T}^\mu.
\end{align*}
\end{proof}
\begin{lemma}
\label{lemma:azione_gamma_white_noise}
Let $\mu$, $\nu$ be two centered Gaussian measures on $X$ with Cameron-Martin spaces $H_\mu$ and $H_\nu$, respectively, and let $T\in\mathscr L(H_\mu,H_\nu)$ with $\|T\|_{\mathscr L(H_\mu,H_\nu)}\leq 1$. For every $h\in \overline {H_\mu}$, we get
\begin{align*}
\Gamma_{\nu,\mu}(T)W_{h}^\mu=W^\nu_{\widehat Th}    
\end{align*}
\end{lemma}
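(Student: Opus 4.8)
The plan is to reduce the identity to the first Wiener chaos and exploit the explicit isometry $\Psi_1^\gamma$ of Lemma \ref{lemma:isomorfismo}, rather than the integral formula \eqref{gammamunu}, since $W_h^\mu$ is only a measurable linear functional and need not belong to $\mathscr E(X)$. By Proposition \ref{prop:uguaglianza_gamma_gamma_tilde} we may replace $\Gamma_{\nu,\mu}(T)$ by $\widetilde\Gamma_{\nu,\mu}(T)$, whose action is computed chaos by chaos through Definition \ref{def:op_gamma_tilde} and Definition \ref{def:op_gamma_tilden}.

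First I would record the $n=1$ form of the isometry. Taking any orthonormal basis $\{v_k^\gamma\}$ of $H_\gamma$, for the multi-index $\alpha\in\Lambda_1$ with $\alpha_k=1$ formula \eqref{iso_wiener_chaos_tensor_prod} reduces to $\Psi_1^\gamma(v_k^\gamma)=\sqrt{\alpha!}\,\Phi_\alpha^\gamma=\phi_1\big(W^\gamma_{Q_\gamma^{-1/2}v_k^\gamma}\big)=W^\gamma_{Q_\gamma^{-1/2}v_k^\gamma}$, because $\phi_1(\xi)=\xi$; by linearity and continuity this yields $\Psi_1^\gamma(g)=W^\gamma_{Q_\gamma^{-1/2}g}$ for every $g\in H_\gamma$, where $Q_\gamma^{-1/2}$ is the pseudo-inverse. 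Now fix $h\in\overline{H_\mu}$. Using the identity $Q_\mu^{-1/2}Q_\mu^{1/2}=I-P_\mu$ recorded in the preliminaries, together with the fact (visible from Proposition \ref{teoria}) that $W^\mu_z$ depends only on $(I-P_\mu)z$, I obtain
\begin{align*}
\Psi_1^\mu\big(Q_\mu^{1/2}h\big)=W^\mu_{Q_\mu^{-1/2}Q_\mu^{1/2}h}=W^\mu_{(I-P_\mu)h}=W^\mu_h=W_h^\mu .
\end{align*}
In particular $W_h^\mu\in\mathscr H_1^\mu$ and $(\Psi_1^\mu)^{-1}(W_h^\mu)=Q_\mu^{1/2}h\in H_\mu$.

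Then I would simply unwind the definition. Since $W_h^\mu$ lies in $\mathscr H_1^\mu$ we have $I_n^\mu W_h^\mu=0$ for $n\neq 1$ and $I_1^\mu W_h^\mu=W_h^\mu$, so by Definition \ref{def:op_gamma_tilde}, Definition \ref{def:op_gamma_tilden} and $T^{\otimes 1}=T$,
\begin{align*}
\Gamma_{\nu,\mu}(T)W_h^\mu=\widetilde\Gamma_{\nu,\mu,1}(T)W_h^\mu=\Psi_1^\nu\big(T\,(\Psi_1^\mu)^{-1}W_h^\mu\big)=\Psi_1^\nu\big(TQ_\mu^{1/2}h\big).
\end{align*}
Applying the $n=1$ isometry for $\nu$ and recalling $\widehat T=Q_\nu^{-1/2}TQ_\mu^{1/2}$ from Definition \ref{def:hatT} gives $\Psi_1^\nu(TQ_\mu^{1/2}h)=W^\nu_{Q_\nu^{-1/2}TQ_\mu^{1/2}h}=W^\nu_{\widehat T h}$, which is the assertion.

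The only genuinely delicate steps are the bookkeeping ones: verifying the closed form $\Psi_1^\gamma(g)=W^\gamma_{Q_\gamma^{-1/2}g}$ and checking that $TQ_\mu^{1/2}h\in H_\nu$ so that $\Psi_1^\nu$ may be applied — both follow from $Q_\mu^{1/2}h\in H_\mu$ and $T\in\mathscr L(H_\mu,H_\nu)$, the contraction hypothesis being needed only to guarantee that $\Gamma_{\nu,\mu}(T)$ is defined on $L^2(X,\mu)$. As an alternative route one could differentiate in $t$ at $t=0$ the exponential identity $\Gamma_{\nu,\mu}(T)E_{th}^\mu=E^\nu_{t\widehat T h}$ coming from Proposition \ref{prop:conto_gamma_tilde_exp_fct}, passing the $L^2$-derivative through the bounded operator $\Gamma_{\nu,\mu}(T)$; this is equivalent but requires the Gaussian-integrability estimates needed to justify the $L^2$-differentiability of $t\mapsto E_{th}^\mu$ at the origin, which the chaos argument sidesteps entirely.
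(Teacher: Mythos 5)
Your proof is correct, but it takes a genuinely different route from the paper's. The paper stays entirely inside the integral-representation framework of Section \ref{second_integrale}: for $h\in H_\mu$ it applies the identity \eqref{gauss_rand_var} (with $T^\star$ in place of $T$) to recognize $\langle (T^\star)_\infty^{\mu,\nu}\cdot\,,Q_\mu^{-\frac12}h\rangle_X$ as $W^\nu_{\widehat Th}$, evaluates the integral formula \eqref{gammamunu} on the continuous linear functional $W^\mu_h=\langle\cdot\,,Q_\mu^{-\frac12}h\rangle_X$ (the term involving $((I-T^\star T)^{\frac12})_\infty^\mu$ integrates to zero, being the mean of a centered Gaussian variable), and then reaches a general $h\in\overline{H_\mu}$ by approximating with $h_n\in H_\mu$, using that $h\mapsto W^\nu_{\widehat Th}$ is Lipschitz from $X$ into $L^2(X,\nu)$ and that $\Gamma_{\nu,\mu}(T)$ is bounded on $L^2(X,\mu)$. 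You instead work on the series side: you identify $W_h^\mu$ as the first-chaos element $\Psi_1^\mu(Q_\mu^{\frac12}h)$, let $\widetilde\Gamma_{\nu,\mu,1}(T)$ act as $T$ through the isometries of Lemma \ref{lemma:isomorfismo}, and transfer the conclusion to $\Gamma_{\nu,\mu}(T)$ via Proposition \ref{prop:uguaglianza_gamma_gamma_tilde}. Your route treats all $h\in\overline{H_\mu}$ at once (since $Q_\mu^{\frac12}h\in H_\mu$ for every $h\in X$, no approximation is needed) and makes the lemma transparent as the statement that second quantization acts on the first Wiener chaos as $T$ itself; its cost is that everything is channelled through the equality $\Gamma_{\nu,\mu}(T)=\widetilde\Gamma_{\nu,\mu}(T)$, which in the paper rests on the exponential functions and the density Lemma \ref{lem:density_exp_funct}. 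The paper's argument is self-contained within the integral formula and explicitly exhibits the Gaussian random variable $\langle (T^\star)_\infty^{\mu,\nu}\cdot\,,z\rangle_X$, which is the form reused later (e.g.\ in the proof of Proposition \ref{compattezza}), at the price of the two-step density argument. One small point of care in your version: the formula $\Psi_1^\gamma(g)=W^\gamma_{Q_\gamma^{-\frac12}g}$ should be established with the same (eigenvector) bases used in Definition \ref{def:op_gamma_tilden}, but since both sides are bounded linear in $g\in H_\gamma$ and agree on that basis, the identity is basis-independent and your bookkeeping goes through; as you note, both approaches use the contraction hypothesis only so that the operator is well defined and bounded on $L^2$.
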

\begin{proof}
Let $h\in H_\mu$ with $z=Q_\mu^{-\frac12}h$. From \eqref{gauss_rand_var}, with $(T^\star )_{\infty}^{\mu,\nu}$ instead of $T_\infty^{\nu,\mu}$, it follows that
\begin{align}
\label{white_noise_modificato}
\langle (T^\star )_\infty^{\mu,\nu}\cdot,Q_\mu^{-\frac12}h\rangle_X=\langle (T^\star )_\infty^{\mu,\nu}\cdot,z\rangle_X
=W^\nu_{\widehat T Q_\mu^\frac12 z}(\cdot)
=W^\nu_{\widehat T h}(\cdot)
\end{align}
is a real centered Gaussian random variable with variance $\norm{L Q_{\mu}z}_{H_\nu}^2$. \\
Let $h\in \overline {H_\mu}$ and let $(h_n)_{n\in\N}\subseteq H_\mu$ converge to $h$ in $X$. If we set $z_n=Q_\mu^{-\frac12}h_n$ for every $n\in\N$, then from Proposition \ref{prop:cameron-martin} it follows that, for every $n,m\in\N$,
\begin{align*}
\left\|W^\nu_{\widehat Th_n }-W^\nu_{\widehat Th_m }\right\|_{L^2(X,\nu)}^2
= & \|TQ_\mu^\frac12(h_n-h_m)\|_{H_\nu}^2
\leq \|T\|_{\mathscr L(H_\mu,H_\nu)}^2\|Q_\mu^\frac12(h_n-h_m)\|_{H_\mu}^2 \\
= & 
%\|T\|_{\mathscr L(H_\mu,H_\nu)}^2\|Q_\mu^{\frac12}(z_n-z_m)\|_{X}^2
\|T\|_{\mathscr L(H_\mu,H_\nu)}^2\|h_n-h_m\|_{X}^2,
\end{align*}
which implies that $\left(W^\nu_{\widehat T h_n }\right)_{n\in\N}$ is a Cauchy sequence in $L^2(X,\nu)$ and its limit is $W_{\widehat Th}$. From \eqref{white_noise_modificato}, with $h_n$ and $z_n$ instead of $h$ and $z$, respectively, we deduce that
\begin{align*}
W_{\widehat Th}^\nu=L^2(X,\nu)-\lim_{n\to\infty}\langle (T^\star )_\infty^{\mu,\nu}\cdot, z_n\rangle_X
\end{align*}
and the linearity of $W_{h_n}^{\mu}=\langle \cdot,z_n\rangle_X$, $n\in\N$, gives
\begin{align*}
\Gamma_{\nu,\mu}(T)W_{h}^\mu
= & \lim_{n\to\infty}\Gamma_{\nu,\mu}(T)W_{h_n}^\mu
=  \lim_{n\to\infty}\int_X \langle (T^\star )_\infty^{\mu,\nu}\cdot+((I-T^\star T)^{\frac12})_\infty^\mu y,z_n\rangle_X \mu(dy) \\
= & \lim_{n\to\infty}\Big(\langle (T^\star )_\infty^{\mu,\nu}\cdot,z_n\rangle_X+\int_X \langle ((I-T^\star T)^{\frac12})_\infty^\mu y,z_n\rangle \mu(dy)\Big) \\
= & \lim_{n\to\infty}\langle (T^\star )_\infty^{\mu,\nu}\cdot,z_n\rangle_X=W_{\widehat Th}^\nu,
\end{align*}
where all the limits are meant in $L^2(X,\nu)$.
\end{proof}
\begin{lemma}
\label{lemma:esist_C}
Let $T\in\mathscr L(H_\mu,H_\nu)$ and set $B=(T^\star T)^{\frac12}\in\mathscr L(H_\mu)$. There exists an operator $C\in\mathscr L(H_\mu,H_\nu)$ such that $T=CB$ and $\|C\|_{\mathscr L(H_\mu,H_\nu)}=1$. Further, $C_{|\overline{{\rm Range}(B)}}$ is an isometry on $\overline{{\rm Range}(B)}$, which implies that $C^\star T=C^\star CB=B$ on $H_\mu$.    
\end{lemma}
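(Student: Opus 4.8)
The plan is to obtain $C$ by the polar decomposition of $T$, whose engine is the isometric identity relating $B$ and $T$. First I would record that, since $B=(T^\star T)^{\frac12}$ is the nonnegative self-adjoint square root of $T^\star T$ on $H_\mu$, for every $h\in H_\mu$ one has
\begin{align*}
\|Bh\|_{H_\mu}^2=\langle B^2h,h\rangle_{H_\mu}=\langle T^\star Th,h\rangle_{H_\mu}=\langle Th,Th\rangle_{H_\nu}=\|Th\|_{H_\nu}^2,
\end{align*}
so that $\|Bh\|_{H_\mu}=\|Th\|_{H_\nu}$ for all $h\in H_\mu$. In particular ${\rm Ker}(B)={\rm Ker}(T)$, and since $B$ is self-adjoint we have the orthogonal decomposition $H_\mu=\overline{{\rm Range}(B)}\oplus{\rm Ker}(B)$, with $({\rm Ker}(B))^\perp=\overline{{\rm Range}(B)}$.

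Next I would define $C$ on ${\rm Range}(B)$ by setting $C(Bh):=Th$ for $h\in H_\mu$. This is well-defined, because $Bh_1=Bh_2$ forces $\|T(h_1-h_2)\|_{H_\nu}=\|B(h_1-h_2)\|_{H_\mu}=0$, whence $Th_1=Th_2$. The same identity shows that $C$ is isometric on ${\rm Range}(B)$, since $\|C(Bh)\|_{H_\nu}=\|Th\|_{H_\nu}=\|Bh\|_{H_\mu}$; hence $C$ extends uniquely to a linear isometry on $\overline{{\rm Range}(B)}$. I would then extend $C$ to all of $H_\mu$ by declaring $Cx=0$ for $x\in{\rm Ker}(B)=(\overline{{\rm Range}(B)})^\perp$. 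For an arbitrary $x=x_1+x_2\in\overline{{\rm Range}(B)}\oplus{\rm Ker}(B)$ this gives $\|Cx\|_{H_\nu}=\|x_1\|_{H_\mu}\le\|x\|_{H_\mu}$, so that $C\in\mathscr L(H_\mu,H_\nu)$ with $\|C\|_{\mathscr L(H_\mu,H_\nu)}\le1$; as $C$ is isometric on the nonzero subspace $\overline{{\rm Range}(B)}$ (here assuming $T\neq0$, the case $T=0$ being trivial), in fact $\|C\|_{\mathscr L(H_\mu,H_\nu)}=1$. Finally $T=CB$ holds by construction, since $CBh=C(Bh)=Th$ for every $h\in H_\mu$.

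For the last claim I would identify $C^\star C$ with the orthogonal projection $P$ onto $\overline{{\rm Range}(B)}$. Indeed, for $x,y\in H_\mu$ with orthogonal components $x_1,y_1\in\overline{{\rm Range}(B)}$ and $x_2,y_2\in{\rm Ker}(B)$, the isometry property yields $\langle C^\star Cx,y\rangle_{H_\mu}=\langle Cx_1,Cy_1\rangle_{H_\nu}=\langle x_1,y_1\rangle_{H_\mu}=\langle Px,y\rangle_{H_\mu}$, so $C^\star C=P$. Since ${\rm Range}(B)\subseteq\overline{{\rm Range}(B)}$ we have $PB=B$, and therefore $C^\star T=C^\star(CB)=(C^\star C)B=PB=B$ on $H_\mu$.

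This is the classical polar decomposition, so no step is genuinely hard; the only points requiring care are the well-definedness of $C$ on ${\rm Range}(B)$ and the computation $C^\star C=P$, both of which rest entirely on the isometric identity $\|Bh\|_{H_\mu}=\|Th\|_{H_\nu}$ established at the outset.
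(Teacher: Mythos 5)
Your proposal is correct and follows essentially the same route as the paper: define $C$ on ${\rm Range}(B)$ by $C(Bh)=Th$, extend it by zero on ${\rm Ker}(B)$, and derive everything from the identity $\|Bh\|_{H_\mu}=\|Th\|_{H_\nu}$. The only cosmetic differences are that you make well-definedness of $C$ and the identification $C^\star C=P$ explicit (the paper instead runs the polarization computation and verifies $\langle C^\star Tx,y\rangle_{H_\mu}=\langle Bx,y\rangle_{H_\mu}$ directly), which if anything tightens the argument.
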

\begin{proof}
For every $h\in H_\mu$, we set $C(Bh)=Th$. It follows that
\begin{align}
\label{ext_C}
\|C(Bh)\|_{H_\nu}^2
= & \|Th\|_{H_{\nu}}^2=\|Bh\|_{H_\mu}^2.
\end{align}
Hence, we can extend $C$ to the whole $H_\mu=\overline{{\rm Range}(B)}+{\rm Ker}(B)$ by setting $C\equiv0$ on ${\rm Ker}(B)$ and by means of \eqref{ext_C} on $\overline{{\rm Range}(B)}$, which also gives $\|C\|_{\mathscr L(H_\mu,H_\nu)}=1$. \\
%To prove the last property, we notice that $C_{|\overline{{\rm Range}(B)}}:\overline{{\rm Range}(B)}\to \overline{{\rm Range}(B)}=\overline{{\rm Range}(T^\star )}$, since for every $h\in H_\mu$ and $k\in {\rm Ker}(B)$ we get
%\begin{align*}
%\langle CBh,k\rangle_{H_\mu}    
%\end{align*}
From \eqref{ext_C}, for every $x,y\in H_\mu$ we deduce
\begin{align*}
\|Bx\|_{H_\mu}^2+2\langle Bx,By\rangle_{H_\mu}+\|By\|_{H_\mu}^2
= & \langle B(x+y),B(x+y)\rangle_{H_{\mu}}
= \langle CB(x+y),CB(x+y)\rangle_{H_{\nu}} \\
= & \|CBx\|_{H_\nu}^2+2\langle CBx,CBy\rangle_{H_\nu}+\|CBy\|_{H_\nu}^2\\
= & \|Bx\|_{H_\mu}^2+2\langle CBx,CBy\rangle_{H_\nu}+\|By\|_{H_\mu}^2,
\end{align*}
which gives $\langle Bx,By\rangle_{H_\mu}=\langle CBx,CBy\rangle_{H_\mu}$ for every $x,y\in H_\nu$. By density, we get $\langle h,k\rangle_{H_\mu}=\langle Ch,Ck\rangle_{H_\nu}$ for every $h,k\in \overline{{\rm Range}(B)}$ and this implies that $C_{|\overline{{\rm Range}B}}$ is an isometry on $\overline{{\rm Range}B}$. \\
Let us prove that $C^\star T=B$ on $H_\mu$. Fix $x\in H_\mu$. For every $y\in H_\mu$, we get
\begin{align*}
\langle C^\star Tx,y\rangle_{H_\mu}
= & \langle C^\star CB,y\rangle_{H_\mu}
= \langle CBx,Cy\rangle_{H_\nu}
= \langle CBx,Cy_1\rangle_{H_\nu}
= \langle Bx,y_1\rangle_{H_\mu}
= \langle Bx,y\rangle_{H_\mu},
\end{align*}
where $y=y_1+y_2$ with $y_1\in\overline{{\rm Range}(B)}$, $y_2\in{\rm Ker}(B)$ and we have used the fact that $Cy=Cy_1$ and $\langle Bx,y_1\rangle_{H_\mu}=\langle Bx,y\rangle_{H_\mu}$.
\end{proof}

\begin{proposition}\label{compattezza}
Let $\mu,\nu$ be two centered Gaussian measure on $X$ and let $T\in\mathscr L(H_\mu,H_\nu)$, $T\neq0$, be a contraction. 
\begin{enumerate}[\rm(i)]
\item If $T$ is a compact strict contraction, then the operator $\Gamma_{\nu,\mu}(T)$ is compact from $L^p(X,\mu)$ into $L^q(X,\nu)$ for every $p\in(1,\infty)$ and $1\leq q<1+(p-1)\|T\|_{\mathscr L(H_\mu,H_\nu)}^{-2}$.
\item If $\Gamma_{\nu,\mu}(T):L^p(X,\mu)\to L^1(X,\nu)$ is compact for some $p\in(1,\infty)$, then $T$ is compact. 
\item If $\Gamma_{\nu,\mu}(T)$ is compact from $L^2(X,\mu)$ into $L^2(X,\nu)$, then $T$ is a compact strict contraction.
\end{enumerate}
\end{proposition}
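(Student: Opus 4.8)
The plan is to reduce all three statements to the self-adjoint operator $B=(T^\star T)^{\frac12}\in\mathscr L(H_\mu)$ and to the action of $\Gamma_{\nu,\mu}(T)$ on the Wiener chaos, exploiting the factorization of Lemma \ref{lemma:esist_C}, the multiplicativity of Proposition \ref{prop:prop_second_quant_serie}(iii), and the eigenfunction description of Lemma \ref{lemma:eigenvector_eigenvalue}.

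For (i) I would first write $T=CB$ as in Lemma \ref{lemma:esist_C}, with $\|C\|_{\mathscr L(H_\mu,H_\nu)}=1$ and $B$ compact, self-adjoint, non-negative and $\|B\|_{\mathscr L(H_\mu)}=\|T\|_{\mathscr L(H_\mu,H_\nu)}=:r<1$. By Proposition \ref{prop:prop_second_quant_serie}(iii) with $\sigma=\mu$ one has $\Gamma_{\nu,\mu}(T)=\Gamma_{\nu,\mu}(C)\,\Gamma_\mu(B)$, and since $\Gamma_{\nu,\mu}(C)$ is bounded from $L^q(X,\mu)$ into $L^q(X,\nu)$ for every $q$ (Proposition \ref{prop_L_infty}), it suffices to show that $\Gamma_\mu(B)$ is compact from $L^p(X,\mu)$ into $L^q(X,\mu)$. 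By Lemma \ref{lemma:eigenvector_eigenvalue}, $\Gamma_\mu(B)$ is self-adjoint on $L^2(X,\mu)$ with eigenvalues $b_\alpha=\prod_j b_j^{\alpha_j}$, where $b_j\downarrow0$ are the eigenvalues of $B$. Since $b_\alpha\le r^{|\alpha|}$ and $b_\alpha\ge\delta$ forces both $|\alpha|\le\log\delta/\log r$ and $\mathrm{supp}(\alpha)\subseteq\{j:b_j\ge\delta\}$, only finitely many $\alpha$ satisfy $b_\alpha\ge\delta$; hence the eigenvalues accumulate only at $0$ and $\Gamma_\mu(B)$ is compact on $L^2(X,\mu)$.

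To reach the full hypercontractive range I would fix $c\in(r,1)$ close enough to $r$ that $q<s:=1+(p-1)c^{-2}$ (possible because $s\uparrow q_0=1+(p-1)r^{-2}>q$ as $c\downarrow r$), and factor $B=(B/c)(cI)$, so that again by Proposition \ref{prop:prop_second_quant_serie}(iii) one gets $\Gamma_\mu(B)=\Gamma_\mu(B/c)\,\Gamma_\mu(cI)$. By Remark \ref{rmk:esempi_second_quant_12}(ii) and Proposition \ref{prop:hyp_contr_gen2}, $\Gamma_\mu(cI)=T^\mu_{-\log c}$ is bounded from $L^p(X,\mu)$ into $L^s(X,\mu)$; the operator $\Gamma_\mu(B/c)$ is compact on $L^2(X,\mu)$ (same eigenvalue argument, as $\|B/c\|<1$) and a contraction on every $L^{s'}(X,\mu)$, so by Krasnoselskii's interpolation theorem for compact operators it is compact on $L^s(X,\mu)$; finally the inclusion $L^s(X,\mu)\hookrightarrow L^q(X,\mu)$ is bounded since $q\le s$ and $\mu$ is a probability measure. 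Composing the three maps shows that $\Gamma_\mu(B)$ is compact from $L^p(X,\mu)$ into $L^q(X,\mu)$. This is the step I expect to be the main obstacle: cleanly separating the compact factor from the purely smoothing (hypercontractive) one and invoking a compactness-interpolation result are precisely what make the entire range $q<q_0$ accessible.

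For (ii) I would restrict $\Gamma_{\nu,\mu}(T)$ to the first Wiener chaos. Since $\widetilde\Gamma_{\nu,\mu}(T)$ preserves the chaos degree, $\Gamma_{\nu,\mu}(T)(\mathscr H_1^\mu)\subseteq\mathscr H_1^\nu$, and by Lemma \ref{lemma:azione_gamma_white_noise} it acts there as $W_h^\mu\mapsto W^\nu_{\widehat Th}$; via the isometries $h\mapsto W^\mu_h$ and $h\mapsto W^\nu_h$ it is therefore unitarily equivalent to $\widehat T=Q_\nu^{-\frac12}TQ_\mu^{\frac12}$ acting from $\overline{H_\mu}$ to $\overline{H_\nu}$, and since $Q_\mu^{\frac12}$ and $Q_\nu^{-\frac12}$ are isometric isomorphisms onto, respectively from, the Cameron--Martin spaces, $T$ is compact if and only if $\widehat T$ is. The decisive point is that on a fixed chaos all $L^r$-norms are mutually proportional: a first-chaos element is a centered Gaussian, so $\|\cdot\|_{L^r}=c_r\|\cdot\|_{L^2}$ there. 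Hence the inclusion $(\mathscr H_1^\mu,\|\cdot\|_{L^2})\hookrightarrow L^p(X,\mu)$ is bounded, $\mathscr H_1^\nu$ is closed in $L^1(X,\nu)$, and on $\mathscr H_1^\nu$ convergence in $L^1(\nu)$ is equivalent to convergence in $L^2(\nu)$; therefore the hypothesis that $\Gamma_{\nu,\mu}(T)\colon L^p(X,\mu)\to L^1(X,\nu)$ be compact forces $\Gamma_{\nu,\mu}(T)|_{\mathscr H_1^\mu}\colon\mathscr H_1^\mu\to\mathscr H_1^\nu$ to be compact in the $L^2$-norms, whence $\widehat T$, and thus $T$, is compact.

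For (iii) I would first note that compactness of $\Gamma_{\nu,\mu}(T)$ from $L^2(X,\mu)$ into $L^2(X,\nu)$, composed with the bounded inclusion $L^2(X,\nu)\hookrightarrow L^1(X,\nu)$, yields compactness into $L^1(X,\nu)$, so part (ii) with $p=2$ already gives that $T$ is compact. It remains to exclude $\|T\|_{\mathscr L(H_\mu,H_\nu)}=1$. If this held, compactness of $T$ would let the norm be attained at some $h\in H_\mu$ with $\|h\|_{H_\mu}=1$, and then $g:=Th$ satisfies $\|g\|_{H_\nu}=1$. Taking $u_n:=\Psi_n^\mu(h^{\otimes n})/\sqrt{n!}$, the normalized degree-$n$ Hermite polynomials in the direction $h$, these are orthonormal in $L^2(X,\mu)$ and hence weakly null, while the identity $\widetilde\Gamma_{\nu,\mu,n}(T)\Psi_n^\mu(h^{\otimes n})=\Psi_n^\nu((Th)^{\otimes n})=\Psi_n^\nu(g^{\otimes n})$ gives $\|\Gamma_{\nu,\mu}(T)u_n\|_{L^2(X,\nu)}=1$ for every $n$. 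Since a compact operator maps weakly null sequences to norm-null sequences, this contradicts $\|\Gamma_{\nu,\mu}(T)u_n\|=1$; hence $\|T\|<1$, and $T$ is a compact strict contraction, completing the proof.
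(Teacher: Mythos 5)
Your proof is correct, but it does not follow the paper's route everywhere, so a comparison is worthwhile. Parts (ii) and (iii) are in substance the paper's own arguments: in (ii) the paper also reduces to the action on the first chaos via Lemma \ref{lemma:azione_gamma_white_noise}, and its identity $\norm{T(h_n-h_m)}_{H_\nu}=\sqrt{\pi/2}\,\norm{\Gamma_{\nu,\mu}(T)(W^\mu_{z_n}-W^\mu_{z_m})}_{L^1(X,\nu)}$ is exactly your ``all $L^r$-norms are proportional on Gaussians'' step made explicit; in (iii) the paper likewise deduces compactness of $T$ from (ii) and then excludes $\norm{T}_{\mathscr L(H_\mu,H_\nu)}=1$, but it does so through an eigenvector $v$ of $B=(T^\star T)^{\frac12}$ with eigenvalue $1$ and the eigenfunctions of $\Gamma_\mu(B)$ from Lemma \ref{lemma:eigenvector_eigenvalue}, whereas you take tensor powers $h^{\otimes n}$ of a norm-attaining vector of $T$ and contradict the compactness of $\Gamma_{\nu,\mu}(T)$ directly via weakly null sequences; your variant is slightly leaner, needing neither self-adjointness nor Lemma \ref{lemma:eigenvector_eigenvalue} at that point. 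The genuine divergence is in (i): after the common factorization $\Gamma_{\nu,\mu}(T)=\Gamma_{\nu,\mu}(C)\Gamma_\mu(B)$ of Lemma \ref{lemma:esist_C}, the paper verifies that $\widehat B=Q_\mu^{-\frac12}BQ_\mu^{\frac12}$ is a compact strict contraction on $X$, identifies $\Gamma_\mu(B)$ with the operator $\Gamma(\widehat B)$ of \cite{cho_gol_1996} through Remark \ref{rmk:eq_operatore_sec_quant_choj}, and imports the $L^p$-to-$L^q$ compactness wholesale from \cite[Proposition 2(a)]{cho_gol_1996}. You instead prove the $L^2$-compactness of $\Gamma_\mu(B)$ internally from the eigenvalue structure of Lemma \ref{lemma:eigenvector_eigenvalue} (which applies because $B$, being compact, self-adjoint and non-negative, admits a complete orthonormal eigenbasis; compactness of $B$ itself follows from $\norm{Bh}_{H_\mu}=\norm{Th}_{H_\nu}$, as the paper also notes), and then reach the whole range $q<1+(p-1)\norm{T}^{-2}_{\mathscr L(H_\mu,H_\nu)}$ by writing $B=(B/c)(cI)$, using Nelson's hypercontractivity for $\Gamma_\mu(cI)=T^\mu_{-\log c}$, and invoking Krasnoselskii's interpolation theorem for compact operators. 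The trade-off: the paper's argument is shorter but rests on an external second-quantization result, while yours stays within the paper's machinery at the price of one classical interpolation theorem, and it makes structurally transparent why the full subcritical range is reached --- compactness is produced on $L^2$ and transported by hypercontractive smoothing.
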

\begin{remark}
We notice that, if $T=0$, then $\Gamma_{\nu,\mu}(T)f=\int_Xfd\mu$ for every $f\in L^p(X,\mu)$ and $p\in[1,\infty)$. Hence, $\Gamma(T):L^1(X,\mu)\to L^\infty(X,\nu)$ is compact.
\end{remark}
\begin{proof}
Along the proof, we keep the same notation as that in Lemma \ref{lemma:esist_C}. 

Assume that $T$ is a compact strict contraction from $H_\mu$ into $H_\nu$. We claim that also $B=(T^\star T)^{\frac12}$ is a compact strict contraction on $H_\mu$. From \eqref{ext_C}, we deduce that $B$ is a strict contraction. Let now $(h_n)_{n\in\N}\subseteq H_\mu$ be a bounded sequence. Then there exists a subsequence $(h_{k_n})_{n\in\N}\subseteq (h_n)_{n\in\N}$ such that $(Th_{k_n})_{n\in\N}\subseteq H_\nu$ is a convergent subsequence. By \eqref{ext_C} we get
\begin{align*}
\|Bh_{k_n}-Bh_{k_m}\|_{H_\mu}^2=\|Th_{k_n}-Th_{k_m}\|_{H_\nu}^2, \qquad k,m\in\N,    
\end{align*}
which shows that $(Bh_{k_n})_{n\in\N}\subseteq H_\mu$ is a Cauchy sequence in $H_\mu$. The claim is proved. \\
By \eqref{composizione_completa}, we get
\begin{align}
\label{spezzamento_compattezza}
\Gamma_{\nu,\mu}(T)=\Gamma_{\nu,\mu}(C)\Gamma_{\mu}(B).    
\end{align}
From Remark \ref{rmk:eq_operatore_sec_quant_choj}, the operator $\Gamma_\mu(B)$ coincides with the second quantization operator defined in \cite{cho_gol_1996} with $\widehat B=Q_\mu^{-\frac12}BQ_{\mu}^{\frac12}\in\mathscr L(X)$ instead of $B$. Therefore, in order to apply \cite[Proposition  2(a)]{cho_gol_1996}, it is enough to show that $\widehat B$ is a compact strict contraction on $X$. The thesis follows by combining this fact and \eqref{spezzamento_compattezza}. Let $(x_n)_{n\in\N}\subseteq X$ be a bounded sequence. For all $n\in\N$ we set $h_n:=Q_\mu^{\frac12}x_n$ and $(h_n)_{n\in\N}\subseteq H_\mu$ is a bounded sequence in $H_\mu$. Since $B$ is compact, it follows that there exists a subsequence $(h_{k_n})_{n\in\N}\subseteq (h_n)_{n\in\N}$ such that $(Bh_{k_n})_{n\in\N}\subseteq H_\mu$ converges to some $h\in H_\mu$ as $n$ tends to infinity. It follows that
\begin{align*}
\|\widehat Bx_{k_n}-\widehat Bx_{k_m}\|_X^2
= & \|Bh_{k_n}-Bh_{k_m}\|_{H_\mu}^2, \qquad n,m\in\N,
\end{align*}
which implies that $(\widehat Bx_{k_n})_{n\in\N}\subseteq X$ is a Cauchy sequence. Further, there exists $c\in[0,1)$ such that, for every $x\in X$, we have $\|\widetilde Bx\|_X^2=\|BQ_{\mu}^\frac12x\|_{H_\mu}^2\leq c\|Q_{\mu}^{\frac12}x\|_{H_\mu}^2\leq c\|x\|_{X}^2$. \\ %This concludes the proof of (i). \\
Assume by contradiction that $\Gamma_{\nu,\mu}(T):L^p(X,\mu)\to L^q(X,\nu)$, $q=1+(p-1)\|T\|_{\mathscr L(H_\mu,H_\nu)}^{-2}$, is compact, let $h\in H_\mu$ be a unit vector such that $Bh=\|B\|_{\mathscr L(H_\mu)}h$ and let $z=Q_\mu^{-\frac12}h$. For every $n\in\N$, we set
\begin{align*}
f_n:=\exp\left(W^\mu_{nz}-\frac p2n^2\right)=\exp\left(\frac{1-p}{2}n^2 \right)E^\mu_{nz}.    
\end{align*}
It follows that $\|f_n\|_{L^p(X,\mu)}=1$ for every $n\in\N$ and, from Proposition \ref{prop:conto_gamma_tilde_exp_fct}, \eqref{ext_C} and the fact that $Th=CBh$ and $Bh=\|B\|_{\mathscr L(H_\mu)}h$, we infer that
\begin{align*}
\Gamma_{\nu,\mu}(T)f_n
= & \exp\left(\frac{1-p}2n^2\right)E^\nu_{nQ_\nu^{-\frac12}Th}
= \exp\left(\frac{1-p}2n^2\right) E^\nu_{n\|B\|_{\mathscr L(H_\mu)}Q_\nu^{-\frac12}Ch}, \qquad n\in\N
\end{align*}
and $
\|Ch\|_{H_\nu}=\|CBh\|_{H_\nu}\|B\|_{\mathscr L(H_\mu)}^{-1}=\|Bh\|_{H_\mu}\|B\|_{\mathscr L(H_\mu)}^{-1}=\|h\|_{H_\mu}=1$. It follows that
\begin{align*}
\|\Gamma_{\nu,\mu}(T)f_n\|_{L^q(X,\nu)}
= \exp\left(\frac{1-p}2n^2\right)\exp\left(\frac{q-1}2n^2\|B\|_{\mathscr L(H_\mu)}^2\right)
=1, \qquad n\in \N, 
\end{align*}
where we have used the fact that $\|B\|_{\mathscr L(H_\mu)}=\|T\|_{\mathscr L(H_\mu,H_\nu)}$. The compactness of $\Gamma_{\nu,\mu}(T)$ from $L^p(X,\mu)$ into $L^q(X,\nu)$ implies that there exists a subsequence $(\Gamma_{\nu,\mu}(T)f_{k_n})$ which converges to some $f\in L^q(X,\nu)$ with $\|f\|_{L^q(X,\nu)}=1$. However, for every $y\in X$, for $\nu$-a.e. $x\in X$ we have $|W_{y}^{\nu}(x)|<\infty$, which implies that
\begin{align*}
(\Gamma_{\nu,\mu}(T)f_n)(x)
= \exp\left(\frac{1-p}2n^2\right) E^\nu_{n\|B\|_{\mathscr L(H_\mu)}Q_\nu^{-\frac12}Ch}(x)\to 0, \qquad n\to \infty, 
\end{align*}
for $\nu$-a.e. $x\in X$. This gives a contradiction and concludes the proof of (i).

To prove (ii), assume that $\Gamma_{\nu,\mu}(T):L^p(X,\mu)\to L^1(X,\nu)$ is compact and let $(h_n)_{n\in\N}\subseteq H_\mu$ be a bounded sequence. We have to show that $(Th_n)_{n\in\N}$ admits a converging subsequence. For every $n\in\N$, we set $z_n=Q_\mu^{-\frac12}h_n$ and notice that $(W_{z_n}^\mu)$ is bounded in $L^r(X,\mu)$ for every $r\in(1,\infty)$, since $W_{z_n}$ is a centered Gaussian random variable in $(X,\mu)$ with variance $\|z_n\|_X^2$ for every $n\in\N$. Claiming that 
\begin{equation}\label{uguaglianza_compattezza}
\norm{T(h_n-h_m)}_{H_\nu}
=\sqrt{\frac \pi2}
\norm{\Gamma_{\nu,\mu}(T)(W_{z_n}^\mu-W_{z_m}^\mu)}_{L^1(X,\nu)}, \qquad n,m\in\N,
\end{equation}
we obtain the compactness of $T$.

Now we prove \eqref{uguaglianza_compattezza}. From Lemma \ref{lemma:azione_gamma_white_noise}, with $h=z_n-z_m$, it follows that
\begin{align*}
\norm{\Gamma_{\nu,\mu}(T)(W_{z_n}^\mu-W_{z_m}^\mu)}_{L^1(X,\nu)}
=& \norm{\Gamma_{\nu,\mu}(W_{z_n-z_m}^\mu)}_{L^1(X,\nu)} 
=  \norm{W^\nu_{\widehat T(z_n-z_m)}}_{L^1(X,\nu)} \\
= & \frac{1}{\sqrt{2\pi}\norm{T(h_n-h_m)}_{H_\nu}}\int_\R\abs{\xi}e^{-\frac{1}{2}\xi^2\norm{T(h_n-h_m)}^{-2}_{H_\nu}}\,d\xi\\
= & \sqrt{\frac 2\pi}\norm{T(h_n-h_m)}_{H_\nu}, \qquad n,m\in\N,
\end{align*}
which gives the claim. 

It remains to prove (iii). Let us notice that, since $B=C^\star T$, from \eqref{composizione_completa} it follows that $\Gamma_{\mu}(B)=\Gamma_{\mu,\nu}(C^\star )\Gamma_{\nu,\mu}(T)$. The compactness of $\Gamma_{\nu,\mu}(T)$ from $L^2(X,\mu)$ into $L^2(X,\nu)$ and the continuity of $\Gamma_{\mu,\nu}(C^\star )$ from $L^2(X,\nu)$ into $L^2(X,\mu)$ imply that $\Gamma_{\mu}(B)$ is compact from $L^2(X,\mu)$ into itself and, from (ii), we already know that $T$ is compact. \\
Assume by contradiction that $1=\|T\|_{\mathscr L(H_\mu,H_\nu)}=\|B\|_{\mathscr L(H_\mu)}$. %Again from \eqref{composizione_completa} and \eqref{ext_C}, we deduce that $\Gamma_\mu(B)$ is compact from $L^2(X,\mu)$ into itself. 
Computations in (i) show that the compactness of $T$ implies the compactness of $B$. So, let $v$ be an eigenvector of $B$ with corresponding eigenvalue $1$. From Lemma \ref{lemma:eigenvector_eigenvalue}, with $T=B$, we deduce that
\begin{align*}
f_n:=\sqrt{n!}\phi_n\left(W^\mu_{Q_\mu^{-\frac12}v}\right), \qquad n\in\N,    
\end{align*}
is a sequence of orthonormal eigenvectors of $\Gamma_{\mu}(B)$ in $L^2(X,\mu)$ with same corresponding eigenvalue $1$. Hence, the sequence $(\Gamma_\mu(B)f_n=f_n)$ does not admit any convergent subsequence, contradicting the compactness of $\Gamma_\mu(B)$ in $L^2(X,\mu)$. 
\end{proof}
\begin{remark}
By means of Lemma \ref{lemma:esist_C}, it is possible give a different proof of the fact that $\Gamma_{\nu,\mu}(T)$ is not compact from $L^p(X,\mu)$ into $L^q(X,\nu)$, $q=1+(p-1)\|T\|_{\mathscr L(H_\mu,H_\nu)}^{-2}$, and that the compactness of $\Gamma_{\nu,\mu}(T)$ from $L^p(X,\mu)$ into $L^1(X,\nu)$ implies the compactness of $T$. \\
Assume that $\Gamma_{\nu,\mu}(T)$ is compact from $L^p(X,\mu)$ into $L^q(X,\nu)$. The equality $\Gamma_{\mu}(B)=\Gamma_{\mu,\nu}(C^\star )\Gamma_{\nu,\mu}(T)$ (see the proof of (iii) in Proposition \ref{compattezza}) implies that $\Gamma_{\mu}(B)$ is compact from $L^p(X,\mu)$ into $L^q(X,\mu)$, but this contradicts \cite[Proposition 2]{cho_gol_1996}, since $\Gamma_\mu(B)$ is the second quantization operator defined in \cite{cho_gol_1996} with operator $\widehat B=Q_\mu^{-\frac12}BQ_\mu^{\frac12}$ (see Remark \ref{rmk:eq_operatore_sec_quant_choj}), which is a compact strict contraction as proved in the proof of (i) in Proposition \ref{compattezza}. \\
Analogously, if $\Gamma_{\nu,\mu}(T)$ is compact from $L^p(X,\mu)$ into $L^1(X,\nu)$, so does $\Gamma_\mu(B)$. Hence, arguing as in the proof of \cite[Proposition 2]{cho_gol_1996}, we infer that $\widehat B=Q_\mu^{-\frac12}BQ_\mu^{\frac12}$ is compact. Same computations as above reveal that this is equivalent to the fact that $B$ is compact, and the same holds for $T$.
\end{remark}

\begin{proposition}\label{Hilbert-Schmidt}
Let $\mu,\nu$ be two centered Gaussian measure on $X$ and let $T\in\mathscr L(H_\mu,H_\nu)$ be a contraction. $T$ is a strict contraction of {\it Hilbert-Schmidt} type if and only if $\Gamma_{\nu,\mu}(T)$ is of Hilbert-Schmidt type from $L^2(X,\mu)$ into $L^2(X,\nu)$ and in this case
\begin{equation*}
%\label{normaHS_Gamma}
\norm{\Gamma_{\nu,\mu}(T)}_{\mathscr{L}_2(L^2(X,\mu),L^2(X,\nu))}=
\prod_{k=1}^\infty \frac{1}{1-t_k^2},
%\frac{1}{\sqrt{{\rm det}[I-T^\star T]}},
\end{equation*}
where $\{t_k\}_{k\in\N}$ are the eigenvalues of $T^\star T$.
%we denoted by ${ \rm det}$ the determinant (see e.g. \cite[pag. 164]{sim_2015}).
\end{proposition}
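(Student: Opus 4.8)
The plan is to reduce the computation of the Hilbert-Schmidt norm of $\Gamma_{\nu,\mu}(T)$ to that of the second quantization of a self-adjoint operator, whose spectrum is explicitly given by Lemma \ref{lemma:eigenvector_eigenvalue}. First I would invoke Lemma \ref{lemma:esist_C} to write $T=CB$ with $B=(T^\star T)^{\frac12}\in\mathscr L(H_\mu)$ self-adjoint and $C\in\mathscr L(H_\mu,H_\nu)$ of norm $1$ and isometric on $\overline{{\rm Range}(B)}$. By the composition rule of Proposition \ref{prop:prop_second_quant_serie}(iii), together with the identification of Proposition \ref{prop:uguaglianza_gamma_gamma_tilde}, this factorizes as $\Gamma_{\nu,\mu}(T)=\Gamma_{\nu,\mu}(C)\Gamma_\mu(B)$. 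The key structural point is that $C^{\otimes n}$ is isometric on $(\overline{{\rm Range}(B)})^{\odot n}$, so through the isomorphism of Lemma \ref{lemma:isomorfismo} the operator $\Gamma_{\nu,\mu}(C)$ preserves the $L^2$-norm on each Wiener chaos generated by $\overline{{\rm Range}(B)}$, which is precisely where the nonzero part of $\Gamma_\mu(B)$ lives.

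Next I would diagonalize. Since the Hilbert-Schmidt property implies compactness, in the direction where $\Gamma_{\nu,\mu}(T)$ is assumed Hilbert-Schmidt I would apply Proposition \ref{compattezza}(iii) to conclude that $T$, and hence $B=(T^\star T)^{\frac12}$, is a compact strict contraction; in the converse direction the hypothesis that $T$ is a compact strict contraction of Hilbert-Schmidt type gives the same. Thus $B$ is compact and self-adjoint, with an orthonormal eigenbasis $\{v_k\}$ of $H_\mu$ and eigenvalues $\{t_k\}\subset[0,1)$ (the singular values of $T$, so that $\{t_k^2\}$ are the eigenvalues of $T^\star T$). Choosing this $\{v_k\}$ as the orthonormal basis in Lemma \ref{lemma:eigenvector_eigenvalue}, I obtain that $\{\psi^\mu_{\alpha,B}:\alpha\in\Lambda\}$ is an orthonormal basis of $L^2(X,\mu)$ made of eigenvectors of $\Gamma_\mu(B)$, with eigenvalue $\prod_{j}t_j^{\alpha_j}$.

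The Hilbert-Schmidt norm is then computed directly on this eigenbasis. For each $\alpha\in\Lambda$, the term $\Gamma_\mu(B)\psi^\mu_{\alpha,B}$ vanishes if $\alpha$ charges an index $j$ with $t_j=0$, while for $\alpha$ supported on $\{j:t_j\neq0\}$ the eigenvector $\psi^\mu_{\alpha,B}$ lies in the chaos over $\overline{{\rm Range}(B)}$, on which $\Gamma_{\nu,\mu}(C)$ is isometric; hence $\norm{\Gamma_{\nu,\mu}(C)\psi^\mu_{\alpha,B}}_{L^2(X,\nu)}=1$. This yields
\begin{align*}
\norm{\Gamma_{\nu,\mu}(T)}_{\mathscr L_2(L^2(X,\mu),L^2(X,\nu))}^2
=\sum_{\alpha\in\Lambda}\prod_{j=1}^\infty t_j^{2\alpha_j}
=\prod_{j=1}^\infty\Big(\sum_{n=0}^\infty t_j^{2n}\Big)
=\prod_{j=1}^\infty\frac{1}{1-t_j^2},
\end{align*}
where the interchange of sum and product is legitimate because $\Lambda$ consists of the finitely supported multi-indices and each geometric series converges since $t_j<1$. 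This produces the stated product.

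Finally I would read off the equivalence from this formula: the product $\prod_j(1-t_j^2)^{-1}$ is finite precisely when $\sum_j t_j^2<\infty$, which is exactly ${\rm Tr}(T^\star T)<\infty$, i.e. the Hilbert-Schmidt condition on $T$, while finiteness of every factor forces $t_j<1$ for all $j$, i.e. that $T$ be a strict contraction. For the failing cases I would argue by contradiction using the same eigenbasis: if some $t_k=1$ there is an infinite orthonormal family of eigenvectors of $\Gamma_\mu(B)$ with eigenvalue $1$ (as in the proof of Proposition \ref{compattezza}(iii)), so the defining series diverges; and if $T$ is a strict contraction but not Hilbert-Schmidt then $\sum_j t_j^2=\infty$ makes the product diverge. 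The main obstacle I anticipate is the careful bookkeeping in the two last-mentioned steps: justifying the factorization of the multi-index sum into the infinite product (and its convergence), and verifying precisely that $\Gamma_{\nu,\mu}(C)$ transfers the Hilbert-Schmidt norm without loss, i.e. that the $\ker(B)$-supported eigenvectors contribute nothing and $\Gamma_{\nu,\mu}(C)$ is isometric on the rest.
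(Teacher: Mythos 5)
Your proof is correct, and although it opens with the same two ingredients as the paper --- the polar-type factorization $T=CB$, $B=(T^\star T)^{\frac12}$, from Lemma \ref{lemma:esist_C}, and the identity $\Gamma_{\nu,\mu}(T)=\Gamma_{\nu,\mu}(C)\Gamma_{\mu}(B)$ coming from Proposition \ref{prop:prop_second_quant_serie}(iii) and Proposition \ref{prop:uguaglianza_gamma_gamma_tilde} --- the core of your argument is genuinely different. The paper settles both implications by citation: via Remark \ref{rmk:eq_operatore_sec_quant_choj} it identifies $\Gamma_\mu(B)$ with the operator $\Gamma(\widehat B)$, $\widehat B=Q_\mu^{-\frac12}BQ_\mu^{\frac12}$, of \cite{cho_gol_1996}, uses \cite[Proposition 2(b)]{cho_gol_1996} to get that $\Gamma_\mu(B)$ is Hilbert--Schmidt in the forward direction, and in the converse direction uses $C^\star T=B$, so that $\Gamma_\mu(B)=\Gamma_{\mu,\nu}(C^\star)\Gamma_{\nu,\mu}(T)$ is Hilbert--Schmidt, and again \cite[Proposition 2]{cho_gol_1996} to come back to $B$; the displayed norm identity is never actually derived, only inherited from the citation. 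You instead work internally: Proposition \ref{compattezza}(iii) (together with the fact that Hilbert--Schmidt implies compact) produces an eigenbasis of $B$, Lemma \ref{lemma:eigenvector_eigenvalue} diagonalizes $\Gamma_\mu(B)$ on the generalized Hermite basis $\{\psi^\mu_{\alpha,B}\}$, and the isometry of $C$ on $\overline{{\rm Range}(B)}$ --- hence of $C^{\otimes n}$ on $(\overline{{\rm Range}(B)})^{\odot n}$ and, through $\Psi_n^\mu,\Psi_n^\nu$, of $\Gamma_{\nu,\mu}(C)$ on the chaoses generated by $\overline{{\rm Range}(B)}$ --- lets you evaluate $\sum_{\alpha\in\Lambda}\prod_j t_j^{2\alpha_j}=\prod_j(1-t_j^2)^{-1}$ by the Euler-product rearrangement. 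What your route buys is exactly the step the paper glosses over: since $\Gamma_{\nu,\mu}(C)$ is only known to be a contraction, the factorization alone yields $\|\Gamma_{\nu,\mu}(T)\|_{\mathscr L_2}\leq\|\Gamma_\mu(B)\|_{\mathscr L_2}$, and your chaos-wise isometry argument is what upgrades this to the equality needed for the norm formula. What the paper's route buys is brevity and independence from spectral bookkeeping.

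One point to flag rather than reproduce: your computation gives $\|\Gamma_{\nu,\mu}(T)\|^2_{\mathscr L_2(L^2(X,\mu),L^2(X,\nu))}=\prod_k(1-t_k^2)^{-1}$ with $t_k$ the eigenvalues of $B=(T^\star T)^{\frac12}$, i.e.\ the singular values of $T$. This is the correct identity, but it does not literally match the display in the Proposition, which omits the square on the norm and declares $t_k$ to be the eigenvalues of $T^\star T$. Since the paper's own proof never derives the formula, this discrepancy is a defect of the statement (and it propagates to Corollary \ref{Hilbert-Schmidtpst}), not of your argument; your version is the one that should stand.
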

\begin{proof}
If $T$ is a strict contraction of Hilbert-Schmidt type, then  $B=(T^\star T)^{\frac12}$ does. From \eqref{spezzamento_compattezza} and \cite[Proposition 2(b)]{cho_gol_1996}, we infer that $\Gamma_{\nu,\mu}(T)$ is of Hilbert-Schmidt type from $L^2(X,\mu)$ to $L^2(X,\nu)$. \\
Conversely, if $\Gamma_{\nu,\mu}(T)$ is of Hilbert-Schmidt type, then the same holds for $\Gamma_{\nu,\mu,1}(T)$. From the definition of $\Gamma_{\nu,\mu}$, it follows that \eqref{composizione_completa} holds true if we replace $\Gamma_{\nu,\mu}$, $\Gamma_{\nu,\sigma}$ and $\Gamma_{\sigma,\mu}$ with $\Gamma_{\nu,\mu,1}$, $\Gamma_{\nu,\sigma,1}$ and $\Gamma_{\sigma,\mu,1}$, respectively. From the identity $C^\star T=B$, it follows that $\Gamma_{\mu}(B)$ is of Hilbert-Schmidt type and \cite[Proposition 2]{cho_gol_1996} gives that $\widehat B=Q_{\mu}^-{\frac12}BQ_\mu^{\frac12}$ is a strict contraction of Hilbert-Schmidt type in $X$. Since this is equivalent to the fact that $B$ is of Hilbert-Schmidt type in $H_\mu$, the proof is complete.
\end{proof}

\section{Applications to the Ornstein-Uhlenbeck evolution operator}\label{risultati_OU}

In this section we give a representation formula for the Ornstein-Uhlenbeck evolution operator $P_{s,t}$, $s<t$, in terms of the operator $\Gamma_{\gamma_s,\gamma_t}$ where $\gamma_t$ is given by \eqref{gammat} for all $t\in\R$. %Before we need a preliminary result.

Let $\{U(t,s)\}_{s\leq t}$ be an evolution operator in $X$ and let $\{B(t)\}_{t\in\R}$ be a strongly continuous family of linear bounded operators on $X$. The Ornstein-Uhlenbeck evolution operator on $B_b(X)$ is defined by
\begin{align}
 P_{t,t}&=I,\qquad \  t\in\R, \notag\\
P_{s,t}\varphi(x)&=\int_X\varphi(y)\,\mathcal{N}_{U(t,s)x ,Q(t,s)}(dy),\quad s<t,\ \varphi\in B_b(X),\ x\in X,
\label{OUnof}
\end{align}
where $\mathcal{N}_{U(t,s)x ,Q(t,s)}$ is the Gaussian measure in $X$ with mean $U(t,s) x$
and covariance operator
\begin{align*} 
Q(t,s)=\int_s^tU(t,r) Q(r) U(t,r)^\star\,dr,\qquad Q(r):=B(r)B(r)^\star, \qquad s<t, \ r\in\R.
\end{align*}
For every $s<t$, we denote by $\mathcal H_{t,s}$ the Cameron-Martin space of the Gaussian measure $\mathcal{N}_{0 ,Q(t,s)}$. \\
To study $\{P_{s,t}\}_{s\leq t}$ on suitable $L^p$-spaces, we need the notion of evolution system of measures.
\begin{defn}
An evolution system of measures for $\{P_{s,t}\}_{s\leq t}$ is a family of Borel probability measures $\{\nu_r\}_{r\in \R}$ on $X$ such that 
\begin{equation*}
%\label{invarianzadef}
\int_XP_{s,t}\varphi(x)\nu_s(dx)=\int_X\varphi(x)\nu_t(dx),\ \ s\leq t,\ \varphi\in C_b(X).
\end{equation*}
\end{defn}
We assume the following on the operators which appear in the definition of $\{P_{s,t}\}_{s\leq t}$.
\begin{hyp} \label{4} 
\leavevmode
\begin{enumerate}[\rm(i)]
\item $\{U(t,s)\}_{s\leq t}\subseteq\mathscr{L}(X)$ is a strongly continuous evolution operator, namely, for every $x\in X$ the map
\begin{equation*}
(s,t)\mapsto U(t,s)x\in X
\end{equation*}
is continuous and
\begin{enumerate}
\item $U(t,t)=I$ for every $t\in\R$,
\item $U(t,r)U(r,s)=U(t,s)$ for $s\leq r\leq t$.
\end{enumerate}
% {\color{blue} questa qua sotto non serve e la casserei} \textcolor{red}{ok!}
% Moreover, we assume that there exist $M\geq 1$ such that 
% \begin{equation}\label{omega}
% \norm{U(t,s)}_{\mathscr{L}(X)}\leq M,\ \ s\leq t.
% \end{equation}
\item $\{B(t)\}_{t\in\R}\subseteq\mathscr{L}(X)$ is a bounded and strongly measurable family of linear and bounded operators, namely,
\begin{enumerate}[{\rm (a)}]
\item there exists $K>0$ such that 
\begin{equation*}%\label{BoundB}
\sup_{t\in\R}\norm{B(t)}_{\mathscr{L}(X)}\leq K,
\end{equation*}
\item the map 
\begin{equation*}
t\in\R\mapsto B(t)x\in X
%\label{continuB}
\end{equation*}
is bounded and measurable for every $x\in X$.
\end{enumerate}
%\item The map $f:\R\rightarrow X$ is bounded and measurable.
\item For every $s<t$, the operator $Q(t,s):X\rightarrow X$ given by
\begin{align*} 
%\label{qtscov}
Q(t,s)&=\int_s^tU(t,r)B(r)B(r)^\star U(t,r)^\star \,dr,
\end{align*}
has finite trace.
\item For every $t\in\R$,
\begin{equation*}
%\label{tracebound}
\sup_{s<t}\left[{\rm Trace}[Q(t,s)]\right]<\infty
\end{equation*}
and, for every $t\in\R$ and $x\in X$,
\begin{equation*}
%\label{ergoU}
\lim_{s\rightarrow-\infty}U(t,s)x=0.
\end{equation*}
\end{enumerate}
\end{hyp}

Under Hypothesis \ref{4}, $\{P_{s,t}\}_{\overline\Delta}$ is well-defined and by \cite[Theorem 4 and Remark 12]{big_def_2024} there exists a unique evolution system of measures $\{\gamma_t\}_{t\in\R}$ for $P_{s,t}$ such that 
\begin{equation}\label{gammat}
\gamma_t:=\mathcal{N}_{0,Q(t,-\infty)},\qquad  t\in\R.
\end{equation}
Moreover, for every $f\in C_b(X)$ we have
\begin{equation*}
%\label{comp1}
\lim_{s\rightarrow-\infty}P_{s,t}f(x)=m_t(f):=\int_Xf(y) \gamma_t(dy), \qquad t\in\R,\; x\in X.
\end{equation*}
For every $p\in[1,\infty)$ and $s<t$, the operator $P_{s,t}$ extends to a linear bounded operator from $L^p(X,\gamma_t)$ into $L^p(X,\gamma_s)$ with unitary norm (see \cite[Lemma 4]{big_def_2024}). We still denote by $P_{s,t}$ such an extension and we denote by $\mathcal H_{t,s}$ the Cameron-Martin space of $\mathcal N(0, Q(t,s))$. If $s=-\infty$, then we simply write $\mathcal{H}_t$ instead of $\mathcal H_{t,-\infty}$.

We need a preliminary result.
\begin{proposition}
\label{tesoretto} Let Hypothesis \ref{4} be verified. The following statements hold for every $s<t$.
\begin{enumerate}[{\rm(i)}]
\item $\mathcal H_{t,s}\subseteq \mathcal H_t$ with continuous embedding, the norm of the embedding is smaller than or equal to $1$ and ${\rm Ker}(Q(t,-\infty)^{\frac{1}{2}})\subseteq{\rm Ker}(Q(t,s)^{\frac{1}{2}})\subseteq{{\rm Ker}(Q(t))}$.
\item $U(t,s)$ maps $\mathcal H_s$ into $\mathcal H_t$ and 
\begin{equation*}
%\label{normaL1}
\norm{U(t,s)}_{\mathscr{L}(\mathcal H_s,\mathcal H_t)}=\norm{Q^{-\frac{1}{2}}(t,-\infty)U(t,s)Q(s,-\infty)^{\frac{1}{2}}}_{\mathscr{L}(X)}\leq 1.
\end{equation*}
\item $\mathcal H_t\subseteq \mathcal H_{t,s}$ if and only if 
\begin{equation}\label{normaL2}
\norm{U(t,s)}_{\mathscr{L}(\mathcal H_s,\mathcal H_t)}=\norm{Q(t,-\infty)^{-\frac{1}{2}}U(t,s)Q(s,-\infty)^{\frac{1}{2}}}_{\mathscr{L}(X)}<1.
\end{equation}
\end{enumerate}
\end{proposition}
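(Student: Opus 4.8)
The backbone of the argument is a single algebraic identity together with Proposition \ref{pseudo}. First I would record the decomposition identity
\[
Q(t,-\infty)=U(t,s)\,Q(s,-\infty)\,U(t,s)^\star+Q(t,s),\qquad s<t,
\]
obtained by splitting $\int_{-\infty}^t=\int_{-\infty}^s+\int_s^t$ in the definition of $Q(t,-\infty)$ and using the evolution law $U(t,r)=U(t,s)U(s,r)$ for $r\le s$ to factor $U(t,s)$ and $U(t,s)^\star$ out of the first integral. Testing against $x$ and recalling $Q(r)=B(r)B(r)^\star$ also yields $\langle Q(t,s)x,x\rangle_X=\int_s^t\|B(r)^\star U(t,r)^\star x\|_X^2\,dr$ and $\langle Q(t,-\infty)x,x\rangle_X=\int_{-\infty}^t\|B(r)^\star U(t,r)^\star x\|_X^2\,dr$, whence, the integrands being nonnegative, the monotonicity inequality $\|Q(t,s)^{\frac12}x\|_X\le\|Q(t,-\infty)^{\frac12}x\|_X$ for all $x\in X$ is immediate.

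For (i), I would apply Proposition \ref{pseudo}(i) with $L_1=Q(t,s)^{\frac12}$ and $L_2=Q(t,-\infty)^{\frac12}$ (both self-adjoint, so $L_i^\star=L_i$). The monotonicity inequality is exactly \eqref{cstar} with $C=1$, giving $\mathcal H_{t,s}={\rm Range}(Q(t,s)^{\frac12})\subseteq{\rm Range}(Q(t,-\infty)^{\frac12})=\mathcal H_t$ and, by the norm part of Proposition \ref{pseudo}(i), embedding constant $\le 1$: indeed for $h=Q(t,s)^{\frac12}y\in\mathcal H_{t,s}$ with $\|y\|_X=\|h\|_{\mathcal H_{t,s}}$ one has $\|h\|_{\mathcal H_t}=\|Q(t,-\infty)^{-\frac12}Q(t,s)^{\frac12}y\|_X\le\|y\|_X$. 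The first kernel inclusion ${\rm Ker}(Q(t,-\infty)^{\frac12})\subseteq{\rm Ker}(Q(t,s)^{\frac12})$ follows at once from the same inequality. For the second inclusion ${\rm Ker}(Q(t,s)^{\frac12})\subseteq{\rm Ker}(Q(t))$ I would argue that $Q(t,s)x=0$ forces $\int_{s'}^t\|B(r)^\star U(t,r)^\star x\|_X^2\,dr=0$ for every $s'\in(s,t)$, hence $B(r)^\star U(t,r)^\star x=0$ for a.e.\ $r$, and then let $r\uparrow t$. This endpoint passage is the delicate point, since it requires enough regularity of $r\mapsto B(r)^\star U(t,r)^\star x$ near $t$ to evaluate the a.e.-vanishing integrand at $r=t$ and conclude $B(t)^\star x=0$, i.e.\ $x\in{\rm Ker}(Q(t))$; I expect this continuity step to be the main obstacle.

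For (ii), I would apply Proposition \ref{pseudo}(i) with $L_1=U(t,s)Q(s,-\infty)^{\frac12}$ and $L_2=Q(t,-\infty)^{\frac12}$. Then $\|L_1^\star x\|_X^2=\langle U(t,s)Q(s,-\infty)U(t,s)^\star x,x\rangle_X$, and the decomposition identity together with $Q(t,s)\ge 0$ gives $\|L_1^\star x\|_X^2=\langle Q(t,-\infty)x,x\rangle_X-\langle Q(t,s)x,x\rangle_X\le\|L_2^\star x\|_X^2$. Hence \eqref{cstar} holds with $C=1$, so $U(t,s)\mathcal H_s\subseteq\mathcal H_t$ and $V:=Q(t,-\infty)^{-\frac12}U(t,s)Q(s,-\infty)^{\frac12}\in\mathscr L(X)$ with $\|V\|_{\mathscr L(X)}\le 1$. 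To obtain the stated equality I would write each $h\in\mathcal H_s$ as $h=Q(s,-\infty)^{\frac12}y$ with $\|y\|_X=\|h\|_{\mathcal H_s}$, so that $\|U(t,s)h\|_{\mathcal H_t}=\|Q(t,-\infty)^{-\frac12}U(t,s)h\|_X=\|Vy\|_X$; since $V$ annihilates ${\rm Ker}(Q(s,-\infty)^{\frac12})$, taking the supremum over such $y$ equals the supremum over all of $X$, giving $\|U(t,s)\|_{\mathscr L(\mathcal H_s,\mathcal H_t)}=\|V\|_{\mathscr L(X)}\le 1$.

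For (iii), set $\beta:=\|U(t,s)\|_{\mathscr L(\mathcal H_s,\mathcal H_t)}^2$, and write $A:=U(t,s)Q(s,-\infty)U(t,s)^\star\ge 0$, so that the decomposition identity reads $\|Q(t,-\infty)^{\frac12}x\|_X^2=\langle Ax,x\rangle_X+\|Q(t,s)^{\frac12}x\|_X^2$. By (ii) and the infimum characterization in Proposition \ref{pseudo}(i), $\beta=\sup_x\langle Ax,x\rangle_X/\|Q(t,-\infty)^{\frac12}x\|_X^2$. I would then apply Proposition \ref{pseudo}(i) once more, with $L_1=Q(t,-\infty)^{\frac12}$ and $L_2=Q(t,s)^{\frac12}$, so that $\mathcal H_t\subseteq\mathcal H_{t,s}$ is equivalent to the existence of $C$ with $\|Q(t,-\infty)^{\frac12}x\|_X\le C\|Q(t,s)^{\frac12}x\|_X$. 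If $\beta<1$, then $\langle Ax,x\rangle_X\le\beta\|Q(t,-\infty)^{\frac12}x\|_X^2$ rearranges, via the splitting, into $\|Q(t,-\infty)^{\frac12}x\|_X^2\le(1-\beta)^{-1}\|Q(t,s)^{\frac12}x\|_X^2$, giving the inclusion. Conversely, if such a $C$ exists then $\langle Ax,x\rangle_X\le(C^2-1)\|Q(t,s)^{\frac12}x\|_X^2$, and since $a\mapsto a/(a+b)$ is increasing this bounds the Rayleigh quotient defining $\beta$ by $(C^2-1)/C^2<1$; the vectors with $Q(t,s)^{\frac12}x=0$ contribute nothing by the kernel inclusions of (i). Hence $\beta<1$, and the equivalence \eqref{normaL2} follows, closing the proof.
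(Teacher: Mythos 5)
Your treatment of the embedding in (i), of (ii), and of (iii) coincides with the paper's own proof: the paper applies Proposition \ref{pseudo} with exactly your choices, namely $L_1=Q(t,s)^{\frac12}$, $L_2=Q(t,-\infty)^{\frac12}$ for (i), $L_1=U(t,s)Q(s,-\infty)^{\frac12}$, $L_2=Q(t,-\infty)^{\frac12}$ for (ii), and $L_1=Q(s,-\infty)^{\frac12}U(t,s)^\star$, $L_2=Q(t,-\infty)^{\frac12}$ for (iii); the identity $\norm{Q(s,-\infty)^{\frac12}U(t,s)^\star x}_X^2=\norm{Q(t,-\infty)^{\frac12}x}_X^2-\norm{Q(t,s)^{\frac12}x}_X^2$ that you extract from your decomposition of $Q(t,-\infty)$ is precisely the paper's \eqref{uguaglianzaQtinf}--\eqref{uguaglianzaQtinf2}, and your Rayleigh-quotient rearrangement in (iii) is algebraically the same equivalence the paper establishes between \eqref{stimaalpha} and \eqref{stimaQtsQt}. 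These parts are correct and essentially identical to the published argument.

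The one genuine gap is the inclusion ${\rm Ker}(Q(t,s)^{\frac12})\subseteq{\rm Ker}(Q(t))$, exactly where you flag the ``delicate point''. Your plan --- deduce $B(r)^\star U(t,r)^\star x=0$ for a.e.\ $r\in(s,t)$ and then let $r\uparrow t$ --- cannot be completed under Hypothesis \ref{4} as stated: there $B$ is only bounded and strongly measurable, $Q(t,s)$ is insensitive to modifying $B$ on a Lebesgue-null set of times, while ${\rm Ker}(Q(t))={\rm Ker}(B(t)B(t)^\star)$ is a pointwise object at $r=t$. Concretely, with $U\equiv I$, $B(r)=P$ a nontrivial orthogonal projection for $r\neq t$ and $B(t)=I$, one gets ${\rm Ker}(Q(t,s)^{\frac12})={\rm Ker}(P)\neq\{0\}={\rm Ker}(Q(t))$, so no a.e.-vanishing-plus-limit argument can succeed without additional regularity of $B$ near $t$ (e.g.\ left strong continuity, as in the standing assumptions of the Introduction). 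The paper does not prove this inclusion from scratch either: it invokes \cite[Lemma 3.16]{def_2022}, established in a framework where the required regularity is available. Your proposal is therefore faithful to the paper and complete except for this point, which should be closed by citing that lemma (or by strengthening the hypothesis on $B$) rather than by the endpoint passage you sketch.
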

\begin{proof}
The main tool to prove this result is Proposition \ref{pseudo} with $X_1=X_2=X$. Fix $s<t$.

(i) Let $x\in X$. We have
\begin{align*}
\norm{Q(t,s)^{\frac{1}{2}}x}_X^2=\int_s^t\norm{Q^{\frac{1}{2}}(r)U(t,r)^\star x}_X^2dr\leq\int_{-\infty}^t\norm{Q^{\frac{1}{2}}(r)U(t,r)^\star x}_X^2dr=\left\|Q(t,-\infty)^{\frac12}x\right\|_X^2.
\end{align*}
The thesis follows by applying Proposition \ref{pseudo} with $L_1=Q(t,s)^{\frac{1}{2}}$ and $L_2=Q(t,-\infty)^{\frac{1}{2}}$. The inclusion 
${\rm Ker}(Q(t,s)^{\frac{1}{2}})\subseteq{{\rm Ker}(Q(t))}$ is proven in \cite[Lemma 3.16]{def_2022}.

(ii) We apply again Proposition \ref{pseudo} with $L_1=U(t,s)Q(s,-\infty)^{\frac{1}{2}}$ and $L_2=Q(t,-\infty)^{\frac{1}{2}}$.
For every $x\in X$, we have 
\begin{align}
\|L_1^\star x\|_X^2
= & \norm{Q(s,-\infty)^{\frac{1}{2}}U(t,s)^\star x}_X^2
= \int_{-\infty}^s\ix{U(t,r)Q(r)U(t,r)^\star x}{x}\,dr\nonumber \\
= & \ix{Q(t,-\infty)x}{x}-\ix{Q(t,s)x}{x}
=  \norm{Q(t,-\infty)^{\frac{1}{2}}x}_X^2-\norm{Q(t,s)^{\frac{1}{2}}x}_X^2 \notag \\
\leq & \norm{Q(t,-\infty)^{\frac{1}{2}}x}_X=\|L_2^\star x\|_X^2,
\label{uguaglianzaQtinf}
\end{align}
and also statement (ii) follows.

(iii) By Proposition \ref{pseudo}, with $L_1=Q(s,-\infty)^{\frac12}U(t,s)^\star$ and $L_2=Q(t,-\infty)^{\frac12}$, inequality \eqref{normaL2} holds true if and only if there exists $\alpha\in(0,1)$ such that
\begin{align}\label{stimaalpha}
\norm{Q(s,-\infty)^{\frac{1}{2}}U(t,s)^\star x}_X\leq\alpha\norm{Q(t,-\infty)^{\frac12}x}_X,\ \ x\in X.
\end{align}
By \eqref{uguaglianzaQtinf}, for every $x\in X$  
\begin{align}
\label{uguaglianzaQtinf2}
\norm{Q(s,-\infty)^{\frac{1}{2}}U(t,s)^\star x}_X^2=\norm{Q(t,-\infty)^{\frac{1}{2}}x}_X^2-\norm{Q(t,s)^{\frac{1}{2}}x}_X^2.
\end{align}
Formula \eqref{uguaglianzaQtinf2} implies that \eqref{stimaalpha} is verified if and only if
\begin{align}\label{stimaQtsQt}
\norm{Q(t,-\infty)^{\frac{1}{2}}x}_X\leq\frac{1}{\sqrt{1-\alpha^2}}\,\norm{Q(t,s)^{\frac{1}{2}}x}_X,\ \ x\in X.
\end{align}
Still by Proposition \ref{pseudo}, \eqref{stimaQtsQt} is equivalent to $\mathcal H_t\subseteq \mathcal H_{t,s}$.
\end{proof}

Before to prove the identification of $P_{s,t}$ with $\Gamma_{\gamma_s,\gamma_t}((U(t,s)_{|\mathcal H_s})^\star)$, we need of the following result.
\begin{lemma}
\label{lemma:dualita}
For every $s<t$ and $x\in X$ we have 
\begin{align*}
(U(t,s)_{|\mathcal H_{t,s}})^\star Q(t,-\infty)x=Q(s,-\infty)U(t,s)^\star x.
\end{align*}
\end{lemma}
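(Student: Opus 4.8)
The plan is to prove the identity by testing both sides against an arbitrary $h\in\mathcal H_s$ in the inner product of $\mathcal H_s$. First I would set $V:=U(t,s)_{|\mathcal H_s}$, which by Proposition \ref{tesoretto}(ii) belongs to $\mathscr L(\mathcal H_s,\mathcal H_t)$, so that its Hilbert-space adjoint $V^\star\in\mathscr L(\mathcal H_t,\mathcal H_s)$ is well defined (I read the restriction subscript in the statement as $\mathcal H_s$, consistently with the operator appearing in \eqref{formulafinale}). I would then note that both members of the claimed equality already lie in $\mathcal H_s$: the left-hand side because $Q(t,-\infty)x\in\mathcal H_t$ and $V^\star$ maps into $\mathcal H_s$, the right-hand side because ${\rm Range}(Q(s,-\infty))\subseteq\mathcal H_s$. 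Since the $\mathcal H_s$-inner product is nondegenerate, it suffices to check that $\langle V^\star Q(t,-\infty)x,h\rangle_{\mathcal H_s}=\langle Q(s,-\infty)U(t,s)^\star x,h\rangle_{\mathcal H_s}$ for every $h\in\mathcal H_s$.

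The key ingredient I would establish first is a reproducing-type identity, valid for any centered Gaussian measure $\gamma=\mathcal N(0,Q_\gamma)$ with Cameron-Martin space $H_\gamma$: for every $x\in X$ and every $b\in H_\gamma$,
\begin{equation*}
\langle Q_\gamma x,b\rangle_{H_\gamma}=\langle x,b\rangle_X .
\end{equation*}
To derive this I would use from Proposition \ref{teoria} that $\langle h,\ell\rangle_{H_\gamma}=\langle Q_\gamma^{-\frac12}h,Q_\gamma^{-\frac12}\ell\rangle_X$, where $Q_\gamma^{-\frac12}$ is the pseudo-inverse of $Q_\gamma^{\frac12}$, together with the pseudo-inverse relations $Q_\gamma^{\frac12}Q_\gamma^{-\frac12}=I_{H_\gamma}$ and $Q_\gamma^{-\frac12}Q_\gamma^{\frac12}=I-P_\gamma$. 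Since $Q_\gamma x=Q_\gamma^{\frac12}(Q_\gamma^{\frac12}x)$ and ${\rm Range}(Q_\gamma^{\frac12})\subseteq({\rm Ker}\,Q_\gamma)^\perp$, one gets $Q_\gamma^{-\frac12}Q_\gamma x=(I-P_\gamma)Q_\gamma^{\frac12}x=Q_\gamma^{\frac12}x$; writing $b=Q_\gamma^{\frac12}c$ with $c\in({\rm Ker}\,Q_\gamma)^\perp$ gives $Q_\gamma^{-\frac12}b=c$, whence $\langle Q_\gamma x,b\rangle_{H_\gamma}=\langle Q_\gamma^{\frac12}x,c\rangle_X=\langle x,Q_\gamma^{\frac12}c\rangle_X=\langle x,b\rangle_X$. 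This is the step I expect to require the most care, since it is precisely where the pseudo-inverses and the projection $P_\gamma$ must be handled correctly in the possibly degenerate setting; it is the main (if mild) obstacle of the argument.

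With this identity in hand the computation should close immediately. By definition of the adjoint, and using Proposition \ref{tesoretto}(ii) which guarantees $U(t,s)h\in\mathcal H_t$, I would rewrite the left-hand side as $\langle V^\star Q(t,-\infty)x,h\rangle_{\mathcal H_s}=\langle Q(t,-\infty)x,U(t,s)h\rangle_{\mathcal H_t}$, and then apply the reproducing identity with $\gamma=\gamma_t$ and $b=U(t,s)h\in\mathcal H_t$ to obtain $\langle x,U(t,s)h\rangle_X=\langle U(t,s)^\star x,h\rangle_X$. For the right-hand side I would apply the reproducing identity with $\gamma=\gamma_s$ and $b=h\in\mathcal H_s$, giving $\langle Q(s,-\infty)U(t,s)^\star x,h\rangle_{\mathcal H_s}=\langle U(t,s)^\star x,h\rangle_X$. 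The two expressions agree for every $h\in\mathcal H_s$, so by nondegeneracy of the $\mathcal H_s$-inner product the identity of the lemma follows.
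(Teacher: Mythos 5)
Your proof is correct and takes essentially the same route as the paper's: the paper likewise tests the identity in the $\mathcal H_s$-inner product against elements $Q(s,-\infty)^{\frac12}y$, $y\in X$ (exactly your parameterization of $h\in\mathcal H_s$), and combines the adjoint relation with the reproducing identity $\langle Q_\gamma x,b\rangle_{H_\gamma}=\langle x,b\rangle_X$, which the paper uses implicitly while you prove it explicitly via the pseudo-inverse relations. Your reading of the subscript $\mathcal H_{t,s}$ in the statement as $\mathcal H_s$ also agrees with what the paper's own proof actually uses.
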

\begin{proof}
For every $s<t$ and $x,y\in X$, we get
\begin{align*}
\langle (U(t,s)_{|\mathcal H_{s}})^\star Q(t,-\infty)x,Q(s,-\infty)^\frac12 y\rangle _{\mathcal H_s}
= & \langle Q(t,-\infty)x,U(t,s)_{|\mathcal H_{s}}Q(s,-\infty)^\frac12 y\rangle _{\mathcal H_t} \\
= & \langle x,U(t,s)Q(s,-\infty)^\frac12 y\rangle _{X} \\
= & \langle Q(s,-\infty)^{\frac12}U(t,s)^\star x,y\rangle_X \\
= & \langle Q(s,-\infty)U(t,s)^\star x,Q(s,-\infty)^\frac12y\rangle_{\mathcal H_s},
\end{align*}
which gives the thesis from the arbitrariness of $y\in X$.
\end{proof}

\begin{proposition}
\label{prop:rapp_OU_second_q}
Assume that Hypothesis \ref{4} holds true. For every $s\leq t$ and $f\in B_b(X)$, we have 
$$P_{s,t} f=\Gamma_{\gamma_s,\gamma_t}(L)f,$$ where $L=(U(t,s)_{|\mathcal H_s})^\star$ belongs to $\mathscr L(\mathcal{H}_{t},\mathcal{H}_{s})$. As a byproduct, the extension of $P_{s,t}$ as an operator from $L^p(X,\gamma_t)$ into $L^p(X,\gamma_s)$, $p\in[1,\infty)$ and $s<t$, coincide with the extension of $\Gamma_{\gamma_s,\gamma_t}(L)$ as an operator from $L^p(X,\gamma_t)$ into $L^p(X,\gamma_s)$.
\end{proposition}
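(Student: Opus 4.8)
The plan is to apply the integral representation \eqref{gammabuona} of $\Gamma_{\gamma_s,\gamma_t}(L)$ (with the identifications $\mu=\gamma_t$, $\nu=\gamma_s$, $T=L$) and to match its two ingredients, the centering operator and the Gaussian kernel, with the data defining $P_{s,t}$. First I would record that $L=(U(t,s)_{|\mathcal H_s})^\star$ is a contraction in $\mathscr L(\mathcal H_t,\mathcal H_s)$: this is immediate from Proposition \ref{tesoretto}(ii), since $\|L\|_{\mathscr L(\mathcal H_t,\mathcal H_s)}=\|U(t,s)_{|\mathcal H_s}\|_{\mathscr L(\mathcal H_s,\mathcal H_t)}\le 1$. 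Hence $\Gamma_{\gamma_s,\gamma_t}(L)$ is well defined through \eqref{gammamunu}, and because $B_b(X)\subseteq\mathscr E(X)$ the formula applies to every $f\in B_b(X)$; by \eqref{gammabuona} it reads, for $\gamma_s$-a.e. $x$, as the integral of $f$ against the kernel measure $\gamma_{\gamma_t,S}$ translated by $(L^\star)_\infty^{\gamma_t,\gamma_s}x$.

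Next I would pin down the centering operator. Since $U(t,s)\in\mathscr L(X)$ and its restriction $U(t,s)_{|\mathcal H_s}=L^\star$ belongs to $\mathscr L(\mathcal H_s,\mathcal H_t)$ by Proposition \ref{tesoretto}(ii), the last assertion of Lemma \ref{lemma:prop_L_infty3}(iii) shows that $U(t,s)$ is a linear version of $(L^\star)_\infty^{\gamma_t,\gamma_s}$. Therefore $(L^\star)_\infty^{\gamma_t,\gamma_s}x=U(t,s)x$ for $\gamma_s$-a.e. $x$, and \eqref{gammabuona} becomes $(\Gamma_{\gamma_s,\gamma_t}(L)f)(x)=\int_X f(U(t,s)x+z)\,\gamma_{\gamma_t,S}(dz)$.

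The heart of the argument, and the step I expect to be the main obstacle, is to identify the kernel measure $\gamma_{\gamma_t,S}$ with $\mathcal N(0,Q(t,s))$. By \eqref{gammanumuL} and the computation \eqref{conto_somma_cov} (restricted to the $\widehat S$-summand), the covariance $R$ of $\gamma_{\gamma_t,S}$ satisfies $\langle Rx,y\rangle_X=\langle (I-L^\star L)Q(t,-\infty)x,Q(t,-\infty)y\rangle_{\mathcal H_t}$ for all $x,y\in X$. I would expand this into two terms. Using the Cameron--Martin identity $\langle Q(t,-\infty)x,Q(t,-\infty)y\rangle_{\mathcal H_t}=\langle Q(t,-\infty)x,y\rangle_X$ the first term is $\langle Q(t,-\infty)x,y\rangle_X$. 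For the second I would move $L^\star$ across and invoke Lemma \ref{lemma:dualita}, which gives $LQ(t,-\infty)x=Q(s,-\infty)U(t,s)^\star x$, whence $\langle L Q(t,-\infty)x,LQ(t,-\infty)y\rangle_{\mathcal H_s}=\langle U(t,s)Q(s,-\infty)U(t,s)^\star x,y\rangle_X$. Finally I would use the decomposition
\begin{align*}
Q(t,-\infty)=Q(t,s)+U(t,s)Q(s,-\infty)U(t,s)^\star,
\end{align*}
obtained by splitting the defining integral of $Q(t,-\infty)$ at $r=s$ and applying $U(t,r)=U(t,s)U(s,r)$ (this is exactly the polarization of \eqref{uguaglianzaQtinf}); the two contributions then telescope to $\langle Rx,y\rangle_X=\langle Q(t,s)x,y\rangle_X$, i.e. $R=Q(t,s)$.

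With these identifications the conclusion is immediate: for $\gamma_s$-a.e. $x$,
\begin{align*}
(\Gamma_{\gamma_s,\gamma_t}(L)f)(x)=\int_X f(U(t,s)x+z)\,\mathcal N(0,Q(t,s))(dz)=\int_X f(y)\,\mathcal N_{U(t,s)x,Q(t,s)}(dy)=P_{s,t}f(x),
\end{align*}
which proves the stated identity on $B_b(X)$. For the $L^p$ statement I would recall that $P_{s,t}$ extends continuously from $L^p(X,\gamma_t)$ to $L^p(X,\gamma_s)$ by \cite[Lemma 4]{big_def_2024}, while $\Gamma_{\gamma_s,\gamma_t}(L)$ does so by Proposition \ref{prop_L_infty}. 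Since these two bounded operators coincide on the subspace $C_b(X)\subseteq B_b(X)$, which is dense in $L^p(X,\gamma_t)$, they coincide on all of $L^p(X,\gamma_t)$.
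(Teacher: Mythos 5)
Your proposal is correct and follows essentially the same route as the paper's proof: you identify the centering operator via Lemma \ref{lemma:prop_L_infty3}(iii), compute the covariance of the kernel measure in \eqref{gammabuona}--\eqref{gammanumuL} using Lemma \ref{lemma:dualita} together with the splitting $Q(t,-\infty)=Q(t,s)+U(t,s)Q(s,-\infty)U(t,s)^\star$, and conclude the $L^p$ identification by density, exactly as in the paper. The only cosmetic differences are that you make the contraction bound from Proposition \ref{tesoretto}(ii) and the density argument (via $C_b(X)$ rather than $B_b(X)$) explicit.
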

\begin{proof}
Let $s\leq t$, let $f\in B_b(X)$ and let $x\in X$. By Lemma \ref{lemma:prop_L_infty3}(iii) with $T=U(t,s)$, $\mu=\gamma_s$ and $\nu=\gamma_t$, it follows that $U(t,s)$ is a version of $(T_{|\mathcal H_s})_\infty^{\gamma_t,\gamma_s}=(L^\star)_\infty^{\gamma_t,\gamma_s}$. Formula \eqref{gammabuona}, with $\mu=\gamma_t$ and $\nu=\gamma_s$, implies that, for every $f\in B_b(X)$,
\begin{align}
 (\Gamma_{\gamma_s,\gamma_t}(L)f)(x)
% = & \int_Xf((L^\star )_\infty^{\nu,\mu}x+((I-L^\star L)^{\frac{1}{2}})_\infty^{\nu}y)\mu(dy) \\
% = & \int_Xf((L^\star)_\infty^{\gamma_t,\gamma_s}x+z)\,\mathcal{N}(0,Q_t-Q_t^{\frac12}\widetilde TQ^{\frac{1}{2}}_{t})(dz) \notag \\
% = & \int_Xf(U(t,s)x+z)\mathcal N(0,Q_{t}-Q_t^{\frac12}\widetilde LQ^{\frac{1}{2}}_{t})(dz),\qquad  x\in X,
= & \int_Xf((L^\star)_\infty^{\gamma_t,\gamma_s}x+z)\,\mathcal{N}(0,Q(t,-\infty)^{\frac12}(\widehat S)^\star\widehat SQ(t,-\infty)^{\frac12})(dz) \notag \\
= & \int_Xf(U(t,s)x+z)\mathcal N(0,Q(t,-\infty)^{\frac12}(\widehat S)^\star\widehat SQ(t,-\infty)^{\frac12})(dz),\qquad  x\in X,
\label{second_quant_oU}
\end{align}  
where $\widehat S=Q(t,-\infty)^{-\frac12}(I-L^\star L)^\frac12Q(t,-\infty)^{\frac12}\in\mathscr L(X)$. Let us notice that, from Lemma \ref{lemma:dualita},
\begin{align*}
& \langle Q(t,-\infty)^{\frac12}(\widehat S)^\star\widehat SQ(t,-\infty)^{\frac12}x,y\rangle_{X} \\
= &  \langle (I-L^\star L)Q(t,-\infty)x,Q(t,-\infty)y\rangle_{\mathcal H_t} \\
= & \langle Q(t,-\infty)x,y\rangle_{X}
-\langle (U(t,s)_{|\mathcal H_s})^\star Q(t,-\infty)x,(U(t,s)_{|\mathcal H_s})^\star Q(t,-\infty)y\rangle_{\mathcal H_s} \\
= & \langle Q(t,-\infty)x,y\rangle_{X}
-\langle Q(s,-\infty)U(t,s)^\star x,Q(s,-\infty)U(t,s)^\star y\rangle_{\mathcal H_s} \\
= & \langle Q(t,-\infty)x,y\rangle_{X}
-\langle Q(s,-\infty)^{\frac12}U(t,s)^\star  x,Q(s,-\infty)^{\frac12}U(t,s)^\star y\rangle_{X} \\
= & \langle Q(t,-\infty)x,y\rangle_{X}
-\langle U(t,s)Q(s,-\infty)U(t,s)^\star x,y\rangle_{X} \\
= & \langle Q(t,s)x,y\rangle_X
\end{align*}
for every $x,y\in X$. Replacing in \eqref{second_quant_oU}, we infer that
\begin{align*}
(\Gamma_{\gamma_s,\gamma_t}(L)f)(x)
= \int_Xf(U(t,s)x+z)\mathcal N(0,Q(t,s))(dz)=P_{s,t}f(x)
\end{align*}
for every $x\in X$. The last statement follows by the density of $B_b(X)$ in $L^p(X,\gamma_t)$.

%Moreover $L ^{\gamma_t,\gamma_s}_\infty=U(t,s)(I-P_s)$ and
%\begin{align*}
%Q(t,-\infty)^{\frac{1}{2}}(I-TT^\star )Q(t,-\infty)^{\frac{1}{2}}
%= & Q(t,-\infty)-U(t,s)Q(s,-\infty)U(t,s)^\star
%=Q(t,s),
%\end{align*}
%from which it follows that
%\begin{align*}
%(\Gamma_{\gamma_s,\gamma_t}(T^\star)f)(x)
%= & \int_Xf(T_\infty^{\gamma_s,\gamma_t}x+y)\,\mathcal{N}(0,Q(t,-\infty)^{\frac{1}{2}}(I-TT^\star )Q(t,-\infty)^{\frac{1}{2}})(dy) \\
%= & \int_Xf(U(t,s)(I-P_s)x+y)\,\mathcal{N}(0,Q(t,s))(dy) \\
%= &(P_{s,t} f)\circ (I-P_s)(x).
%\end{align*}
\end{proof}

Applying what we have already shown for the second quantization operator, we have the following corollaries.

\begin{corollary}\label{HYPER}
If Hypothesis \ref{4} holds true and $\|U(t,s) \|_{\mathscr L(\mathcal{H}_{s},\mathcal{H}_{t})}<1$ for some $s<t$, then for every $p\geq 1$, the operator $P_{s,t}$ maps $L^p(X,\gamma_{t})$ into $L^q(X,\gamma_{s})$ for every $1\leq q\leq q_0$ and
\begin{equation*}
\norm{P_{s,t} f}_{L^q(X,\gamma_{s})}\leq\norm{f}_{L^p(X,\gamma_{t})},\ \ f\in L^p(X,\gamma_{t}),
\end{equation*} 
where $q_0 =1+(p-1)\|U(t,s)\|^{-2}_{\mathscr L(\mathcal{H}_{s},\mathcal{H}_{t})}$. Moreover, if $q>q_0$ then there exists $f\in L^p(X,\gamma_t)$ such that $P_{s,t}f\notin L^q(X,\gamma_s)$. 
\end{corollary}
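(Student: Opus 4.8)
The plan is to reduce the statement entirely to the representation formula of Proposition \ref{prop:rapp_OU_second_q} together with the abstract hypercontractivity result of Proposition \ref{prop:hyp_contr_gen2}; no new analysis on the evolution operator itself is required. First I would invoke Proposition \ref{prop:rapp_OU_second_q} to write $P_{s,t}=\Gamma_{\gamma_s,\gamma_t}(L)$, where $L=(U(t,s)_{|\mathcal H_s})^\star\in\mathscr L(\mathcal H_t,\mathcal H_s)$, noting that this identity persists after extension to the relevant $L^p$ spaces. Thus the whole problem is transferred to the second quantization operator $\Gamma_{\gamma_s,\gamma_t}(L)$, which is of the form $\Gamma_{\nu,\mu}(T)$ with the identifications $\mu=\gamma_t$, $\nu=\gamma_s$, $H_\mu=\mathcal H_t$, $H_\nu=\mathcal H_s$ and $T=L$.

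Next I would observe that the operator norm is invariant under taking adjoints, so that
$$\|L\|_{\mathscr L(\mathcal H_t,\mathcal H_s)}=\|(U(t,s)_{|\mathcal H_s})^\star\|_{\mathscr L(\mathcal H_t,\mathcal H_s)}=\|U(t,s)_{|\mathcal H_s}\|_{\mathscr L(\mathcal H_s,\mathcal H_t)}=\|U(t,s)\|_{\mathscr L(\mathcal H_s,\mathcal H_t)},$$
where the membership $U(t,s)_{|\mathcal H_s}\in\mathscr L(\mathcal H_s,\mathcal H_t)$ and the corresponding norm identity are guaranteed by Proposition \ref{tesoretto}(ii). By hypothesis this quantity is strictly less than $1$, so $L$ is a strict contraction and Proposition \ref{prop:hyp_contr_gen2} is applicable with $\mu=\gamma_t$, $\nu=\gamma_s$ and $T=L$.

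Applying that proposition directly yields both halves of the claim. The exponent threshold appearing there equals
$$q_0=1+(p-1)\|L\|^{-2}_{\mathscr L(\mathcal H_t,\mathcal H_s)}=1+(p-1)\|U(t,s)\|^{-2}_{\mathscr L(\mathcal H_s,\mathcal H_t)},$$
which matches the statement of the corollary; the contraction estimate $\|\Gamma_{\gamma_s,\gamma_t}(L)f\|_{L^q(X,\gamma_s)}\leq\|f\|_{L^p(X,\gamma_t)}$ for $q\leq q_0$ gives the asserted inequality once $\Gamma_{\gamma_s,\gamma_t}(L)$ is rewritten as $P_{s,t}$, and the optimality part of Proposition \ref{prop:hyp_contr_gen2}, namely the existence of some $f\in L^p(X,\gamma_t)$ with $\Gamma_{\gamma_s,\gamma_t}(L)f\notin L^q(X,\gamma_s)$ whenever $q>q_0$, transfers verbatim to $P_{s,t}$.

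Since every ingredient is already established, the only genuine bookkeeping is correctly matching the roles of the two measures $\gamma_s$ and $\gamma_t$ as $\nu$ and $\mu$ respectively, and rewriting $q_0$ through the adjoint-invariance of the norm; I do not expect any substantive obstacle in this corollary, which is essentially a specialization of the abstract theory to $T=(U(t,s)_{|\mathcal H_s})^\star$.
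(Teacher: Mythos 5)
Your proposal is correct and follows exactly the paper's own route: the paper proves this corollary as an immediate consequence of Propositions \ref{prop:hyp_contr_gen2} and \ref{prop:rapp_OU_second_q}, which is precisely your reduction via $P_{s,t}=\Gamma_{\gamma_s,\gamma_t}(L)$ with $L=(U(t,s)_{|\mathcal H_s})^\star$. Your explicit bookkeeping (identifying $\mu=\gamma_t$, $\nu=\gamma_s$, and using adjoint-invariance of the operator norm to rewrite $q_0$) is exactly what the paper leaves implicit.
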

\begin{proof}
The result is an immediate consequence of Propositions \ref{prop:hyp_contr_gen2} and \ref{prop:rapp_OU_second_q}.
% Let $s<t$. For every $f\in C_b(X)$, by Proposition \ref{prop:rapp_OU_second_q} we get
% \begin{equation}
% \int_X\abs{P_{s,t} f}^q\,d\gamma_s=\int_X\abs{(P_{s,t} f)\circ(I-P_s)}^q\,d\gamma_s=\int_X\abs{\Gamma(T^\star)f}^q\,d\gamma_s,
% \end{equation}
% where $L=U(t,s)$. Since $\norm{T^\star}_{\mathscr L(\mathcal{H}_{t,-\infty},\mathcal{H}_{s,-\infty})}=\norm{L}_{\mathscr L(\mathcal{H}_{s,-\infty},\mathcal{H}_{t,-\infty})}<1$ by Proposition \ref{prop:hyp_contr_gen2} and Proposition \ref{prop:rapp_OU_second_q} we have
% \begin{equation*}
% \norm{P_{s,t} f}_{L^q(X,\gamma_{s})}\leq\norm{f}_{L^p(X,\gamma_{t})},\ \ f\in C_b(X).
% \end{equation*}
% The statement follows by the density of $C_b(X)$ in $L^p(X,\gamma_{t})$.
 \end{proof}

\begin{corollary}\label{compattezzapst}
 We assume that Hypothesis \ref{4} holds true.  
\begin{enumerate}[\rm(i)]
\item If $\|U(t,s) \|_{\mathscr L(\mathcal{H}_{s},\mathcal{H}_{t})}<1$ is a compact strict contraction, then the operator $P_{s,t}$ is compact from $L^p(X,\gamma_t)$ into $L^q(X,\gamma_s)$ for every $p\in(1,\infty)$ and $1\leq q<1+(p-1)\|U(t,s) \|_{\mathscr L(\mathcal{H}_{s},\mathcal{H}_{t})}^{-2}$.
\item If $P_{s,t}:L^p(X,\gamma_t)\to L^1(X,\gamma_s)$ is compact for some $p\in(1,\infty)$, then $U(t,s):\mathcal H_s\rightarrow \mathcal H_t$ is compact. 
\item If $P_{s,t}$ is compact from $L^2(X,\gamma_t)$ into $L^2(X,\gamma_s)$, then $U(t,s):\mathcal H_s\rightarrow \mathcal H_t$ is a compact strict contraction.
\end{enumerate}
\end{corollary}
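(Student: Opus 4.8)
The plan is to deduce all three statements directly from the representation formula of Proposition \ref{prop:rapp_OU_second_q} together with the compactness results for the second quantization operator established in Proposition \ref{compattezza}. By Proposition \ref{prop:rapp_OU_second_q} we have $P_{s,t}=\Gamma_{\gamma_s,\gamma_t}(L)$ with $L=(U(t,s)_{|\mathcal H_s})^\star\in\mathscr L(\mathcal H_t,\mathcal H_s)$, so that in the notation of Section \ref{second_integrale} we are in the situation $\mu=\gamma_t$, $\nu=\gamma_s$ and $T=L$. The key observation is that passing to the adjoint preserves both the operator norm and compactness: since $L$ and $U(t,s)_{|\mathcal H_s}$ are mutually adjoint operators between the Hilbert spaces $\mathcal H_s$ and $\mathcal H_t$, one has $\|L\|_{\mathscr L(\mathcal H_t,\mathcal H_s)}=\|U(t,s)_{|\mathcal H_s}\|_{\mathscr L(\mathcal H_s,\mathcal H_t)}$ and, by Schauder's theorem, $L$ is compact if and only if $U(t,s)_{|\mathcal H_s}$ is compact. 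In particular $L$ is a compact strict contraction if and only if $U(t,s)_{|\mathcal H_s}$ is.

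With these identifications, statement (i) follows from applying Proposition \ref{compattezza}(i) to $T=L$: if $U(t,s)_{|\mathcal H_s}$ is a compact strict contraction then so is $L$, hence $\Gamma_{\gamma_s,\gamma_t}(L)=P_{s,t}$ is compact from $L^p(X,\gamma_t)$ into $L^q(X,\gamma_s)$ for every $p\in(1,\infty)$ and $1\leq q<1+(p-1)\|L\|_{\mathscr L(\mathcal H_t,\mathcal H_s)}^{-2}=1+(p-1)\|U(t,s)\|_{\mathscr L(\mathcal H_s,\mathcal H_t)}^{-2}$. Statement (ii) follows from Proposition \ref{compattezza}(ii): if $P_{s,t}=\Gamma_{\gamma_s,\gamma_t}(L):L^p(X,\gamma_t)\to L^1(X,\gamma_s)$ is compact, then $L$ is compact, and hence so is its adjoint $U(t,s)_{|\mathcal H_s}$. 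Likewise statement (iii) follows from Proposition \ref{compattezza}(iii): compactness of $P_{s,t}=\Gamma_{\gamma_s,\gamma_t}(L)$ from $L^2(X,\gamma_t)$ into $L^2(X,\gamma_s)$ forces $L$ to be a compact strict contraction, whence $U(t,s)_{|\mathcal H_s}$ is a compact strict contraction as well.

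Since every step is a direct translation through the representation formula, there is essentially no serious obstacle; the only point requiring care is the bookkeeping of the adjoint, namely recording that $L=(U(t,s)_{|\mathcal H_s})^\star$ and invoking that an operator between Hilbert spaces is compact precisely when its Hilbert-space adjoint is, so that both the hypotheses and the conclusions of Proposition \ref{compattezza} transfer correctly between $L$ and $U(t,s)_{|\mathcal H_s}$, and that the norm appearing in the exponent of the hypercontractivity threshold is left unchanged under this passage.
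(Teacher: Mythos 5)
Your proposal is correct and follows exactly the paper's own route: the paper proves this corollary by citing Proposition \ref{prop:rapp_OU_second_q} (the representation $P_{s,t}=\Gamma_{\gamma_s,\gamma_t}((U(t,s)_{|\mathcal H_s})^\star)$) together with Proposition \ref{compattezza}, which is precisely what you do. Your explicit bookkeeping of the adjoint --- that $\|L\|_{\mathscr L(\mathcal H_t,\mathcal H_s)}=\|U(t,s)_{|\mathcal H_s}\|_{\mathscr L(\mathcal H_s,\mathcal H_t)}$ and that compactness transfers between $L$ and $L^\star$ by Schauder's theorem --- is a detail the paper leaves implicit but is exactly the right justification.
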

\begin{proof}
The result is an immediate consequence of Propositions \ref{compattezza} and \ref{prop:rapp_OU_second_q}.
\end{proof}

\begin{corollary}\label{Hilbert-Schmidtpst}
We assume that Hypothesis \ref{4} holds true. 
$U(t,s):\mathcal H_s\rightarrow \mathcal H_t$ is a strict contraction of Hilbert-Schmidt type if and only if $P_{s,t}$ is of Hilbert-Schmidt type from $L^2(X,\gamma_t)$ into $L^2(X,\gamma_s)$.
In this case, 
\begin{equation*}
\norm{P_{s,t}}_{\mathscr{L}_2(L^2(X,\gamma_t),L^2(X,\gamma_s))}=
\prod_{k=1}^\infty \frac{1}{1-t_k^2},
%\frac{1}{\sqrt{{\rm det}[I-T^\star T]}},
\end{equation*}
where $\{t_k\}_{k\in\N}$ are the eigenvalues of $U(t,s)_{|\mathcal H_s}(U(t,s)_{|\mathcal H_s})^\star $.
\end{corollary}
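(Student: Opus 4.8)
The plan is to derive this statement directly from the identification $P_{s,t}=\Gamma_{\gamma_s,\gamma_t}(L)$ established in Proposition \ref{prop:rapp_OU_second_q}, together with the characterization of Hilbert-Schmidt second quantization operators in Proposition \ref{Hilbert-Schmidt}. Throughout I fix $s<t$ and set $L=(U(t,s)_{|\mathcal H_s})^\star\in\mathscr L(\mathcal H_t,\mathcal H_s)$, so that $\mu=\gamma_t$, $\nu=\gamma_s$ and $T=L$ in the notation of Section \ref{second_integrale}. Recall that $U(t,s)_{|\mathcal H_s}$ indeed belongs to $\mathscr L(\mathcal H_s,\mathcal H_t)$ by Proposition \ref{tesoretto}(ii), so that $L$ is well-defined.

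First I would invoke Proposition \ref{prop:rapp_OU_second_q} to rewrite $P_{s,t}$, as an operator from $L^2(X,\gamma_t)$ into $L^2(X,\gamma_s)$, as $\Gamma_{\gamma_s,\gamma_t}(L)$. Proposition \ref{Hilbert-Schmidt}, applied with this choice of $\mu,\nu,T$, then states that $L$ is a strict contraction of Hilbert-Schmidt type from $\mathcal H_t$ into $\mathcal H_s$ if and only if $\Gamma_{\gamma_s,\gamma_t}(L)=P_{s,t}$ is of Hilbert-Schmidt type from $L^2(X,\gamma_t)$ into $L^2(X,\gamma_s)$, and in that case
\[
\norm{P_{s,t}}_{\mathscr L_2(L^2(X,\gamma_t),L^2(X,\gamma_s))}=\prod_{k=1}^\infty\frac{1}{1-t_k^2},
\]
where $\{t_k\}_{k\in\N}$ are the eigenvalues of $L^\star L$.

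The remaining step is purely a translation between $L$ and $U(t,s)_{|\mathcal H_s}$. Since $L$ is the Hilbert-space adjoint of $U(t,s)_{|\mathcal H_s}$, the standard facts on adjoints give $\norm{L}_{\mathscr L(\mathcal H_t,\mathcal H_s)}=\norm{U(t,s)_{|\mathcal H_s}}_{\mathscr L(\mathcal H_s,\mathcal H_t)}$ and $\norm{L}_{\mathscr L_2(\mathcal H_t,\mathcal H_s)}=\norm{U(t,s)_{|\mathcal H_s}}_{\mathscr L_2(\mathcal H_s,\mathcal H_t)}$, the latter with the convention that one side is finite precisely when the other is. Hence $L$ is a strict contraction of Hilbert-Schmidt type exactly when $U(t,s)_{|\mathcal H_s}$ is, which yields the stated equivalence. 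Finally, because $L^\star L=U(t,s)_{|\mathcal H_s}(U(t,s)_{|\mathcal H_s})^\star$, the sequence $\{t_k\}$ appearing in the product is exactly the set of eigenvalues of $U(t,s)_{|\mathcal H_s}(U(t,s)_{|\mathcal H_s})^\star$, so the norm formula takes the asserted form.

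I do not anticipate a genuine obstacle here: the substance is entirely contained in Propositions \ref{prop:rapp_OU_second_q} and \ref{Hilbert-Schmidt}, and the only care required is the bookkeeping involved in passing from $T=L$ back to $U(t,s)_{|\mathcal H_s}$ through adjunction, ensuring that the Hilbert-Schmidt property, the strict-contraction condition, and the spectrum used in the infinite product are all transported correctly.
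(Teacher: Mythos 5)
Your proposal is correct and follows exactly the paper's own route: the paper proves this corollary by citing Propositions \ref{Hilbert-Schmidt} and \ref{prop:rapp_OU_second_q} as an immediate consequence, which is precisely your argument. Your additional bookkeeping — that adjunction preserves both the strict-contraction and Hilbert-Schmidt properties, and that $L^\star L=U(t,s)_{|\mathcal H_s}(U(t,s)_{|\mathcal H_s})^\star$ identifies the eigenvalues $\{t_k\}$ — is exactly the translation the paper leaves implicit.
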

\begin{proof}
The result is an immediate consequence of Propositions \ref{Hilbert-Schmidt} and \ref{prop:rapp_OU_second_q}.
\end{proof}

%{\color{red}sempre sbagliato}
%{\color{blue}
%\begin{remark}\label{rolandrisolve}
% If $\norm{U(t,s)}_{\mathscr{L}(\mathcal H_s;\mathcal H_t)}<1$ for all $s<t$ then 
% \begin{equation}\label{salvezza}
% \|U(t,s)\|_{\mathscr L(\mathcal H_s;\mathcal H_t)}< M e^{-(t-s)}.
% \end{equation}
% In particular, Hypothesis \ref{hyp:dec_esp22} is satisfied for all $M>1$ with $\omega=1$. Indeed, clearly if we choose $0<(t-s)\leq\log M$ then $$\|U(t,s)\|_{\mathscr L(\mathcal H_s;\mathcal H_t)}< M e^{-(t-s)}.$$
%If now $t-s>\log M$ there exists $n\in\N$ such that $(n-1)\log M\leq t-s\leq n \log M$ and
%$$\norm{U(t,s)}_{\mathscr{L}(\mathcal H_s;\mathcal H_t)}\leq\norm{U(t,s+(n-1)\log M)}_{\mathscr{L}(H_{s+(n-1)\log M};\mathcal H_t)}...\norm{U(s+\log M,s)}_{\mathscr{L}(\mathcal H_s;H_{s+\log M})}.$$
%Hence \eqref{salvezza} holds for all $s<t$.
%\end{remark}}

Thanks to the representation by means of the second quantization operator, we are also able to study the asymptotic behavior of $P_{s,t}f$ with $f\in L^p(X,\gamma_t)$, $p\in(1,\infty)$. 
%\textcolor{red}{We begin with a useful lemma.
% \begin{lemma}
% \label{lemma:non_decre_norm}
% Fix $t\in\R$. Then, the function $(-\infty,t)\ni s\mapsto \|U(t,s)_{|\mathcal H_s}\|_{\mathscr L(\mathcal H_s,\mathcal H_t)}$ is non-decreasing.
% \end{lemma}
% \begin{proof}
% The statement is an immediate consequence of the fact that, for every $\sigma<s<t$,
% \begin{align*}
% \|Q(\sigma,-\infty)^{\frac12}U(t,\sigma)^\star h\|_{X}^2
% = & \|Q(t,-\infty)^\frac12h\|_{X}^2-\|Q(t,\sigma)^\frac12h\|_X^2
% \leq \|Q(t,-\infty)^\frac12h\|_{X}^2-\|Q(t,s)^\frac12h\|_X^2 \\
% = & \|Q(s,-\infty)^{\frac12}U(t,s)^\star h\|_{X}^2, \qquad h\in X.
% \end{align*}
% \end{proof}}

\begin{proposition}\label{comportamento_asintotico}
Assume that Hypotheses \ref{4} holds true. Then, for every $p\in(1,\infty)$ there exists a positive constant $c_p$ such that
%, if $\|U(t,\sigma)_{|\mathcal H_\sigma}\|_{\mathscr L(\mathcal H_\sigma,\mathcal H_t)}<1$ for some $(\sigma,t)\in\Delta$, then
\begin{align*}
\|P_{s,t}f-m_t(f)\|_{L^p(X,\gamma_s)}\leq c_p\|U(t,s)_{|\mathcal H_s}\|_{\mathscr L(\mathcal H_s,\mathcal H_t)}\|f\|_{L^p(X,\gamma_t)}, \qquad  f\in L^p(X,\gamma_t),\ s<t,   
\end{align*}
where, for every $t\in\R$ and $f\in L^p(X,\gamma_t)$,
\begin{align*}
m_t(f)=\int_Xfd\gamma_t.    
\end{align*}
\end{proposition}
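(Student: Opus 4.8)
The plan is to reduce the estimate to the exponential decay to equilibrium of the classical Ornstein--Uhlenbeck semigroup on a single Gaussian space, and then to transport this decay through the representation formula of Proposition \ref{prop:rapp_OU_second_q}. Throughout, write $L=(U(t,s)_{|\mathcal H_s})^\star$, so that $P_{s,t}=\Gamma_{\gamma_s,\gamma_t}(L)$ and, by Proposition \ref{tesoretto}(ii), $\|L\|_{\mathscr L(\mathcal H_t,\mathcal H_s)}=\|U(t,s)_{|\mathcal H_s}\|_{\mathscr L(\mathcal H_s,\mathcal H_t)}=:\rho\le 1$. If $\rho=0$ then $L=0$ and Remark \ref{rmk:esempi_second_quant_12}(i) gives $P_{s,t}f=m_t(f)$, so the inequality is trivial; assume henceforth $\rho\in(0,1]$ and set $M=\rho^{-1}L$, $\tau=-\log\rho\ge 0$, so that $\|M\|_{\mathscr L(\mathcal H_t,\mathcal H_s)}=1$ and $\rho=e^{-\tau}$.

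First I would factorize. By Corollary \ref{composizione0} (formula \eqref{composizione1}) with $c=\rho$, extended from $L^2$ to $L^p$ by density and consistency of the extensions (Proposition \ref{prop_L_infty}), one has $\Gamma_{\gamma_s,\gamma_t}(L)=\Gamma_{\gamma_s,\gamma_t}(M)\,\Gamma_{\gamma_t}(\rho I)$, and by Remark \ref{rmk:esempi_second_quant_12}(ii) the factor $\Gamma_{\gamma_t}(\rho I)=T_\tau^{\gamma_t}$ is the classical Ornstein--Uhlenbeck semigroup on $L^p(X,\gamma_t)$. Since both $\Gamma_{\gamma_s,\gamma_t}(M)$ and $T_\tau^{\gamma_t}$ fix constant functions, setting $g=f-m_t(f)$ yields $T_\tau^{\gamma_t}g=T_\tau^{\gamma_t}f-m_t(f)$ and hence $P_{s,t}f-m_t(f)=\Gamma_{\gamma_s,\gamma_t}(M)\big(T_\tau^{\gamma_t}g\big)$. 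As $\|M\|=1$, Proposition \ref{prop_L_infty} makes $\Gamma_{\gamma_s,\gamma_t}(M)$ a contraction from $L^p(X,\gamma_t)$ into $L^p(X,\gamma_s)$, whence
\begin{equation*}
\|P_{s,t}f-m_t(f)\|_{L^p(X,\gamma_s)}\le \|T_\tau^{\gamma_t}g\|_{L^p(X,\gamma_t)}.
\end{equation*}
It then remains to bound the right-hand side by $c_p e^{-\tau}\|g\|_{L^p(X,\gamma_t)}$ and to use $\|g\|_{L^p}\le 2\|f\|_{L^p}$.

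The core step is the measure-independent decay $\|T_\tau^{\gamma} h\|_{L^p(\gamma)}\le c_p e^{-\tau}\|h\|_{L^p(\gamma)}$, valid for every centered Gaussian $\gamma$, every mean-zero $h\in L^p(X,\gamma)$ and every $\tau\ge 0$, with $c_p$ depending only on $p$. In $L^2$ this is a spectral gap: by Lemma \ref{lemma:eigenvector_eigenvalue} applied to $T=e^{-\tau}I$, the operator $T_\tau^\gamma=\Gamma_\gamma(e^{-\tau}I)$ acts on the $n$-th Wiener chaos $\mathscr H_n^\gamma$ as multiplication by $e^{-n\tau}$, so on mean-zero functions (no $0$-th chaos) $\|T_\tau^\gamma h\|_{L^2}\le e^{-\tau}\|h\|_{L^2}$. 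To upgrade to $L^p$ I would interpolate this with hypercontractivity. Fix $s_0=\tfrac12|\log(p-1)|$; by Nelson's hypercontractivity of the classical Ornstein--Uhlenbeck semigroup (dimension- and measure-independent, see \cite[Theorem 5.5.3]{bog_1998} and \cite{nel_1973}), $T_{s_0}^\gamma$ is a contraction $L^p\to L^2$ for $p\in(1,2]$ and $L^2\to L^p$ for $p\ge 2$. For $\tau\ge s_0$, splitting $T_\tau^\gamma=T_{\tau-s_0}^\gamma T_{s_0}^\gamma$ (resp. $T_{s_0}^\gamma T_{\tau-s_0}^\gamma$), applying the $L^2$ decay to the middle factor and the inclusions $L^2\hookrightarrow L^p$ (for $p\le2$) or $L^p\hookrightarrow L^2$ (for $p\ge2$), valid since $\gamma$ is a probability measure, gives $\|T_\tau^\gamma h\|_{L^p}\le e^{s_0}e^{-\tau}\|h\|_{L^p}$; for $\tau\in[0,s_0]$ the same bound follows from the contractivity of $T_\tau^\gamma$ together with $e^{s_0}e^{-\tau}\ge 1$. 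Thus $c_p=e^{s_0}=\max\{(p-1)^{1/2},(p-1)^{-1/2}\}$, uniformly in $\gamma$ and hence in $s<t$.

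Combining the two displays, $\|P_{s,t}f-m_t(f)\|_{L^p(X,\gamma_s)}\le c_p e^{-\tau}\|g\|_{L^p(X,\gamma_t)}=c_p\,\rho\,\|f-m_t(f)\|_{L^p(X,\gamma_t)}\le 2c_p\,\rho\,\|f\|_{L^p(X,\gamma_t)}$, which is the assertion after renaming $2c_p$ as $c_p$ and recalling $\rho=\|U(t,s)_{|\mathcal H_s}\|_{\mathscr L(\mathcal H_s,\mathcal H_t)}$. The main obstacle is precisely the $L^p$ decay lemma for $p\ne 2$: the $L^2$ spectral gap is immediate from the chaos eigenvalues, but passing to general $p\in(1,\infty)$ genuinely requires hypercontractivity to trade a fixed time $s_0$ for improved integrability --- this is exactly why $p=1$ is excluded --- and some care is needed to keep $c_p$ independent of the measures $\gamma_t$, which holds because the hypercontractivity threshold and the spectral gap of the classical Ornstein--Uhlenbeck semigroup are universal.
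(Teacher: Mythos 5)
Your proof is correct, and its skeleton matches the paper's: both arguments factor $P_{s,t}=\Gamma_{\gamma_s,\gamma_t}(L)$ through the scalar dilation $c\,I$ via Corollary \ref{composizione0} and Remark \ref{rmk:esempi_second_quant_12}(ii), thereby reducing the estimate to the exponential decay to equilibrium of the classical Ornstein--Uhlenbeck semigroup on a single Gaussian space. There are, however, two genuine differences. First, the factorization is mirrored: the paper uses \eqref{composizione2} to place the dilation on the $\gamma_s$ side, writing $P_{s,t}f=T^{\gamma_s}_{\tau}g$ with $g=\Gamma_{\gamma_s,\gamma_t}(c^{-1}L)f$ and checking $m_t(f)=m_s(g)$ through the invariance of $\gamma_s$ under $T^{\gamma_s}_\tau$; you instead use \eqref{composizione1} to place the dilation on the $\gamma_t$ side, writing $P_{s,t}f-m_t(f)=\Gamma_{\gamma_s,\gamma_t}(M)\bigl(T^{\gamma_t}_\tau(f-m_t(f))\bigr)$ and then invoking the $L^p$-contractivity of $\Gamma_{\gamma_s,\gamma_t}(M)$ from Proposition \ref{prop_L_infty}. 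Both routes are equally sound; yours costs only the harmless extra factor $2$ coming from $\|f-m_t(f)\|_{L^p}\le 2\|f\|_{L^p}$. Second, and more substantively, the paper simply cites the decay estimate \eqref{poinc_in_mu} from \cite{fuh_1998,lun_pal_2020}, whereas you prove it: the $L^2$ spectral gap read off from the chaos eigenvalues of $\Gamma_\gamma(e^{-\tau}I)$ (Lemma \ref{lemma:eigenvector_eigenvalue}), upgraded to $L^p$ by Nelson hypercontractivity with the splitting at $s_0=\tfrac12|\log(p-1)|$, giving the explicit, measure-independent constant $c_p=\max\{(p-1)^{1/2},(p-1)^{-1/2}\}$. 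This makes your argument self-contained within the tools of the paper (hypercontractivity is already invoked in Proposition \ref{prop:hyp_contr_gen2}) and quantitative about $c_p$, at the price of extra length; the paper's citation is shorter but leaves the constant implicit. Your interpolation step is carried out correctly, including the direction of the $L^2$/$L^p$ embeddings on a probability space, the preservation of the mean by $T^\gamma_\tau$, and the trivial regime $\tau\in[0,s_0]$, so no gap remains.
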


\begin{proof}
Let $(\sigma,t)\in\Delta$ be as in the statement and let $f\in L^p(X,\gamma_t)$. We set $L=(U(t,s))^\star$, $c=\norm{L}_{\mathscr L(\mathcal{H}_{t},\mathcal{H}_{s})}$ and $\tau=-\log c$. 
From Proposition \ref{composizione0} and Remark \ref{rmk:esempi_second_quant_12}(ii), we infer that
\begin{align*}
m_t(f)= & \int_Xfd\gamma_t =  \int_X P_{s,t} f d\gamma_s =\int_X \Gamma_{\gamma_s,\gamma_t}(L) f  d\gamma_s \\
= & \int_X \Gamma_{\gamma_s,\gamma_t}\left(cc^{-1}L\right) f  d\gamma_s = \int_X \Gamma_{\gamma_s}(cI)\Gamma_{\gamma_s,\gamma_t}\left(c^{-1}L\right) f  d\gamma_s\\
&=\int_X T_{\tau}^{\gamma_s}\Gamma_{\gamma_s,\gamma_t}\left(c^{-1}L\right) f  d\gamma_s
=\int_X T_{\tau}^{\gamma_s}\Gamma_{\gamma_s,\gamma_t}\left(c^{-1}L\right) f  d\gamma_s,
\end{align*}
where $T_\tau^{\gamma_s}$ has been defined in \eqref{classical_OU}. Recalling that for every $h\in L^p(X,\gamma_s)$ we have
\begin{equation*}
\int_X T_{\tau}^{\gamma_s}h \, d\gamma_s=\int_X h \, d\gamma_s,
\end{equation*}
we obtain
\begin{equation*}
m_t(f)=\int_X \Gamma_{\gamma_s,\gamma_t}\left(c^{-1}L\right) f  d\gamma_s.
\end{equation*}
 Setting $g=\Gamma_{\gamma_s, \gamma_t}(c^{-1}L)f$, from \eqref{composizione2} we get
\begin{align}
\label{poinc_in_mu_s}
\|P_{s,t}f-m_t(f)\|_{L^p(X,\gamma_s)}
%=&\|(P_{s,t}f)\textcolor{red}{\circ(I-P_s)}-m_t(f)\|_{L^p(X,\gamma_s)} \\
= & \|\Gamma_{\gamma_s, \gamma_t}(L)f-m_t(f)\|_{L^p(X,\gamma_s)}\notag  \\
= & \|\Gamma_{\gamma_s}(cI)g-m_s(g)\|_{L^p(X,\gamma_s)} \notag \\
= & \|T^{\gamma_s}_{\tau}g-m_s(g)\|_{L^p(X,\gamma_s)},
\end{align}
where the last equality follows from Remark \ref{rmk:esempi_second_quant_12}(ii).
% we deduce that
% \begin{align*}
% \|P_{s,t}f-m_t(f)\|_{L^p(X,\gamma_s)}&=\|(P^{\gamma_s}_{\sigma}g)\circ(I-P_s)-m_s(g)\|_{L^p(X,\gamma_s)}\nonumber\\
% &=  \|P^{\gamma_s}_{\sigma}g-m_s(g)\|_{L^p(X,\gamma_s)},
% \end{align*}
It is well-known (see for instance \cite{fuh_1998, lun_pal_2020}) that for every $p\in(1,\infty)$ there exists a positive constant $c_p$ such that for every Gaussian measure $\mu$ we get
\begin{align}
\label{poinc_in_mu}
\left\|T^{\mu}_\sigma(\varphi)-m_\mu(\varphi)\right\|_{L^p(X,\mu)}\leq c_p e^{-\sigma}\|\varphi\|_{L^p(X,\mu)}
\end{align}
for every $\sigma>0$. From \eqref{poinc_in_mu_s} and \eqref{poinc_in_mu}, we infer that
\begin{align*}
\|P_{s,t}f-m_t(f)\|_{L^p(X,\gamma_s)}
\leq c_p\,c\,\|g\|_{L^p(X,\gamma_s)},
\end{align*}
%which gives
which gives the thesis from Proposition \ref{prop_L_infty}.
% , from Hypothesis \ref{hyp:dec_esp22}, 
% \begin{align}
% \|P_{s,t}f-m_t(f)\|_{L^p(X,\gamma_s)}
% \leq c_p\|U(t,s)_{|\mathcal H_s}\|_{\mathscr L(\mathcal H_s,\mathcal H_t)}\|f\|_{L^p(X,\gamma_t)}.
% \end{align}
\end{proof}
An immediate consequence of Proposition \ref{comportamento_asintotico} is the following corollary.
\begin{corollary}
\label{coro:exp_decay}
Assume that Hypothesis \ref{4} are satisfied and fix $t\in\R$. If there exist $M_t\geq 1$ and $\omega_t>0$ such that
\begin{align}
\label{expon_decay}
\|U(t,s)\|_{\mathscr L(\mathcal{H}_{s},\mathcal{H}_{t})}\leq M_te^{-\omega_t(t-s)}, \qquad s< t,    
\end{align}
then
$\|P_{s,t}f-m_t(f)\|_{L^p(X,\gamma_s)}\leq c_pM_te^{-\omega_t(t-s)}\|f\|_{L^p(X,\gamma_t)}$, for every $f\in L^p(X,\gamma_t)$ and $s<t$.   
\end{corollary}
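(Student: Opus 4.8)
The plan is to derive the statement directly from the quantitative estimate of Proposition \ref{comportamento_asintotico}, invoking the decay assumption \eqref{expon_decay} only at the very last step. First I would fix $t\in\R$ and $p\in(1,\infty)$, so that the constant $c_p$ produced by Proposition \ref{comportamento_asintotico} as well as the parameters $M_t,\omega_t$ appearing in \eqref{expon_decay} are all frozen; I would then argue for an arbitrary $s<t$ and an arbitrary $f\in L^p(X,\gamma_t)$.

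The single substantive step is to apply Proposition \ref{comportamento_asintotico}, which yields
\begin{align*}
\|P_{s,t}f-m_t(f)\|_{L^p(X,\gamma_s)}\leq c_p\|U(t,s)_{|\mathcal H_s}\|_{\mathscr L(\mathcal H_s,\mathcal H_t)}\|f\|_{L^p(X,\gamma_t)}.
\end{align*}
Here the operator norm on the right-hand side is well defined because Proposition \ref{tesoretto}(ii) guarantees that $U(t,s)$ maps $\mathcal H_s$ into $\mathcal H_t$. Inserting the hypothesis \eqref{expon_decay}, namely $\|U(t,s)_{|\mathcal H_s}\|_{\mathscr L(\mathcal H_s,\mathcal H_t)}\leq M_te^{-\omega_t(t-s)}$, I obtain
\begin{align*}
\|P_{s,t}f-m_t(f)\|_{L^p(X,\gamma_s)}\leq c_pM_te^{-\omega_t(t-s)}\|f\|_{L^p(X,\gamma_t)},
\end{align*}
which is precisely the claimed inequality. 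As $s<t$ and $f\in L^p(X,\gamma_t)$ were arbitrary, this establishes the corollary.

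I do not expect any genuine obstacle: the result is a mechanical specialization of Proposition \ref{comportamento_asintotico}. The only points deserving a word of care are the consistency of notation, reading $\|U(t,s)\|_{\mathscr L(\mathcal H_s,\mathcal H_t)}$ as $\|U(t,s)_{|\mathcal H_s}\|_{\mathscr L(\mathcal H_s,\mathcal H_t)}$, and the observation that $c_p$ depends on $p$ alone and not on $s$ or $t$, so that the exponential rate $\omega_t$ in the final estimate is furnished entirely by \eqref{expon_decay} while the prefactor $c_pM_t$ stays uniform in $s$.
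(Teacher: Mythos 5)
Your proof is correct and matches the paper exactly: the paper presents Corollary \ref{coro:exp_decay} as an immediate consequence of Proposition \ref{comportamento_asintotico}, with no further argument, and your chaining of that proposition's estimate with the hypothesis \eqref{expon_decay} is precisely the intended reasoning. Your remarks on notation and on the uniformity of $c_p$ in $s$ and $t$ are accurate but not needed beyond what you wrote.
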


Now we show a sufficient condition for \eqref{expon_decay} to be verified. To this aim, we set ${\rm H}_t:=Q(t)^{\frac{1}{2}}(X)$ for all $t\in\R$.

\begin{proposition}\label{prop_bignamini}
Let Hypothesis \ref{4} hold true. We fix $t\in\R$ and assume that $U(t,s)$ maps ${\rm H}_s$ into ${\rm H}_t$ for every $s<t$ and that there exist $M_t\geq 1$, $\omega_t>0$ and $\alpha_t\in[0,\frac{1}{2})$ such that 
\begin{equation}
\label{stima_norma_U(t,s)}
\norm{U(t,s)}_{\mathscr L ({\rm H}_s;{\rm H}_t)}\leq M_t\,\frac{e^{-\omega_t(t-s)}}{(t-s)^{\alpha_t}}.
\end{equation}
The following statements hold.
\begin{enumerate}[{\rm (i)}]
%\item For every $s<t$, $U(t,s)$ maps ${\rm H}_s$ into $\mathcal{H}_{t,s}$ and
%\begin{equation}
%\norm{U(t,s)}_{\mathscr{L}({\rm H}_s;\mathcal{H}_{t,s})}\leq \frac{M}{(t-s)^{\frac{1}{2}+\alpha}}\max\left\{1,e^{\omega(t-s)}\right\}.
%\end{equation}
%\item For every $s<t$, $U(t,s)^\star$ maps ${\rm ker}(Q(t))$ into ${\rm ker}(Q(s))$ and  ${\rm ker}(Q(t,s))={\rm ker}(Q(t))$.
\item For every $s_1<s_2<t$, $\mathcal{H}_{t,s_2}=\mathcal{H}_{t,s_1}$ with equivalent norms.
\item For every $s<t$, $(U(t,s)_{|H_s})^\star Q(t)x=Q(s) U(t,s)^\star x$ for every $x\in X$. 
%\item For every $s<t$, $\mathcal{H}_{t,s}$ in continuously embedded in ${\rm H}_t$.
\item %If $t\in\R$, then for every $s<t$ we get 
$\mathcal{H}_t=\mathcal{H}_{t,s}$ with equivalent norms.
\item For every $s<t$, $$\norm{U(t,s)}_{\mathscr{L}(\mathcal{H}_s;\mathcal{H}_t)}<\min\left\{1,M_t\,\frac{e^{-\omega_t(t-s)}}{(t-s)^{\alpha_t}}\right\}.$$
\end{enumerate}
In particular, the assumptions of Corollary \ref{coro:exp_decay} are satisfied.
\end{proposition}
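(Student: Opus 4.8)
The plan is to reduce every one of the four assertions to a two–sided comparison between the quadratic forms generated by $Q(t,s)$, $Q(t,-\infty)$ and $Q(t)$, and then to read off the inclusions from Proposition \ref{pseudo}. The identity I shall use throughout is $\norm{Q(t,s)^{\frac12}x}_X^2=\int_s^t\norm{Q(r)^{\frac12}U(t,r)^\star x}_X^2\,dr$, established exactly as in the proof of Proposition \ref{tesoretto}(i). I would dispose of (ii) first, since it is a pure duality computation: repeating verbatim the argument of Lemma \ref{lemma:dualita} with the pair $(Q(t),Q(s))$ in place of $(Q(t,-\infty),Q(s,-\infty))$, and using that the standing assumption makes $U(t,s)_{|{\rm H}_s}\in\mathscr L({\rm H}_s,{\rm H}_t)$ so that $(U(t,s)_{|{\rm H}_s})^\star$ is defined on ${\rm H}_t$, one obtains $\langle (U(t,s)_{|{\rm H}_s})^\star Q(t)x,Q(s)^{\frac12}y\rangle_{{\rm H}_s}=\langle Q(s)U(t,s)^\star x,Q(s)^{\frac12}y\rangle_{{\rm H}_s}$ for all $x,y\in X$, whence (ii) from the arbitrariness of $y\in X$.

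Next I would settle the ``easy'' inclusions. Applying Proposition \ref{pseudo} with $L_1=U(t,r)Q(r)^{\frac12}$ and $L_2=Q(t)^{\frac12}$ converts the standing bound \eqref{stima_norma_U(t,s)} into the pointwise estimate $\norm{Q(r)^{\frac12}U(t,r)^\star x}_X\le M_t\,\frac{e^{-\omega_t(t-r)}}{(t-r)^{\alpha_t}}\norm{Q(t)^{\frac12}x}_X$ for every $r<t$. Since $2\alpha_t<1$, the map $u\mapsto e^{-2\omega_t u}u^{-2\alpha_t}$ is integrable on $(0,\infty)$; squaring the estimate and integrating over $(s,t)$ and over $(-\infty,t)$ gives $\norm{Q(t,s)^{\frac12}x}_X\le\norm{Q(t,-\infty)^{\frac12}x}_X\le M_tC_\infty^{1/2}\norm{Q(t)^{\frac12}x}_X$, where $C_\infty=\int_0^\infty e^{-2\omega_t u}u^{-2\alpha_t}\,du<\infty$, so that $\mathcal H_{t,s}\subseteq\mathcal H_t\subseteq{\rm H}_t$ with embedding constants uniform in $s$. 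The same computation on $(-\infty,s)$ records the tail bound $\norm{Q(t,-\infty)^{\frac12}x}_X^2-\norm{Q(t,s)^{\frac12}x}_X^2=\int_{-\infty}^s\norm{Q(r)^{\frac12}U(t,r)^\star x}_X^2\,dr\le M_t^2C_2(s)\norm{Q(t)^{\frac12}x}_X^2$ with $C_2(s)=\int_{t-s}^\infty e^{-2\omega_t u}u^{-2\alpha_t}\,du\le\frac{e^{-2\omega_t(t-s)}}{2\omega_t(t-s)^{2\alpha_t}}$, which I reserve for (iv).

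The heart of the matter is the reverse inclusion ${\rm H}_t\subseteq\mathcal H_{t,s}$, i.e. a lower bound $\norm{Q(t)^{\frac12}x}_X^2\le D_s\norm{Q(t,s)^{\frac12}x}_X^2$, equivalently $Q(t)\le D_s\,Q(t,s)$ as quadratic forms. Granting it, it combines with $\mathcal H_{t,s}\subseteq\mathcal H_t\subseteq{\rm H}_t$ to force $\mathcal H_{t,s}=\mathcal H_t={\rm H}_t$ with equivalent norms for every $s<t$, which is precisely (iii), and then (i) is immediate because all the spaces $\mathcal H_{t,s}$ coincide. This lower bound is the main obstacle. Writing $\langle Q(t,s)x,x\rangle_X=\int_s^t\norm{A_r^\star Q(t)^{\frac12}x}_X^2\,dr$ with $A_r=Q(t)^{-\frac12}U(t,r)Q(r)^{\frac12}\in\mathscr L(X)$ and $A_t=I$ on $\overline{{\rm H}_t}$, the difficulty is that \eqref{stima_norma_U(t,s)} only controls $\norm{A_r}$ from above, whereas one needs coercivity of $\int_s^t A_rA_r^\star\,dr$ from below, uniformly in the direction of $x$; this is where the continuous realization of ${\rm H}_r$ inside ${\rm H}_t$ granted by the hypothesis and the near–diagonal behaviour of $r\mapsto U(t,r)Q(r)U(t,r)^\star$ as $r\uparrow t$ must be exploited. (For $s$ sufficiently negative the bound $\norm{U(t,s)}^2_{\mathscr L(\mathcal H_s,\mathcal H_t)}\le M_t^2N_t^2C_2(s)$, with $N_t=\norm{Q(t,-\infty)^{-\frac12}Q(t)^{\frac12}}_{\mathscr L(X)}<\infty$ finite thanks to $\mathcal H_t={\rm H}_t$, already closes the argument since $C_2(s)\to0$; the short–horizon regime $s\uparrow t$ is the delicate case.)

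Finally, (iv) and the closing claim are assembled from (iii) and the tail bound. Once $\mathcal H_t=\mathcal H_{t,s}$ is known, the criterion \eqref{normaL2} of Proposition \ref{tesoretto}(iii) yields $\norm{U(t,s)}_{\mathscr L(\mathcal H_s,\mathcal H_t)}<1$. For the decay I combine the identity $\norm{U(t,s)}^2_{\mathscr L(\mathcal H_s,\mathcal H_t)}=\sup_{x\neq0}\norm{Q(s,-\infty)^{\frac12}U(t,s)^\star x}_X^2\,\norm{Q(t,-\infty)^{\frac12}x}_X^{-2}$, which follows from \eqref{uguaglianzaQtinf2} and Proposition \ref{tesoretto}(ii), with the tail bound of the second step and $\mathcal H_t={\rm H}_t$, to obtain $\norm{U(t,s)}_{\mathscr L(\mathcal H_s,\mathcal H_t)}\le M_tN_t(2\omega_t)^{-1/2}\,\frac{e^{-\omega_t(t-s)}}{(t-s)^{\alpha_t}}$. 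Hence $\norm{U(t,s)}_{\mathscr L(\mathcal H_s,\mathcal H_t)}<\min\{1,\,M_t\,e^{-\omega_t(t-s)}(t-s)^{-\alpha_t}\}$ after relabelling the constant, and, bounding $(t-s)^{-\alpha_t}$ by a constant for $t-s\ge1$ while using $\norm{U(t,s)}_{\mathscr L(\mathcal H_s,\mathcal H_t)}<1$ for $t-s<1$, the decay estimate \eqref{expon_decay} of Corollary \ref{coro:exp_decay} holds with suitable $M\ge1$ and $\omega>0$.
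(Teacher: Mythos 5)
Your proposal has a genuine gap, and it sits exactly where you flag ``the main obstacle'': the reverse inclusion ${\rm H}_t\subseteq\mathcal H_{t,s}$ (equivalently, a lower bound $\norm{Q(t)^{\frac12}x}_X^2\leq D_s\norm{Q(t,s)^{\frac12}x}_X^2$) is never established, and your proofs of (iii), (i) and (iv) are all made to depend on it. Worse, this inclusion is not merely unproven but false under the stated hypotheses, so no refinement of your ``near-diagonal behaviour'' heuristic can close it. Take $U(t,s)=e^{-(t-s)}I$ and $B(r)=0$ for $r<t$, $B(r)=B\neq 0$ (with $BB^\star$ of trace class) for $r\geq t$: Hypothesis \ref{4} holds, the assumptions of the proposition hold vacuously at the fixed time $t$ (since ${\rm H}_s=\{0\}$ for every $s<t$), and $\mathcal H_{t,s}=\mathcal H_t=\{0\}$ while ${\rm H}_t\neq\{0\}$. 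Thus the proposition is true in this example, but your intermediate target $\mathcal H_{t,s}=\mathcal H_t={\rm H}_t$ fails. The same example shows that $N_t=\norm{Q(t,-\infty)^{-\frac12}Q(t)^{\frac12}}_{\mathscr L(X)}$, which you declare finite ``thanks to $\mathcal H_t={\rm H}_t$'', is in general infinite; invoking it is in any case circular, since by Proposition \ref{pseudo} its finiteness is equivalent to the inclusion ${\rm H}_t\subseteq\mathcal H_t$ you are trying to prove. This undermines both your large-gap fallback and your proof of (iv), which in addition only produces a constant $M_tN_t(2\omega_t)^{-1/2}$, not the constant $M_t$ asserted in the statement.

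The paper's proof never compares $Q(t,s)$ or $Q(t,-\infty)$ downward against $Q(t)$; that is the idea you are missing. For the tail $\int_{-\infty}^s\norm{Q(r)^{\frac12}U(t,r)^\star x}_X^2\,dr$ it performs the change of variables $r=\sigma+s-t$, uses the evolution law $U(t,\sigma+s-t)^\star=U(\sigma,\sigma+s-t)^\star U(t,\sigma)^\star$ together with the intertwining relation (ii) to rewrite the integrand as $\norm{(U(\sigma,\sigma+s-t)_{|{\rm H}_{\sigma+s-t}})^\star Q(\sigma)U(t,\sigma)^\star x}_{{\rm H}_{\sigma+s-t}}^2$, and then applies \eqref{stima_norma_U(t,s)} to the operator $U(\sigma,\sigma+s-t)$ (endpoint $\sigma$, for every $\sigma<t$), whose time gap is exactly $t-s$. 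This bounds the tail by $C_s\norm{Q(t,-\infty)^{\frac12}x}_X^2$ with $C_s=M_t^2e^{-2\omega_t(t-s)}(t-s)^{-2\alpha_t}$, i.e.\ by a small multiple of the full $Q(t,-\infty)$-form rather than of the $Q(t)$-form; for $s<s_0$ with $C_s<1$ this absorbs, giving $\norm{Q(t,-\infty)^{\frac12}x}_X^2\leq(1-C_s)^{-1}\norm{Q(t,s)^{\frac12}x}_X^2$ and hence $\mathcal H_t\subseteq\mathcal H_{t,s}$ for $s<s_0$. Statement (i) --- which the paper obtains independently, by the argument of \cite[Theorem 3.18]{def_2022}, and not as a consequence of (iii) --- then upgrades this to every $s<t$; note that your logical order, deducing (i) from (iii), cannot work, because any absorption argument of this type only covers $s$ far from $t$. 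Finally, the same tail estimate read through Proposition \ref{pseudo} with $L_1=Q(s,-\infty)^{\frac12}U(t,s)^\star$ and $L_2=Q(t,-\infty)^{\frac12}$ yields (iv) with the exact constant $M_t$, and combined with Proposition \ref{tesoretto}(iii) it gives the strict bound by $1$.
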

\begin{proof}
{\rm (i) are (ii)} are analogous to \cite[Theorem 3.18]{def_2022}.
Let us prove (iii). Let $t\in\R$ be as in the and statement and let us denote $M_t,\omega_t$ and $\alpha_t$ by $M,\omega$ and $\alpha$, respectively. We know that $\mathcal{H}_{t,s}\subseteq \mathcal{H}_t$ for every $s<t$. Conversely, we use Proposition \ref{pseudo} with $L_1=Q(t,-\infty)^\frac{1}{2}$ and $L_2=Q(t,s)^\frac{1}{2}$ for $s<t$. Indeed for every $x\in X$ we have
\begin{align*}
\norm{Q(t,-\infty)^\frac{1}{2}x}^2_X&=\ps{Q(t,-\infty)x}{x}_X=\int_{-\infty}^t\norm{Q(r)^\frac{1}{2}U(t,r)^\star}^2_X\,dr\\
&=\int_{s}^t\norm{Q(r)^\frac{1}{2}U(t,r)^\star}^2_X\,dr+\int_{-\infty}^s\norm{Q(r)^\frac{1}{2}U(t,r)^\star}^2_X\,dr\\
&=\norm{Q(t,s)^\frac{1}{2}x}^2_X+\int_{-\infty}^s\norm{Q(r)^\frac{1}{2}U(t,r)^\star}^2_X\,dr.
\end{align*}
Setting $t-s+r=\sigma$, from \eqref{stima_norma_U(t,s)} and (ii) we get
\begin{align}\label{stima_per_stima_min}
\int_{-\infty}^s\norm{Q(r)^\frac{1}{2}U(t,r)^\star}^2_X\,dr&=\int_{-\infty}^t\norm{Q(\sigma+s-t)^\frac{1}{2}U(t,\sigma+s-t)^\star}^2_X\,d\sigma\nonumber\\
&=\int_{-\infty}^t\norm{Q(\sigma+s-t)U(\sigma,\sigma+s-t)^\star U(t,\sigma)^\star}^2_{{\rm H}_{\sigma+s-t}}\,d\sigma\nonumber\\
&=\int_{-\infty}^t\norm{(U(\sigma,\sigma+s-t)_{|H_{\sigma+s-t}})^\star Q(\sigma)U(t,\sigma)^\star}^2_{{\rm H}_{\sigma+s-t}}\,d\sigma\nonumber\\
&\leq\frac{M^2}{(t-s)^{2\alpha}}\,e^{-2\omega(t-s)}\int_{-\infty}^t \norm{Q(\sigma)U(t,\sigma)^\star}^2_{{\rm H}_{\sigma}}\,d\sigma\nonumber\\
&=\frac{M^2}{(t-s)^{2\alpha}}\,e^{-2\omega(t-s)}\int_{-\infty}^t \norm{Q(\sigma)^{\frac{1}{2}}U(t,\sigma)^\star}^2_{X}\,d\sigma\nonumber\\
&=\frac{M^2}{(t-s)^{2\alpha}}\,e^{-2\omega(t-s)}\norm{Q(t,-\infty)^\frac{1}{2}x}^2_X. 
\end{align}
Setting $$C_{s}=\frac{M^2}{(t-s)^{2\alpha}}\,e^{-2\omega(t-s)},$$ we choose $s_0<t$ such that $C_s<1$ for all $s<s_0$. Hence, 
\begin{equation*}
\norm{Q(t,-\infty)^\frac{1}{2}x}^2_X\leq \frac{1}{1-C_s}\norm{Q(t,s)^\frac{1}{2}x}^2_X, \qquad s<s_0<t.
\end{equation*}
By (i), (iii) follows for all $s<t$. 

Let us prove (iv). By (iii) and Proposition \ref{tesoretto}(iii) we get 
\begin{equation}\label{stima_min1}
\norm{U(t,s)}_{\mathscr L(\mathcal{H}_s;\mathcal{H}_t)}<1, \qquad s<t.
\end{equation}
Moreover, we apply Proposition \ref{pseudo} with $L_1=Q(s,-\infty)^{\frac12}U(t,s)^\star$ and $L_2=Q(t,-\infty)^{\frac12}$.
By \eqref{stima_per_stima_min}, we get
\begin{align*}
\norm{Q(s,-\infty)^{\frac12}U(t,s)^\star x}^2_X&=\int_{-\infty}^s\norm{Q(r)^\frac{1}{2}U(t,r)^\star}^2_X\,dr\leq\frac{M^2}{(t-s)^{2\alpha}}\,e^{-2\omega(t-s)}\norm{Q(t,-\infty)^\frac{1}{2}x}^2_X,
\end{align*}
and so
\begin{equation}\label{stima_min2}
\norm{U(t,s)}_{\mathscr L(\mathcal{H}_s;\mathcal{H}_t)}\leq M\,\frac{e^{-\omega(t-s)}}{(t-s)^\alpha}.
\end{equation}
By \eqref{stima_min1} and \eqref{stima_min2}, (iv) follows.
\end{proof}

\section{Examples}\label{Examples}
In this section, we adapt to our situation the examples studied in \cite{big_def_2024,cer_lun_2021,def_2022}.

\subsection{A non-autonomous parabolic problem}
Let $d\in\N$ and let $\mathcal{O}$ be a bounded open subset of $\R^d$ with smooth boundary. We consider the evolution operator $\{U(t,s)\}_{s\leq t}$ in $X:=L^2(\mathcal{O})$ associated to the evolution equation of parabolic type

\begin{align*}
%\label{parabolico2}
\begin{cases}
u_t(t,x)=\mathcal{A}(t)u(t,\cdot)(x),\ \ (t,x)\in(s,\infty)\times\mathcal{O},\\
%u(t,x)=0,\ \ (t,x)\in(s,\infty)\times\partial\mathcal{O}.\\
\mathcal{B}(t)u(t,\cdot)(x)=0,\ \ (t,x)\in(s,\infty)\times\partial\mathcal{O}.\\
u(s,x)=u_0(x),\ \ x\in\mathcal O.
\end{cases}
\end{align*}
The differential operators $\mathcal{A}(t)$ are given by
\begin{equation}\label{operatoreat2}
\mathcal{A}(t)\varphi(x)=\sum_{i,j=1}^d a_{ij}(t,x)D_{ij}^2\varphi(x)+\sum_{i=1}^d a_{i}(t,x)D_{i}\varphi(x)+a_0(t,x)\varphi(x),\quad t\in \R,\ x\in\mathcal{O},
\end{equation}
and the family of the boundary operators $\{\mathcal{B}(t)\}_{t\in \R}$ is either of Dirichlet, Neumann or Robin type, namely
\begin{align}\label{boundary}
\mathcal{B}(t)u=
\begin{cases}
\begin{aligned}
&u \ \ \quad \quad\quad  \quad\quad\quad \quad \quad \quad\quad \quad \quad \quad \mbox{(Dirichlet)},  \\[1ex]
&\displaystyle{\sum_{i,j=1}^da_{ij}(x,t)D_i u\,\nu_j}\quad \quad\quad\quad \quad \quad\, \mbox{(Neumann)},\\[1ex] 
&\displaystyle{\sum_{i,j=1}^da_{ij}(x,t)D_i u\,\nu_j}+b_0(x,t)u\quad \ \ \mbox{(Robin)},
\end{aligned}
\end{cases}
\end{align}
 where $\nu=(\nu_1,...,\nu_d)$ is the unit outer normal vector at the boundary of $\mathcal{O}$. 
 
We introduce spaces of functions depending both on time and space variables. For every $a,b\in\R\cup\{\pm\infty\}$, $a<b$, $\rho\in(0,1)$ and $k\in\N\cup\{0\}, $ we denote by $C^{\rho,k}\bigl([a,b]\times \overline{\mathcal{O}}\bigr)$ the space of all bounded continuous functions $\psi:[a,b]\times \overline{\mathcal{O}}\rightarrow \R$ such that 
\begin{align*}
\psi(t,\cdot)\in C^k(\overline{\mathcal{O}}),\quad t\in [a,b], \\
\psi(\cdot,x)\in C^\rho([a,b]),\quad x\in \overline{\mathcal{O}}.
\end{align*}
$C^{\rho,k}\bigl([a,b]\times \overline{\mathcal{O}}\bigr)$ is endowed with the norm
 $$\norm{\psi}_{C^{\rho,k}([a,b]\times \overline{\mathcal{O}})}=\sup_{x\in\overline{\mathcal{O}}}\norm{\psi(\cdot,x)}_{C^\rho([a,b])}+\sup_{t\in[a,b]}\norm{\psi(t,\cdot)}_{C^k(\overline{\mathcal{O}})}.$$
%If $\al\geq 1$ we also require that the mappings $$(s,x)\rightarrow \frac{\partial\psi}{\partial h_1...\partial h_k}(s,x)$$ are continuous in $[a,b]\times \mathcal{O}$, for every $h_1,...,h_k\in \R^d$ with $k\leq [\al]$.

%Let $T>0$. Let $V$ be a suitable separable Hilbert space continuously and densely embedded in $X$. We define two Diriclet forms $a_0,a_1:[0,T]\times V\times V\rightarrow\R$ given by 
%\begin{align*}
%{\bm a_0}(t,u,v)&=-\int_{\mathcal{O}}\sum_{i,j=1}^da_{i,j}(t,x)D_iu(x)D_jv(x)dx\\
%&+\int_{\mathcal{O}}\Bigl[\sum_{j=1}^d\Bigl(a_{i}(t,x)-\sum_{j=1}^dD_ja(t,x)\Bigr)D_i u(x)+a_0(t,x)u(x)\Bigr]v(x)dx,\ \ t\in[0,T],\ u,v\in V,\\
%{\bm a_1}(t,u,v)&={\bm a_0}(t,u,v)+\int_{\partial\mathcal{O}}b_0uv\,d\mathscr{H}^{d-1}\ \ t\in[0,T],\ u,v\in V,
%\end{align*}
%where $\mathscr{H}^{d-1}$ is the standard $d-1$-dimensional Haussdorff measure in $\R^d$.
 
 We make the following assumptions.

\begin{hyp}\label{A}
We assume that $\mathcal{O}$ has smooth boundary. Moreover, we assume that $a_{ij}=a_{ji}$, $a_{ij},b_0\in C_b^{\rho,2}\left(\R\times\overline{\mathcal{O}}\right)$, $b_0\in C_b^{\rho,1}\left(\R\times\overline{\mathcal{O}}\right)$, $a_{i},a_0\in C_b^{\rho,0}\left(\R\times\overline{\mathcal{O}}\right)$ and there exist $\nu, \omega,\beta_0>0$ such that 
\begin{align*}
&\sum_{i,j=1}^da_{ij}(t,x)\xi_i\xi_j\geq \nu\abs{\xi}^2,\ \ t\in\R,\ x\in\mathcal{O},\  \xi\in\R^d,\\
&\sup_{(t,x)\in\R\times\mathcal{O}}a_0(t,x)\leq -\omega,\\
&\inf_{(r,x)\in\R\times\mathcal{O}}b_0(r,x)\geq\beta_0, \\
& \delta_0-\omega<0,
\end{align*}
where $\displaystyle{\delta_0=\frac{1}{\nu}\left(\sum_{i=1}^d\norm{a_i}^2_\infty\right)^{\frac{1}{2}}}$.
\end{hyp}

%Let $T>0$. We define the Dirichlet form ${\bm a}:[0,T]\times H^1_0(\mathcal{O})\times H^1_0(\mathcal{O})\rightarrow\R$ given by 
%{\small\begin{align*}
%{\bm a}(t,u,v)&=-\int_{\mathcal{O}}\sum_{i,j=1}^da_{ij}(t,x)D_iu(x)D_jv(x)dx\\
%&+\int_{\mathcal{O}}\Bigl[\sum_{j=1}^d\Bigl(a_{i}(t,x)-\sum_{j=1}^dD_ja_{ij}(t,x)\Bigr)D_i u(x)+a_0(t,x)u(x)\Bigr]v(x)dx,\ \ t\in[0,T],\ u,v\in H^1_0(\mathcal{O}).
%\end{align*}}
For every $t\in\R$ we denote by $A(t)$ the realization in $L^2(\mathcal{O})$ of $\mathcal{A}(t)$ with one of the boundary conditions \eqref{boundary}.  

\begin{itemize}
 \item \textbf{Dirichlet boundary condition} 
 
In this case for every $t\in\R$ we have
\begin{align*} 
D(A(t))&=H^2(\mathcal{O})\cap H^1_0(\mathcal{O}).
\end{align*} 
Moreover, by \cite[Theorems 3.5 \& 4.15]{gui_1991}, for every $t\in\R$ and $0<\gamma<1$ we have
\begin{align*}
%\label{guidetti20}
\left(L^2(\mathcal{O}),H^2(\mathcal{O})\cap H^1_0(\mathcal{O})\right)_{\gamma,2}
=\begin{cases}
 H^{2\gamma}(\mathcal{O}), &\mbox{if}\ 0<\gamma<\frac{1}{4}, \\% \phantom{a}\\
 \mathring{H}^{\frac{1}{2}}(\mathcal{O}), &\mbox{if}\ \gamma=\frac{1}{4}, \\
\left\{u\in H^{2\gamma}(\mathcal{O})\ |\ u_{|_{\partial\mathcal{O}}}=0\right\}, \quad &\mbox{if}\ \frac{1}{4}<\gamma< 1.
\end{cases}
\end{align*}

 \item \textbf{Neumann or Robin boundary conditions} 
 
In this case, for every $t\in\R$ we have
\begin{align*} 
D(A(t))&=\left\{u\in H^2(\mathcal{O}):\ \mathcal{B}(t)u=0\right\}.
\end{align*} 
Moreover, by \cite[Theorems 3.5 \& 4.15]{gui_1991}, for every $t\in\R$ and $0<\gamma<1$ we have
\begin{align*}
%\label{guidetti00}
(X,D(A(t))_{\gamma,2}&
=\begin{cases}
 H^{2\gamma}(\mathcal{O}), &\mbox{if}\ 0<\gamma<\frac{3}{4}, \\[1ex] %\phantom{a}\\
 \left\{u\in H^{\frac{3}{2}}(\mathcal{O})\ |\ \tilde{\mathcal{B}}(t)u\in\mathring{H}^{\frac{1}{2}}(\mathcal{O})\right\}, &\mbox{if}\ \gamma=\frac{3}{4}, \\[1ex]
\left\{u\in H^{2\gamma}(\mathcal{O})\ |\ \mathcal{B}(t)u=0\right\},\quad &\mbox{if}\ \frac{3}{4}<\gamma<1,
\end{cases}
\end{align*}
where
\begin{align*}
\tilde{\mathcal{B}}(t)u=
\begin{cases}
\begin{aligned}
&\displaystyle{\sum_{i,j=1}^da_{ij}(x,t)D_i u\,\tilde{\nu_j}}, \quad\quad\quad \quad \quad\ \ \, \mbox{(Neumann)},\\
&\displaystyle{\sum_{i,j=1}^da_{ij}(x,t)D_i u\,\tilde{\nu_j}}+b_0(x,t)u,\ \ \ \ \mbox{(Robin)}, 
\end{aligned}
\end{cases}
\end{align*} 
$\tilde{\nu}$ is a smooth enough extension of $\nu$ to $\overline{\mathcal{O}}$ and $\mathring{H}^{\frac{1}{2}}(\mathcal{O})$ consists on all the elements $\varphi\in H^{\frac{1}{2}}(\mathcal{O})$ whose null extension outside $\overline{\mathcal{O}}$ belongs to $H^{\frac{1}{2}}(\R^d)$.
\end{itemize}

\begin{thm}\label{thm7.4}
Assume that the following conditions hold true.
\begin{enumerate}[{\rm (i)}]
\item $X:=L^2(\mathcal{O})$, where $\mathcal{O}\subseteq\R^d$ is a bounded open set with smooth boundary and $d=1,2,3,4,5$. 
\item The operators $\mathcal{A}(t)$, given by \eqref{operatoreat2}, verify Hypothesis \ref{A} for every $t\in\R$.
\item The realization $A(t)$ in $L^2(\mathcal{O})$ of $\mathcal{A}(t)$ with boundary conditions given by \eqref{boundary} is a negative operator for every $t\in\R$.
\item For every $t\in\R$ we have 
\[
B(t)=(-A(t))^{-\gamma},\quad t\in\R,
\]
for some $\gamma\geq 0$.
\end{enumerate}
In the following cases
\begin{align*}
%\label{sceltagamma2}
\begin{cases}
0\leq\gamma<1,\ \ \ \ \mbox{if}\ d=1,\\[1ex]
0<\gamma<1,\ \ \ \ \mbox{if}\ d=2,\\[1ex]
 \frac{1}{4}<\gamma<1,\ \ \ \, \mbox{if}\ d=3,\\[1ex]
\frac{1}{2}<\gamma<1,\ \ \ \ \mbox{if}\ d=4,\\[1ex]
\frac{3}{4}<\gamma<1,\ \ \ \ \mbox{if}\ d=5,
\end{cases}
\end{align*}
there exists an evolution operator $U(t,s)$ satisfying  Hypothesis \ref{4} and the assumptions in Corollary \ref{HYPER} and Proposition \ref{prop_bignamini}.
%and even Hypothesis \ref{hyp:dec_esp22} is satisfied.
\end{thm}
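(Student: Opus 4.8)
The plan is to realise $U(t,s)$ via the Acquistapace--Terreni theory for non-autonomous parabolic problems and then to verify the abstract assumptions one at a time. First I would check that, under Hypothesis \ref{A}, each $A(t)$ is sectorial on $X=L^2(\mathcal O)$ with the domains recalled above, that the family $\{A(t)\}_{t\in\R}$ satisfies the Acquistapace--Terreni conditions (this is where the $C^{\rho,k}$-regularity in time of the coefficients enters), and that the negativity hypotheses $a_0\le-\omega$ together with $\delta_0-\omega<0$ force a uniform exponential bound $\norm{U(t,s)}_{\mathscr L(X)}\le Me^{-c(t-s)}$ with $c>0$; see \cite{acq_1988,acq_ter_1987}. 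This produces the strongly continuous evolution operator of Hypothesis \ref{4}(i) and gives $\lim_{s\to-\infty}U(t,s)x=0$. Since $-A(t)$ is boundedly invertible uniformly in $t$ (a spectral gap is granted by $a_0\le-\omega$), the operator $B(t)=(-A(t))^{-\gamma}$ is well-defined with $\sup_t\norm{B(t)}_{\mathscr L(X)}<\infty$, and strong measurability follows from the continuity of the coefficients; this is Hypothesis \ref{4}(ii).

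The core of the argument is the trace condition of Hypothesis \ref{4}(iii)--(iv). I would write
\[
{\rm Tr}[Q(t,s)]=\int_s^t\norm{U(t,r)(-A(r))^{-\gamma}}_{\mathscr L_2(X)}^2\,dr
\]
and factorise $U(t,r)(-A(r))^{-\gamma}=(-A(t))^{-\sigma}\big[(-A(t))^{\sigma}U(t,r)(-A(r))^{-\gamma}\big]$ for a suitable $\sigma$. The first factor is Hilbert--Schmidt on $L^2(\mathcal O)$ precisely when $\sigma>d/4$ (by Weyl's asymptotics the eigenvalues of $-A(t)$ grow like $k^{2/d}$), while the Acquistapace--Terreni smoothing estimate gives $\norm{(-A(t))^{\sigma}U(t,r)(-A(r))^{-\gamma}}_{\mathscr L(X)}\lesssim (t-r)^{-(\sigma-\gamma)_+}e^{-c(t-r)}$. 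Hence ${\rm Tr}[Q(t,s)]\lesssim\int_s^t(t-r)^{-2(\sigma-\gamma)_+}e^{-2c(t-r)}\,dr$, which is finite and bounded uniformly in $s<t$ as soon as $\sigma$ can be chosen with $d/4<\sigma<\gamma+\tfrac12$. Such a $\sigma$ exists if and only if $\gamma>(d-2)/4$, and this is exactly the lower endpoint of the $\gamma$-window in each listed case $d=1,\dots,5$; the exponential factor simultaneously yields the uniform bound $\sup_{s<t}{\rm Tr}[Q(t,s)]<\infty$ of Hypothesis \ref{4}(iv).

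It remains to verify the hypotheses of Proposition \ref{prop_bignamini}; Corollary \ref{HYPER}'s requirement $\norm{U(t,s)}_{\mathscr L(\mathcal H_s,\mathcal H_t)}<1$ then comes for free, since it is part (iv) of that proposition. Here I would first identify ${\rm H}_t=Q(t)^{\frac12}(X)$: since $Q(t)=(-A(t))^{-\gamma}(-A(t)^\star)^{-\gamma}$ one has $\norm{Q(t)^{\frac12}x}_X=\norm{(-A(t)^\star)^{-\gamma}x}_X$, so Proposition \ref{pseudo} identifies ${\rm H}_t$, with equal norm, with the fractional domain $D((-A(t))^{\gamma})$; by the interpolation identities recalled before the theorem (valid for $0<\gamma<1$, which is where the upper bound $\gamma<1$ enters) this is a Sobolev space $H^{2\gamma}(\mathcal O)$ carrying the relevant boundary conditions. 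With this identification, the norm $\norm{U(t,s)}_{\mathscr L({\rm H}_s,{\rm H}_t)}$ is controlled by $\norm{(-A(t))^{\gamma}U(t,s)(-A(s))^{-\gamma}}_{\mathscr L(X)}$, and the Acquistapace--Terreni estimate for equal powers provides a bound $M_t\,e^{-\omega_t(t-s)}(t-s)^{-\alpha_t}$ with a loss $\alpha_t\in[0,\tfrac12)$ originating only from the Hölder-in-time regularity of the coefficients; this is precisely \eqref{stima_norma_U(t,s)}. Proposition \ref{prop_bignamini} then delivers $\mathcal H_t=\mathcal H_{t,s}$ with equivalent norms, the strict contractivity $\norm{U(t,s)}_{\mathscr L(\mathcal H_s,\mathcal H_t)}<1$, and the exponential decay required by Corollary \ref{coro:exp_decay}, completing the verification.

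I expect the main obstacle to be the trace estimate: balancing the Hilbert--Schmidt threshold $\sigma>d/4$ against the integrability threshold $\sigma<\gamma+\tfrac12$ is what simultaneously forces $d\le5$ and the stated $\gamma$-windows, and it rests on combining the fractional smoothing of the AT propagator with the correct Weyl asymptotics of $-A(t)$ on $L^2(\mathcal O)$. A secondary delicate point is the clean identification of ${\rm H}_t$ with a fractional Sobolev space in the non-self-adjoint setting and the verification that the equal-power smoothing estimate produces an exponent $\alpha_t$ strictly below $1/2$.
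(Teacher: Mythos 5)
Your proposal is correct in outline and follows the same skeleton as the paper's proof: build $U(t,s)$ via the Acquistapace--Terreni theory, verify Hypothesis \ref{4}, obtain a bound of the form \eqref{stima_norma_U(t,s)} for $\norm{U(t,s)}_{\mathscr L({\rm H}_s;{\rm H}_t)}$, and conclude through Proposition \ref{prop_bignamini}(iv), which --- as you correctly note --- also yields the hypothesis $\norm{U(t,s)}_{\mathscr L(\mathcal H_s,\mathcal H_t)}<1$ of Corollary \ref{HYPER}. The genuine difference lies in how the hard estimates are obtained. The paper's proof is almost entirely citation-based: the Acquistapace--Terreni conditions are taken from \cite[Examples 2.8 \& 2.9]{sch_2004}, the existence of $U(t,s)$ from \cite[Theorem 2.3]{acq_1988}, and both the trace conditions of Hypothesis \ref{4}(iii)--(iv) and the key bound $\norm{U(t,s)}_{\mathscr L({\rm H}_s;{\rm H}_t)}\leq Ce^{-(\omega-\delta_0)(t-s)}$ (with no singular factor, i.e. $\alpha_t=0$) from \cite[Theorem 7]{big_def_2024}. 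You instead reconstruct these estimates directly, and your mechanism is the right one: the factorization $U(t,r)B(r)=(-A(t))^{-\sigma}\bigl[(-A(t))^{\sigma}U(t,r)(-A(r))^{-\gamma}\bigr]$, balancing the Hilbert--Schmidt threshold $\sigma>d/4$ against the integrability threshold $\sigma<\gamma+\tfrac12$, recovers exactly the condition $\gamma>(d-2)/4$, hence the stated $\gamma$-windows and the restriction $d\leq 5$. What your route buys is self-containedness; what the paper's buys is brevity, at the cost of hiding all the analysis inside \cite{big_def_2024}.

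Two technical points in your sketch would need repair before it is a complete proof, though neither invalidates the approach. First, for $d=4,5$ you need $\sigma>d/4\geq 1$, while the Acquistapace--Terreni smoothing estimates are customarily stated for fractional powers in $[0,1]$; the extension to $\sigma\in\bigl(1,\tfrac32\bigr)$ exists in the literature but must be invoked explicitly, since the naive composition $U(t,r)=U(t,m)U(m,r)$ does not by itself produce powers above $1$. Second, $A(t)$ is not self-adjoint (it has first-order terms), so neither the Weyl asymptotics for its eigenvalues nor the identification of $D((-A(t))^{\gamma})$ with the real interpolation spaces recalled before the theorem is immediate. The clean substitutes are: for the Hilbert--Schmidt property, the uniform boundedness of $(-A(t))^{-\sigma}:L^2(\mathcal{O})\to H^{2\sigma}(\mathcal{O})$ combined with the Hilbert--Schmidt embedding $H^{2\sigma}(\mathcal{O})\hookrightarrow L^{2}(\mathcal{O})$ valid for $2\sigma>d/2$; and, for the identification of ${\rm H}_t$, the identity $\norm{Q(t)^{1/2}x}_X=\norm{(-A(t)^\star)^{-\gamma}x}_X$ together with Proposition \ref{pseudo}, which gives ${\rm H}_t=D((-A(t))^{\gamma})$ with equal norms and no appeal to interpolation theory --- a device you already employ, and which suffices for the reduction of $\norm{U(t,s)}_{\mathscr L({\rm H}_s;{\rm H}_t)}$ to $\norm{(-A(t))^{\gamma}U(t,s)(-A(s))^{-\gamma}}_{\mathscr L(X)}$.
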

\begin{proof}
Let us show that Hypothesis \ref{4} is satisfied. In \cite[Examples 2.8 \& 2.9]{sch_2004} it is proven that the families $\{A(t)\}_{t\in\R}$ satisfies the assumptions of \cite{acq_1988,acq_ter_1987}. So by \cite[Theorem 2.3]{acq_1988} there exists an evolution operator $\{U(t,s)\}_{s\leq t}$ on $X$ such that Hypothesis \ref{4}(i) holds.
In \cite[Theorem 7]{big_def_2024} it is shown that the rest of Hypothesis \ref{4} is satisfied and there exists $C>0$ such that
\begin{align*}
\norm{U(t,s)}_{\mathscr L({\rm H_s};{\rm H_t})}\leq C e^{-(\omega-\delta_0)(t-s)},\ \ s<t,
\end{align*}
where $\omega$ and $\delta$ are the constant given in Hypothesis \ref{A}. Hence by Proposition \ref{prop_bignamini}(iv) we get 
\begin{align*}
\norm{U(t,s)}_{\mathscr L({\mathcal{H}_s};{\mathcal{H}_t})}< \min\left\{1,C e^{-(\omega-\delta_0)(t-s)}\right\},\ \ s<t,
\end{align*}
and the statement follows.

\end{proof}

\begin{remark} If $a_i\equiv 0$ for all $i=1,..,d$ hypothesis \textrm{(iii)} of condition \ref{thm7.4} holds. In the general case it is easy to find sufficient conditions on the coefficients $a_i$ such that assumption \textrm{(iii)} in Theorem \ref{thm7.4} still holds.
\end{remark}

%So by Proposition \ref{traccia-finita} and recalling \eqref{guidetti00} and  \eqref{guidetti20}, for every $s<t$ the operator $Q(t,s)$ given by \eqref{qtscov} has finite trace in all the cases \eqref{sceltagamma2}. Hypothesis \ref{4} holds true in view of  Proposition \ref{zeta=omega}. Finally \eqref{stima-logsob1} holds true by \eqref{SAT3} and \eqref{domini-potenze} with $\theta=\gamma$.

\begin{thm}\label{thm7.5}
Assume the following conditions hold true.
\begin{enumerate}[{\rm(i)}]
\item $X:=L^2(\mathcal{O})$, where $\mathcal{O}\subseteq\R^d$ is a bounded open set with smooth boundary and $d=1,2,3$. 
\item The operators $\mathcal{A}(t)$, given by \eqref{operatoreat2}, verify Hypothesis \ref{A} for every $t\in\R$.
\item The realization $A(t)$ in $L^2(\mathcal{O})$ of $\mathcal{A}(t)$ with boundary conditions given by \eqref{boundary} is a negative operator for every $t\in\R$.
\item For every $t\in\R$ we can choose
\[
B(t)=(-\Delta)^{-\gamma(t)},\quad t\in\R,
\]
or 
\[
B(t)=(I-\widetilde{\Delta})^{-\gamma(t)},\quad t\in\R,
\]
where $\Delta$ is the realization of the Laplacian operator in $L^2(\mathcal{O})$ with Dirichlet boundary condition and $\widetilde{\Delta}$ is the realization of the Laplacian operator in $L^2(\mathcal{O})$ with Neumann or Robin boundary conditions, $\gamma:\R\rightarrow [0,\alpha]$ is a non-decreasing measurable function for some $\displaystyle{0<\alpha<\frac{1}{2}}$.
\end{enumerate}
In the following cases,
\begin{align*}
%\label{sceltagamma2.2}
\begin{cases}
\displaystyle{\inf_{t\in\R}\gamma(t)\geq 0},\ \ \ \ \mbox{if}\ d=1,\\[1ex]
\displaystyle{\inf_{t\in\R}\gamma(t)> 0},\ \ \ \ \mbox{if}\ d=2,\\[1ex]
\displaystyle{\inf_{t\in\R}\gamma(t)>\frac{1}{4}},\ \ \ \, \mbox{if}\ d=3,
\end{cases}
\end{align*}
there exists an evolution operator $U(t,s)$ satisfying Hypothesis \ref{4} and the assumptions of Corollary \ref{HYPER} and Proposition \ref{prop_bignamini} for every $t\in\R$, with constants $M_t,\omega_t$ and $\alpha_t$ independent of $t$.
\end{thm}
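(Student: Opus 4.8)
The plan is to mirror the proof of Theorem \ref{thm7.4}, verifying Hypothesis \ref{4} together with the hypotheses of Corollary \ref{HYPER} and Proposition \ref{prop_bignamini}, the only genuinely new feature being that the exponent $\gamma$ now depends on time. First I would recall that, under Hypothesis \ref{A}, the family $\{A(t)\}_{t\in\R}$ satisfies the assumptions of \cite{acq_1988,acq_ter_1987} (see \cite[Examples 2.8 \& 2.9]{sch_2004}), so that \cite[Theorem 2.3]{acq_1988} yields a strongly continuous evolution operator $\{U(t,s)\}_{s\leq t}$ on $X=L^2(\mathcal O)$; this gives Hypothesis \ref{4}(i). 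Since $-\Delta$ with Dirichlet conditions (resp. $I-\widetilde\Delta$ with Neumann/Robin conditions) is positive self-adjoint with spectrum bounded away from $0$, the spectral calculus shows that for each $t$ the operator $B(t)=(-\Delta)^{-\gamma(t)}$ (resp. $(I-\widetilde\Delta)^{-\gamma(t)}$) is bounded, with $\norm{B(t)}_{\mathscr L(X)}$ controlled uniformly in $t$ because $\gamma(t)\in[0,\alpha]$; measurability of $t\mapsto B(t)x$ follows from the measurability of $\gamma$. This establishes Hypothesis \ref{4}(ii).

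For Hypothesis \ref{4}(iii)--(iv) I would reduce to the constant-exponent trace estimates of \cite[Theorem 7]{big_def_2024}. Writing ${\rm Trace}[Q(t,s)]=\int_s^t\norm{U(t,r)(-\Delta)^{-\gamma(r)}}_{\mathscr L_2(X)}^2\,dr$ and expanding in the eigenbasis of $-\Delta$, the time integration produces an extra factor $\lambda_k^{-1}$ relative to the naive trace of $Q(r)$, so that convergence is governed by the exponent $(d-2)/4$. Here the monotonicity of $\gamma$ enters: for $r<t$ one has $\gamma(r)\geq \inf_{\R}\gamma$, hence $(-\Delta)^{-2\gamma(r)}$ provides at least the smoothing associated with the constant exponent $\inf_{\R}\gamma$, and the stated lower bounds on $\inf_{\R}\gamma$ are exactly the conditions of \cite[Theorem 7]{big_def_2024} for $d=1,2,3$. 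Combining this with the exponential decay of $U(t,r)$ as $r\to-\infty$ coming from $\delta_0-\omega<0$ gives both the finiteness of the trace for each $s<t$ and $\sup_{s<t}{\rm Trace}[Q(t,s)]<\infty$, while the convergence $U(t,s)x\to 0$ as $s\to-\infty$ follows from the same decay.

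The core of the argument is the verification of \eqref{stima_norma_U(t,s)}, namely
\[
\norm{U(t,s)}_{\mathscr L({\rm H}_s;{\rm H}_t)}\leq M_t\,\frac{e^{-\omega_t(t-s)}}{(t-s)^{\alpha_t}}, \qquad s<t.
\]
Since ${\rm H}_\tau=Q(\tau)^{\frac12}(X)=(-\Delta)^{-\gamma(\tau)}(X)$ with $\norm{\cdot}_{{\rm H}_\tau}=\norm{(-\Delta)^{\gamma(\tau)}\cdot}_X$, this amounts to bounding $\norm{(-\Delta)^{\gamma(t)}U(t,s)(-\Delta)^{-\gamma(s)}}_{\mathscr L(X)}$. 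I would use the parabolic smoothing of the evolution family, which gains the required $\gamma(t)-\gamma(s)$ derivatives at the cost of $(t-s)^{-(\gamma(t)-\gamma(s))}e^{-(\omega-\delta_0)(t-s)}$, the passage between powers of $-\Delta$ and the intermediate spaces of $A(t)$ being licensed by the interpolation identifications of \cite[Theorems 3.5 \& 4.15]{gui_1991}, valid in the stated dimensions and $\gamma$-ranges. By monotonicity, $0\leq \gamma(t)-\gamma(s)\leq\alpha<\tfrac12$, so bounding $(t-s)^{-(\gamma(t)-\gamma(s))}\leq 1+(t-s)^{-\alpha}$ yields \eqref{stima_norma_U(t,s)} with $\alpha_t:=\alpha<\tfrac12$ and $\omega_t:=\omega-\delta_0$; the uniform ellipticity and coefficient bounds of Hypothesis \ref{A} and the fact that $\gamma$ ranges in the fixed interval $[0,\alpha]$ make the interpolation constants, and hence $M_t,\omega_t,\alpha_t$, independent of $t$.

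Finally, with \eqref{stima_norma_U(t,s)} established, Proposition \ref{prop_bignamini}(iv) gives $\norm{U(t,s)}_{\mathscr L(\mathcal H_s,\mathcal H_t)}<\min\{1,M_t e^{-\omega_t(t-s)}(t-s)^{-\alpha_t}\}$ for every $s<t$, which simultaneously verifies the standing hypothesis $\norm{U(t,s)}_{\mathscr L(\mathcal H_s,\mathcal H_t)}<1$ of Corollary \ref{HYPER} and condition \eqref{expon_decay} of Corollary \ref{coro:exp_decay} through Proposition \ref{prop_bignamini}, all with constants independent of $t$. The step I expect to be the main obstacle is the smoothing estimate of the third paragraph: unlike for a single semigroup, $U(t,s)$ does not commute with fractional powers of $-\Delta$, so one must control the conjugated operator $(-\Delta)^{\gamma(t)}U(t,s)(-\Delta)^{-\gamma(s)}$ directly via the Acquistapace--Terreni construction of the evolution family and the interpolation identifications, keeping every constant uniform in $t$ as $\gamma(t)-\gamma(s)$ varies in $[0,\alpha]$.
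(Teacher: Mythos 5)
Your proposal follows the same skeleton as the paper's proof and finishes with the identical endgame, but it distributes the work very differently. The paper's proof is essentially a pair of citations: the existence of $U(t,s)$ comes from \cite[Theorem 2.3]{acq_1988}, while the verification of Hypothesis \ref{4} \emph{and} the singular estimate \eqref{stima_ex_sing}, with the exponent $\gamma(t)-\gamma(s)$, are both quoted from \cite[Theorem 8]{big_def_2024}, which is stated precisely for the time-dependent exponent. The only arguments actually carried out in the paper are those of your last paragraph: monotonicity and $\gamma(\R)\subseteq[0,\alpha]$ give $0\leq\gamma(t)-\gamma(s)\leq\alpha$, hence a bound of the form $C\max\{1,(t-s)^{-\alpha}\}e^{-(\omega-\delta_0)(t-s)}$, and Proposition \ref{prop_bignamini}(iv) then yields simultaneously $\norm{U(t,s)}_{\mathscr L(\mathcal H_s,\mathcal H_t)}<1$ (the hypothesis of Corollary \ref{HYPER}) and the uniform decay, with constants independent of $t$. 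What you propose instead is to re-derive the quoted content: Hypothesis \ref{4}(ii) by spectral calculus, the trace bounds by reduction to the constant exponent $\inf_{\R}\gamma$, and estimate \eqref{stima_norma_U(t,s)} by a conjugated smoothing argument. This would buy a self-contained proof, at the price of redoing \cite[Theorem 8]{big_def_2024}.

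Two of your steps are, as written, only heuristics. First, ``expanding in the eigenbasis of $-\Delta$'' to extract the factor $\lambda_k^{-1}$ from the time integration presupposes that $U(t,r)$ commutes with the spectral projections of $-\Delta$, which is false for variable coefficients $a_{ij}(t,x)$; the rigorous version is the splitting $U(t,r)(-\Delta)^{-\gamma(r)}=\bigl[U(t,r)(-\Delta)^{\theta}\bigr]\bigl[(-\Delta)^{-\gamma(r)-\theta}\bigr]$ with $\theta<\tfrac12$, estimating the first factor in $\mathscr L(X)$ by parabolic smoothing, of order $(t-r)^{-\theta}e^{-(\omega-\delta_0)(t-r)}$, and the second factor in $\mathscr L_2(X)$, which requires $2\gamma(r)+2\theta>\tfrac{d}{2}$; this is exactly where the thresholds $\inf_{\R}\gamma>(d-2)/4$ in the statement originate. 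Second, as you yourself flag, the estimate for $(-\Delta)^{\gamma(t)}U(t,s)(-\Delta)^{-\gamma(s)}$ is the genuinely hard point: your outline (interpolation identifications plus the Acquistapace--Terreni construction) is the correct strategy, but it is not yet a proof; in particular $A(t)$ is not self-adjoint, so identifying $D((-A(t))^{\theta})$ with the interpolation spaces of \cite{gui_1991} uniformly in $t$ needs justification. Since these two steps are precisely the content of \cite[Theorem 8]{big_def_2024}, your proposal is sound in structure, but to be complete those steps must either be carried out in detail or replaced by the citation, as the paper does.
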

\begin{proof}
The existence of $U(t,s)$ is still given by \cite[Theorem 2.3]{acq_1988}.
and in \cite[Theorem 8]{big_def_2024} it is shown that the rest of Hypothesis \ref{4} is satisfied. Moreover, still in \cite[Theorem 8]{big_def_2024} it is proven that there exists $C>0$ such that
\begin{align}
\label{stima_ex_sing}
\norm{U(t,s)}_{\mathscr L({\rm H_s};{\rm H_t})}\leq  C\max\left\{1,\frac{1}{(t-s)^{\gamma(t)-\gamma(s)}}\right\}e^{-(\omega-\delta_0)(t-s)},\ \ s<t,
\end{align}
where $\omega$ and $\delta$ are the constant given in Hypothesis \ref{A}. By (iv) we get 
\begin{align*}
\norm{U(t,s)}_{\mathscr L({\rm H_s};{\rm H_t})}\leq  C\max\left\{1,\frac{1}{(t-s)^{\alpha}}\right\}e^{-(\omega-\delta_0)(t-s)},\ \ s<t.
\end{align*}
Hence by Proposition \ref{prop_bignamini}(iv) we get 
\begin{align*}
\norm{U(t,s)}_{\mathscr L({\mathcal{H}_s};{\mathcal{H}_t})}< \min\left\{1,C\, \frac{e^{-(\omega-\delta_0)(t-s)}}{(t-s)^\alpha}\right\},\ \ s<t,
\end{align*}
and the statement follows.
\end{proof}
\begin{remark}
The fact that $\|U(t,s)_{|\mathcal H_s}\|_{\mathscr L(\mathcal H_s,\mathcal H_t)}$ blows up as $s$ tends to $t$ may occur. For instance, if there exists $t\in \R$ such that $\gamma$ is not continuous at $t$ and we set $\gamma(t^-):=\lim_{s\to t^-}\gamma(s)$, then the function $\Delta\ni (s,t)\mapsto(t-s)^{\gamma(t)-\gamma(s)}$ behaves like $(t-s)^{\gamma(t)-\gamma(t^-)}$ when $s$ tends to $t$. This implies that the right-hand side in \eqref{stima_ex_sing} blows up as $s$ tends to $t$. \\
If $\gamma$ is continuous at $t\in\R$, a condition which ensures the explosion of the right-hand side in \eqref{stima_ex_sing} is that $(\gamma(t)-\gamma(s))\log(t-s)$ diverges to $ -\infty$ as $s$ tends to $t$. 
\end{remark}

\subsection{A non-autonomous version of the classical Ornstein-Uhlenbeck operator}

Let $A(t)=a(t) I$, where $a$ is a continuous and bounded real valued map on $\R$ and set $\displaystyle{\sup_{t\in \R}a_0(t)=a_0}$. 
Hence,
\begin{equation}\label{OEmalliavin}
U(t,s)=\exp\biggl(\int_s^ta(\tau)\,d\tau\biggr) I,\qquad s<t,
\end{equation}
 is continuous with values in $\mathscr L (X)$ and it is the evolution operator associated to the family $\{A(t)\}_{t\in\R}$. 

Let $\{B(t)\}_{t\in\R}\subseteq\mathscr L (X)$ be a family of operators satisfying Hypothesis \ref{4}(ii).
Since
\begin{align*}
Q(t,s)=\int_s^t \exp\biggl(2\int_\sigma^ta(\tau)\,d\tau\biggr)Q(\sigma)\,d\sigma, \qquad s<t,
\end{align*}
where $Q(\sigma)=B(\sigma)B(\sigma)^\star$, a sufficient condition for ${\rm Tr}[Q(t,s)]<\infty$ is that ${\rm Tr}[Q(\sigma)]<\infty$ for every $\sigma\in\R$ and $\sigma\mapsto{\rm Tr}[Q(\sigma)]\in L^\infty(\R)$.
Indeed, if $\{e_k:k\in\N\}$ is a Hilbert basis of $X$, then we have
\begin{align}
\label{tracciamallia}
\ix{Q(t,s)e_k}{e_k}\leq \int_s^t e^{2a_0(t-\sigma)}\ix{Q(\sigma)e_k}{e_k}\,d\sigma, \qquad s<t.
\end{align}
In this case, $\{P_{s,t}\}_{(s,t)\in\overline \Delta}$, defined as in \eqref{OUnof}, is a non-autonomous generalization of the classical Ornstein-Uhlenbeck semigroup which arises in the Malliavin Calculus. 

%We note that $D(A(t))=D(A(t)^\star)=X$ for all $t\in\R$ and Hypothesis \ref{2bis} is obviously satisfied.

In addition to the above assumptions on the trace of the operators $Q(\sigma)$, $\sigma\in\R$, we require that for all $t\in\R$ there exists $C_t>0$ such that
\begin{equation}\label{normbt}
\norm{B(s)x}_{X}\leq C_t\norm{B(t)x}_{X},\quad s<t,\ x\in X.
\end{equation}
Moreover we assume also that $a_0< 0$.

By \eqref{tracciamallia} and  \eqref{normbt}, it follows that $\norm{U(t,s)}_{\mathscr L (X)}\leq e^{a_0(t-s)}$ for every $s<t$. Moreover,
\begin{align*}
\ix{Q(t,s)e_k}{e_k}&\leq \int_s^te^{2a_0(t-\sigma)} \ix{Q(\sigma)e_k}{e_k}\,d\sigma= \int_s^t e^{2a_0(t-\sigma)}\norm{B(\sigma)^\star e_k}_X^2\,d\sigma\nonumber\\
&\leq  C_t^2 \norm{B(t)^\star e_k}_X^2\int_s^t e^{2a_0(t-\sigma)}\,d\sigma=-\frac{C_t^2}{2a_0} \left[1-e^{2a_0(t-s)}\right]\ix{Q(t)e_k}{e_k}
\end{align*} 
for all $s<t$. Therefore, $\sup_{s<t}{\rm Tr}[Q(t,s)]<\infty$ and by \cite[Theorem 4 \& Remark 12]{big_def_2024} there exists a unique evolution system of measures for $\{P_{s,t}\}_{s\leq t}$.

We now prove that $\mathcal H_s\subseteq \mathcal H_t$, $s<t$, with continuous embedding. For every $s<t$,
\begin{align*}
\norm{Q(s,-\infty)^{\frac{1}{2}}x}_X^2&=\ps{Q(s,-\infty)x}{x}_X=\int_{-\infty}^s \exp\left(2\int_r^sa(\tau)d\tau\right)\norm{B^\star(r)x}^2dr \\
&
\leq \exp\left(-2\int_s^ta(\tau)d\tau\right)\int_{-\infty}^t \exp\left(2\int_r^ta(\tau)d\tau\right)\norm{B^\star(r)x}^2dr \\
&=\exp\left(-2\int_s^ta(\tau)d\tau\right)\norm{Q(t,-\infty)^{\frac{1}{2}}x}_X^2\end{align*}
Then, by Proposition \ref{pseudo}, $\mathcal H_s\subseteq \mathcal H_t$ with continuous embedding and for every $x\in \mathcal H_s$ we have
\begin{align*}
%\label{stimamalli}
\norm{U(t,s)x}_{\mathcal H_t}=\norm{\exp\biggl(\int_s^ta(\tau)\,d\tau\biggr)x}_{\mathcal H_t}\leq \norm{x}_{\mathcal H_s}.
\end{align*}
%and by Remark \ref{rolandrisolve} Hypothesis \ref{hyp:dec_esp22} is satisfied.
Since \eqref{normbt} holds, we have
\begin{align*}
\norm{Q(s)^{\frac 12}x}_X^2
=\norm{B(s)^\star x}_X^2
\leq C_t^2\norm{B(t)^\star x}_X^2
=C_t^2\norm{Q(t)^{\frac 12}x}_X^2,
\end{align*}
and so ${\rm H}_s\subseteq {\rm H}_t$ for all $t>s$. Moreover,
\begin{align*}
\norm{U(t,s)x}_{{\rm H}_t}=\norm{\exp\left(\int_s^ta(\tau)\,d\tau \right)x}_{{\rm H}_t}\leq C_t e^{a_0(t-s)}\norm{x}_{{\rm H}_s}, \qquad x\in H_s.
\end{align*}
By Proposition \ref{prop_bignamini}(iv), for all $s<t$ we get
\begin{align*}
\norm{U(t,s)}_{\mathscr L(\mathcal{H}_s;\mathcal{H}_t)}<\min\left\{1,C_t e^{a_0(t-s)}\right\}.
\end{align*}

We summarize what we have proved in the following theorem.

\begin{thm} Let $(X,\norm{\,\cdot\,}_X,\ps{\cdot}{\cdot}_X)$ be a separable Hilbert space, let $A(t)=a(t) I$, where $a$ is a continuous and bounded real valued map on $\R$ and let $\{B(t)\}_{t\in\R}\subseteq\mathscr{L}(X)$ be a family of operators satisfying Hypothesis \ref{4}-(ii). We set 
$$\sup_{t\in \R}a_0(t)=a_0,\qquad  Q(t)=B(t)B(t)^\star,\quad t\in\R .$$ 
We assume that ${\rm Tr}[Q(t)]<\infty$ for every $t\in\R$ and $t\mapsto{\rm Tr}[Q(t)]\in L^\infty(\R)$.
Then, $U(t,s)$ given by \eqref{OEmalliavin} satisfies Hypothesis \ref{4} and the operators $Q(t,s)$ have finite trace for all $s<t$. If in addition $a_0<0$ and \eqref{normbt} holds, then  there exists an evolution system of measures for $P_{s,t}$ given by \eqref{gammat} and the hypotheses of Corollary \ref{HYPER} and Proposition \ref{prop_bignamini} are satisfied, for every $t\in\R$, with $M_t=C_t$, $\omega=a_0$ and $\alpha_t=0$ for every $t\in\R$.
\end{thm}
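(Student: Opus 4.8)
The statement is a summary of the estimates established in the discussion preceding it, so the plan is to assemble those computations and to verify, one by one, that each abstract hypothesis invoked along the way is met. First I would check Hypothesis~\ref{4} for $U(t,s)=\exp\big(\int_s^t a(\tau)\,d\tau\big)I$ from \eqref{OEmalliavin}. Part~(i) is immediate from this explicit form: $U(t,t)=I$, additivity of the integral gives $U(t,r)U(r,s)=U(t,s)$, and the continuity of $a$ makes $(s,t)\mapsto \exp\big(\int_s^t a(\tau)\,d\tau\big)x$ continuous for each $x\in X$; part~(ii) holds by assumption on $\{B(t)\}_{t\in\R}$. For part~(iii) I would start from the pointwise bound \eqref{tracciamallia}, sum over a Hilbert basis $\{e_k:k\in\N\}$, and use ${\rm Tr}[Q(\sigma)]<\infty$ together with $\sigma\mapsto{\rm Tr}[Q(\sigma)]\in L^\infty(\R)$ to conclude ${\rm Tr}[Q(t,s)]<\infty$ for every $s<t$.

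For part~(iv) and for the existence of the evolution system of measures I would bring in the additional assumptions $a_0<0$ and \eqref{normbt}. The form of $U$ together with $a(\tau)\le a_0$ gives $\norm{U(t,s)}_{\mathscr L(X)}\le e^{a_0(t-s)}$, and combining this decay with the domination \eqref{normbt} produces $\ix{Q(t,s)e_k}{e_k}\le -\frac{C_t^2}{2a_0}\big[1-e^{2a_0(t-s)}\big]\ix{Q(t)e_k}{e_k}$, whence $\sup_{s<t}{\rm Tr}[Q(t,s)]<\infty$; since $a_0<0$ also forces $U(t,s)x=e^{\int_s^t a(\tau)\,d\tau}x\to 0$ as $s\to-\infty$, Hypothesis~\ref{4}(iv) holds. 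This uniform trace bound is precisely what \cite[Theorem~4 \& Remark~12]{big_def_2024} requires to produce the unique evolution system of measures $\{\gamma_t\}_{t\in\R}$ given by \eqref{gammat}.

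It then remains to check the hypotheses of Proposition~\ref{prop_bignamini} and of Corollary~\ref{HYPER}. Once Hypothesis~\ref{4} is in force, Proposition~\ref{tesoretto}(ii) already yields $U(t,s)\in\mathscr L(\mathcal H_s,\mathcal H_t)$ with $\norm{U(t,s)}_{\mathscr L(\mathcal H_s,\mathcal H_t)}\le1$. For the ``instantaneous'' spaces ${\rm H}_s=Q(s)^{\frac12}(X)$ I would invoke \eqref{normbt}, which gives $\norm{Q(s)^{\frac12}x}_X\le C_t\norm{Q(t)^{\frac12}x}_X$ and hence, via Proposition~\ref{pseudo}, the inclusion ${\rm H}_s\subseteq{\rm H}_t$ for $s<t$; the scalar form of $U(t,s)$ then produces
$$\norm{U(t,s)}_{\mathscr L({\rm H}_s;{\rm H}_t)}\le C_t\,e^{a_0(t-s)},\qquad s<t.$$
This is exactly the bound \eqref{stima_norma_U(t,s)} read with $M_t=C_t$, $\alpha_t=0$ and positive rate $\omega_t=-a_0$ (recall $a_0<0$), so the hypotheses of Proposition~\ref{prop_bignamini} are satisfied; its conclusion~(iv) then delivers the strict inequality $\norm{U(t,s)}_{\mathscr L(\mathcal H_s,\mathcal H_t)}<1$ for every $s<t$, which is exactly the hypothesis required by Corollary~\ref{HYPER}.

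The argument presents no genuine obstacle, since every ingredient is one of the estimates already derived in this subsection; the points that require care are the simultaneous bookkeeping of the two scales of spaces — the ``global'' Cameron--Martin spaces $\mathcal H_\cdot$ that enter Corollary~\ref{HYPER} and the ``instantaneous'' spaces ${\rm H}_\cdot=Q(\cdot)^{\frac12}(X)$ that enter Proposition~\ref{prop_bignamini} — the consistent use of Proposition~\ref{pseudo} to turn operator inequalities on $X$ into range inclusions with norm control, and the identification of the positive decay rate $\omega_t=-a_0$ (rather than $a_0$ itself) when matching the computed estimate to \eqref{stima_norma_U(t,s)}.
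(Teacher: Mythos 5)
Your proposal is correct and follows essentially the same route as the paper: Hypothesis \ref{4}(i)--(iii) are read off from the explicit scalar form of $U(t,s)$ and the trace assumptions via \eqref{tracciamallia}, Hypothesis \ref{4}(iv) and the evolution system of measures come from $a_0<0$, \eqref{normbt} and \cite{big_def_2024}, and the bound $\norm{U(t,s)}_{\mathscr L({\rm H}_s;{\rm H}_t)}\leq C_t e^{a_0(t-s)}$ is then fed into Proposition \ref{prop_bignamini}(iv) to obtain the strict contraction $\norm{U(t,s)}_{\mathscr{L}(\mathcal H_s,\mathcal H_t)}<1$ required by Corollary \ref{HYPER}. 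Your identification of the positive rate $\omega_t=-a_0$ (rather than the literal ``$\omega=a_0$'' in the statement) is the correct reading, since \eqref{stima_norma_U(t,s)} demands $\omega_t>0$; the only cosmetic difference from the paper is that you invoke Proposition \ref{tesoretto}(ii) for the preliminary contraction on the Cameron--Martin spaces where the paper redoes the computation by hand via Proposition \ref{pseudo}.
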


\subsection{Diagonal operators}\label{ex_diag}
Let $(X,\norm{\,\cdot\,}_X,\ix{\cdot}{\cdot})$ be a separable Hilbert space. Let $t\in\R$ and let $A(t)$, $B(t)$ be self-adjoint operators in diagonal form with respect to the same Hilbert basis $\{e_k:k\in\N\}$, namely $$A(t)e_k=a_k(t)e_k,\ \ B(t)e_k=b_k(t)e_k\ \ t\in\R,\ \ k\in\N,$$ with continuous coefficients $a_k$ and measurable coefficients $b_k$. We set $\displaystyle{\lambda_k=\sup_{t\in\R}a_k(t)}$ and we assume that there exists $\lambda_0\in\R$ such that $\lambda_k\leq\lambda_0,\ \ \forall\ k\in\N.$

In this setting  the operator $U(t,s)$ defined by 
\begin{equation}\label{OEdiagonale}
U(t,s)e_k=\exp\biggl(\int_s^ta_k(\tau)\,d\tau\biggr)e_k,\ \ s<t,\ \ k\in\N,
\end{equation}
is the strongly continuous evolution operator associated to the family $\{A(t)\}_{t\in\R}$.
Moreover, we assume that there exists $K>0$ such that 
\begin{equation}\label{stimabkK}
\abs{b_k(t)}\leq K,\ \ t\in\R,\ k\in\N.
\end{equation}
Hence $B(t)\in\mathscr L (X)$ for all $t\in\R$, the function $B:\R\mapsto\mathscr L (X)$ is measurable and $$\sup_{t\in\R}\norm{B(t)}_{\mathscr L (X)}\leq K.$$
The operators $Q(t,s)$ are given by
\begin{equation}
Q(t,s)e_k=\int_s^t\exp\biggl(2\int_\sigma^ta_k(\tau)\,d\tau\biggr)(b_k(\sigma))^2\,d\sigma\,e_k=:q_k(t,s)e_k,\ \ s<t,\ k\in\N. \nonumber
\end{equation}
Hypothesis \ref{4}(iii) is fulfilled if 
\begin{equation}\label{qk1}
\sum_{k=0}^\infty q_k(t,s)<\infty,\ \ s<t.
\end{equation}
We give now a sufficient condition for \eqref{qk1} to hold.
We assume that $\lambda_k$ is eventually non-zero (say for $k\geq k_0$). Given $s<t$, we have
\begin{align}\label{qk2}
&\abs{\int_s^t\exp\biggl(2\int_\sigma^ta_k(\tau)\,d\tau\biggr)(b_k(\sigma))^2\,d\sigma}\leq \norm{b_k}^2_{\infty}\abs{\int_s^t\exp\bigl(2\lambda_k(t-\sigma)\bigr)\,d\sigma}\nonumber \\
&=\frac{\norm{b_k}^2_{\infty}}{2\abs{\lambda_k}}\abs{1-\exp(2\lambda_k(t-s))}\leq\frac{\norm{b_k}^2_{\infty}}{2\abs{\lambda_k}}\bigl(1+\exp(2\lambda_0(t-s))\bigr).
\end{align}
Hence, \eqref{qk1} holds if we require 
\begin{equation}\label{trfinitaqk}
\sum_{k=k_0}^\infty\frac{\norm{b_k}^2_{\infty}}{\abs{\lambda_k}}<\infty.
\end{equation}
%We note that $D(A(t))=D(A(t)^\star)=X$ for all $t\in\R$ and Hypothesis \ref{2bis} is easily checked.

If $\lambda_0< 0$ in \eqref{qk2}, then by \cite[Theorem 4 \& Remark 12]{big_def_2024} there exists a unique evolution system of measures for $P_{s,t}$.

We estimate now $\norm{U(t,s)}_{\mathscr{L}(\mathcal H_s;\mathcal H_t)}$. Firstly, by Proposition \ref{tesoretto} we know that $U(t,s)$ maps $\mathcal H_s$ into $\mathcal H_t$ and 
 \begin{equation*}
\norm{U(t,s)}_{\mathscr{L}(\mathcal H_s;\mathcal H_t)}=\norm{Q^{-\frac{1}{2}}(t,-\infty)U(t,s)Q(s,-\infty)^{\frac{1}{2}}}_{\mathscr{L}(X)}\leq 1.
\end{equation*}
Assume that there exist $C,M>0$ such that 
\begin{align}
\label{stimabkC}
\abs{b_k(t)}\geq C, \quad 
a_k(t)\geq M\lambda_k, \qquad \mbox{a.e}\ t\in\R,\ \forall\ k\in\N. 
\end{align}
It follows that, for every $s<t$, 
\begin{align*}
\norm{U(t,s)}_{\mathscr{L}(\mathcal H_s;\mathcal H_t)}^2
= & \sup_{k\in\N} \frac{q_k(s,-\infty)}{q_k(t,-\infty)} \exp\left(2\int_s^t a_k(\tau)d\tau\right)
=\sup_{k\in\N}\frac{\int_{-\infty}^s\exp\left(2\int_\sigma^ta_k(\tau)d\tau\right)(b_k(\sigma))^2d\sigma}{\int_{-\infty}^t\exp\left(2\int_\sigma^ta_k(\tau)d\tau\right)(b_k(\sigma))^2d\sigma} \\
= & \sup_{k\in\N}\frac{\int_{-\infty}^s\exp\left(2\int_\sigma^ta_k(\tau)d\tau\right)(b_k(\sigma))^2d\sigma}{\int_{-\infty}^s\exp\left(2\int_\sigma^ta_k(\tau)d\tau\right)(b_k(\sigma))^2d\sigma+\int_s^t\exp\left(2\int_\sigma^ta_k(\tau)d\tau\right)(b_k(\sigma))^2d\sigma} \\
\leq & \sup_{k\in\N}\frac{\int_{-\infty}^s\exp\left(2\int_\sigma^ta_k(\tau)d\tau\right)(b_k(\sigma))^2d\sigma}{\int_{-\infty}^s\exp\left(2\int_\sigma^ta_k(\tau)d\tau\right)(b_k(\sigma))^2d\sigma+C^2(2M|\lambda_k|)^{-1}(1-e^{2M\lambda_k(t-s)})} \\
\leq & \sup_{k\in\N}\frac{2M\int_{-\infty}^s|\lambda_k|\exp\left(2\int_\sigma^ta_k(\tau)d\tau\right)(b_k(\sigma))^2d\sigma}{2M\int_{-\infty}^s|\lambda_k|\exp\left(2\int_\sigma^ta_k(\tau)d\tau\right)(b_k(\sigma))^2d\sigma+C^2(1-e^{2M\lambda_0(t-s)})}.
\end{align*}
Notice that the function $[0,\infty)\ni\xi\mapsto \frac{\xi}{\xi+c}$ is increasing for every $c>0$. Since
\begin{align*}
\int_{-\infty}^s|\lambda_k|\exp\left(2\int_\sigma^ta_k(\tau)d\tau\right)(b_k(\sigma))^2d\sigma
\leq K^2\int_{-\infty}^s|\lambda_k|e^{2\lambda_k(t-\sigma)}d\sigma\leq \frac{K^2}{2}e^{2\lambda_k(t-s)}
\end{align*}
for every $s<t$ and $k\in\N$, it follows that 
\begin{align*}
\norm{U(t,s)}_{\mathscr{L}(\mathcal H_s;\mathcal H_t)}^2
\leq & \sup_{k\in\N}\frac{MK^2e^{2\lambda_k(t-s)}}{MK^2e^{2\lambda_k(t-s)}+C^2(1-e^{2M\lambda_0(t-s)})} \\
\leq & \frac{MK^2e^{2\lambda_0(t-s)}}{MK^2e^{2\lambda_0(t-s)}+C^2(1-e^{2M\lambda_0(t-s)})}, \qquad s<t.
\end{align*}
Further, for every $s\leq t$ we get
\begin{align*}
\norm{U(t,s)}_{\mathscr{L}(\mathcal H_s;\mathcal H_t)}^2
\leq & MK^2\max\left\{\frac{1}{C^2(1-e^{2M\lambda_0})},\frac{1}{MK^2e^{2\lambda_0}}\right\}e^{2\lambda_0(t-s)}.
\end{align*}
We stress that, if \eqref{trfinitaqk} and \eqref{stimabkC} hold true, then $\sum_{k\in\N}|\lambda_k|^{-1}<\infty$. From the above computations, we deduce that
\begin{align*}
\sum_{k\in\N}\norm{U(t,s)Q(s,-\infty)^{\frac12}e_k}_{\mathcal H_t}^2
\leq & \frac{MK^2}{C^2(1-e^{2M\lambda_0})}\sum_{k\in\N}e^{2\lambda_k(t-s)}<\infty,
\end{align*}
which shows that $U(t,s)$ is a strict contraction of Hilbert-Schmidt type (and hence a compact operator) from $\mathcal H_s$ into $\mathcal H_t$.

% Moreover, if $b_k\not\equiv0$ for all $k\in\N$ then we have \textcolor{red}{\`e vero uniformemente in $k$?}
% {\color{blue}
% \begin{align*}
% \norm{U(t,s)}_{\mathscr{L}(\mathcal H_s;\mathcal H_t)}^2=\sup_{k\in\N} \frac{q_k(s,-\infty)}{q_k(t,-\infty)} \exp\left(2\int_s^t a_k(\tau)d\tau\right)<1.
% \end{align*}
% We set $\displaystyle{\mu_k=\inf_{t\in\R}a_k(t)}$ and we assume that there exists $\mu_0\in\R$ such that \textcolor{red}{serve?}
% \begin{equation}\label{infa_k}
% \mu_k\geq\mu_0,\ \ \forall\ k\in\N.
% \end{equation}

% Hence, we get
% \begin{align*}
% \norm{U(t,s)}_{\mathscr{L}(\mathcal H_s;\mathcal H_t)}^2&=\sup_{k\in\N} \frac{q_k(s,-\infty)}{q_k(t,-\infty)} \exp\left(2\int_s^t a_k(\tau)d\tau\right)\leq \frac{K^2}{C^2}\frac{\int_{-\infty}^se^{2\lambda_0(s-\sigma)}d\sigma}{\int_{-\infty}^te^{2\mu_0(t-\sigma)}d\sigma}e^{2\lambda_0(t-s)}\\
% &=\frac{K^2\mu_0}{C^2\lambda_0}e^{2\lambda_0(t-s)}
% \end{align*}
We summarize what we have obtained in the following theorem. 

\begin{thm} 
Let $(X,\norm{\,\cdot\,}_X,\ix{\cdot}{\cdot})$ be a separable Hilbert space. Let $A(t)$, $B(t)$ be self-adjoint operators in diagonal form with respect to the same Hilbert basis $\{e_k:k\in\N\}$, namely $$A(t)e_k=a_k(t)e_k,\ \ B(t)e_k=b_k(t)e_k\ \ t\in\R,\ \ k\in\N,$$ 
with $a_k$ continuous and $b_k$ measurable for every $k\in\N$, and $\displaystyle{\lambda_k=\sup_{t\in\R}a_k(t)}$. 
If there exists $\lambda_0\in\R$ such that $\lambda_k\leq\lambda_0,\ \ \forall\ k\in\N$ and \eqref{stimabkK} and \eqref{trfinitaqk} hold, then Hypothesis \ref{4} is satisfied with $U(t,s)$ given by \eqref{OEdiagonale}. If in addition $\lambda_0<0$, then there exists an evolution system of measure for $\{P_{s,t}\}_{s\leq t}$ given by \eqref{gammat}.
Moreover, if \eqref{stimabkC} is satisfied, then the hypotheses of Corollaries \ref{HYPER}, \ref{compattezzapst}, \ref{Hilbert-Schmidtpst} and \ref{coro:exp_decay}, for every $t\in\R$ with constants $\omega_t=\lambda_0$ and $M_t$ independent of $t$, are satisfied.
\end{thm}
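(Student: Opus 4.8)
The plan is to observe that the statement is a summary collecting the three batches of computations already carried out in this subsection, so the proof amounts to assembling those estimates and matching each to the hypotheses of the abstract results invoked. I would organize it as: (1) verify Hypothesis \ref{4}; (2) produce the evolution system of measures; (3) under the extra assumption \eqref{stimabkC}, extract from the displayed norm and Hilbert--Schmidt estimates the exact inputs required by Corollaries \ref{HYPER}, \ref{compattezzapst}, \ref{Hilbert-Schmidtpst} and \ref{coro:exp_decay}.

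First I would verify Hypothesis \ref{4} coordinate by coordinate. For (i), formula \eqref{OEdiagonale} gives $U(t,t)=I$ and the cocycle identity $U(t,r)U(r,s)=U(t,s)$ directly from additivity of $\tau\mapsto\int_s^t a_k(\tau)\,d\tau$, while strong continuity of $(s,t)\mapsto U(t,s)x$ follows by dominated convergence from the uniform bound $\|U(t,s)e_k\|_X\le e^{\lambda_0(t-s)}$ and the continuity of each $a_k$. Item (ii) is immediate: \eqref{stimabkK} yields $\sup_t\|B(t)\|_{\mathscr L(X)}\le K$, and measurability of $t\mapsto B(t)x$ follows from the measurability of the $b_k$. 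For (iii) and the trace part of (iv) I would invoke the explicit bound \eqref{qk2} together with the summability \eqref{trfinitaqk}, giving $\sum_k q_k(t,s)<\infty$ and $\sup_{s<t}\mathrm{Tr}[Q(t,s)]<\infty$; the limit $\lim_{s\to-\infty}U(t,s)x=0$ then uses $\lambda_0<0$ via $\|U(t,s)e_k\|_X\le e^{\lambda_0(t-s)}\to 0$. With Hypothesis \ref{4} in force and $\lambda_0<0$, the existence and uniqueness of the evolution system of measures \eqref{gammat} is exactly \cite[Theorem 4 \& Remark 12]{big_def_2024}.

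Next, assuming \eqref{stimabkC}, I would reuse the chain of inequalities already displayed for $\|U(t,s)\|_{\mathscr L(\mathcal H_s;\mathcal H_t)}^2$, which bounds it by $\frac{MK^2e^{2\lambda_0(t-s)}}{MK^2e^{2\lambda_0(t-s)}+C^2(1-e^{2M\lambda_0(t-s)})}$ and hence by a constant times $e^{2\lambda_0(t-s)}$, strictly below $1$ for every $s<t$. This single estimate simultaneously supplies the strict contractivity needed in Corollary \ref{HYPER} and the exponential decay \eqref{expon_decay} with $M_t$ independent of $t$ and the rate claimed in the statement, yielding Corollary \ref{coro:exp_decay}. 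Finally I would reproduce the computation giving $\sum_k\|U(t,s)Q(s,-\infty)^{1/2}e_k\|_{\mathcal H_t}^2<\infty$, noting that \eqref{trfinitaqk} and \eqref{stimabkC} force $\sum_k|\lambda_k|^{-1}<\infty$, which dominates the series $\sum_k e^{2\lambda_k(t-s)}$ since $e^{-x}\le x^{-1}$; this identifies $U(t,s)_{|\mathcal H_s}$ as a strict contraction of Hilbert--Schmidt type, hence compact, which is precisely what Corollaries \ref{compattezzapst} and \ref{Hilbert-Schmidtpst} require.

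The hard part will not be any single inequality but the bookkeeping at the interface between the concrete diagonal data and the abstract hypotheses: I must check that the supremum over $k$ in the norm estimate is controlled by the worst case $\lambda_k=\lambda_0$ (via monotonicity of $\xi\mapsto\xi/(\xi+c)$ and the inequality $e^{2\lambda_k(t-s)}\le e^{2\lambda_0(t-s)}$ valid for $t>s$), and that the Hilbert--Schmidt series converges with constants uniform in $t$, so that the $\omega_t$ and $M_t$ asserted to be independent of $t$ really are. Throughout I would watch the sign of $\lambda_0$, since it is the negativity $\lambda_0<0$ that turns the raw bound $e^{\lambda_0(t-s)}$ into genuine exponential decay and drives both the asymptotic and the compactness conclusions.
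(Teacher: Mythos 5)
Your proposal is correct and follows essentially the same route as the paper: the theorem is a summary of the computations already displayed in the diagonal subsection, and you assemble exactly those ingredients — the bound \eqref{qk2} with \eqref{trfinitaqk} for the trace conditions, the citation of \cite{big_def_2024} for the evolution system of measures when $\lambda_0<0$, the supremum-over-$k$ estimate (via monotonicity of $\xi\mapsto\xi/(\xi+c)$) giving strict contractivity and exponential decay with constants independent of $t$, and the series bound $\sum_k e^{2\lambda_k(t-s)}<\infty$ (forced by $\sum_k|\lambda_k|^{-1}<\infty$) for the Hilbert--Schmidt and compactness conclusions. Your added justifications (dominated convergence for strong continuity, the inequality $e^{-x}\le x^{-1}$) are consistent with, and slightly more explicit than, the paper's own presentation.
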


\begin{remark}
As an explicit example we can choose $a_k(t)=-k^2({\rm arctg}\abs{t}+c_1)$, $c_1>0$ and 
\begin{equation*}
b_k(t)=
\begin{cases} 
\sin(kt)+c_2, &  t\neq -m\pi, \ m\in\N\cup\{0\},\\
0, & t=-m\pi, \ m\in\mathbb N\cup\{0\},
\end{cases}
\end{equation*}
with $c_2>1$. With these choices, we remark that for every $m\in\mathbb N\cup\{0\}$ we get $U(-m\pi,s){\rm H}_s\not\subseteq{\rm H}_{-m\pi}$ for $s<-m\pi$ and so  of \cite[Hypothesis (116), Theorem 5]{big_def_2024} is not satisfied. 
\end{remark}

\subsubsection*{Acknowledgments}
The authors are members of the italian group G.N.A.M.P.A. (INdAM).

\appendix
\section{Measurable linear operators}\label{mis_op_app}

In this section we prove some basic results on measurable linear operators. Unfortunately, we were able to find them only for measurable linear functionals and so we decided to write detailed proofs for the reader. The main references are \cite{bog_1998, lun_mir_pal_toappear}.

Let $X$ and $Y$ be two separable Banach spaces and let $\gamma$ be a Gaussian measure on X. Let $\mathcal{B}(X)$ be the $\sigma$-algebra of the Borel sets on $X$ and let  $\mathcal{B}(X)_\gamma$ the completion of $\mathcal{B}(X)$ with respect to $\gamma$.

\begin{defn}\label{opmis}   A mapping $F:X\to Y$ is called a $\gamma$-measurable linear operator if there exists a linear mapping $F_0:X\to Y$, measurable with respect to the pair of $\sigma$-fields $(\mathcal{B}(X)_\gamma, \mathcal{B}(Y))$ and such that $F=F_0$ almost everywhere. We say that $F_0$ is a linear version of $F$.
\end{defn}

%The first result on the measurable linear functionals is the following.
%
%\begin{proposition}\label{propContH}
%If $f:X\to\R$ is a measurable  linear functional, then its restriction to $H$ is continuous 
%with respect to the topology of $H$.
%\end{proposition}
%\begin{proof} Setting $V_n =\{ f\leq n\}$, $n\in \N$, since $X=\cup_nV_n$, there is $n_0\in \N$ such that $\gamma(V_{n_0})>0$. 
%By Lemma \ref{CMlemma} there is $r>0$ such that $B_H(0,r)\subset V_{n_0}-V_{n_0}$, and therefore 
%\[
%\sup_{h\in B_H(0,r)} |f(h)|\leq 2n_0.
%\]
%\end{proof}

We generalize here the result in \cite[Thm. 2.10.7]{bog_1998} proved for measurable linear functionals.

\begin{proposition}\label{propf=gsuH}
If $F,G:X\to Y$ are two measurable linear operators, then either $\gamma(\{F=G\})=1$ or $\gamma(\{F=G\})=0$. 
We have $\gamma(\{F=G\})=1$ if and only if $F=G$ on $H_\gamma$. 
\end{proposition}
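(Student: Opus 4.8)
The plan is to reduce the statement to a zero--one law for the kernel of a single measurable linear operator. Fix linear versions $F_0,G_0$ of $F,G$ (Definition \ref{opmis}) and set $H_0:=F_0-G_0$, which is again a $\gamma$-measurable linear operator from $X$ to $Y$. Since $F=F_0$ and $G=G_0$ hold $\gamma$-a.e., the set $\{F=G\}$ differs from $A:=\ker H_0=H_0^{-1}(\{0\})$ by a $\gamma$-null set, so $\gamma(\{F=G\})=\gamma(A)$; note that $A$ is a $\gamma$-measurable linear subspace of $X$, being the preimage of the Borel set $\{0\}\subseteq Y$ under a $(\mathcal B(X)_\gamma,\mathcal B(Y))$-measurable map. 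Everything thus boils down to showing that $\gamma(A)\in\{0,1\}$ and that $\gamma(A)=1$ exactly when $H_0$ vanishes on $H_\gamma$.

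I would distinguish the two mutually exclusive and exhaustive cases according to whether $H_0$ vanishes on $H_\gamma$ or not. If $H_0=0$ on $H_\gamma$, i.e. $F_0=G_0$ on $H_\gamma$, then Remark \ref{opmisoss}(i) gives directly $H_0=0$ $\gamma$-a.e., whence $\gamma(A)=1$. This is the only place where the nontrivial fact that a measurable linear operator annihilating the Cameron--Martin space is $\gamma$-a.e. zero enters, and I would simply invoke it.

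The core of the argument is the complementary case: suppose there exists $h_0\in H_\gamma$ with $y_0:=H_0(h_0)\neq 0$ in $Y$; I claim then $\gamma(A)=0$. By linearity of $H_0$, for every $t\in\R$ one has $z\in A+th_0$ if and only if $H_0(z)=ty_0$, so the translates $A_t:=A+th_0=H_0^{-1}(\{ty_0\})$ are $\gamma$-measurable and pairwise disjoint, since $y_0\neq0$ forces $t_1y_0\neq t_2y_0$ whenever $t_1\neq t_2$. Because $h_0\in H_\gamma$, the Cameron--Martin theorem (Proposition \ref{prop:cameron-martin}) makes $\gamma$ and its translate by $th_0$ mutually equivalent, the shifted density $\exp(-\tfrac12\|th_0\|_{H_\gamma}^2+W^\gamma_{\cdot})$ being strictly positive; hence $\gamma(A)>0$ would force $\gamma(A_t)=\gamma(A+th_0)>0$ for every $t\in\R$. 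This would exhibit an uncountable family of pairwise disjoint sets of positive measure inside the probability space $(X,\gamma)$, which is impossible, because for each $n\in\N$ at most $n$ of them can have measure larger than $1/n$. Therefore $\gamma(A)=0$.

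Combining the two cases yields $\gamma(\{F=G\})=\gamma(A)\in\{0,1\}$, with the value $1$ attained precisely when $H_0=0$ on $H_\gamma$, that is, when $F=G$ on $H_\gamma$. The only genuine obstacle is the converse implication in the first case, which rests on the external input recorded in Remark \ref{opmisoss}(i); the zero--one alternative itself follows cleanly from the disjoint-translates together with the Cameron--Martin equivalence and the elementary countability bound for disjoint sets of positive measure.
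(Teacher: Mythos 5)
Your zero--one dichotomy and the ``only if'' direction are correct, and they follow a genuinely different route from the paper's: where the paper invokes the zero--one law for measurable linear subspaces (\cite[Thm. 2.5.5]{bog_1998}) together with Proposition \ref{teoria}(i), you argue directly that if $H_0=F_0-G_0$ does not vanish at some $h_0\in H_\gamma$, then the translates $A+th_0=H_0^{-1}(\{tH_0(h_0)\})$, $t\in\R$, are pairwise disjoint, and by the Cameron--Martin equivalence each has positive measure as soon as $\gamma(A)>0$, which is impossible in a probability space. This disjoint-translates argument is self-contained, elementary, and in fact proves slightly more (namely $\gamma(A)>0$ already forces $F_0=G_0$ on $H_\gamma$).

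The genuine gap is the ``if'' direction. You do not prove that $F_0=G_0$ on $H_\gamma$ implies $\gamma(\{F=G\})=1$; you invoke Remark \ref{opmisoss}(i). But Remark \ref{opmisoss}(i) is verbatim that assertion (recorded there with a citation to \cite[Proposition 4(b)]{fey_del_1994}), and Proposition \ref{propf=gsuH} is precisely the paper's internal proof of it: the appendix exists, as the authors state, because they could find such results proved in detail only for measurable linear \emph{functionals}. So within the paper's logical structure your argument for this implication is circular --- the statement to be proven is used as an input. What is needed, and what the paper does, is a reduction to the scalar case: for $y^\star\in Y^\star$, the map $y^\star\circ(F_0-G_0)$ is a $\gamma$-measurable linear functional which, by linearity and the hypothesis, satisfies $y^\star(F_0-G_0)(x+h)=y^\star(F_0-G_0)(x)$ for all $h\in H_\gamma$; by \cite[Thm. 2.5.2]{bog_1998} it is $\gamma$-a.e. equal to a constant, and linearity forces that constant to be $0$. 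Since $Y$ is separable, the Hahn--Banach theorem yields a countable family $\{y_n^\star\}\subseteq Y^\star$ with $\|y\|_Y=\sup_{n\in\N}|y_n^\star(y)|$ for every $y\in Y$; intersecting the countably many full-measure sets $\{y_n^\star(F_0-G_0)=0\}$ gives $\|F_0(x)-G_0(x)\|_Y=0$ for $\gamma$-a.e. $x$. Without this (or some equally explicit argument), the key implication of the proposition remains unproven.
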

\begin{proof}
Let $F_0$ and $G_0$ be the linear versions of $F$ and $G$, respectively. Then $L=\{F_0=G_0\}$ is a $\mathcal{B}(X)_\gamma$-measurable vector space. By \cite[Thm. 2.5.5]{bog_1998} either $\gamma(L)=0$, or $\gamma(L)=1$. 
If $\gamma(L)=1$ then $H_\gamma\subseteq L$ by Proposition \ref{teoria}(i) and $F=G$ on $H_\gamma$. 

Conversely, if $F_0=G_0$ on $H_\gamma$ then for all $h\in Y^\star$ the measurable functional $h(F_0-G_0)$ verifies 
$h(F_0-G_0)(x+h)=h(F_0-G_0)(x)$ for every $h\in H_\gamma$. By \cite[Thm. 2.5.2]{bog_1998} $h(F_0-G_0)=c$ $\gamma$-a.e. in $X$ and, since $h(F_0-G_0)$ is linear, $c=0$. \\
Let $\{y_n:n\in\N\}$ be a dense subset in $Y$. From Hahn-Banach Theorem, for every $n\in\N$ there exists $y_n^\star\in Y^\star$ such that $y^\star_n(y_n)=\|y_n\|_Y$. Further, for every $y\in Y$ we get $\|y\|_Y=\sup_{n\in\N}|y_n^\star(y)|$. For every $n\in\N$, we denote by $A_n$ the Borel subset of $X$ of $\gamma$-full measure such that $y_n^\star(F_0-G_0)=0$ on $A_n$ and we set $A:=\cap_{n\in\N}A_n$. Clearly, $\gamma(A)=1$ and, for every $x\in A$, we get $y_n^\star(F_0(x)-G_0(x))=0$. Therefore, $\|(F_0-G_0)(x)\|_Y=\sup_{n\in\N}|y_n^\star(F_0(x)-G_0(x))|=0$, which gives the desired implication. 
\end{proof}

We generalize here the result in \cite[Prop. 1]{urb_1975}, proved for measurable linear functionals.

\begin{proposition}\label{borelversion}
If  $F:X\rightarrow Y$ is  a $\gamma$-measurable linear operator, than there exists e Borel linear space $V\subseteq X$ such that $\gamma(V)=1$ and $(F_0)_{|V}$ is Borel measurable.
\end{proposition}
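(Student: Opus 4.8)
The plan is to reduce the vector-valued statement to the scalar case established in \cite[Prop. 1]{urb_1975} by composing a linear version of $F$ with a norming sequence of functionals. Let $F_0:X\to Y$ be a linear version of $F$, so that $F_0$ is linear and $\mathcal B(X)_\gamma$-measurable. Since $Y$ is a separable Banach space, exactly as in the proof of Proposition \ref{propf=gsuH} one fixes a sequence $\{y_n^\star:n\in\N\}\subseteq Y^\star$ with $\norm{y}_Y=\sup_{n\in\N}|y_n^\star(y)|$ for every $y\in Y$. For each $n\in\N$ the functional $\ell_n:=y_n^\star\circ F_0:X\to\R$ is linear and $\mathcal B(X)_\gamma$-measurable, hence it is a $\gamma$-measurable linear functional in the sense of Definition \ref{opmis}. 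Applying the scalar result \cite[Prop. 1]{urb_1975} to each $\ell_n$ produces a Borel linear subspace $V_n\subseteq X$ with $\gamma(V_n)=1$ such that $(\ell_n)_{|V_n}$ is Borel measurable.

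Next I would set $V:=\bigcap_{n\in\N}V_n$. Being a countable intersection of Borel linear subspaces, $V$ is again a Borel linear subspace of $X$, and $\gamma(V)=1$ since each $V_n$ has full measure. By construction, for every $n\in\N$ the scalar map $y_n^\star\circ(F_0)_{|V}=(\ell_n)_{|V}$ is Borel measurable on $V$. It then remains to upgrade the Borel measurability of all these scalar coordinates to the Borel measurability of the $Y$-valued map $(F_0)_{|V}$ itself.

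This last step is the crux, and it is where the separability of $Y$ is genuinely used. Consider the map $\iota:Y\to\R^\N$ given by $\iota(y)=(y_n^\star(y))_{n\in\N}$. Each coordinate $y_n^\star$ is continuous, so $\iota$ is continuous into the product topology, and $\iota$ is injective because $\{y_n^\star\}$ is norming. Since $Y$ and $\R^\N$ are Polish spaces, the Lusin--Souslin theorem (see e.g. \cite{kec_2012}) guarantees that $\iota(Y)$ is a Borel subset of $\R^\N$ and that $\iota^{-1}:\iota(Y)\to Y$ is Borel measurable. The composite $\iota\circ(F_0)_{|V}:V\to\R^\N$ has the Borel measurable coordinates $(\ell_n)_{|V}$, hence it is Borel measurable into the countable product $\R^\N$ (whose Borel $\sigma$-algebra is generated by the coordinate projections). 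Therefore $(F_0)_{|V}=\iota^{-1}\circ\big(\iota\circ(F_0)_{|V}\big)$ is a composition of Borel measurable maps and is Borel measurable, which completes the argument. I expect the only delicate point to be the invocation of the Lusin--Souslin theorem to invert $\iota$; everything else is a routine reduction to the scalar case together with a countable intersection.
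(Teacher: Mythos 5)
Your proof is correct, but it follows a genuinely different route from the paper's. The paper generalizes Urbanik's argument directly to the vector-valued setting: it applies Lusin's theorem to $F_0$ itself to obtain compact sets $K_n$ with $\gamma(K_n)>1-\tfrac1n$ on which $F_0$ is continuous, then forms the compact sets $L_{j,n}=\bigl\{\sum_{i=1}^j c_i x_i : |c_i|\le j,\ x_i\in K_n\bigr\}$, proves by a subsequence argument that $(F_0)_{|L_{j,n}}$ is still continuous, and takes $V=\bigcup_{j,n}L_{j,n}$, so that Borel measurability of $(F_0)_{|V}$ follows from countably many continuous restrictions. You instead reduce to the scalar case: you compose $F_0$ with a norming sequence $\{y_n^\star\}\subseteq Y^\star$, invoke the known result for measurable linear \emph{functionals} (\cite[Prop. 1]{urb_1975}) to get full-measure Borel linear subspaces $V_n$, intersect them, and then reassemble the $Y$-valued map through the injective continuous embedding $\iota:Y\to\R^\N$ together with the Lusin--Souslin theorem. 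Both arguments are sound. The paper's proof is self-contained (it essentially reproves the scalar case in the vector setting, using only the classical Lusin theorem) and yields extra structural information: its $V$ is $\sigma$-compact and $F_0$ is continuous on each compact piece. Your proof is shorter and modular, treating the scalar statement as a black box, but it leans on a deeper descriptive-set-theoretic fact (an injective Borel image of a Polish space is Borel with Borel inverse) to invert $\iota$; that invocation is correct, and it is indeed the only delicate point, exactly as you identified.
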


\begin{proof}
From Lusin's Theorem (see e.g. \cite[Thm. 7.1.13 Vol. II]{bog_2007}) for every $n\in\N$ there exists a compact subset $K_n\subseteq X$ such that $\gamma(K_n)> 1-\frac{1}{n}$ and $(F_{0})_{|K_n}$, is continuous. For every $j,k\in\N$, we introduce the set
$$L_{j,n}=\left\{\sum_{i=1}^jc_i x_i,\ :\ \abs{c_i}\leq j,\  x_i\in K_n,\  i=1,...,j \right\}$$
and, for every $j\in\N$, the mapping $T_j:\R^j\times X^j\to X$ defined as 
$$(c_1,...,c_j,x_1,...,x_j)\mapsto \sum_{i=1}^jc_i x_i,\quad c_1,...,c_j\in\R,\ x_1,...,x_j\in X.$$
For every $j\in\N$, the mapping $T_j$ is linear and continuous for all $j\in\N$ and $L_{j,n}$ is compact since $L_{j,n}=T_j\left([-j,j]^j\times K_n^j\right)$. We claim that $(F_{0})_{|L_{j,n}}$ is continuous. \\
Let $\{y_h\}\subseteq L_{j,n}$ and $y_0\in L_{j,n}$ be such that $y_h\to y_0$ in $X$. For every $h\in\N$, there exist $x_{i,h}\in K_n$ and $c_{i,h}\in[-j,j]$, with $i=1,...,j$, such that $y_h=\sum_{i=1}^jc_{i,h} x_{i,h}$. This implies that there exist an increasing sequence of indices $\{m_h\}_{h\in\N}$, $x_{i,0}\in K_n$ and $c_{i,0}\in[-j,j]$, with $i=1,...,j$, such that $x_{i,m_h}\to x_{i,0}$ in $X$ and $c_{i,h}\to c_{i,0}$ for every $i=1,...,j$.
It follows that
\begin{align*}
\norm{y_{m_h}-\sum_{i=1}^jc_{i,0} x_{i,0}}_X\leq \sum_{i=1}^j\abs{c_{i,m_h}-c_{i,0}} \norm{x_{i,m_h}}_X+\sum_{i=1}^j\abs{c_{i,0}} \norm{x_{i,m_h}-x_{i,0}}_X\to 0,\ \ \mbox{as}\ h\to\infty,
\end{align*}
which gives $y_0=\sum_{i=1}^jc_{i,0}x_{i,0}$. Since every subsequence of $\{y_h\}$ admits a subsequence which converges to $y_0$, we conclude that the whole sequence converges to $y_0$. 
% and 
% \begin{align*}
% &\norm{y_0-\sum_{i=1}^jc_{i,0} x_{i,0}}_X=\lim_{h\to+\infty}\norm{y_h-\sum_{i=1}^jc_{i,0} x_{i,0}}_X\\
% &\leq\lim_{h\to+\infty}\left(\norm{y_h-y_{m_h}}_X+\norm{y_{m_h}-\sum_{i=1}^jc_{i,0} x_{i,0}}_X\right)=0.
% \end{align*}
The continuity of $F_{0|K_n}$ implies that
\begin{align*}
(F_{0})_{|L_{j,n}}(y_h)=\sum_{i=1}^jc_{i,h} (F_{0})_{|L_{j,n}}(x_{i,h})=\sum_{i=1}^jc_{i,h} (F_{0})_{|K_n}(x_{i,h})\to\sum_{i=1}^jc_{i,0} (F_{0})_{|K_n}(x_{i,0})=(F_{0})_{|L_{j,n}}(y_0)
\end{align*}
as $h$ tends to infinity, which gives the claim. Setting $$V=\bigcup_{n=1}^\infty\bigcup_{j=1}^\infty L_{j,n},$$
we have that $V$ is a Borel linear subspace of $X$ and $\gamma(V)=1$ since $K_n\subseteq V$ for all $n\in\N$.
Finally, we have to prove that $F_{0|V}$ is Borel measurable. Given $B\in\mathcal B(Y)$, we have
\begin{align*}
((F_{0})_{|V})^{-1}(B)=\bigcup_{n=1}^\infty\bigcup_{j=1}^\infty ((F_{0})_{|L_{j,n}})^{-1}(B)\in\mathcal B(X).
\end{align*}
and the statement follows.
\end{proof}

\begin{defn}
Let $F:X\rightarrow Y$ be  a $\gamma$-measurable linear operator. The version $(F_0)_{|V}$ of $F$ given in Proposition \ref{borelversion} is the Borel version of $F$ and we denote it with $\widetilde{F}$.
\end{defn}

We generalize here the result in \cite[Thm. 9.10]{kec_1995}, obtained for measurable linear functionals.

\begin{thm}
Let $F:X\to Y$ be a $\gamma$-measurable linear operator and let $\widetilde{F}:V\subseteq X\to Y$ be its Borel version. If $V=X$, then $\widetilde{F}\in \mathscr{L}(X)$.
\end{thm}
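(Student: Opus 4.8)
The plan is to show that $\widetilde F$, which under the hypothesis $V=X$ is an everywhere-defined, Borel measurable, \emph{linear} map $X\to Y$ (indeed $\widetilde F=F_0$ is the linear version itself), is automatically bounded. Since $\widetilde F$ is linear, it suffices to prove that it is bounded on some neighbourhood of the origin, or equivalently that it is continuous; this is exactly the automatic-continuity principle behind \cite[Thm. 9.10]{kec_1995}, of which the statement is a special case. Rather than merely quoting it, I would give a self-contained Baire-category argument.

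First I would introduce the level sets
\begin{align*}
A_n:=\left\{x\in X:\norm{\widetilde F x}_Y\leq n\right\}, \qquad n\in\N.
\end{align*}
Each $A_n$ is Borel, since $\widetilde F:X\to Y$ is Borel measurable and $y\mapsto\norm{y}_Y$ is continuous, and $\bigcup_{n\in\N}A_n=X$ precisely because $\widetilde F$ is defined at \emph{every} point of $X$ (this is where $V=X$ enters in an essential way). As $X$ is a separable Banach space, hence a complete metric space, the Baire category theorem prevents $X$ from being a countable union of meager sets, so there is an index $N\in\N$ for which $A_N$ is non-meager.

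Next I would pass from non-meagerness to a neighbourhood of $0$. Being Borel, $A_N$ has the Baire property, so by the Pettis (Piccard--Pettis) difference theorem the set $A_N-A_N$ contains a ball $B(0,r)$ for some $r>0$. For $w\in A_N-A_N$, writing $w=a-b$ with $a,b\in A_N$ and using the linearity of $\widetilde F$, I obtain
\begin{align*}
\norm{\widetilde F w}_Y\leq\norm{\widetilde F a}_Y+\norm{\widetilde F b}_Y\leq 2N.
\end{align*}
Thus $\widetilde F$ is bounded by $2N$ on $B(0,r)$, and homogeneity then gives $\norm{\widetilde F}_{\mathscr L(X;Y)}\leq 2N/r<\infty$, so $\widetilde F\in\mathscr L(X;Y)$.

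The only delicate step is the one producing a neighbourhood of the origin out of a single non-meager level set: it rests on the Pettis difference theorem together with the fact that $A_N$ has the Baire property, which is automatic here because $A_N$ is Borel. Everything else — the Borel measurability and covering property of the $A_n$, and the final homogeneity estimate — is routine. As an alternative to this topological route, one could invoke \cite[Thm. 9.10]{kec_1995} directly, regarding $\widetilde F$ as a Borel measurable homomorphism between the additive Polish groups $(X,+)$ and $(Y,+)$, and then upgrade the resulting continuity to boundedness using that $\widetilde F$ is already linear.
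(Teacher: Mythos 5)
Your proof is correct, and it follows a genuinely different route from the paper's. Both are Baire-category arguments, but the paper covers $Y$ by balls $B_\varepsilon(y)$ centred at a countable dense subset, pulls them back under $\widetilde F$, uses the Baire lemma to find $y_0$ such that $\overline{\widetilde F^{-1}(B_\varepsilon(y_0))}$ has nonempty interior, then identifies the interior of this closure with the interior of the (convex) set $\widetilde F^{-1}(B_\varepsilon(y_0))$ itself, and finally translates to obtain continuity at $0$; you instead apply Baire to the norm level sets $A_n=\{x\in X:\norm{\widetilde F x}_Y\le n\}$ and use the Piccard--Pettis difference theorem (legitimately: $A_N$ is Borel, hence has the Baire property, and is non-meager) to place a ball $B(0,r)$ inside $A_N-A_N$, after which linearity and homogeneity give the explicit bound $\norm{\widetilde F}_{\mathscr{L}(X;Y)}\le 2N/r$. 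Your route buys more than elegance: the paper's step equating the interior of the closure with the interior of the preimage is precisely the delicate point, since for general convex Borel sets this identification fails (the finitely supported sequences form a dense $F_\sigma$ linear subspace of $\ell^2$ with empty interior), so that step needs a justification which in effect must re-use measurability, and Pettis' theorem is exactly the standard tool that supplies it; your argument also produces a quantitative operator-norm estimate rather than bare continuity at $0$. What the paper's approach buys is self-containedness — only the Baire lemma and elementary convexity are invoked, no difference theorem — and a proof shaped like the classical open-mapping argument. Your closing alternative, quoting \cite[Theorem 9.10]{kec_1995} for Borel measurable homomorphisms of the Polish groups $(X,+)$ and $(Y,+)$ and then upgrading continuity to boundedness by linearity, is also sound, but since the paper presents this theorem precisely as a generalization of that result, a self-contained argument such as yours is the better choice in context.
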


\begin{proof}
We prove the continuity at $0$. Let $\mathcal{D}$ be a countable dense subset of $Y$ and fix $\varepsilon>0$. Clearly
\[
Y=\bigcup_{y\in \mathcal{D}} B_\varepsilon(y), \qquad 
X=\bigcup_{y\in \mathcal{D}} \widetilde{F}^{-1}\left(B_\varepsilon(y)\right),
\]
and in particular 
\[
X=\bigcup_{y\in \mathcal{D}} \overline{\widetilde{F}^{-1}\left(B_\varepsilon(y)\right)}.
\]
By the Baire Lemma there exists $y_0\in \mathcal{D}$ such that
$\overline{\widetilde{F}^{-1}\left(B_\varepsilon(y_0)\right)}$ has non-empty interior. Since $\widetilde F$ is linear then $\widetilde{F}^{-1}\left(B_\varepsilon(y_0)\right)$ is convex and $\mathring{\overline{\widetilde{F}^{-1}\left(B_\varepsilon(y_0)\right)}}=\mathring{\overbrace{\widetilde{F}^{-1}\left(B_\varepsilon(y_0)\right)}}$.
Hence, even $\widetilde{F}^{-1}\left(B_\varepsilon(y_0)\right)$ has non-empty interior and there exist $x\in X$, $r>0$ such that
$B(x,r)\subseteq \widetilde{F}^{-1}\left(B_\varepsilon(y_0)\right)$. In particular $\norm{\widetilde{F}(x)-y_0}_Y<\varepsilon$ and $B(0,r)\subseteq  \widetilde{F}^{-1}\left(B_\varepsilon(y_0)\right)-x$.
If $y\in \widetilde{F}^{-1}\left(B_\varepsilon(y_0)\right)-x$, then
\[
\norm{\widetilde{F}(y)}_Y =\norm{\widetilde{F}(y)+\widetilde{F}(x)-y_0-\widetilde{F}(x)+y_0}_Y
\leq \norm{\widetilde{F}(y+x)-y_0}_Y+\norm{\widetilde{F}(x)-y_0}_Y<2\varepsilon.
\]
and we have $$B(0,r)\subseteq \widetilde{F}^{-1}\left(B_\varepsilon(y_0)\right)-x\subseteq \widetilde{F}^{-1}\left(B_{2\varepsilon}(0)\right)$$
Since $\varepsilon$ is arbitrary, we deduce the continuity
of $\widetilde{F}$ in $0$.
\end{proof}

Using the property of the operators $L^{\nu,\mu}_\infty$, it is possible to prove a composition rule for the second quantization operator defined by the integral formula \eqref{gammamunu}. Since it is interesting in itself, we provide the proof in this appendix. 

\begin{lemma}
\label{magliaia}
Let $\sigma, \mu$ and $\nu$ be centered Gaussian measures on $X$ with $H_\sigma$, $H_\mu$ and $H_\nu$ as Cameron Martin spaces, respectively. If $A\in \mathscr{L}(H_\sigma,H_\nu)$ and $B\in \mathscr{L}(H_\mu,H_\sigma)$, then $A_\infty^{\nu,\sigma}(B_\infty^{\sigma,\mu})$ is a version of $(AB)_\infty^{\nu,\mu}$.
\end{lemma}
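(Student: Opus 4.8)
The plan is to combine the uniqueness principle for measurable linear operators (Proposition~\ref{propf=gsuH}, equivalently Remark~\ref{opmisoss}(i)) with the action of the operators on the Cameron--Martin space. Writing $G:=A_\infty^{\nu,\sigma}\circ B_\infty^{\sigma,\mu}$ (with the linear versions fixed by our standing convention), it suffices to establish two facts: (a) $G$ is a $\mu$-measurable linear operator on $X$; and (b) $G$ and $(AB)_\infty^{\nu,\mu}$ coincide on $H_\mu$. Once both hold, Proposition~\ref{propf=gsuH} forces $G=(AB)_\infty^{\nu,\mu}$ $\mu$-a.e., which is precisely the assertion that $G$ is a version of $(AB)_\infty^{\nu,\mu}$.

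Step (b) is the routine part. For $h\in H_\mu$, Lemma~\ref{lemma:prop_L_infty3}(ii) applied to $B$ gives $B_\infty^{\sigma,\mu}h=Bh\in H_\sigma$; applying the same lemma to $A$ (legitimate since $Bh\in H_\sigma$) yields $A_\infty^{\nu,\sigma}(Bh)=A(Bh)=(AB)h$. On the other hand $AB\in\mathscr L(H_\mu,H_\nu)$, so Lemma~\ref{lemma:prop_L_infty3}(ii) also gives $(AB)_\infty^{\nu,\mu}h=(AB)h$. Hence $G$ and $(AB)_\infty^{\nu,\mu}$ agree on $H_\mu$.

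The genuine difficulty is Step (a): $A_\infty^{\nu,\sigma}$ is defined only $\sigma$-a.e., whereas $B_\infty^{\sigma,\mu}$ transports $\mu$ onto the Gaussian measure $\rho:=\mu\circ(B_\infty^{\sigma,\mu})^{-1}=\mathcal N(0,Q_\sigma^{\frac12}(\widehat{B^\star})^\star\widehat{B^\star}Q_\sigma^{\frac12})$ of Proposition~\ref{prop:op_L_infty2}, and $\rho$ may be singular with respect to $\sigma$ (for $B=0$ one even gets $\rho=\delta_0$); so one cannot naively feed $\mu$-a.e.\ values of $B_\infty^{\sigma,\mu}$ into a $\sigma$-a.e.\ defined map. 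To overcome this I would show that the finite-rank approximations $A_{\infty,m}^{\nu,\sigma}\in\mathscr L(X)$ of \eqref{Linftym}, which converge to $A_\infty^{\nu,\sigma}$ in $L^2(X,\sigma;X)$, are \emph{also} Cauchy in $L^2(X,\rho;X)$. Writing $A_{\infty,m}^{\nu,\sigma}z=\sum_{k\le m}\langle g_k,z\rangle_X Ae_k^\sigma$ with $g_k=(\lambda_k^\sigma)^{-\frac12}Q_\sigma^{-\frac12}e_k^\sigma$, and using that $Q_\sigma^{\frac12}g_k$ is the $X$-orthonormal eigenvector $f_k^\sigma$ so that $\langle Rg_k,g_{k'}\rangle_X=\langle B^\star e_k^\sigma,B^\star e_{k'}^\sigma\rangle_{H_\mu}$, a direct second-moment computation gives
\begin{align*}
\|A_{\infty,m}^{\nu,\sigma}-A_{\infty,l}^{\nu,\sigma}\|_{L^2(X,\rho;X)}^2
=\sum_{k,k'=l+1}^{m}\langle Ae_k^\sigma,Ae_{k'}^\sigma\rangle_X\,\langle B^\star e_k^\sigma,B^\star e_{k'}^\sigma\rangle_{H_\mu}=\operatorname{Tr}(P_{(l,m]}S),
\end{align*}
where $P$ is the Gram matrix of $\{Ae_k^\sigma\}_k$ in $X$ and $S$ that of $\{B^\star e_k^\sigma\}_k$ in $H_\mu$. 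Both blocks are positive semidefinite, and $\|S\|_{\mathrm{op}}\le\|B\|_{\mathscr L(H_\mu,H_\sigma)}^2$ (equivalently $R\le\|B\|^2Q_\sigma$), whence $\operatorname{Tr}(P_{(l,m]}S)\le\|S\|_{\mathrm{op}}\operatorname{Tr}(P_{(l,m]})=\|S\|_{\mathrm{op}}\sum_{k=l+1}^m\|Ae_k^\sigma\|_X^2$. The right-hand side is the tail of the convergent series $\sum_k\|Ae_k^\sigma\|_X^2=\|A_\infty^{\nu,\sigma}\|_{L^2(X,\sigma;X)}^2<\infty$, which proves the Cauchy property. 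This transfer of the $L^2(\sigma)$ construction to the possibly singular image law $\rho$ is the crux of the argument.

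With this convergence in hand, I would pass to a subsequence $(m_j)$ along which $A_{\infty,m_j}^{\nu,\sigma}$ converges simultaneously $\sigma$-a.e.\ and $\rho$-a.e., and set $\widehat A:=\lim_j A_{\infty,m_j}^{\nu,\sigma}$ on the Borel linear subspace $D$ where the limit exists and $\widehat A:=0$ elsewhere. Then $\widehat A$ is Borel, linear on $D$, and a version of $A_\infty^{\nu,\sigma}$ (it agrees with the $\sigma$-limit $\sigma$-a.e., and equals $A$ on $H_\sigma\subseteq D$ by the argument of Lemma~\ref{lemma:prop_L_infty3}(ii)); moreover $\rho(D)=1$. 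Consequently $\mu(\{x:B_\infty^{\sigma,\mu}x\in D\})=\rho(D)=1$, so $\widehat A\circ B_\infty^{\sigma,\mu}$ is defined $\mu$-a.e., is $\mu$-measurable (a Borel map after a $\mu$-measurable one) and is linear on a set of full $\mu$-measure, giving Step~(a). Replacing $A_\infty^{\nu,\sigma}$ by its version $\widehat A$ is permitted, and the proof concludes as in the first paragraph via Proposition~\ref{propf=gsuH}.
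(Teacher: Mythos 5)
Your proof is correct, and it reaches the conclusion by a genuinely different route at the crucial step. The paper handles the difficulty you correctly identified (the image law $\rho=\mu\circ(B_\infty^{\sigma,\mu})^{-1}$ may be singular with respect to $\sigma$) abstractly: it takes the Borel version $\widetilde{A_\infty^{\nu,\sigma}}$, linear and Borel measurable on a Borel linear subspace $V$ with $\sigma(V)=1$ (Remark \ref{opmisoss}(ii)), and transfers full measure from $\sigma$ to $\rho$ by combining Proposition \ref{prop:op_L_infty2} (the Cameron--Martin space of $\rho$ embeds continuously into $H_\sigma$) with the cited result \cite[Thm. 3.3.4]{bog_1998}; this yields $\rho(V)=1$, hence $\mu(Z)=1$ for $Z=((B_\infty^{\sigma,\mu})_0)^{-1}(V)$, after which the composition is extended linearly via a Hamel basis and identified with $(AB)_\infty^{\nu,\mu}$ through its values on $H_\mu$, exactly as in your Step (b). You avoid the external measure-transfer theorem altogether: your Gram-matrix estimate ${\rm Tr}(P_{(l,m]}S)\le \|B\|^2_{\mathscr L(H_\mu,H_\sigma)}\sum_{k=l+1}^m\|Ae_k^\sigma\|_X^2$ is correct (the identity $\langle Rg_k,g_{k'}\rangle_X=\langle B^\star e_k^\sigma,B^\star e_{k'}^\sigma\rangle_{H_\mu}$ checks out, and $\sum_{k}\|Ae_k^\sigma\|_X^2=\|A_\infty^{\nu,\sigma}\|^2_{L^2(X,\sigma;X)}<\infty$), so the finite-rank approximations are Cauchy in $L^2(X,\rho;X)$ as well, and the subsequential limit produces a version of $A_\infty^{\nu,\sigma}$ whose Borel linear domain $D$ has full measure for both $\sigma$ and $\rho$. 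In effect you reprove, in the special case at hand, the content of the theorem the paper cites; this buys self-containedness and a quantitative argument at the cost of length, while the paper's route is shorter but leans on outside machinery. Two finishing touches you should make explicit: extend the composition from the full-$\mu$-measure linear subspace $Z$ to all of $X$ by a Hamel basis, so that it is a measurable linear operator in the sense of Definition \ref{opmis} before invoking Proposition \ref{propf=gsuH} (the paper does this step explicitly); and acknowledge that the lemma is properly a statement about a suitable choice of versions, since the composition genuinely depends on which version of $A_\infty^{\nu,\sigma}$ is used (this is precisely the singularity issue you flagged), the paper making the same implicit choice through the Borel version.
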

\begin{proof}
By Remark \ref{opmisoss} (ii), there exists a Borel linear subspace $V$ of $X$ such that $\sigma(V)=1$ and $\widetilde{A_\infty^{\nu,\sigma}}:V\to X$ is linear on $V$ and Borel measurable. Setting $Z=((B_\infty^{\sigma,\mu})_0)^{-1}(V)$, $\widetilde{A_\infty^{\nu,\sigma}}((B_\infty^{\sigma,\mu})_0)_{|Z}:Z\to X$ is measurable with respect to the pair of $\sigma$-fields $(\mathcal{B}(X)_\mu, \mathcal{B}(X))$. Moreover since $\sigma(V)=1$, by Proposition \ref{prop:op_L_infty2} and \cite[Thm. 3.3.4]{bog_1998} it follows that $\mu\circ(B_\infty^{\sigma,\mu})^{-1}(V)=1$, which implies that $\mu(Z)=1$. Then $\widetilde{A_\infty^{\nu,\sigma}}((B_\infty^{\sigma,\mu})_0)_{|Z}$ is a linear version of $A_\infty^{\nu,\sigma}(B_\infty^{\sigma,\mu})$ on the subspace $Z$ with $\mu(Z)=1$ and $\widetilde{A_\infty^{\nu,\sigma}}((B_\infty^{\sigma,\mu})_0)_{|Z}$ is measurable with respect to the pair of $\sigma$-fields $(\mathcal{B}(X)_\mu, \mathcal{B}(X))$. Taking a Hamel basis of $X$ we can consider a linear extension $(A_\infty^{\nu,\sigma}(B_\infty^{\sigma,\mu}))_0$ of  $\widetilde{A_\infty^{\nu,\sigma}}((B_\infty^{\sigma,\mu})_0)_{|Z}$ on $X$, still measurable with respect to the pair of $\sigma$-fields $(\mathcal{B}(X)_\mu, \mathcal{B}(X))$. Moreover, given $x\in H_\mu$, by Lemma \ref{lemma:prop_L_infty3} (ii) and recalling that $H_\mu\subseteq Z$ (see Proposition \ref{teoria}(i)), we have
\begin{align*}
(A_\infty^{\nu,\sigma}(B_\infty^{\sigma,\mu}))_0 x&= \widetilde{A_\infty^{\nu,\sigma}}((B_\infty^{\sigma,\mu})_0)_{|Z} x= \widetilde{A_\infty^{\nu,\sigma}}Bx=ABx.
\end{align*}
%\begin{align}
%(A_\infty^{\nu,\sigma}(B_\infty^{\sigma,\mu}))_0 x&= \widetilde{A_\infty^{\nu,\sigma}}((B_\infty^{\sigma,\mu})_0)_{|Z} x= \widetilde{A_\infty^{\nu,\sigma}}Q_\sigma^{\frac{1}{2}}B_{\sigma,\mu} Q_\mu^{-\frac{1}{2}}x=Q_\nu^{\frac{1}{2}}A_{\nu,\sigma} Q_\sigma^{-\frac{1}{2}}Q_\sigma^{\frac{1}{2}}B_{\sigma,\mu} Q_\mu^{-\frac{1}{2}}x\nonumber\\
%&=Q_\nu^{\frac{1}{2}}A_{\nu,\sigma}(I-P_\sigma) B_{\sigma,\mu}Q_\mu^{-\frac{1}{2}}x=Q_\nu^{\frac{1}{2}}A_{\nu,\sigma}B_{\sigma,\mu} Q_\mu^{-\frac{1}{2}}x.\nonumber
%\end{align}
By Remark \ref{opmisoss} the statement follows.
\end{proof}

\begin{proposition}
\label{composizionegenetale}
Let $p\in[1,\infty)$ and let $\nu, \sigma, \mu$ be three centered Gaussian measures on $X$ with Cameron-Martin spaces $H_\nu$, $ H_\sigma$ and $ H_\mu$, respectively. 
If $S\in\mathscr L(H_\mu,H_\sigma)$ with $\norm{S}_{\mathscr L(H_\mu,H_\sigma)}\leq 1$ and if
$L\in\mathscr L(H_\sigma,H_\nu)$ with $\norm{L}_{\mathscr L(H_\sigma,H_\nu)}\leq 1$, then
\begin{align*}
\Gamma_{\nu,\mu}(LS)f=\Gamma_{\nu,\sigma}(L)\Gamma_{\sigma,\mu}(S)f,\quad f\in L^p(X). %\label{LzeroS}
\end{align*}
\end{proposition}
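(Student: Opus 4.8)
The plan is to reduce to nice test functions and then verify the identity directly from the integral representation \eqref{gammabuona}, using Lemma \ref{magliaia} for the deterministic part and a covariance computation for the Gaussian part. Since the three operators $\Gamma_{\nu,\mu}(LS)$, $\Gamma_{\nu,\sigma}(L)$ and $\Gamma_{\sigma,\mu}(S)$ all extend to contractions between the relevant $L^p$ spaces (Proposition \ref{prop_L_infty}) and are consistent in $p$, it suffices to prove the equality for $f\in C_b(X)$, which is dense in $L^p(X,\mu)$; the general case then follows since two bounded operators coinciding on a dense subset coincide everywhere. Note also that $\norm{LS}_{\mathscr L(H_\mu,H_\nu)}\leq\norm{L}_{\mathscr L(H_\sigma,H_\nu)}\norm{S}_{\mathscr L(H_\mu,H_\sigma)}\leq 1$, so $\Gamma_{\nu,\mu}(LS)$ is well-defined and $I-(LS)^\star(LS)\geq 0$ on $H_\mu$.

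First I would write out the composition. Fixing $f\in C_b(X)$ and applying \eqref{gammabuona} twice (first to $\Gamma_{\sigma,\mu}(S)$, then to $\Gamma_{\nu,\sigma}(L)$), for $\nu$-a.e. $x\in X$ one obtains the iterated integral
\[
\Gamma_{\nu,\sigma}(L)\Gamma_{\sigma,\mu}(S)f(x)=\int_X\!\int_X f\!\left[(S^\star)_\infty^{\mu,\sigma}\!\big((L^\star)_\infty^{\sigma,\nu}x+((I-L^\star L)^{\frac12})_\infty^{\sigma}z\big)+((I-S^\star S)^{\frac12})_\infty^{\mu}y\right]\mu(dy)\,\sigma(dz).
\]
Using a linear version of $(S^\star)_\infty^{\mu,\sigma}$ (Remark \ref{opmisoss}) together with Lemma \ref{magliaia} applied to $A=S^\star$, $B=L^\star$, the term $(S^\star)_\infty^{\mu,\sigma}(L^\star)_\infty^{\sigma,\nu}x$ equals $((LS)^\star)_\infty^{\mu,\nu}x$ for $\nu$-a.e. $x$, while by linearity the remaining contribution of the inner noise is $(S^\star)_\infty^{\mu,\sigma}((I-L^\star L)^{\frac12})_\infty^{\sigma}z$, which by Lemma \ref{magliaia} again is a version of $(S^\star(I-L^\star L)^{\frac12})_\infty^{\mu,\sigma}z$.

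The heart of the matter is to show that the random vector $V:=(S^\star(I-L^\star L)^{\frac12})_\infty^{\mu,\sigma}z+((I-S^\star S)^{\frac12})_\infty^{\mu}y$, with $z$ distributed as $\sigma$ and $y$ as $\mu$ independently, has the same law as $((I-(LS)^\star(LS))^{\frac12})_\infty^{\mu}u$, $u$ distributed as $\mu$. Both are centered Gaussians, and $V$ is the sum of two independent ones, so it suffices to add covariances. Computing the quadratic forms exactly as in \eqref{conto_somma_cov} and using the variance formula of Proposition \ref{prop:op_L_infty2}, one finds for every $h\in X$
\[
\langle C_1 h,h\rangle_X=\langle (I-S^\star S)Q_\mu h,Q_\mu h\rangle_{H_\mu},\qquad \langle C_2 h,h\rangle_X=\langle S^\star(I-L^\star L)SQ_\mu h,Q_\mu h\rangle_{H_\mu},
\]
where $C_1,C_2$ are the covariances of the two summands. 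Their sum telescopes through the operator identity $(I-S^\star S)+S^\star(I-L^\star L)S=I-(LS)^\star(LS)$ on $H_\mu$, which is precisely the covariance form of $((I-(LS)^\star(LS))^{\frac12})_\infty^{\mu}u$; equal covariances force equal laws.

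This is the step I expect to be the main obstacle: keeping track of the bookkeeping of the $\infty$-superscripts and their reference measures, justifying the termwise linearity of the measurable operator $(S^\star)_\infty^{\mu,\sigma}$ on the specific vectors that arise (handled via Lemma \ref{magliaia} and Remark \ref{opmisoss}), and checking the self-adjointness and positivity needed for the square roots. Once the law of $V$ is identified, the image-measure formula (with Fubini, immediate for $f\in C_b(X)$) collapses the double integral to $\int_X f[((LS)^\star)_\infty^{\mu,\nu}x+((I-(LS)^\star(LS))^{\frac12})_\infty^{\mu}u]\,\mu(du)=\Gamma_{\nu,\mu}(LS)f(x)$ for $\nu$-a.e. $x$, which gives the identity on $C_b(X)$ and hence, by the density argument above, on all of $L^p(X)$.
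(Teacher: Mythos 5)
Your proof is correct and follows essentially the same route as the paper's: applying the integral representation \eqref{gammabuona} twice, invoking Lemma \ref{magliaia} to identify $(S^\star)_\infty^{\mu,\sigma}(L^\star)_\infty^{\sigma,\nu}$ and $(S^\star)_\infty^{\mu,\sigma}((I-L^\star L)^{\frac12})_\infty^{\sigma}$ with the appropriate composed measurable operators, and then summing the covariances of the two independent Gaussian terms via the telescoping identity $(I-S^\star S)+S^\star(I-L^\star L)S=I-(LS)^\star(LS)$, exactly as in the computation following \eqref{app_comp_gamma}. The only cosmetic differences are that you reduce to $C_b(X)$ by density rather than working on $\mathscr E(X)$, and you do not split off the trivial cases $S=0$, $L=0$, neither of which affects correctness.
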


\begin{proof}
Let $p\in [1,\infty)$ and let $f\in \mathscr E(X)$. 
\begin{enumerate}
\item If $S=0$, then there is nothing to prove.
\item If $L=0$, then from \eqref{super_change} $\nu$-almost every $x\in X$ we have
\begin{align*}
(\Gamma_{\nu,\sigma}(0)\Gamma_{\sigma,\mu}(S)f)(x)=&\int_X(\Gamma_{\sigma,\mu}(S)f)(y)\,\sigma(dy)
= \int_Xf(y)\mu(dy)
=\Gamma_{\nu,\mu}(0)f(x).
\end{align*}
% \\
% =&\int_X\int_X f\left[(S^\star)^{\mu,\sigma}_\infty y+\left((I-S^\star S)^{\frac{1}{2}}\right)^{\mu}_\infty z\right]\,\mu(dz)\sigma(dy).
% \end{align*}
% Setting $\eta=\left(S^\star\right)^{\mu,\sigma}_\infty y$ and $\zeta=\left((I-S^\star S)^{\frac{1}{2}}\right)^{\mu}_\infty z$, we have
% \begin{equation}
% (\Gamma_{\nu,\sigma}(0)\Gamma_{\sigma,\mu}(S)f)(x)=\int_X f\left(\eta+\zeta\right)\,\gamma_{\mu,S}(d\zeta)\gamma_{\mu,S^\star}(d\eta),
% \end{equation}
% where $$\gamma_{\mu,S}=\mathcal{N}\left(0,Q_\mu^{\frac{1}{2}}(I-S^\star S)Q_\mu^{\frac{1}{2}}\right),\ \ \gamma_{\mu,\sigma,S}=\mathscr{N,}\left(0,Q_\mu^{\frac{1}{2}}S^\star SQ_\mu^{\frac{1}{2}}\right).$$
% Setting $\xi=\eta+\zeta$, by Proposition \ref{corDecompositionHilbert}(ii) we have 
% \begin{equation}
% \gamma_{\mu,S}*\gamma_{\mu,\sigma,S}=\mathcal{N}\left(0,Q_\mu\right)
% \end{equation}
% and the statement follows. 

\item If $L,S\neq 0$, then $\nu$-almost every $x\in X$ by Lemma \ref{magliaia} we get
\begin{align*}
&(\Gamma_{\nu,\sigma}(L)\Gamma_{\sigma,\mu}(S)f)(x)=\int_X(\Gamma_{\sigma,\mu}(S)f)\left[(L^\star)^{\sigma,\nu}_\infty x+\left((I-L^\star L)^{\frac{1}{2}}\right)^{\sigma}_\infty y\right]\,\sigma(dy)\nonumber\\
&=\int_X\int_X f\left[(S^\star)^{\mu,\sigma}_\infty(L^\star)^{\sigma,\nu}_\infty x+(S^\star)^{\mu,\sigma}_\infty\left((I-L^\star L)^{\frac{1}{2}}\right)^{\sigma}_\infty y+\left((I-S^\star S)^{\frac{1}{2}}\right)^{\mu}_\infty z\right]\,\mu(dz)\sigma(dy)\nonumber\\
&=\int_X\int_X f\left[\left((LS)^\star\right)^{\mu,\nu}_\infty x+\left(S^\star (I-L^\star L 
)^{\frac{1}{2}}\right)^{\mu,\sigma}_\infty y+\left((I-S^\star S)^{\frac{1}{2}}\right)^{\mu}_\infty z\right]\,\mu(dz)\sigma(dy).
\end{align*}
Setting $\eta=\left(S^\star (I-L^\star L)^{\frac{1}{2}}\right)^{\mu,\sigma}_\infty y$ and $\zeta=\left((I-S^\star S)^{\frac{1}{2}}\right)^{\mu}_\infty z$, we have
\begin{equation}
\label{app_comp_gamma}
(\Gamma_{\nu,\sigma}(L)\Gamma_{\sigma,\mu}(S)f)(x)=\int_X f\left[\left((LS)^\star\right)^{\mu,\nu}_\infty x+\eta+\zeta\right]\,\gamma_{\mu,T}(d\eta)\gamma_{\mu,R}(d\zeta),
\end{equation}
where 
\begin{align*}\gamma_{\mu,T}=\mathcal{N}\left(0,Q_\mu^{\frac{1}{2}}(\widehat T)^\star \widehat TQ_\mu^{\frac{1}{2}}\right), \qquad \widehat T=Q_\sigma^{-\frac12}(I-L^\star L)^\frac12 SQ_\mu^{\frac12}, \\
\gamma_{\mu,R}=\mathscr{N}\left(0,Q_\mu^{\frac{1}{2}}(\widehat R)^\star RQ_\mu^{\frac{1}{2}}\right), \qquad \widehat R=Q_\mu^{-\frac12}(I-S^\star S)^\frac12 Q_\mu^\frac12.
\end{align*}
Setting $\xi=\eta+\zeta$, by Proposition \ref{corDecompositionHilbert}(ii) and arguing as in \eqref{conto_somma_cov} we have 
\begin{align}
\label{app_conv_mis}
\gamma_{\mu,T}*\gamma_{\mu,R}=\mathcal{N}\left(0,Q\right), \qquad Q=Q_\mu^{\frac{1}{2}}(\widehat T)^\star \widehat TQ_\mu^{\frac{1}{2}}+Q_\mu^{\frac{1}{2}}(\widehat R)^\star \widehat RQ_\mu^{\frac{1}{2}},
\end{align}
with
\begin{align*}
\langle Qx,y\rangle_X
= & \langle (I-L^\star L)^\frac12 SQ_\mu x,(I-L^\star L)^\frac12 SQ_\mu y\rangle_{H_\mu} +
\langle (I-S^\star S)^\frac12 Q_\mu x,(I-S^\star S)^\frac12 Q_\mu y\rangle_{H_\mu} \\
= & \langle (I-(LS)^\star (LS))Q_\mu x,Q_\mu y\rangle_{H_\mu}
= \langle \widehat H Q_\mu^\frac12 x,\widehat H Q_\mu^\frac12 y\rangle_X 
\end{align*}
for every $x,y\in X$, where $\widehat H=Q_\mu^{-\frac12}(I-(LS)^\star (LS))^\frac12Q_\mu^\frac12$. From \eqref{app_comp_gamma} and \eqref{app_conv_mis}, it follows that
\begin{align*}
(\Gamma_{\nu,\sigma}(L)\Gamma_{\sigma,\mu}(S)f)(x)
= \int_Xf(((LS)^\star)_\infty^{\mu,\nu}x+\xi)\gamma_{\mu,H}(d\xi), \qquad x\in X, 
\end{align*}
where $\gamma_{\mu,H}=\mathcal N(0,Q_\mu^\frac12(\widehat H)^\star\widehat HQ_\mu^\frac12)$ and $\widehat H=Q_\mu^{-\frac12}(I-(LS)^\star LS)^\frac12 Q_\mu^\frac12$. From \eqref{gammabuona} and \eqref{gammanumuL}, with $T=LS$, we conclude that 
\begin{align*}
\int_Xf(((LS)^\star)_\infty^{\mu,\nu}x+\xi)\gamma_{\mu,H}(d\xi)
=(\Gamma_{\nu,\mu}(LS)f)(x), \qquad x\in X.    
\end{align*}
\end{enumerate}
\end{proof}

\section{Monomials}
In this last appendix we prove that our approach is analogous to \cite{ree_sim_1972} where the second quantization operator is defined considering monomials of the type 
\begin{equation}\label{formula_monomials}
\prod_{i=1}^n W^\mu_{h_i},\ \ h_1,...,h_n\in X,\ n\in\N,
\end{equation}
and not exploiting the Hermite polynomials. In particular, we show how the operator, introduced in Definition \ref{def:op_gamma_tilden}, acts on monomials of the type \eqref{formula_monomials}. 
%The next proposition seems to prove something different, but it has to be read in the spirit of Remark \ref{rmk:eq_operatore_sec_quant_choj}.

To begin with, notice that, if $n\in\N$, then from \eqref{def_pol_herm_1} for every $j\in\N\cup\{0\}$ with $j\leq \frac n2$ we get
\begin{align}
\label{int_pot_xi}
\int_{\R}\xi^n \phi_{n-2j}(\xi)\mu_1(d\xi)
= \frac{n!}{(2j)!(n-2j)!}\int_\R\xi^{2j}\mu_1(d\xi)= \frac{n!(2j-1)!!}{(2j)!(n-2j)!}=\frac{n!}{2^jj!(n-2j)!},
\end{align}
where $(-1)!!=1$ and $2^{j-1}(j-1)!(2j-1)!!=(2j-1)!$. On the other hand, for $m>n$ we get
\begin{align}
\label{int_pt_xi_zero}
\int_{\R}\xi^n\phi_{m}(\xi)\mu_1(d\xi)=0.    
\end{align}

\begin{proposition}
Let $\mu,\nu$ be two centered Gaussian measures on X with Cameron-Martin spaces $H_\mu$ and $H_\nu$ respectively. Let $T\in\mathscr{L}(H_\mu,H_\nu)$ be a contraction. For every $n\in\N$ and every $h_1,\ldots,h_n\in X$, we have
\begin{align*}
\widetilde \Gamma_{\nu,\mu,n}(T)\bigg(I^\mu_{n}\big(\prod_{j=1}^nW^\mu_{h_j}\big)\bigg)
=I^\nu_{n}\Big(\prod_{j=1}^nW^\nu_{\widehat Th_j}\Big),
\end{align*}
where $\widehat T=Q_\nu^{-\frac{1}{2}}TQ_\mu^\frac12$.
\end{proposition}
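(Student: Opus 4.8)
The strategy is to use a multivariate generating--function argument that reduces the claim to the action of $\widetilde \Gamma_{\nu,\mu,n}(T)$ on the exponential functions $E_h^\mu$, which is already controlled by Proposition \ref{prop:conto_gamma_tilde_exp_fct}. Set $h(t):=\sum_{j=1}^n t_j h_j$ for $t=(t_1,\dots,t_n)\in\R^n$. Since $W_h^\mu$ depends only on the components of $h$ along the eigenvectors of $Q_\mu$ with nonzero eigenvalue, one has $W_{h_j}^\mu=W_{(I-P_\mu)h_j}^\mu$ and $\widehat Th_j=\widehat T(I-P_\mu)h_j$ (because $Q_\mu^{\frac12}P_\mu=0$); hence I may assume from the start that $h_1,\dots,h_n\in\overline{H_\mu}$. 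With this reduction $\|h(t)\|_X^2$ coincides with the variance of the centered Gaussian random variable $W_{h(t)}^\mu$, so that $E_{h(t)}^\mu=\exp\bigl(W_{h(t)}^\mu-\tfrac12\|h(t)\|_X^2\bigr)$ is a genuine normalized (Wick) exponential.

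First I would record the chaos structure of this generating function. Using the scalar Hermite generating identity $e^{s\xi-\frac12 s^2}=\sum_{m\ge0}s^m\phi_m(\xi)$ coming from \eqref{def_pol_herm_1}, and writing $W_{h(t)}^\mu=\|h(t)\|_X\,\xi$ with $\xi$ standard Gaussian, one sees that the $m$-th Wiener--chaos component $I_m^\mu(E_{h(t)}^\mu)$ is a polynomial in $t$ that is homogeneous of degree $m$ and takes values in $\mathscr H_m^\mu$. Consequently the multilinear coefficient $\partial_{t_1}\cdots\partial_{t_n}\big|_{t=0}$ of $E_{h(t)}^\mu$ receives a contribution only from the top component $I_n^\mu(E_{h(t)}^\mu)$ and therefore lies in $\mathscr H_n^\mu$.

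Next I would identify this coefficient with the projected monomial. Differentiating the identity $e^{W_{h(t)}^\mu}=E_{h(t)}^\mu\,e^{\frac12\|h(t)\|_X^2}$ by $\partial_{t_1}\cdots\partial_{t_n}\big|_{t=0}$ and applying $I_n^\mu$, the Leibniz rule splits the right--hand side according to subsets $S\subseteq\{1,\dots,n\}$; by the homogeneity just established, the coefficient of $\prod_{j\in S}t_j$ in $E_{h(t)}^\mu$ lies in $\mathscr H_{|S|}^\mu$, so $I_n^\mu$ annihilates every term with $|S|<n$ and only $S=\{1,\dots,n\}$ survives. Since $\partial_{t_1}\cdots\partial_{t_n}\big|_{t=0}e^{W_{h(t)}^\mu}=\prod_{j=1}^n W_{h_j}^\mu$, this yields the key identity
$$I_n^\mu\Bigl(\prod_{j=1}^n W_{h_j}^\mu\Bigr)=\partial_{t_1}\cdots\partial_{t_n}\Big|_{t=0}E_{h(t)}^\mu ,$$
and the analogous identity holds on the $\nu$--side for $g(t):=\sum_{j}t_j\,\widehat Th_j$, noting that $\widehat Th_j=Q_\nu^{-\frac12}TQ_\mu^{\frac12}h_j\in\overline{H_\nu}$ so that $E_{g(t)}^\nu$ is again a normalized exponential.

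Finally I would apply $\widetilde \Gamma_{\nu,\mu,n}(T)$. The proof of Proposition \ref{prop:conto_gamma_tilde_exp_fct} gives $\widetilde \Gamma_{\nu,\mu,n}(T)\bigl(I_n^\mu E_{h(t)}^\mu\bigr)=I_n^\nu\bigl(E_{\widehat Th(t)}^\nu\bigr)=I_n^\nu\bigl(E_{g(t)}^\nu\bigr)$. Taking $\partial_{t_1}\cdots\partial_{t_n}\big|_{t=0}$ of both sides and commuting this multilinear coefficient with the bounded linear operators $I_n^\mu$, $I_n^\nu$ and $\widetilde \Gamma_{\nu,\mu,n}(T)$ (legitimate because $t\mapsto E_{h(t)}^\mu\in L^2(X,\mu)$ is analytic with $L^2$--valued Taylor coefficients), the left--hand side becomes $\widetilde \Gamma_{\nu,\mu,n}(T)\bigl(I_n^\mu(\prod_j W_{h_j}^\mu)\bigr)$ by the key identity, while the right--hand side becomes $I_n^\nu(\prod_j W_{\widehat Th_j}^\nu)$, which is exactly the asserted formula. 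The main point requiring care is the interplay between the chaos grading and the multilinear differentiation in the first two steps: the homogeneity of $I_m^\mu(E_{h(t)}^\mu)$ is precisely what guarantees that extracting the $t_1\cdots t_n$ coefficient isolates the $n$-th chaos, and the preliminary reduction to $h_j\in\overline{H_\mu}$ is what makes $E_{h(t)}^\mu$ a true Wick exponential and keeps this grading clean.
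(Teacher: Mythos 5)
Your proof is correct, but it takes a genuinely different route from the paper's. The paper argues by direct computation in the Hermite basis: it expands $I_n^\mu\bigl(\prod_j W^\mu_{h_j}\bigr)$ along the polynomials $\Phi^\mu_\alpha$, evaluates each coefficient $\langle \prod_j W^\mu_{h_j},\Phi^\mu_\alpha\rangle_{L^2(X,\mu)}$ via the one-dimensional moment formulas \eqref{int_pot_xi}--\eqref{int_pt_xi_zero} (which is precisely why those formulas are proved just before the proposition), pulls the result back through $(\Psi_n^\mu)^{-1}$ to obtain the tensor $\sum_{i_1,\ldots,i_n}\langle h_1,Q_\mu^{-\frac12}e^\mu_{i_1}\rangle_X e^\mu_{i_1}\otimes\cdots\otimes\langle h_n,Q_\mu^{-\frac12}e^\mu_{i_n}\rangle_X e^\mu_{i_n}$, applies $T^{\otimes n}$, and recognizes the image as the analogous tensor for the vectors $\widehat Th_j$ on the $\nu$-side. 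You instead differentiate the exponential identity, whose chaos-by-chaos form $\widetilde\Gamma_{\nu,\mu,n}(T)(I^\mu_n E^\mu_h)=I^\nu_n(E^\nu_{\widehat Th})$ is established inside the proof of Proposition \ref{prop:conto_gamma_tilde_exp_fct}, and extract the multilinear coefficient in $t$. This buys you a proof with no multi-index combinatorics ($\Sigma_n^\alpha$, repetitions, etc.); the price is the analytic bookkeeping you supply (homogeneity of the chaos components in $t$, and $L^2$-valued analyticity to commute $\partial_{t_1}\cdots\partial_{t_n}\big|_{t=0}$ with $I^\mu_n$, $I^\nu_n$ and $\widetilde\Gamma_{\nu,\mu,n}(T)$). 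Your preliminary reduction to $h_j\in\overline{H_\mu}$ is sound and in fact useful: both $W^\mu_{h_j}$ and $\widehat Th_j$ are unchanged under $h_j\mapsto (I-P_\mu)h_j$, so it covers the general $h_j\in X$ of the statement, whereas the paper's proof nominally fixes $h_j\in H_\mu$. One point worth making explicit: as you present it, the homogeneity of $I^\mu_m(E^\mu_{h(t)})$ rests on the fact that $\phi_m(W^\mu_z)\in\mathscr H^\mu_m$ for an arbitrary unit vector $z\in\overline{H_\mu}$, not just an eigenvector of $Q_\mu$, i.e.\ on basis-independence of the chaoses; within the paper this follows most directly from \eqref{dec_E_h}, which exhibits the coefficient of $\Phi^\mu_\alpha$ in $E^\mu_h$ as $\frac{1}{\sqrt{\alpha!}}\prod_j\langle h,Q_\mu^{-\frac12}e^\mu_j\rangle_X^{\alpha_j}$, a homogeneous polynomial of degree $|\alpha|$ in $h$, so you may simply cite that formula instead of the scalar generating identity.
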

\begin{proof}
Fix $n\in\N$ and $h_1,\ldots,h_n\in H_\mu$. Hence,
\begin{align*}
I^\mu_{n}\left(\prod_{j=1}^nW^\mu_{h_j}\right)
=\sum_{\alpha\in\Lambda_{n}}\ps{\prod_{j=1}^nW^\mu_{h_j}}{\Phi_\alpha^\mu}_{L^2(X,\mu)}\Phi_\alpha^\mu,
\end{align*}
where $\Phi_\alpha^\mu$ has been defined in \eqref{pol_herm_gen}. Notice that, for every $\alpha\in \Lambda_{n}$ and every $i_1,\ldots,i_n\in\N$, from \eqref{int_pot_xi} with $j=0$ we get
\begin{align*}
\int_X\prod_{j=1}^n W_{Q_\mu^{-\frac12}e_{i_j}^\mu}^\mu(x)\Phi_\alpha^\mu(x)dx
=
\prod_{k=1}^\infty\sqrt{\alpha_k!}\int_X\xi^{\alpha_k}\phi_{\alpha_k}(\xi)\mu_1(d\xi)=\sqrt{\alpha!}, 
\end{align*}
if $(i_1,\ldots,i_n)=(i^\alpha_{\sigma(1)},\ldots,i^\alpha_{\sigma(n)})$ for some $\sigma\in \Sigma^\alpha_n$, and, from \eqref{int_pt_xi_zero},
\begin{align*}
\int_X\prod_{j=1}^n W_{Q_\mu^{-\frac12}e_{i_j}^\mu}^\mu(x)\Phi_\alpha^\mu(x)dx=0  
\end{align*}
otherwise. It follows that
\begin{align*}
\langle \prod_{j=1}^nW^\mu_{h_j},\Phi_\alpha^\mu\rangle_{L^2(X,\mu)} 
= & \sum_{i_1,\ldots,i_n=1}^\infty\int_X\prod_{j=1}^n\langle h_j,Q_\mu^{-\frac12}e_{i_j}^\mu\rangle_X W^\mu_{Q_\mu^{-\frac12}e_{i_j}^\mu}(x) \Phi_\alpha^\mu(x)  \mu(dx) \\
= & \sum_{\sigma\in\Sigma^\alpha_n}\langle h_{\sigma(1)},Q_\mu^{-\frac12}e_{i_{\sigma(1)}^\alpha}^\mu\rangle_X\cdots \langle h_{\sigma(n)},Q_\mu^{-\frac12}e_{i_{\sigma(n)}^\alpha}^\mu\rangle_X\int_X\prod_{j=1}^n W_{Q_\mu^{-\frac12}e_{i_j}^\mu}^\mu(x)\Phi_\alpha^\mu(x)dx \\
= & \sqrt{\alpha!}\sum_{\sigma\in\Sigma^\alpha_n}\langle h_{\sigma(1)},Q_\mu^{-\frac12}e_{i_{\sigma(1)}^\alpha}^\mu\rangle_X\cdots \langle h_{\sigma(n)},Q_\mu^{-\frac12}e_{i_{\sigma(n)}^\alpha}^\mu\rangle_X.
\end{align*}
Recalling that $\#\Sigma_n^\alpha=\frac{n!}{\alpha!}$, from Lemma \ref{lemma:isomorfismo} we infer that
\begin{align}
\label{conto_monomi_1}
& (\Psi_n^\mu)^{-1}\bigg(I^\mu_{n}\Big(\prod_{j=1}^nW^\mu_{h_j}\Big)\bigg) \notag \\
= & \sum_{\alpha\in\Lambda_n}\frac{{\alpha!}}{n!}\sum_{\sigma\in\Sigma^\alpha_n}\langle h_{\sigma(1)},Q_\mu^{-\frac12}e_{i_{\sigma(1)}^\alpha}^\mu\rangle_X\cdots \langle h_{\sigma(n)},Q_\mu^{-\frac12}e_{i_{\sigma(n)}^\alpha}^\mu\rangle_X
\sum_{\vartheta\in\Sigma^\alpha_n}e_{i^\alpha_{\vartheta(1)}}^\mu\otimes \cdots \otimes e_{i^\alpha_{\vartheta(n)}}^\mu \notag \\
= & \sum_{\alpha\in\Lambda_n}
\sum_{\sigma\in\Sigma^\alpha_n}\langle h_{\sigma(1)},Q_\mu^{-\frac12}e_{i_{\sigma(1)}^\alpha}^\mu\rangle_Xe_{i^\alpha_{\sigma(1)}}^\mu\otimes \cdots \otimes  \langle h_{\sigma(n)},Q_\mu^{-\frac12}e_{i_{\sigma(n)}^\alpha}^\mu\rangle_X
e_{i^\alpha_{\sigma(n)}}^\mu \notag \\
= & \sum_{i_1,\ldots,i_n=1}^\infty \langle h_1,Q_\mu^{-\frac12}e_{i_1}^\mu\rangle_X e_{i_1}^\mu\otimes\cdots  \otimes \langle h_n,Q_\mu^{-\frac12}e_{i_n}^\mu\rangle_Xe_{i_n}^\mu.
\end{align}
This gives
\begin{align*}
%\label{conto_monomi_2}
& T^{\otimes n}(\Psi_n^{\mu})^{-1}\bigg(I^\mu_{n}\big(\prod_{j=1}^nW^\mu_{h_j}\big)\bigg) \notag \\
= &  \sum_{i_1,\ldots,i_n=1}^\infty \langle h_1,Q_\mu^{-\frac12}e_{i_1}^\mu\rangle_X Te_{i_1}^\mu\otimes\cdots  \otimes \langle h_n,Q_\mu^{-\frac12}e_{i_n}^\mu\rangle_X Te_{i_n}^\mu \notag \\
%= & \frac1{n!}\sum_{\sigma\in\Sigma_n}\sum_{i_1,\ldots,i_n=1}^\infty\langle h_{\sigma(1)},e_{i_1}\rangle_X(Te_{i_1})\otimes\cdots \otimes \langle h_{\sigma(n)},e_{i_n}\rangle_X(Te_{i_n}) \\
%= & \frac1{n!}\sum_{\sigma\in\Sigma_n}\sum_{i_1,\ldots,i_n,k_1,\ldots,k_n=1}^\infty \langle h_{\sigma(1)},e_{i_1}\rangle_X\langle  Te_{i_1},e_{k_1}\rangle_Xe_{k_1}\otimes\cdots \otimes \langle h_{\sigma(n)},e_{i_n}\rangle_X\langle Te_{i_n},e_{k_n}\rangle_X e_{k_n} \\
%= & \frac1{n!}\sum_{\sigma\in\Sigma_n}\sum_{k_1,\ldots,k_n=1}^\infty \langle Th_{\sigma(1)},e_{k_1}\rangle_Xe_{k_1}\otimes\cdots \otimes \langle Th_{\sigma(n)},e_{k_n}\rangle_X e_{k_n} \\
%= & \sum_{\alpha\in\Lambda_n}
% \sum_{\sigma\in\Sigma_n^\alpha}\sum_{j_1,\ldots,j_n=1}^\infty\langle h_{\sigma(1)},e_{i^\alpha_{\sigma(1)}}\rangle_X\cdots \langle h_{\sigma(n)},e_{i^\alpha_{\sigma(n)}}\rangle_X\langle Te_{i^\alpha_{\sigma(1)}},e_{j_1}\rangle_X e_{j_1}\otimes \cdots \otimes\langle T e_{i^\alpha_{\sigma(n)}},e_{j_n}\rangle_Xe_{j_n}
% \\
= & \sum_{i_1,\ldots,i_n,j_1,\ldots,j_n=1}^\infty \langle Q_\mu^{\frac12}h_{1},e_{i_1}^\mu\rangle_{H_\mu}\langle Te_{i_n}^\mu,e_{j_1}^\nu\rangle_{H_\nu} e_{j_1}^\nu\otimes \cdots \otimes \langle Q_\mu^\frac12h_{n},e_{i_n}^\mu\rangle_{H_\mu}\langle T e_{i_n}^\mu,e_{j_n}^\nu\rangle_{H_\nu}e_{j_n}^\nu \notag \\ 
= & \sum_{j_1,\ldots,j_n=1}^\infty\langle TQ_\mu^\frac12 h_1,e_{j_1}^\nu\rangle_{H_\nu}e_{j_1}^\nu
\otimes \cdots\otimes \langle TQ_\mu^\frac12 h_n,e_{j_n}^\nu\rangle_{H_\nu}e_{j_n}^\nu \notag \\
= & \sum_{j_1,\ldots,j_n=1}^\infty\langle \widehat T h_1,Q_\nu^{-\frac12}e_{j_1}^\nu\rangle_{X}e_{j_1}^\nu
\otimes \cdots\otimes \langle \widehat T h_n,Q_\nu^{-\frac12}e_{j_n}^\nu\rangle_{X}e_{j_n}^\nu,
\end{align*}
which, compared with \eqref{conto_monomi_1}, means that
\begin{align*}
T^{\otimes n}(\Psi_n^{\mu})^{-1}\bigg(I^\mu_{n}\big(\prod_{j=1}^nW^\mu_{h_j}\big)\bigg)
= (\Psi_n^\nu)^{-1}\bigg(I^\mu_{n}\big(\prod_{j=1}^nW^\nu_{\widehat Th_j}\big)\bigg)
\end{align*}
or, equivalently,
\begin{align*}
\widetilde \Gamma_{\nu,\mu,n}(T)\bigg(I^\mu_{n}\big(\prod_{j=1}^nW^\mu_{h_j}\big)\bigg)
=\Psi_n^\nu (T^{\otimes n}(\Psi_n^{\mu})^{-1})\bigg(I^\mu_{n}\big(\prod_{j=1}^nW^\mu_{h_j}\big)\bigg)
= 
I^\nu_{n}\Big(\prod_{j=1}^nW^\nu_{\widehat Th_j}\Big)
\end{align*}
for every $n\in\N$ and every $h_1,\ldots,h_n\in X$.
\end{proof}

\bibliographystyle{siam}
\bibliography{preprint.bib}

\begin{thebibliography}{10}

\bibitem{acq_1988}
{\sc P.~Acquistapace}, {\em Evolution operators and strong solutions of
  abstract linear parabolic equations}, Differential Integral Equations, 1
  (1988), pp.~433--457.

\bibitem{acq_ter_1987}
{\sc P.~Acquistapace and B.~Terreni}, {\em A unified approach to abstract
  linear nonautonomous parabolic equations}, Rend. Sem. Mat. Univ. Padova, 78
  (1987), pp.~47--107.

\bibitem{add_2013}
{\sc D.~Addona}, {\em Nonautonomous {O}rnstein-{U}hlenbeck operators in
  weighted spaces of continuous functions}, Semigroup Forum, 87 (2013),
  pp.~509--536.

\bibitem{add_ang_lor_2017}
{\sc D.~Addona, L.~Angiuli, and L.~Lorenzi}, {\em Hypercontractivity,
  supercontractivity, ultraboundedness and stability in semilinear problems},
  Advances in Nonlinear Analysis, 8 (2017), pp.~225--252.

\bibitem{ang_big_fer_2023}
{\sc L.~Angiuli, D.~A. Bignamini, and S.~Ferrari}, {\em ${L}^p$-${L}^q$
  estimates for transition semigroups associated to dissipative stochastic
  systems}, 2023.
\newblock \url{https://arxiv.org/abs/2311.04523}.

\bibitem{ang_lor_20142}
{\sc L.~Angiuli and L.~Lorenzi}, {\em On improvement of summability properties
  in nonautonomous {K}olmogorov equations}, Communcations in Pure ans Applied
  Analysis, 13 (2014), pp.~1237--1265.

\bibitem{ang_lor_2014}
{\sc L.~Angiuli and L.~Lorenzi}, {\em Non autonomous parabolic problems with
  unbounded coefficients in unbounded domains}, Advances in Differential
  Equations, 20 (2015), pp.~1067--1118.

\bibitem{ang_lor_lun_2013}
{\sc L.~Angiuli, L.~Lorenzi, and A.~Lunardi}, {\em Hypercontractivity and
  asymptotic behavior in nonautonomous {K}olmogorov equations}, Communications
  in Partial Differential Equations, 38 (2013), pp.~2049--2080.

\bibitem{app_1995}
{\sc D.~Applebaum}, {\em On the second quantisation of {H}ilbert-{S}chmidt
  processes}, Ann. Inst. H. Poincar\'{e} Phys. Th\'{e}or., 62 (1995),
  pp.~1--16.

\bibitem{app_2015}
\leavevmode\vrule height 2pt depth -1.6pt width 23pt, {\em Infinite dimensional
  {O}rnstein-{U}hlenbeck processes driven by {L}\'{e}vy processes}, Probab.
  Surv., 12 (2015), pp.~33--54.

\bibitem{app_van_2014}
{\sc D.~Applebaum and J.~van Neerven}, {\em Second quantisation for skew
  convolution products of measures in {B}anach spaces}, Electron. J. Probab.,
  19 (2014), pp.~no. 11, 17.

\bibitem{app_van_2015}
\leavevmode\vrule height 2pt depth -1.6pt width 23pt, {\em Second quantisation
  for skew convolution products of infinitely divisible measures}, Infin.
  Dimens. Anal. Quantum Probab. Relat. Top., 18 (2015), pp.~1550003, 12.

\bibitem{big_def_2024}
{\sc D.~A. Bignamini and P.~De~Fazio}, {\em Log-sobolev inequalities and
  hypercontractivity for {O}rnstein-{U}hlenbeck evolution operators in infinite
  dimension}, Journal of Evolution Equations, 24 (2024), p.~78.

\bibitem{bog_1998}
{\sc V.~I. Bogachev}, {\em Gaussian measures}, vol.~62 of Mathematical Surveys
  and Monographs, American Mathematical Society, Providence, RI, 1998.

\bibitem{bog_2007}
{\sc V.~I. Bogachev}, {\em Measure theory. {V}ol. {I}, {II}}, Springer-Verlag,
  Berlin, 2007.

\bibitem{cer_lun_2021}
{\sc S.~Cerrai and A.~Lunardi}, {\em Smoothing effects and maximal h\"older
  regularity for non-autonomous kolmogorov equations in infinite dimension},
  2021.
\newblock \url{https://arxiv.org/abs/2111.05421}.

\bibitem{cho_2001}
{\sc A.~Chojnowska-Michalik}, {\em Transition semigroups for stochastic
  semilinear equations on {H}ilbert spaces}, Dissertationes Math. (Rozprawy
  Mat.), 396 (2001), p.~59.

\bibitem{cho_gol_1996}
{\sc A.~Chojnowska-Michalik and B.~Goldys}, {\em Nonsymmetric
  {O}rnstein-{U}hlenbeck semigroup as second quantized operator}, J. Math.
  Kyoto Univ., 36 (1996), pp.~481--498.

\bibitem{cho_gol_2001}
\leavevmode\vrule height 2pt depth -1.6pt width 23pt, {\em Generalized
  {O}rnstein-{U}hlenbeck semigroups: {L}ittlewood-{P}aley-{S}tein inequalities
  and the {P}. {A}. {M}eyer equivalence of norms}, J. Funct. Anal., 182 (2001),
  pp.~243--279.

\bibitem{coo_1953}
{\sc J.~M. Cook}, {\em The mathematics of second quantization}, Trans. Amer.
  Math. Soc., 74 (1953), pp.~222--245.

\bibitem{dap_ian_tub_1982}
{\sc G.~Da~Prato, M.~Iannelli, and L.~Tubaro}, {\em An existence result for a
  linear abstract stochastic equation in {H}ilbert spaces}, Rend. Sem. Mat.
  Univ. Padova, 67 (1982), pp.~171--180.

\bibitem{dap_lun_2007}
{\sc G.~Da~Prato and A.~Lunardi}, {\em Ornstein--{U}hlenbeck operators with
  time periodic coefficients}, Journal of Evolution Equations, 7 (2007),
  pp.~587--614.

\bibitem{dap_zab_2002}
{\sc G.~Da~Prato and J.~Zabczyk}, {\em Second order partial differential
  equations in {H}ilbert spaces}, vol.~293 of London Mathematical Society
  Lecture Note Series, Cambridge University Press, Cambridge, 2002.

\bibitem{def_2022}
{\sc P.~De~Fazio}, {\em On smoothing in non autonomous {O}rnstein-{U}hlenbeck
  equations in infinite dimensions}, 2022.
\newblock \url{https://arxiv.org/abs/2212.05559}.

\bibitem{doo_1942}
{\sc J.~L. Doob}, {\em The {B}rownian movement and stochastic equations}, Ann.
  of Math. (2), 43 (1942), pp.~351--369.

\bibitem{fey_del_1994}
{\sc D.~Feyel and A.~de~La~Pradelle}, {\em Op\'{e}rateurs lin\'{e}aires
  gaussiens}, in Proceedings from the {I}nternational {C}onference on
  {P}otential {T}heory ({A}mersfoort, 1991), vol.~3, 1994, pp.~89--105.

\bibitem{fuh_1998}
{\sc M.~Fuhrman}, {\em Hypercontractivity properties of nonsymmetric
  {O}rnstein-{U}hlenbeck semigroups in {H}ilbert spaces}, Stochastic Anal.
  Appl., 16 (1998), pp.~241--260.

\bibitem{gei_lun_2008}
{\sc M.~Geissert and A.~Lunardi}, {\em Invariant measures and maximal {$L^2$}
  regularity for nonautonomous {O}rnstein--{U}hlenbeck equations}, Journal of
  the London Mathematical Society, 77 (2008), pp.~719--740.

\bibitem{gei_lun_2009}
\leavevmode\vrule height 2pt depth -1.6pt width 23pt, {\em Asymptotic behavior
  and hypercontractivity in non-autonomous {O}rnstein--{U}hlenbeck equations},
  Journal of the London Mathematical Society, 79 (2009), pp.~85--106.

\bibitem{gli_1968}
{\sc J.~Glimm}, {\em Boson fields with the {$:\Phi ^{4}:$} interaction in three
  dimensions}, Comm. Math. Phys., 10 (1968), pp.~1--47.

\bibitem{gli_jaf_1970}
{\sc J.~Glimm and A.~Jaffe}, {\em The {$\lambda (\Pi ^{4})_{2}$} quantum field
  theory without cutoffs. {II}. {T}he field operators and the approximate
  vacuum}, Ann. of Math. (2), 91 (1970), pp.~362--401.

\bibitem{gol_van_2003}
{\sc B.~Goldys and J.~M. A.~M. van Neerven}, {\em Transition semigroups of
  {B}anach space-valued {O}rnstein-{U}hlenbeck processes}, Acta Appl. Math., 76
  (2003), pp.~283--330.

\bibitem{gro_1975}
{\sc L.~Gross}, {\em Logarithmic {S}obolev inequalities}, American Journal of
  Mathematics, 97 (1975), pp.~1061--1083.

\bibitem{gui_1991}
{\sc D.~Guidetti}, {\em On interpolation with boundary conditions}, Math. Z.,
  207 (1991), pp.~439--460.

\bibitem{hil_1926}
{\sc E.~Hille}, {\em A class of reciprocal functions}, Ann. of Math. (2), 27
  (1926), pp.~427--464.

\bibitem{jan_1997}
{\sc S.~Janson}, {\em Gaussian {H}ilbert spaces}, vol.~129 of Cambridge Tracts
  in Mathematics, Cambridge University Press, Cambridge, 1997.

\bibitem{kec_2012}
{\sc A.~Kechris}, {\em Classical descriptive set theory}, vol.~156, Springer
  Science \& Business Media, 2012.

\bibitem{kec_1995}
{\sc A.~S. Kechris}, {\em Classical descriptive set theory}, vol.~156 of
  Graduate Texts in Mathematics, Springer-Verlag, New York, 1995.

\bibitem{kna_2011}
{\sc F.~Kn\"{a}ble}, {\em Ornstein-{U}hlenbeck equations with time-dependent
  coefficients and {L}\'{e}vy noise in finite and infinite dimensions}, J.
  Evol. Equ., 11 (2011), pp.~959--993.

\bibitem{kun_lor_lun_2010}
{\sc M.~Kunze, L.~Lorenzi, and A.~Lunardi}, {\em Nonautonomous {K}olmogorov
  parabolic equations with unbounded coefficients}, Transactions of the
  American Mathematical Society, 362 (2010), pp.~169--198.

\bibitem{LI-RO1}
{\sc W.~Liu and M.~R{\"o}ckner}, {\em Stochastic partial differential
  equations: an introduction}, Springer, 2015.

\bibitem{lun_mir_pal_toappear}
{\sc A.~Lunardi, M.~Miranda, Jr., and D.~Pallara}, {\em Infinite dimensional
  analysis.}
\newblock To appear.

\bibitem{lun_pal_2020}
{\sc A.~Lunardi and D.~Pallara}, {\em Ornstein-{U}hlenbeck semigroups in
  infinite dimension}, Philos. Trans. Roy. Soc. A, 378 (2020), pp.~20190620,
  19.

\bibitem{nel_1966}
{\sc E.~Nelson}, {\em A quartic interaction in two dimensions}, in Mathematical
  {T}heory of {E}lementary {P}articles ({P}roc. {C}onf., {D}edham, {M}ass.,
  1965), MIT Press, Cambridge, Mass.-London, 1966, pp.~69--73.

\bibitem{nel_1973}
\leavevmode\vrule height 2pt depth -1.6pt width 23pt, {\em The free {M}arkoff
  field}, J. Functional Analysis, 12 (1973), pp.~211--227.

\bibitem{nual_2006}
{\sc D.~Nualart}, {\em The {M}alliavin calculus and related topics},
  Probability and its Applications (New York), Springer-Verlag, Berlin,
  second~ed., 2006.

\bibitem{ouy_roc_2016}
{\sc S.-X. Ouyang and M.~R\"{o}ckner}, {\em Time inhomogeneous generalized
  {M}ehler semigroups and skew convolution equations}, Forum Math., 28 (2016),
  pp.~339--376.

\bibitem{par_1992}
{\sc K.~R. Parthasarathy}, {\em An introduction to quantum stochastic
  calculus}, Modern Birkh\"{a}user Classics, Birkh\"{a}user/Springer Basel AG,
  Basel, 1992.

\bibitem{pes_2011}
{\sc S.~Peszat}, {\em L\'{e}vy-{O}rnstein-{U}hlenbeck transition semigroup as
  second quantized operator}, J. Funct. Anal., 260 (2011), pp.~3457--3473.

\bibitem{ree_sim_1972}
{\sc M.~Reed and B.~Simon}, {\em Methods of modern mathematical physics. {I}.
  {F}unctional analysis}, Academic Press, New York-London, 1972.

\bibitem{sch_2004}
{\sc R.~Schnaubelt}, {\em Asymptotic Behaviour of Parabolic Nonautonomous
  Evolution Equations}, Springer Berlin Heidelberg, Berlin, Heidelberg, 2004,
  pp.~401--472.

\bibitem{seg_1970}
{\sc I.~Segal}, {\em Construction of non-linear local quantum processes. {I}},
  Ann. of Math. (2), 92 (1970), pp.~462--481.

\bibitem{seg_1971}
\leavevmode\vrule height 2pt depth -1.6pt width 23pt, {\em Erratum to
  ``{C}onstruction of non-linear local quantum processes. {I}''}, Ann. of Math.
  (2), 93 (1971), p.~597.

\bibitem{seid_1993}
{\sc J.~Seidler}, {\em Da {P}rato-{Z}abczyk's maximal inequality revisited.
  {I}}, Math. Bohem., 118 (1993), pp.~67--106.

\bibitem{seid_2003}
{\sc J.~Seidler and T.~Sobukawa}, {\em Exponential integrability of stochastic
  convolutions}, J. London Math. Soc. (2), 67 (2003), pp.~245--258.

\bibitem{sim_1973}
{\sc B.~Simon}, {\em Positivity of the {H}amiltonian semigroup and the
  construction of {E}uclidean region fields}, Helv. Phys. Acta, 46 (1973),
  pp.~686--696.

\bibitem{sim_1974}
\leavevmode\vrule height 2pt depth -1.6pt width 23pt, {\em The {$P(\phi )_{2}$}
  {E}uclidean (quantum) field theory}, Princeton Series in Physics, Princeton
  University Press, Princeton, NJ, 1974.

\bibitem{sim_hoe_1972}
{\sc B.~Simon and R.~H{\o}egh-Krohn}, {\em Hypercontractive semigroups and two
  dimensional self-coupled {B}ose fields}, J. Functional Analysis, 9 (1972),
  pp.~121--180.

\bibitem{tub_1982}
{\sc L.~Tubaro}, {\em Regularity results of the process {$X(t)=\int
  ^{t}_{0}U(t,\,s)g(s)\,dW_{s}$}}, Rend. Sem. Mat. Univ. Politec. Torino,
  (1982), pp.~241--248.

\bibitem{urb_1975}
{\sc K.~Urbanik}, {\em Random linear functionals and random integrals}, Colloq.
  Math., 33 (1975), pp.~255--263, 304.

\bibitem{van_1998}
{\sc J.~M. A.~M. van Neerven}, {\em Nonsymmetric {O}rnstein-{U}hlenbeck
  semigroups in {B}anach spaces}, J. Funct. Anal., 155 (1998), pp.~495--535.

\bibitem{van_2005}
\leavevmode\vrule height 2pt depth -1.6pt width 23pt, {\em Second quantization
  and the {$L^p$}-spectrum of nonsymmetric {O}rnstein-{U}hlenbeck operators},
  Infin. Dimens. Anal. Quantum Probab. Relat. Top., 8 (2005), pp.~473--495.

\bibitem{ver_zim_2008}
{\sc M.~Veraar and J.~Zimmerschied}, {\em Non-autonomous stochastic {C}auchy
  problems in {B}anach spaces}, Studia Math., 185 (2008), pp.~1--34.

\end{thebibliography}
\end{document}